\newtheorem{theorem}{Theorem}[section]
\newtheorem{observation}[theorem]{Observation}
\newtheorem{lemma}[theorem]{Lemma}
\newtheorem{proposition}[theorem]{Proposition}
\newtheorem{corollary}[theorem]{Corollary}
\newtheorem{question}[theorem]{Question}
\newtheorem*{theorem*}{Theorem}
\numberwithin{equation}{section}
\theoremstyle{definition}
\newtheorem{definition}[theorem]{Definition}
\newtheorem{notation}[theorem]{Notation}
\theoremstyle{remark}
\newtheorem{remark}[theorem]{Remark}
\newtheorem{example}[theorem]{Example}
\newcommand\R{\mathbb{R}}
\newcommand\C{\mathbb{C}}
\newcommand\N{\mathbb{N}}
\newcommand{\cA}{\mathcal{A}}
\newcommand{\cB}{\mathcal{B}}
\newcommand{\cC}{\mathcal{C}}
\newcommand{\cD}{\mathcal{D}}
\newcommand{\cM}{\mathcal{M}}
\newcommand{\cU}{\mathcal{U}}
\DeclareMathOperator{\id}{id}
\DeclareMathOperator{\re}{Re}
\DeclareMathOperator{\im}{Im}
\DeclareMathOperator{\Tr}{Tr}
\DeclareMathOperator{\tr}{tr}
\DeclareMathOperator{\sa}{sa}
\DeclareMathOperator{\app}{app}
\DeclareMathOperator{\dom}{dom}
\DeclareMathOperator{\Aff}{Aff}
\DeclareMathOperator{\opt}{opt}
\DeclareMathOperator{\Spec}{Spec}
\DeclareMathOperator{\fin}{fin}
\DeclareMathOperator{\bim}{bim}
\DeclareMathOperator{\sgn}{sgn}
\DeclarePairedDelimiter{\norm}{\lVert}{\rVert}
\DeclarePairedDelimiter{\ip}{\langle}{\rangle}
\begin{document}
	
	\title{Duality for optimal couplings in free probability}
	
	\author{Wilfrid Gangbo}
	\email{wgangbo@math.ucla.edu}
	\address{Department of Mathematics, University of California, Los Angeles, Los Angeles, CA 90095}
	
	\author{David Jekel}
	\email{djekel@ucsd.edu}
	\address{Department of Mathematics, University of California, San Diego, La Jolla, CA 92093}
	
	\author{Kyeongsik Nam}
	\email{ksnam@g.ucla.edu}
	\address{Department of Mathematics, University of California, Los Angeles, Los Angeles, CA 90095}
	
	\author{Dimitri Shlyakhtenko}
	\email{shlyakht@math.ucla.edu}
	\address{Department of Mathematics, University of California, Los Angeles, Los Angeles, CA 90095}
	
	\thanks{W.G.\ was supported by NSF grant DMS-1700202 and Air Force grant FA9550-18-1-0502.  D.J.\ was supported by NSF postdoc grant DMS-2002826.  D.S.\ was supported by NSF grant DMS-1762360.  We thank IPAM for the stimulating environment in the online long program on High-dimensional Hamilton-Jacobi PDEs in Spring 2020 where some conversations around this work began.  D.S. thanks Alice Guionnet and Yoann Dabrowski for multiple discussions about free transport; D.J. thanks Ben Hayes and Srivatsav Kunnawalkam Elayavalli for discussions about non-commutative laws and model theory, and in particular inspiration for Corollary \ref{cor:genericity}.  We thank the referees for many useful comments for improving the clarity of the paper.}
	
	\subjclass{Primary: 46L53; Secondary: 49Q22, 46L52, 81P45}
	
	\begin{abstract}
		We study the free probabilistic analog of optimal couplings for the quadratic cost, where classical probability spaces are replaced by tracial von Neumann algebras, and probability measures on $\R^m$ are replaced by non-commutative laws of $m$-tuples.  We prove an analog of the Monge-Kantorovich duality which characterizes optimal couplings of non-commutative laws with respect to Biane and Voiculescu's non-commutative $L^2$-Wasserstein distance using a new type of convex functions.  As a consequence, we show that if $(X,Y)$ is a pair of optimally coupled $m$-tuples of non-commutative random variables in a tracial $\mathrm{W}^*$-algebra $\cA$, then $\mathrm{W}^*((1 - t)X + tY) = \mathrm{W}^*(X,Y)$ for all $t \in (0,1)$.  Finally, we illustrate the subtleties of non-commutative optimal couplings through connections with results in quantum information theory and operator algebras.  For instance, two non-commutative laws that can be realized in finite-dimensional algebras may still require an infinite-dimensional algebra to optimally couple.  Moreover, the space of non-commutative laws of $m$-tuples is not separable with respect to the Wasserstein distance for $m > 1$.
	\end{abstract}
	
	\maketitle
	
	\section{Introduction}
	
	\subsection{Context and motivation}
	
	Tracial von Neumann algebras have long been viewed as a non-commutative analog of probability spaces, where the elements of the von Neumann algebra play the role of non-commuting random variables, but it was Voiculescu who pointed out that free products of operator algebras provide an analog of probabilistic independence with its own central limit theorem \cite{Voiculescu1985,Voiculescu1986}, initiating the discipline of free probability theory.  Free probability has since had many applications both to random matrix theory e.g.\ \cite{Voiculescu1991} and to von Neumann algebras e.g.\ \cite{VoiculescuFE3}.  Many developments in free probability theory have been motivated by information geometry (here by ``information geometry'' we mean the study of the space $\mathcal{P}(M)$ of probability measures on a manifold $M$, both as a metric space with the Wasserstein distance and as a formal Riemannian manifold, as well as the study of entropy and Fisher's information as functions on $\mathcal{P}(M)$; see \cite{JKO1998,Lafferty1988,Otto2001,OV2000}).  For instance, Voiculescu introduced free entropy and Fisher information \cite{VoiculescuFE1,VoiculescuFE2,VoiculescuFE5} and Biane and Voiculescu \cite{BV2001} defined an analog of the $L^p$ Wasserstein distance for non-commutative laws (the analog of probability distributions for $m$-tuples of non-commuting random variables), which was then used in free Talagrand inequalities \cite{BV2001,HPU2004,HU2006,Dabrowski2010}.
	
	Information-geometric ideas have also been used in quantum information theory, another non-commutative analog of probability theory that is distinct from free probability theory, even though it uses similar concepts and terminology.  For a survey of quantum information theory, see \cite{Wilde2013,Witten2020}.  To prevent any confusion, in free probability, operators in a tracial von Neumann algebra are viewed as non-commutative random variables (and there is no known analog of multivariable densities), while in quantum information theory, a positive operator with trace $1$ in a von Neumann algebra with a (not necessarily bounded) trace is viewed as a density.\footnote{More precisely, a positive operator $\rho$ defines a non-tracial state on the von Neumann algebra, and $\rho$ is the density of this state with respect to the trace.  However, von Neumann algebras with a semi-finite trace are difficult to classify, and indeed even those with a finite trace are difficult to classify, which makes it difficult to classify non-commutative laws in free probability.}  Hence, for example, a random matrix is typically studied in free probability theory, while a matrix-valued density is typically studied in quantum information theory.  Our paper is focused on the free probabilistic framework; however, in \S \ref{sec:qinfo}, we will draw a connection between free probabilistic optimal couplings and certain aspects of quantum information theory, specifically quantum channels or unital completely positive trace-preserving maps.
	
	In classical information geometry, both the Wasserstein distance and the entropy are intimately related to transport equations (differential equations describing functions which push forward some given probability distribution to another given probability distribution).  In the free setting, there has been some success in constructing non-commutative transport of measure for a special type of non-commutative law known as a \emph{free Gibbs law from a convex potential $V$} in \cite{DGS2016,GS2014,JekelExpectation,JekelThesis,JLS2021}; these ideas have even been generalized beyond the setting of tracial von Neumann algebras \cite{Nelson2015a,Nelson2015b,Shlyakhtenko2003}.  Unfortunately, the transport maps constructed in \cite{DGS2016,JekelExpectation,JekelThesis} were not optimal.  The transport in \cite{GS2014} was shown to be the gradient of a convex function, hence one would expect it to be optimal in light of the classical Monge-Kantorovich duality, but it was not clear yet how to prove this because there was no known non-commutative Monge-Kantorovich duality.  The optimality of these couplings was later verified in \cite[Remark 9.11]{JLS2021} by studying a Legendre transform for (sufficiently regular, uniformly convex) non-commutative functions \cite[Lemma 9.10]{JLS2021}.  This idea was one of the starting points for our current investigation into non-commutative optimal couplings, Legendre transforms, and Monge-Kantorovich duality with minimal regularity assumptions.
	
	One of the challenges in even formulating a Monge-Kantorovich duality for the free setting is to decide what type of convex functions to use.  Operator algebras are often thought of as non-commutative analogs of algebras of functions on a topological space or a measure space, but without a clear analog for points of the underlying space.  Our approach is to consider functions that can be evaluated on random variables rather than on points, or more precisely, to study functions $f: L^2(\cA)_{\sa}^m \to \R$ where $\cA$ is a tracial von Neumann algebra, $L^2(\cA)$ is the non-commutative $L^2$ space, and the subscript $\sa$ indicates the real subspace of self-adjoint elements.  The classical analog would be a function $L^2(\Omega,P;\R^m) \to \R$ where $(\Omega,P)$ is a probability space, rather than a function $\R^m \to \R$.  As we discuss in \S \ref{subsec:classicalcoupling}, such functions on the space of classical random variables have already found applications to Hamilton-Jacobi equations on the Wasserstein space \cite{GaTu2018,GMS2021} as well as the master equation on $\R^m \times \mathcal{P}(\R^m)$ in mean field games \cite{CDLL2019,GMMZ2021,GM2021}.
	
	As in \cite{GS2014} and \cite{JekelExpectation}, we remark that the complexity of classifying von Neumann algebras presents serious obstructions to non-commutative transport theory that simply do not exist in the classical setting.  It is a widely used fact in classical probability theory that any two standard Borel probability spaces with no atoms are measurably isomorphic; hence one can always arrange that their random variables are on some canonical probability space.  By contrast, McDuff \cite{McDuff1969} showed that there are uncountably many non-isomorphic tracial von Neumann algebras that are diffuse with trivial center (that is, $\mathrm{II}_1$ factors).  This provides a real obstruction to non-commutative transport of measure, because if $X = (X_1,\dots,X_m)$ and $Y = (Y_1,\dots,Y_m)$ are $m$-tuples of self-adjoint non-commutative random variables such that $X$ is expressed as a ``function'' of $Y$ and vice versa (for some reasonable notion of non-commutative functions), then $X$ and $Y$ generate the same von Neumann algebra.  Hence, non-commutative laws which produce non-isomorphic von Neumann algebras simply cannot be transported to each other in an invertible way.  Another result of Ozawa \cite{Ozawa2004} (based on group-theoretic results of Gromov \cite{Gromov1987} and Olshanskii \cite{Olshanskii1993}) shows there is no separable $\mathrm{II}_1$ factor that contains an isomorphic copy of every separable $\mathrm{II}_1$-factor.  Hence, we cannot even expect that there is some non-commutative law $\mu$ such that all other non-commutative laws can be expressed as push-forwards of $\mu$.
	
	These obstructions must inform how we go about defining the convex functions for the Monge-Kantorovich duality, as well as the level of regularity that we expect from an optimal coupling.  In fact, in \S \ref{sec:qinfo} we make a more explicit connection between optimal couplings and this result of Gromov, Olshanshkii, and Ozawa as well as exploring other pathological properties of the non-commutative Wasserstein distance through connections with quantum information theory.
	
	\subsection{Main results}
	
	Before stating the non-commutative Monge-Kantorovich duality, we establish following notational conventions; see \S \ref{sec:background} for background.  By \emph{tracial $\mathrm{W}^*$-algebra} we mean a pair $\cA = (A,\tau)$ where $A$ is a $\mathrm{W}^*$-algebra (or von Neumann algebra) and $\tau: A \to \C$ is a faithful normal tracial state.  In analogy with classical probability, we will denote the underlying algebra $A$ by $L^\infty(\cA)$ and the trace by $\tau_{\cA}$ when it is convenient to avoid naming $A$ and $\tau$ explicitly.  We denote by $L^2(\cA)$ the Hilbert space obtained from the GNS construction of $A$ and $\tau$.
	
	We denote by $L^\infty(\cA)_{\sa}^m$ the set of $m$-tuples of self-adjoint elements of $L^\infty(\cA)$ and for $X = (X_1,\dots,X_m) \in L^\infty(\cA)_{\sa}^m$, we write $\norm{X}_{L^\infty(\cA)_{\sa}^m} = \max_{j=1,\dots,m} \norm{X}_{L^\infty(\cA)}$.  If $X \in L^\infty(\cA)_{\sa}^m$, then $\mathrm{W}^*(X)$ denotes the $\mathrm{W}^*$-algebra generated by $X$ equipped with the appropriate trace.
	
	For each $X = (X_1,\dots,X_m) \in L^\infty(\cA)_{\sa}^m$, the \emph{non-commutative law} $\lambda_X$ is the linear map from the non-commutative polynomial algebra $\C\ip{x_1,\dots,x_m}$ to $\C$ given by $\lambda_X(p) = \tau_{\cA}(p(X))$.  The space of non-commutative laws (of self-adjoint $m$-tuples from any tracial $\mathrm{W}^*$-algebra) is denoted $\Sigma_m$.  Furthermore, $\Sigma_{m,R}$ denotes the subspace of those laws $\lambda_X$ where $\norm{X}_{L^\infty(\cA)_{\sa}^m} \leq R$ (where $\cA$ is a tracial $\mathrm{W}^*$-algebra and $X \in L^\infty(\cA)_{\sa}^m$).  The \emph{weak-$*$ topology on $\Sigma_{m,R}$} refers to the topology of pointwise convergence on $\C\ip{x_1,\dots,x_m}$.
	
	Following \cite{BV2001}, a \emph{coupling} of $\mu$, $\nu \in \Sigma_m$ is a triple $(\cA,X,Y)$ where $\cA$ is a tracial $\mathrm{W}^*$-algebra and $X, Y \in L^\infty(\cA)_{\sa}^m$ such that $\lambda_X = \mu$ and $\lambda_Y = \nu$.  The \emph{Wasserstein distance} $d_W^{(2)}(\mu,\nu)$ is the infimum of $\norm{X - Y}_{L^2(\cA)_{\sa}^m}$ over all couplings $(\cA,X,Y)$.  We denote by $C(\mu,\nu)$ the supremum of $\ip{X,Y}_{L^2(\cA)_{\sa}^m}$ over all couplings $(\cA,X,Y)$, where $\ip{X,Y}_{L^2(\cA)_{\sa}^m} = \sum_{j=1}^m \ip{X_j,Y_j}_{L^2(\cA)_{\sa}}$.  We say that a coupling is \emph{optimal} if it achieves the infimum of $\norm{X - Y}_{L^2(\cA)_{\sa}^m}$ or equivalently if it achieves the supremum of $\ip{X,Y}_{L^2(\cA)_{\sa}^m}$.  The existence of optimal couplings was observed in \cite{BV2001}.  That paper also showed that the non-commutative Wasserstein distance agrees with the classical one in the situation that $X_1$, \dots, $X_m$ commute and $Y_1$, \dots, $Y_m$ commute \cite[Theorem 1.5]{BV2001}.
	
	As mentioned before, the functions used in the non-commutative Monge-Kantorovich duality are functions on $L^2(\cA)_{\sa}^m$ for tracial $\mathrm{W}^*$-algebra $\cA$ with separable predual.  However, because of Ozawa's result \cite{Ozawa2004}, it is not sufficient to fix a single such tracial $\mathrm{W}^*$-algebra, but rather we must consider functions that are defined on $L^2(\cA)_{\sa}^m$ for every such $\cA$.  We give more precise versions of the definitions in \S \ref{sec:duality}.
	
	\begin{definition}
		A \emph{tracial $\mathrm{W}^*$-function with values in $(-\infty,\infty]$} is a collection of functions $f^{\cA}: L^2(\cA)_{\sa}^m \to (-\infty,+\infty]$, such that whenever $\iota: \cA \to \cB$ is an inclusion map of tracial $\mathrm{W}^*$-algebras, $f^{\cA} = f^{\cB} \circ \iota$ (here $\iota$ is extended to a map $L^2(\cA)_{\sa}^m \to L^2(\cB)_{\sa}^m$).  If $\mu \in \Sigma_m$ and $f$ is a tracial $\mathrm{W}^*$-function, then $\mu(f)$ is defined as $f^{\cA}(X)$ whenever $\cA$ is a tracial $\mathrm{W}^*$-algebra with separable predual and $X \in L^\infty(\cA)_{\sa}^m$ with $\lambda_X = \mu$; this is well-defined because $\mathrm{W}^*(X)$ is determined up to isomorphism by $\lambda_X = \mu$.
	\end{definition}
	
	One example of a tracial $\mathrm{W}^*$-function would be 
	\[
	f^{\cA}(X) = \begin{cases} \tau_{\cA}(p(X)), & \norm{X}_\infty \leq R \\ \infty, & \text{otherwise,} \end{cases}
	\]
	where $p$ is a non-commutative polynomial.  Tracial $\mathrm{W}^*$-functions also include scalar-valued tracial non-commutative smooth functions as in \cite{JekelThesis} and \cite{JLS2021} in the following sense.  If $\phi$ is such a tracial non-commutative smooth function, then $\phi^{\cA}(X)$ is only a priori defined when $X \in L^\infty(\cA)_{\sa}^m$; however, in many cases $\phi$ is Lipschitz with respect to $\norm{\cdot}_{L^2(\cA)_{\sa}^m}$ and hence can be extended to a function on $L^2(\cA)_{\sa}^m$ which will be a tracial $\mathrm{W}^*$-function.  However, tracial $\mathrm{W}^*$-functions are much more general because they are not assumed to be continuous in any sense.
	
	\begin{definition} \label{def:Econvexsummary}
		We say that $f$ is \emph{$E$-convex} if $f^{\cA}$ is convex and lower semi-continuous on $L^2(\cA)_{\sa}^m$ for each $\cA$, and if for every inclusion $\iota: \cA \to \cB$, letting $E: \cB \to \cA$ be the corresponding trace-preserving conditional expectation, we have $f^{\cA}(E[X]) \leq f^{\cB}(X)$ for $X \in L^2(\cB)_{\sa}^m$.  Here we use the notation $E[X] = (E[X_1],\dots,E[X_m])$ when $X = (X_1,\dots,X_m)$.
	\end{definition}
	
	Motivation for the definition of $E$-convexity will be given in Lemmas \ref{lem:scalarconvex} and \ref{lem:Econvexitymotivation}.
	
	
	\begin{proposition} \label{prop:dualitysummary}
		$C(\mu,\nu)$ is equal to the infimum of $\mu(f) + \nu(g)$ over pairs $(f,g)$ of $E$-convex $\mathrm{W}^*$-functions that satisfy $f^{\cA}(X) + g^{\cA}(Y) \geq \ip{X,Y}_{L^2(\cA)}$ for every tracial $\mathrm{W}^*$-algebra with separable predual and $X$, $Y \in L^2(\cA)_{\sa}^m$.  There exists an admissible pair of $E$-convex functions that achieves the infimum.  See Definition \ref{def:admissible} and Propositions \ref{prop:MKduality1} and \ref{prop:MKduality2}.
	\end{proposition}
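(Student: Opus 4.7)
I split Proposition~\ref{prop:dualitysummary} into weak duality (the inequality $C(\mu,\nu) \leq \inf(\mu(f) + \nu(g))$) and strong duality with attainment, the latter obtained by an explicit construction from an optimal coupling.

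Weak duality is immediate: for admissible $(f, g)$ and any coupling $(\cA, X, Y)$ of $(\mu, \nu)$, the tracial $\mathrm{W}^*$-function property applied to the inclusions $\mathrm{W}^*(X) \hookrightarrow \cA$ and $\mathrm{W}^*(Y) \hookrightarrow \cA$ gives $\mu(f) = f^{\cA}(X)$ and $\nu(g) = g^{\cA}(Y)$. Admissibility then yields $\mu(f) + \nu(g) \geq \langle X, Y\rangle_{L^2(\cA)_{\sa}^m}$, and taking the supremum over couplings produces $\mu(f) + \nu(g) \geq C(\mu,\nu)$.

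For strong duality with attainment, I start from an optimal coupling $(X_*, Y_*) \in \cA_*$ (existing by \cite{BV2001}) and define a candidate dual potential by a noncommutative Legendre-type transform against the cost functional $C(\cdot, \nu)$:
\[
g^{\cB}(Y) := \sup\bigl\{\langle X, Y\rangle_{L^2(\cC)_{\sa}^m} - C(\lambda_X, \nu) : \cC \supseteq \cB,\ X \in L^\infty(\cC)_{\sa}^m\bigr\},
\]
and set $f := g^*$ in the analogous sense, $f^{\cB}(X) := \sup\{\langle X, Y\rangle - g^{\cC}(Y) : \cC \supseteq \cB,\ Y \in L^2(\cC)_{\sa}^m\}$. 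Admissibility of $(f, g)$ is then the Fenchel--Young inequality built into these two suprema. To show that $f$ and $g$ are tracial $\mathrm{W}^*$-functions and $E$-convex, I would exploit the amalgamated free product: given $\cA \subseteq \cB$ and $X \in \cC \supseteq \cA$, realizing $\cC$ and $\cB$ together inside $\cC *_{\cA} \cB$ yields the identity $\langle X, E[Y]\rangle = \langle X, Y\rangle_{\cC *_{\cA} \cB}$, so each term in the sup defining $g^{\cA}(E[Y])$ is transferred to an admissible term in the sup defining $g^{\cB}(Y)$, and a symmetric argument works for $f$. Convexity and lower semi-continuity in the $L^2$ variable are immediate, since both functions are suprema of affine continuous functionals.

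Evaluating this pair at the optimal coupling closes the gap. Since $\langle X, Y_*\rangle \leq C(\lambda_X, \nu)$ for any coupling of $\lambda_X$ with $\nu$, one gets $g^{\cA_*}(Y_*) \leq 0$; specializing to $X = X_* \in \cA_*$ achieves equality in the defining supremum, so $\nu(g) = 0$. A parallel computation gives $\mu(f) = C(\mu, \nu)$: the choice $(\cC, Y) = (\cA_*, Y_*)$ furnishes the lower bound $\langle X_*, Y_*\rangle - g^{\cA_*}(Y_*) = C(\mu, \nu)$, while taking $X_*$ itself as a competitor in the supremum defining $g^{\cC}(Y)$ for $\cC \supseteq \mathrm{W}^*(X_*)$ yields $g^{\cC}(Y) \geq \langle X_*, Y\rangle - C(\mu, \nu)$, which is the matching upper bound. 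Hence $\mu(f) + \nu(g) = C(\mu, \nu)$, matching the weak-duality bound and exhibiting an optimal pair. The main obstacle I anticipate is the systematic bookkeeping across the hierarchy of tracial $\mathrm{W}^*$-algebras: every step requires embedding two given algebras into a common enlargement while preserving conditional expectations and verifying that the suprema are consistent across all such extensions. Amalgamated free products provide the right tool, but applying them uniformly---especially to establish $E$-convexity of the Legendre transform $f$---is where the technical care is concentrated.
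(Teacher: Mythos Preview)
Your approach is correct and is essentially the paper's argument with the roles of $\mu$ and $\nu$ swapped and the Legendre-transform machinery developed inline rather than isolated. The paper first proves a general proposition (Proposition~\ref{prop:Legendre}) that for \emph{any} tracial $\mathrm{W}^*$-function $h$, the transform $\mathcal{L}h$ is automatically an $E$-convex $\mathrm{W}^*$-function, with the amalgamated-free-product computation you sketch appearing once there. It then applies this to the indicator $f_0^{\cA}(X)=0$ if $\lambda_X=\mu$, $+\infty$ otherwise: the optimal pair is $(\mathcal{L}^2 f_0,\mathcal{L}f_0)$, and one checks directly that $\mu(f_0)=0$ and $\nu(\mathcal{L}f_0)=C(\mu,\nu)$. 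Your $g$ is the Legendre transform of $X\mapsto C(\lambda_X,\nu)$, which is itself $\mathcal{L}$ of the indicator of $\{\lambda_Y=\nu\}$; thus your pair $(f,g)=(\mathcal{L}g,g)$ is the paper's pair for $(\nu,\mu)$ with the two slots exchanged. Note also that your construction of $g$ uses only $\nu$, not the optimal coupling---the coupling $(\cA_*,X_*,Y_*)$ enters only in the evaluation step, just as in the paper. The paper's route has the advantage that Proposition~\ref{prop:Legendre} is reused later (for inf-convolutions in \S\ref{sec:displacement}); your route is more self-contained for this single statement but would require repeating the bookkeeping if the transform is needed again.
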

	
	Another consequence of the classification-related obstructions to non-commutative transport is that we cannot expect too much regularity in general for the $E$-convex functions associated to an optimal coupling.  For instance, suppose two non-commutative laws $\mu$ and $\nu$ generate tracial von Neumann algebras that cannot embed into each other.  This implies that if $(X,Y)$ is an optimal coupling of these two laws on a tracial $\mathrm{W}^*$-algebra $\cA$, then neither of $\mathrm{W}^*(X)$ and $\mathrm{W}^*(Y)$ is contained in the other.  Thus, even though the non-commutative laws may be diffuse, the situation is similar to when coupling the classical measures $(1/2)(\delta_{-1} + \delta_1)$ and $(1/3)(\delta_{-1} + \delta_0 + \delta_1)$; in the optimal coupling, neither random variable can be expressed as a function of the other.  However, if a pair of $E$-convex functions associated to an optimal coupling were differentiable, that would imply that $X$ is in the von Neumann algebra generated by $Y$ and vice versa as a consequence of Lemma \ref{lem:Econvex}.
	
	It is natural to ask how close an arbitrary non-commutative optimal coupling is to a coupling where $X$ and $Y$ generate the same von Neumann algebra.  As a first application of duality, we show that every optimal coupling can be decomposed into an optimal coupling where the two variables generate the same $\mathrm{W}^*$-algebra and some additional orthogonal pieces.
	
	\begin{theorem} \label{thm:decompositionsummary}
		Suppose that $(\cA,X,Y)$ is an optimal coupling of $\mu, \nu \in \Sigma_m$.  Then there exists a $\mathrm{W}^*$-subalgebra $\cB$ such that the following hold.  Let $E_{\cB}: \cA \to \cB$ be the trace-preserving conditional expectation, and let $X' = E_{\cB}[X]$ and $Y' = E_{\cB}[Y]$.
		\begin{enumerate}[(1)]
			\item $X'$ and $Y'$ each generate $\cB$.
			\item $(\cB,X',Y')$ is an optimal coupling of $\lambda_{X'}$ and $\lambda_{Y'}$.
			\item $X' - Y'$, $X - X'$, $Y - Y'$ are mutually orthogonal.
		\end{enumerate}
		See Theorem \ref{thm:decomposition}.
	\end{theorem}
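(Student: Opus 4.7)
The plan is to invoke Proposition~\ref{prop:dualitysummary} to obtain a dual pair that certifies optimality of $(\cA,X,Y)$, and then extract $\cB$ as a minimal subalgebra enjoying a certain orthogonality property. First I would use Proposition~\ref{prop:dualitysummary} on the optimal coupling $(\cA,X,Y)$ to obtain $E$-convex tracial $\mathrm{W}^*$-functions $f,g$ with $f^{\cA}(X) + g^{\cA}(Y) = \ip{X,Y}_{L^2(\cA)_{\sa}^m} = C(\mu,\nu)$. Let $\mathcal{S}$ denote the family of $\mathrm{W}^*$-subalgebras $\cD \subset \cA$ satisfying $\ip{E_{\cD}[X], E_{\cD}[Y]} = \ip{X,Y}$, or equivalently $\ip{X - E_{\cD}[X], Y - E_{\cD}[Y]} = 0$ (the residuals being automatically orthogonal to $\cD$).

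The key observation is that every $\cD \in \mathcal{S}$ already furnishes an optimal subcoupling: writing $X^{\cD} = E_{\cD}[X]$ and $Y^{\cD} = E_{\cD}[Y]$, the $E$-convexity of $f,g$ gives $f^{\cD}(X^{\cD}) \le f^{\cA}(X)$ and $g^{\cD}(Y^{\cD}) \le g^{\cA}(Y)$, while feasibility of $(f,g)$ forces $f^{\cD}(X^{\cD}) + g^{\cD}(Y^{\cD}) \ge \ip{X^{\cD},Y^{\cD}}$. Since $\ip{X^{\cD},Y^{\cD}} = \ip{X,Y} = f^{\cA}(X) + g^{\cA}(Y)$, these inequalities all collapse to equalities, and then Proposition~\ref{prop:dualitysummary} applied to $(\lambda_{X^{\cD}}, \lambda_{Y^{\cD}})$ shows that $(\cD, X^{\cD}, Y^{\cD})$ is an optimal coupling of its own marginals.

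Next I would extract a minimal $\cB \in \mathcal{S}$ via Zorn's lemma. The family is nonempty ($\cA \in \mathcal{S}$), and for a decreasing chain $\{\cD_\alpha\}$ the candidate lower bound is $\cD := \bigcap_\alpha \cD_\alpha$: reverse martingale convergence along the downward-directed net gives $E_{\cD_\alpha}[X] \to E_{\cD}[X]$ and $E_{\cD_\alpha}[Y] \to E_{\cD}[Y]$ in $L^2$, so the inner product identity passes to the limit. Writing $X' = E_{\cB}[X]$ and $Y' = E_{\cB}[Y]$, property (2) is precisely the key observation above, and property (3) combines $\ip{X-X', Y-Y'}=0$ with the automatic orthogonality $X - X', Y - Y' \perp \cB$. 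For property (1), if $\cC := \mathrm{W}^*(X') \subsetneq \cB$, then the tower property yields $E_{\cC}[X] = E_{\cC}[X'] = X'$ and $E_{\cC}[Y] = E_{\cC}[Y']$, and because $X' \in \cC$ we have $\ip{X', E_{\cC}[Y']} = \ip{X', Y'} = \ip{X,Y}$, placing $\cC$ in $\mathcal{S}$ and contradicting the minimality of $\cB$; the symmetric argument for $\mathrm{W}^*(Y')$ completes (1).

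The main obstacle I anticipate is the verification that $\mathcal{S}$ is closed under intersections of decreasing chains, which rests on reverse martingale convergence of conditional expectations along an arbitrary downward-directed net of von Neumann subalgebras of $\cA$. The countable case is classical in the tracial setting, but additional care may be required in general, perhaps by replacing the chain with a cofinal sequence using separability of the predual of $\mathrm{W}^*(X,Y) \subset \cA$, or by passing to the strong operator topology on the conditional expectations directly.
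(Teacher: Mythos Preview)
Your proposal is correct and follows essentially the same approach as the paper: the paper defines the same family $\mathscr{B} = \mathcal{S}$, applies Zorn's lemma to extract a minimal element, deduces (1) from minimality exactly as you do, and uses the $E$-convex pair $(f,g)$ from duality in the same way to verify that $(\cB,X',Y')$ is optimal.

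The one point worth noting is your anticipated obstacle. The paper handles the chain condition without invoking separability or reducing to countable chains: for a decreasing chain $\mathscr{C}$ with intersection $\cC$, the numbers $\norm{E_{\cB}[X]}_{L^2}^2$ for $\cB \in \mathscr{C}$ are monotone decreasing along the chain, hence have an infimum $\delta$; for any $\epsilon>0$ choose $\cB_0 \in \mathscr{C}$ with $\norm{E_{\cB_0}[X]}^2 < \delta^2 + \epsilon^2$, and then for any $\cB \subseteq \cB_0$ in $\mathscr{C}$ the Pythagorean identity gives $\norm{E_{\cB}[X] - E_{\cB_0}[X]}^2 = \norm{E_{\cB_0}[X]}^2 - \norm{E_{\cB}[X]}^2 \le \epsilon^2$. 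This shows the net $(E_{\cB}[X])_{\cB \in \mathscr{C}}$ is Cauchy in $L^2$, and its limit is easily identified with $E_{\cC}[X]$. So the reverse martingale convergence you need is just this elementary Hilbert-space argument about a decreasing net of orthogonal projections, and no separability hypothesis is required.
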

	
	Our main results in \S \ref{sec:displacement} concern the displacement interpolation.  If $(\cA,X,Y)$ is an optimal coupling of $\mu$ and $\nu$, then the \emph{displacement interpolation} refers to the family of random variables $X_t = (1 - t)X + tY$ for $t \in [0,1]$.  The associated laws $\mu_t = \lambda_{X_t}$ form a metric geodesic in $\Sigma_m$ with respect to the Wasserstein distance (see Proposition \ref{prop:geodesic}).  With the help of non-commutative Legendre transforms and Hopf-Lax semigroups, we will see that the $E$-convex functions associated to the couplings $(\cA,X_s,X_t)$ for $s, t \in (0,1)$ have more regularity than the $E$-convex functions associated to the original coupling $(\cA,X,Y)$ (see Proposition \ref{prop:functioninterpolation}).  As a consequence, we obtain the following non-commutative transport result.
	
	\begin{theorem} \label{thm:displacementW*}
		Let $(\cA,X,Y)$ be an optimal coupling of $\mu, \nu \in \Sigma_m$.  Then $\mathrm{W}^*((1 - t)X + tY) = \mathrm{W}^*(X,Y)$ for all $t \in (0,1)$.  For proof, see \S \ref{subsec:W*displacement}.
	\end{theorem}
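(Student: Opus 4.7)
The plan is as follows. The inclusion $\mathrm{W}^*(X_t) \subseteq \mathrm{W}^*(X,Y)$, with $X_t := (1-t)X+tY$, is immediate since $X_t$ is an affine combination of $X$ and $Y$. The content of the theorem is the reverse inclusion, i.e., $X, Y \in \mathrm{W}^*(X_t)$. The strategy is to exploit the extra regularity of the dual $E$-convex functions associated to \emph{interior} couplings $(\cA, X_s, X_r)$ with $s, r \in (0,1)$, conclude that $\mathrm{W}^*(X_s)$ is independent of $s \in (0,1)$, and then pass to the limits $s \to 0^+$ and $s \to 1^-$.

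For the first part, fix $0 < s < r < 1$. The geodesic property (Proposition \ref{prop:geodesic}) makes $(\cA, X_s, X_r)$ itself an optimal coupling of $\lambda_{X_s}$ and $\lambda_{X_r}$. Applying Proposition \ref{prop:functioninterpolation}, I obtain a dual pair $(f,g)$ of $E$-convex functions attaining $C(\lambda_{X_s}, \lambda_{X_r})$. Because $s$ and $r$ lie strictly in the interior of $[0,1]$, the Hopf--Lax / Legendre-transform construction supplies both lower and upper quadratic bounds on $f$: convexity on one side, and semi-concavity with constant controlled by $s$ on the other. This two-sided control forces $f^{\cA}$ to be Fr\'echet differentiable at $X_s$ as a function on $L^2(\cA)_{\sa}^m$, and the subdifferential relation $X_r \in \partial f^{\cA}(X_s)$ coming from duality pins down $\nabla f^{\cA}(X_s) = X_r$. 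By Lemma \ref{lem:Econvex}, the gradient of an $E$-convex function at a point lies in the $\mathrm{W}^*$-algebra generated by that point, so $X_r \in \mathrm{W}^*(X_s)$. Swapping the roles of $s$ and $r$ and using $g$ gives $X_s \in \mathrm{W}^*(X_r)$. Hence $\mathrm{W}^*(X_s) = \mathrm{W}^*(X_r) =: \cB$ for every $s, r \in (0,1)$, and in particular this common algebra equals $\mathrm{W}^*(X_t)$.

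To pass to the endpoints, I observe that as $r \downarrow 0$, $\norm{X_r - X}_{L^2(\cA)_{\sa}^m} = r \norm{Y-X}_{L^2(\cA)_{\sa}^m} \to 0$ while $\norm{X_r}_{L^\infty(\cA)_{\sa}^m}$ remains bounded by $\max(\norm{X}_{L^\infty(\cA)_{\sa}^m}, \norm{Y}_{L^\infty(\cA)_{\sa}^m})$. Applying the trace-preserving conditional expectation $E_{\cB}$, which is an $L^2$-contraction fixing each $X_r \in \cB$, and passing to the limit gives $E_{\cB}[X] = X$, so $X \in \cB$. The analogous argument with $r \uparrow 1$ yields $Y \in \cB$. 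Therefore $\mathrm{W}^*(X,Y) \subseteq \cB = \mathrm{W}^*(X_t)$, completing the proof.

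The main obstacle I foresee is the Fr\'echet-differentiability step: showing that two-sided Hopf--Lax/Legendre estimates really upgrade the directional and subdifferential information furnished by convexity-plus-duality into genuine $L^2$-Fr\'echet differentiability of $f^{\cA}$ at $X_s$, so that Lemma \ref{lem:Econvex} applies with a single-valued gradient. Directional differentiability is essentially automatic from convexity plus semi-concavity, but promoting it to norm-differentiability in the Hilbert-space sense, uniformly in the ambient tracial $\mathrm{W}^*$-algebra $\cA$ (so as to remain compatible with the tracial $\mathrm{W}^*$-function framework of Definition \ref{def:Econvexsummary}), is the technical heart of the argument.
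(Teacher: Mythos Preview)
Your proof is correct, but it takes a detour that the paper avoids. The key observation you need—and which you essentially have—is that the $E$-convex function governing the passage from $X_s$ to $X_t$ is semi-concave whenever the \emph{base point} $s$ lies in $(0,1)$, regardless of whether $t$ is interior or an endpoint. The paper exploits this directly: starting from an $E$-convex $f$ with $Y\in\eth f^{\cA}(X)$, it sets $f_t=(1-t)q_1+tf$, notes $X_t\in\eth f_t^{\cA}(X)$, passes to $X\in\eth(\mathcal{L}f_t)^{\cA}(X_t)$ via Legendre duality (Lemma~\ref{lem:Legendresubgradient}), and then uses that $\mathcal{L}f_t$ is both $E$-convex and semi-concave (Proposition~\ref{prop:convexityconversion}, since $f_t-(1-t)q_1=tf$ is $E$-convex). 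Proposition~\ref{prop:convexsemiconcave} then forces $\eth(\mathcal{L}f_t)^{\cA}(X_t)$ to be a singleton lying in $L^2(\mathrm{W}^*(X_t))_{\sa}^m$, so $X\in\mathrm{W}^*(X_t)$ immediately, with no limit needed; a symmetric argument handles $Y$. In the language of Proposition~\ref{prop:functioninterpolation} this is simply $X=\nabla f_{0,t}^{\cA}(X_t)$ for $t\in(0,1)$.

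Your limiting step (using $X_r\to X$ in $L^2$ together with closedness of $L^2(\cB)$, or equivalently continuity of $E_{\cB}$) is valid, just redundant. As for the obstacle you flag: there is no genuine difficulty. Once $f^{\cA}$ is convex and $q_c-f^{\cA}$ is convex on the Hilbert space $L^2(\cA)_{\sa}^m$, the subdifferential at every point is automatically a singleton (if $Y,Y'$ were both subgradients at $X$, combining the lower bound from $Y'$ with the upper bound from $Y$ gives $\langle Z,Y-Y'\rangle\leq c\|Z\|^2$ for all $Z$, hence $Y=Y'$), and the resulting gradient is Lipschitz; this is precisely Proposition~\ref{prop:convexsemiconcave}. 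Since Lemma~\ref{lem:Econvex} guarantees \emph{some} subgradient in $L^2(\mathrm{W}^*(X))_{\sa}^m$, uniqueness forces the gradient to lie there. The machinery you already invoke resolves the issue completely.
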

	
	For instance, this theorem entails that for classical optimal couplings, the $\sigma$-algebra generated by $X_t$ is the same for all $t \in (0,1)$, which could be deduced directly from classical optimal transport theory by a similar proof.  The reader is encouraged to work out the classical example of $(1/2)(\delta_{-1} + \delta_1)$ and $(1/3)(\delta_{-1} + \delta_0 + \delta_1)$ as motivation.
	
	The results of \S \ref{sec:qinfo} highlight additional ways in which non-commutative optimal transport theory is significantly more complicated than its classical counterpart; specifically, the negative solution of the Connes embedding problem \cite{JNVWY2020} has a natural interpretation in terms of optimal couplings.  We observe that optimization over couplings involved in the definition of the Wasserstein distance can be replaced by optimization over what are called factorizable quantum channels in quantum information theory (see Observation \ref{obs:factorizablecoupling}). The results of \cite{HaMu2011,JNVWY2020,MuRo2020a} imply that there exist quantum channels between finite-dimensional matrix algebras which are factorizable whose factorization requires an infinite-dimensional non-Connes embeddable von Neumann algebra (see \S \ref{subsec:CEP}) for definitions).  We then show through Lemma \ref{lem:vectorduality} that channels with this property must occur as optimizers in the definition of Wasserstein distance.  From the optimal transportation point of view, this means that the optimal distance between certain tuples of finite-dimensional matrices cannot be even approximately realized inside a finite-dimensional coupling.
	
	\begin{proposition} \label{prop:counterexamples}
		Thanks to \cite{HaMu2011} and \cite{JNVWY2020}, for certain $n \in \N$, there exist non-commutative laws $\mu$ and $\nu$ associated to $n^2$-tuples in $M_n(\C)$ for which an optimal coupling requires a non-Connes embeddable tracial $\mathrm{W}^*$-algebra; see Corollary \ref{cor:nonConnes}.  Furthermore, thanks to \cite{MuRo2020a}, for every $n \geq 11$ and $d \in \N$, there exist $n^2$-tuples in $M_n(\C)$ such that if $(\cA,X,Y)$ is a coupling that is optimal among couplings on Connes-embeddable tracial $\mathrm{W}^*$-algebras, then $\cA$ must have dimension at least $d$; see Corollary \ref{cor:largedimcoupling} and Remark \ref{rem:largedimcoupling}.
	\end{proposition}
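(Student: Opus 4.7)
The plan is to reduce both assertions to known theorems about factorizable unital completely positive trace-preserving maps on matrix algebras, using the dictionary that the paper sets up in Observation \ref{obs:factorizablecoupling} and Lemma \ref{lem:vectorduality}. Fix $n$ and choose an $n^2$-tuple $X_0 \in M_n(\C)_{\sa}^{n^2}$ that is a basis for the self-adjoint part of $M_n(\C)$, so that $\lambda_{X_0}$ remembers the full $*$-algebra structure of $M_n(\C)$. A coupling $(\cA,X,Y)$ of $\mu = \lambda_{X_0}$ with $\nu = \lambda_{Y_0}$ for another such tuple $Y_0 \in M_n(\C)_{\sa}^{n^2}$ then encodes a pair of trace-preserving embeddings $M_n(\C) \hookrightarrow \cA$, and the pairing $\ip{X,Y}_{L^2(\cA)}$ records the Hilbert--Schmidt pairing of a factorizable channel $T : M_n(\C) \to M_n(\C)$ factoring through $\cA$. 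Optimality of the coupling then corresponds to maximality of this pairing among all factorizations, and forces the factorizing algebra to coincide (up to $\mathrm{W}^*$-closure) with $\mathrm{W}^*(X,Y)$.

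For the first assertion (Corollary \ref{cor:nonConnes}), I would invoke the theorem of Haagerup--Musat \cite{HaMu2011} that a factorizable channel on $M_n(\C)$ factoring through a Connes-embeddable tracial $\mathrm{W}^*$-algebra is necessarily in the closure (in the appropriate topology on channels) of those factoring through finite-dimensional algebras. Combined with the negative resolution of the Connes embedding problem by Ji--Natarajan--Vidick--Wright--Yuen \cite{JNVWY2020}, this produces, for some $n$, a factorizable channel on $M_n(\C)$ that admits no factorization through any Connes-embeddable algebra. Pulling this back through the dictionary yields non-commutative laws $\mu,\nu$ of $n^2$-tuples in $M_n(\C)$ whose every optimal coupling must live in a non-Connes-embeddable tracial $\mathrm{W}^*$-algebra.

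For the second assertion (Corollary \ref{cor:largedimcoupling}), I would appeal to the explicit construction of Musat--R\o rdam \cite{MuRo2020a}: for every $n \geq 11$ and every $d \in \N$ there is a factorizable channel on $M_n(\C)$ for which every factorization through a Connes-embeddable algebra already requires that algebra to have dimension at least $d$. Transferring along the dictionary produces $n^2$-tuples $X_0,Y_0 \in M_n(\C)_{\sa}^{n^2}$ for which any coupling that is optimal among couplings on Connes-embeddable tracial $\mathrm{W}^*$-algebras lives on an algebra of dimension at least $d$.

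The main work is in pinning down the first paragraph's dictionary precisely; once Observation \ref{obs:factorizablecoupling} and Lemma \ref{lem:vectorduality} are in hand, the two corollaries are a direct transcription of the cited external results. The one subtlety to watch is that \emph{optimality within the class of Connes-embeddable couplings} must match \emph{factorizability within the Connes-embeddable category}, so one should verify that the $E$-convex duality of Proposition \ref{prop:dualitysummary} restricts compatibly when the ambient $\mathrm{W}^*$-algebras are required to be Connes-embeddable; this is the only place where one needs to be careful that the Musat--R\o rdam dimension bound is not destroyed by enlarging to non-embeddable algebras.
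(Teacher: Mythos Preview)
Your overall strategy---reduce to statements about factorizable $\operatorname{UCPT}$ maps on $M_n(\C)$ via Observation~\ref{obs:factorizablecoupling}, then invoke the cited external results---is correct and is exactly what the paper does. However, your first paragraph misidentifies the mechanism, and this is a genuine gap.

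You propose to fix, once and for all, an $n^2$-tuple $X_0$ (and similarly $Y_0$) forming a basis of $M_n(\C)_{\sa}$, and then argue that couplings of $\lambda_{X_0}$ and $\lambda_{Y_0}$ correspond to factorizable channels. That correspondence is fine (it is Observation~\ref{obs:factorizablecoupling}), but it does not do what you need. For fixed $X_0,Y_0$, the map $\Phi \mapsto \ip{\Phi(X_0),Y_0}$ is a \emph{single} linear functional on the space of channels. The ``bad'' channel $\Phi$ produced by \cite{HaMu2015,JNVWY2020} or \cite{MuRo2020a} has no reason to be the maximizer of that particular functional over $\operatorname{FM}(M_n(\C),M_n(\C))$; some perfectly Connes-embeddable channel might beat it on your chosen $X_0,Y_0$. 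So you cannot conclude that the optimal coupling of $\lambda_{X_0},\lambda_{Y_0}$ is forced outside the Connes-embeddable (or low-dimensional) world.

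The role of Lemma~\ref{lem:vectorduality} is precisely to fix this: it is not part of a ``dictionary'' but is the hyperplane separation step. Given the bad channel $\Phi$ and the closed convex set $K$ (either $\overline{\operatorname{FM}_{\fin}}$ or $\operatorname{FM}_d$), one separates $\Phi$ from $K$ by a linear functional, and tensor--hom duality (Lemma~\ref{lem:vectorduality}) says every such functional has the form $\Psi \mapsto \ip{\Psi(X),Y}$ for some $X,Y \in M_n(\C)_{\sa}^{k}$ with $k \leq n^2$. The tuples $X,Y$ therefore depend on $\Phi$ and $K$; they are not fixed bases. With this choice, $\ip{\Phi(X),Y} > \sup_{\Psi \in K} \ip{\Psi(X),Y}$, so $C(\lambda_X,\lambda_Y) \geq \ip{\Phi(X),Y}$ strictly exceeds what any channel in $K$ can achieve, and Observation~\ref{obs:factorizablecoupling} finishes.

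Finally, your last paragraph's concern about restricting the $E$-convex duality of Proposition~\ref{prop:dualitysummary} to the Connes-embeddable category is a red herring: the paper's argument never touches Monge--Kantorovich duality here. Everything goes through Observation~\ref{obs:factorizablecoupling} and the separation Lemma~\ref{lem:vectorduality} alone.
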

	
	In contrast to classical probability theory, we show that the $L^2$-Wasserstein metric does not generate the weak-$*$ topology on $\Sigma_{m,R}$.  We call the topology on $\Sigma_{m,R}$ generated by the Wasserstein distance the \emph{Wasserstein topology}.  We characterize when the two topologies agree at some $\mu$ in terms of the associated tracial $\mathrm{W}^*$-algebra (Proposition \ref{prop:twotopologies}) and hence obtain the following results (relying on the work of Connes \cite{Connes1976}).
	
	\begin{proposition} \label{prop:twotopologiessummary}
		The Wasserstein topology on $\Sigma_{m,R}$ is strictly stronger than the weak-$*$ topology; see \cite{BV2001} and Corollary \ref{cor:twotopologies}.  Furthermore, let $\Sigma_{m,R}^{\fin}$ denote the set of non-commutative laws $\lambda_X$ where $X$ comes from $L^2(\cA)_{\sa}^m$ with $\cA$ finite-dimensional.  Let $\mu$ be a non-commutative law and let $\cA$ be a tracial $\mathrm{W}^*$-algebra with a generating $m$-tuple $X$ such that $\lambda_X = \mu$ and $\norm{X}_{L^\infty(\cA)_{\sa}^m} \leq R$.  Then $\mu$ is in the weak-$*$ closure of $\Sigma_{m,R}^{\fin}$ if and only if $\cA$ is Connes-embeddable; see Lemma \ref{lem:ultraproductlaws2}.  Moreover, in this case, the weak-$*$ and Wasserstein topologies on $\Sigma_{m,R}$ agree at $\mu$ if and only if $\mu$ is in the Wasserstein closure of $\Sigma_{m,R}^{\fin}$, which is equivalent to $\cA$ being approximately finite-dimensional; see Proposition \ref{prop:twotopologies2}.
	\end{proposition}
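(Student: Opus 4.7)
The plan is to establish the two chains of equivalences in the ``moreover'' clause and then deduce the strict refinement in the first assertion as a consequence. The Wasserstein topology always refines the weak-$*$ topology by the Lipschitz bound $|\tau_\cA(p(X))-\tau_\cA(p(Y))|\leq L_{p,R}\|X-Y\|_2$ on $\{\|X\|_\infty,\|Y\|_\infty\leq R\}$ valid for each non-commutative polynomial $p$. The equivalence $\mu\in\overline{\Sigma_{m,R}^{\fin}}^{w*}\Leftrightarrow\cA=\mathrm{W}^*(X)$ Connes-embeddable is the standard matricial microstate characterization (Lemma~\ref{lem:ultraproductlaws2}): weak-$*$ convergence of $\lambda_{Y_k}$ to $\mu$ with $Y_k\in\cD_k^m$ for $\cD_k$ finite-dimensional is pointwise convergence on non-commutative polynomials, which is precisely the existence of matricial microstates for $\mu$, equivalently Connes-embeddability of $\cA$.

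For $\cA$ AFD $\Rightarrow\mu\in\overline{\Sigma_{m,R}^{\fin}}^{W_2}$, fix an increasing tower $\cD_1\subset\cD_2\subset\cdots\subset\cA$ of finite-dimensional subalgebras with $\bigcup_k\cD_k$ weak-$*$-dense, and let $E_k\colon\cA\to\cD_k$ be the trace-preserving conditional expectations. Setting $X^{(k)}:=E_k[X]\in\cD_k^m$ gives $\|X^{(k)}\|_\infty\leq R$ (contractivity of $E_k$ in operator norm) and $X^{(k)}\to X$ in $L^2$, so the identity coupling on $\cA$ yields $W_2(\lambda_{X^{(k)}},\mu)\leq\|X-X^{(k)}\|_2\to 0$. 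The converse is the principal technical obstacle: given finite-dimensional $Y_n$ with $W_2(\lambda_{Y_n},\mu)<2^{-n}$, the plan is to realize $X$ as an $L^2$-limit of copies of the $Y_n$'s inside a single hyperfinite tracial $\mathrm{W}^*$-algebra $\cB$ via inductive amalgamation along near-optimal couplings of consecutive laws, arranged so that at each step the coupling lies inside a (possibly very large) finite-dimensional ambient---a nontrivial point, since by Proposition~\ref{prop:counterexamples} optimal couplings of matricial laws need not be finite-dimensionally realizable, so one must instead use $O(2^{-n})$-near-optimal couplings. The inductive limit $\cB$ is then hyperfinite, and $\cA=\mathrm{W}^*(X)\hookrightarrow\cB$ inherits hyperfiniteness as a $\mathrm{W}^*$-subalgebra of a hyperfinite algebra.

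For the topology-agreement equivalence (in the Connes-embeddable setting), one direction is immediate: if the topologies agree at $\mu$, any weak-$*$-approximating sequence from $\Sigma_{m,R}^{\fin}$ is automatically Wasserstein-approximating, yielding $\mu\in\overline{\Sigma_{m,R}^{\fin}}^{W_2}$. For the converse, assuming $\cA$ AFD and $\mu_n\to\mu$ weak-$*$, the plan is to use the finite-dimensional approximation $X^{(k)}$ and an ultraproduct argument to transfer it to the algebras $\cA_n^0$ realizing the $\mu_n$: in $\prod_\omega \cA_n^0$, the canonical isomorphism $\pi\colon\cA\to\mathrm{W}^*([X_n^0])$ sending $X$ to $[X_n^0]$ realizes $\pi(X^{(k)})$ as an ultraproduct element at $L^2$-distance $\|X-X^{(k)}\|_2<\varepsilon$ from $[X_n^0]$, and extracting index-wise representatives produces, for $n$ large, coupling data with $L^2$-error bounded by $\varepsilon$; combined with the coincidence of weak-$*$ and Wasserstein topologies on laws supported in a fixed finite-dimensional algebra (a finite-dimensional compactness and automorphism-conjugation argument) and triangulation through $\lambda_{X^{(k)}}$, this gives $W_2(\mu_n,\mu)\to 0$. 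Finally, the strict refinement of weak-$*$ by Wasserstein in the first assertion follows from the three equivalences: any bounded self-adjoint $m$-tuple generating $L(\F_2)$ (Connes-embeddable but not AFD) produces a law $\mu\in\overline{\Sigma_{m,R}^{\fin}}^{w*}\setminus\overline{\Sigma_{m,R}^{\fin}}^{W_2}$, so the topologies fail to coincide at that $\mu$.
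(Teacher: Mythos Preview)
Your proposal has a genuine gap in the direction ``$\mu$ in the Wasserstein closure of $\Sigma_{m,R}^{\fin}$ $\Rightarrow$ $\cA$ is AFD.'' You correctly flag that optimal couplings of finite-dimensional laws need not live in finite-dimensional algebras, and propose to use $O(2^{-n})$-near-optimal couplings instead; but you give no reason why near-optimal couplings of $\lambda_{Y_n}$ and $\lambda_{Y_{n+1}}$ should exist in any finite-dimensional algebra. Indeed, Remark~\ref{rem:largedimcoupling} shows that for certain pairs of matricial laws, couplings in algebras of dimension $\leq d$ remain bounded away from the optimum for every $d$. The only couplings you actually know achieve distance $O(2^{-n})$ between consecutive $\lambda_{Y_n}$ are obtained by amalgamating the given couplings with $\mu$ over $\cA$ itself, which defeats the purpose. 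So your inductive hyperfinite container cannot be assembled as described.

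The paper's argument for this implication is quite different and sidesteps the issue: rather than building a hyperfinite ambient algebra, it takes optimal couplings $(\cA_n,X,Y_n)$ of $\mu$ with $\mu_n$ in possibly large algebras $\cA_n\supseteq\cA$, lets $\Phi_n\colon\cA\to\mathrm{W}^*(Y_n)$ be the associated conditional expectation onto the finite-dimensional subalgebra, and shows (via an amalgamated-free-product ultraproduct computation) that $\Phi_n^*\Phi_n\to\id_\cA$ pointwise in $L^2$. This is semi-discreteness of $\cA$, and Connes' theorem converts it to AFD. Your route for ``AFD $\Rightarrow$ topologies agree'' is also different from the paper's, which factors through the intermediate notion of $\operatorname{FM}$-stability (Proposition~\ref{prop:twotopologies}) and Jung-type uniqueness of embeddings of AFD algebras into ultraproducts of $\mathrm{II}_1$ factors (Lemma~\ref{lem:folklore}). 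Your triangulation-through-$\lambda_{X^{(k)}}$ idea might be salvageable, but the appeal to ``coincidence of the topologies on laws in a fixed finite-dimensional algebra'' is under-justified: the extracted representatives live in the varying $\cA_n^0$, so what you really need is agreement of the topologies \emph{at} the law $\lambda_{X^{(k)}}$, i.e., $\operatorname{FM}$-stability of $\cD_k$ --- true, but itself a nontrivial lemma you have not supplied.
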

	
	Approximate finite-dimensionality (see \S \ref{subsec:twotopologies} for definition) is the strongest way that a $\mathrm{W}^*$-algebra can be approximated by finite-dimensional algebras (besides being finite-dimensional itself), and thus the latter condition is quite restrictive when $m > 1$.  For instance, there is up to isomorphism only one AFD $\mathrm{II}_1$ factor \cite[\S XIV.2]{TakesakiIII}.  In \S \ref{subsec:randommatrix}, we explain how Propositions \ref{prop:counterexamples} and \ref{prop:twotopologiessummary} pose challenges to studying the large-$N$ convergence of Wasserstein distance for random matrix models.
	
	The results of Propositions \ref{prop:counterexamples} and \ref{prop:twotopologiessummary} constrast strongly with the classical situation.  Some treatments of optimal transport (e.g.\ \cite[p.\ 75]{Villani2008}) take for granted the fact that finitely supported probability measures are weak-$*$ dense in the space of probability measures on a compact set.  Such approximation arguments do not work in the non-commutative case for several reasons.  Due to the negative resolution of the Connes embedding problem \cite{JNVWY2020}, the non-commutative laws that can be realized in finite-dimensional algebras are not weak-$*$ dense.  Furthermore, by Proposition \ref{prop:twotopologiessummary}, the weak-$*$ closure of $\Sigma_{m,R}^{\fin}$ is much larger than its Wasserstein closure (assuming $m > 1$).  Finally, by Proposition \ref{prop:counterexamples}, even if two laws $\mu$ and $\nu$ can be realized in finite-dimensional algebras, an optimal coupling need not be weak-$*$ approximable by couplings in finite-dimensional algebras.
	
	Because the weak-$*$ and Wasserstein topologies are different for $m > 1$, one can deduce that $\Sigma_{m,R}$ with the Wasserstein distance is not compact (Corollary \ref{cor:noncompact}).  The following even more startling result is a consequence of Gromov, Olshanskii, and Ozawa's work \cite[Theorem 1]{Ozawa2004}.
	
	\begin{theorem} \label{thm:notseparable}
		For $m > 1$ and $R > 0$, the space $\Sigma_{m,R}$ is not separable with respect to $d_W^{(2)}$.
	\end{theorem}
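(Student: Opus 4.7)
The plan is to argue by contradiction, using Ozawa's theorem \cite{Ozawa2004} that no single separable $\mathrm{II}_1$ factor contains every separable $\mathrm{II}_1$ factor as a subalgebra. Suppose $\Sigma_{m,R}$ has a countable $d_W^{(2)}$-dense subset $\{\mu_n\}_{n \in \N}$, and for each $n$ fix $X^{(n)} \in L^\infty(\cA_n)_{\sa}^m$ with $\lambda_{X^{(n)}} = \mu_n$ and $\cA_n = \mathrm{W}^*(X^{(n)})$. I will construct a single separable tracial $\mathrm{W}^*$-algebra $\cM$ such that $\mathrm{W}^*(\mu)$ embeds trace-preservingly into $\cM$ for every $\mu \in \Sigma_{m,R}$. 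Since $m \geq 2$, every separable $\mathrm{II}_1$ factor $\cN$ arises as $\mathrm{W}^*(Y)$ for some $Y \in L^\infty(\cN)_{\sa}^m$ with $\norm{Y}_\infty \leq R$ (take two self-adjoint generators, rescale, pad with zeros), so this contradicts Ozawa.

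To build $\cM$, proceed by iterated amalgamated free product along the countable tree $\N^{<\omega}$ of finite index strings. Set $\cD_\emptyset = \C$, and $\cD_{(n_1)} = \cA_{n_1}$ with terminal subalgebra $\cA_{n_1}$. Inductively, given $\cD_\sigma$ for $\sigma = (n_1, \ldots, n_{k-1})$ whose terminal subalgebra is a marked copy of $\cA_{n_{k-1}}$, extend to $\sigma' = (n_1, \ldots, n_k)$ by choosing an optimal coupling $(\cC_{\sigma'}, U_{\sigma'}, V_{\sigma'})$ of $\mu_{n_{k-1}}$ and $\mu_{n_k}$ (existence by \cite{BV2001}) and setting $\cD_{\sigma'} = \cD_\sigma *_{\cA_{n_{k-1}}} \cC_{\sigma'}$, identifying the terminal $\cA_{n_{k-1}} \subset \cD_\sigma$ with $\mathrm{W}^*(U_{\sigma'}) \subset \cC_{\sigma'}$; the new terminal subalgebra is $\mathrm{W}^*(V_{\sigma'}) \cong \cA_{n_k}$. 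Define $\cM$ as the resulting filtered colimit in the category of tracial $\mathrm{W}^*$-algebras, i.e., the iterated reduced amalgamated free product over the tree, with different $\cD_\sigma$'s amalgamated along their common prefixes. Since $\N^{<\omega}$ is countable and each $\cD_\sigma$ is separable, $\cM$ has separable predual.

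For $\mu \in \Sigma_{m,R}$, by Wasserstein density extract indices $(n_k)$ with $d_W^{(2)}(\mu_{n_k}, \mu_{n_{k+1}}) < 2^{-k}$ and $\mu_{n_k} \to \mu$. The infinite branch $(n_1, n_2, \ldots)$ determines a nested sequence $\cD_{(n_1, \ldots, n_K)} \subset \cD_{(n_1, \ldots, n_{K+1})} \subset \cM$, whose terminal realizations $Y_k \in L^\infty(\cM)_{\sa}^m$ of $\mu_{n_k}$ satisfy $\norm{Y_k - Y_{k+1}}_{L^2(\cM)_{\sa}^m} = d_W^{(2)}(\mu_{n_k}, \mu_{n_{k+1}}) < 2^{-k}$ by the optimal coupling at the extension step. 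Hence $(Y_k)$ is Cauchy in $L^2(\cM)_{\sa}^m$ and $L^\infty$-bounded by $R$, so it converges in $L^2$ to some $Y \in L^\infty(\cM)_{\sa}^m$ with $\norm{Y}_\infty \leq R$; since $L^2$-convergence of $L^\infty$-bounded sequences implies weak-$*$ convergence of laws, $\lambda_Y = \mu$, giving $\mathrm{W}^*(\mu) \cong \mathrm{W}^*(Y) \subseteq \cM$.

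The main obstacle is verifying that the iterated amalgamated free product $\cM$ over the tree $\N^{<\omega}$ is a well-defined separable tracial $\mathrm{W}^*$-algebra. Distinct branches produce distinct copies of each $\cA_n$ in $\cM$ (one per prefix reaching $n$), so $\cM$ contains countably many copies of each $\cA_n$, amalgamated consistently along shared prefixes. Handling this requires iterating the standard construction of reduced amalgamated free products of tracial $\mathrm{W}^*$-algebras, and confirming that faithfulness of the trace and separability of the predual propagate through countable colimits in this category; this is where the technical work of the argument lies. Once $\cM$ is established, the gluing via Cauchy sequences is immediate and the contradiction with Ozawa's theorem is clean.
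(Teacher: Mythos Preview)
Your approach is correct in outline and genuinely different from the paper's. The paper never builds a universal algebra; instead it directly exhibits an uncountable $\epsilon$-separated family in $\Sigma_{2m,1}$ by taking a property~(T) group $\Gamma$ with uncountably many pairwise non-isomorphic simple quotients $\Gamma_\alpha$ (Gromov--Olshanskii), and proving via a property~(T) estimate that the laws of the canonical generating tuples of $L(\Gamma_\alpha)$ are uniformly separated in $d_W^{(2)}$. It then passes from $2m$ to $2$ variables by an explicit bi-Lipschitz matrix-amplification embedding $\Sigma_{m,1}\hookrightarrow\Sigma_{2,4m+1}$. Your route---assume separability, build a separable $\cM$ realising every law via a tree of optimal couplings, then contradict Ozawa---is more conceptual and packages the property~(T) input entirely inside Ozawa's theorem. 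The tree amalgamated free product is indeed well-defined (enumerate $\N^{<\omega}$ so that parents precede children and adjoin one coupling at a time); separability and the Cauchy argument along a branch are fine.

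There is, however, one genuine gap you should address. You assert that every separable $\mathrm{II}_1$ factor is generated by two self-adjoint elements. The single-generator problem for $\mathrm{II}_1$ factors is famously open, and I do not know a reference establishing the (formally weaker) two-self-adjoint case in general; if you have one, cite it. What your argument actually \emph{needs} is far less: that Ozawa's specific family $\{L(\Gamma_\alpha)\}$ consists of $m$-self-adjoint-generated algebras and cannot all embed in a common separable tracial $\mathrm{W}^*$-algebra. Since $\Gamma$ is finitely generated (property~(T) forces this), each $L(\Gamma_\alpha)$ is generated by $k$ unitaries and hence $2k$ self-adjoints of norm $\le 1$, so your argument as written proves non-separability of $\Sigma_{m,R}$ for all $m\ge 2k$. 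To reach $m=2$ you then need exactly the kind of reduction the paper supplies (its matrix trick is a bi-Lipschitz embedding $\Sigma_{m,1}\hookrightarrow\Sigma_{2,4m+1}$), or a reference for the two-generator fact. One smaller point: Ozawa's theorem is stated for $\mathrm{II}_1$ factors, while your $\cM$ is merely a separable tracial $\mathrm{W}^*$-algebra; you should remark that $\cM$ embeds into a separable $\mathrm{II}_1$ factor (e.g.\ $\cM * L(\mathbb{F}_2)$) before invoking Ozawa.
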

	
	\subsection{Organization}
	
	The paper is organized as follows:
	\begin{itemize}
		\item In \S \ref{subsec:classicalcoupling} and \S \ref{subsec:matrixcoupling}, we motivate the definition of $E$-convex functions and the associated duality result in terms of two toy examples, classical probability spaces and $M_n(\C)$.
		\item In \S \ref{sec:background}, we recall standard background on tracial $\mathrm{W}^*$-algebras and their interpretation as non-commutative probability spaces for the sake of readers who are not specialists in that topic.
		\item In \S \ref{sec:duality}, we describe the properties of $E$-convex functions and the associated Legendre transform; we prove the non-commutative Monge-Kantorovich duality (Proposition \ref{prop:dualitysummary}) and the decomposition theorem for optimal couplings (Theorem \ref{thm:decompositionsummary}).
		\item In \S \ref{sec:displacement}, we study the non-commutative analog of inf-convolution and the regularity properties of $E$-convex and semi-concave functions; we prove Theorem \ref{thm:displacementW*} and give further detail about the functions associated to the displacement interpolation in Proposition \ref{prop:functioninterpolation}.
		\item In \S \ref{sec:qinfo}, we connect non-commutative optimal couplings with quantum information theory and prove Proposition \ref{prop:counterexamples}.  Then we study the differences between the weak-$*$ and the Wasserstein topology using a certain stability property (Proposition \ref{prop:twotopologies}) and hence prove Proposition \ref{prop:twotopologiessummary}.  Finally, we show non-separability of the Wasserstein space in \S \ref{subsec:nonseparability}.
		\item In \S \ref{subsec:randommatrix}, we explain how \S \ref{sec:qinfo} illustrates the difficulty of studying random matrix optimal transport in the large-$N$ limit.  Then \S \ref{subsec:bimodule} sketches a different but analogous theory of non-commutative optimal couplings that uses bimodules and $\operatorname{UCPT}$-maps of tracial $\mathrm{W}^*$-algebras.
		\item In the appendix \S \ref{sec:Lp}, we define non-commutative laws and optimal couplings for elements of non-commutative $L^p$ spaces, and show the existence of $L^p$ optimal couplings and Wasserstein geodesics.
	\end{itemize}

	\subsection{Motivation from classical probability} \label{subsec:classicalcoupling}
	
	First, we recall the classical Monge-Kantorovich duality.  Fix a standard Borel probability space $(\Omega,P)$ with no atoms. For $\mu$ and $\nu$ compactly supported probability measures on $\R^m$, a \emph{coupling} of $\mu$ and $\nu$ is a pair $(X,Y)$ of random variables on $\Omega$ with $X \sim \mu$ and $Y \sim \nu$.  The classical Wasserstein distance is the infimum of $\norm{X - Y}_{L^2(\Omega,P;\R^m)}$ over all such couplings, and a coupling is said to be \emph{optimal} if it achieves this infimum.
	
	\begin{theorem}[{See \cite[Theorem 5.10, Particular Case 5.17]{Villani2008}}]
		Let $(X,Y)$ be a coupling of two compactly supported measures $\mu$ and $\nu$ on $\R^m$.  Then $(X,Y)$ is optimal if and only if there exists a pair of convex functions $f, g: \R^m \to \R$ satisfying $f(x) + g(y) \geq \ip{x,y}$ for $x, y \in \R^m$ and $\mathbb{E}[f(X)] + \mathbb{E}[g(Y)] = \mathbb{E} \ip{X,Y}$.  Furthermore, $\mathbb{E}[f(X)] + \mathbb{E}[g(Y)] = \mathbb{E} \ip{X,Y}$ implies that $Y$ is almost surely in the subdifferential of $f$ at $X$ and $X$ is almost surely in the subdifferential of $g$ at $Y$.
	\end{theorem}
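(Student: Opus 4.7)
The plan is to follow the standard route: weak duality gives one implication, Kantorovich linear programming duality combined with a Legendre transform reduction gives the other, and the subdifferential conclusion is read off from the equality case of the pointwise inequality. For the easy direction, fix any admissible pair $(f,g)$ and any coupling $(X',Y')$ of $\mu$ and $\nu$. Since the marginals are fixed, $\E[f(X')] = \int f\,d\mu = \E[f(X)]$ and similarly for $g$, so
\[
\E\ip{X',Y'} \le \E[f(X')] + \E[g(Y')] = \E[f(X)] + \E[g(Y)].
\]
If the given pair achieves equality along $(X,Y)$, the right side equals $\E\ip{X,Y}$, showing $(X,Y)$ maximizes the correlation over all couplings; by polarization this is the same as minimizing $\E\norm{X' - Y'}^2$, so $(X,Y)$ is optimal.

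For the converse, the key input is Kantorovich duality: the supremum of $\E\ip{X',Y'}$ over couplings equals the infimum of $\mu(f) + \nu(g)$ over bounded continuous pairs with $f(x)+g(y) \ge \ip{x,y}$ on $\supp \mu \times \supp \nu$. I would derive this by a minimax/Hahn--Banach argument on $C(K) \oplus C(K)$, where $K$ is a common compact set containing both supports; this analytic step is the main obstacle in a self-contained treatment. Once duality is in hand, any feasible pair $(f,g)$ can be replaced by the double-Legendre pair $(f^{**}, f^*)$: the pointwise bound $f^{**}(x) + f^*(y) \ge \ip{x,y}$ is the Fenchel--Young inequality, and since $f^* \le g$ and $f^{**} \le f$ pointwise, the replacement only lowers the dual objective while producing a convex, lower-semicontinuous admissible pair. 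An Arzel\`a--Ascoli argument, exploiting the local Lipschitz property of convex functions on the bounded set $K$ after subtracting a suitable additive normalization, then extracts a convex minimizer $(f,g)$ for which $\mu(f) + \nu(g) = C(\mu,\nu) = \E\ip{X,Y}$.

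For the subdifferential assertion, the pointwise inequality $f + g \ge \ip{\cdot,\cdot}$ together with equality of expectations forces $f(X) + g(Y) = \ip{X,Y}$ almost surely. At any such sample point $(x_0,y_0)$, the feasibility condition rewritten as
\[
f(x) \ge \ip{x,y_0} - g(y_0) = f(x_0) + \ip{x - x_0, y_0} \quad \text{for all } x \in \R^m
\]
is exactly the statement that $y_0 \in \partial f(x_0)$, and the symmetric manipulation with the roles of $f$ and $g$ exchanged yields $x_0 \in \partial g(y_0)$.
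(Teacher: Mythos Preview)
The paper does not give its own proof of this theorem; it is quoted as a classical result with a reference to Villani \cite[Theorem 5.10, Particular Case 5.17]{Villani2008}, and used only as motivation for the non-commutative version developed later. Your outline is the standard route to this classical statement---weak duality for one direction, Kantorovich duality plus a double Legendre transform and a compactness extraction for the other, and the pointwise equality argument for the subdifferential claim---and is correct as a sketch, consistent with the cited source.
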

	
	As explained above, $E$-convex functions will be an analog of functions on $L^2(\Omega,P;\R^m)$ rather than $\R^m$.  Every convex function on $\R^m$ defines a convex function on $L^2(\Omega,P;\R^m)$ as follows.
	
	\begin{lemma} \label{lem:scalarconvex}
		Let $f: \R^m \to (-\infty,\infty]$ be convex and lower semi-continuous.  Let $(\Omega,P)$ be a non-atomic standard Borel probability space with underlying $\sigma$-algebra $\mathcal{F}$.  Define
		\[
		\tilde{f}: L^2(\Omega,P;\R^m) \to \R, X \mapsto \mathbb{E}[f(X)],
		\]
		which is well-defined in $(-\infty,\infty]$ thanks to Jensen's inequality.  Then
		\begin{enumerate}[(1)]
			\item $\tilde{f}(X)$ only depends on the law (probability distribution) of $X$.
			\item $\tilde{f}$ is convex and lower semi-continuous.
			\item Suppose that $\tilde{f}(X) < \infty$.  Then $Y$ is in the subdifferential of $\tilde{f}$ at $X$ if and only if $Y$ is in the subdifferential of $f$ at $X$ almost surely.
			\item $\tilde{f}$ is monotone under conditional expectations: If $\mathcal{G}$ is a sub-$\sigma$-algebra of $\mathcal{F}$, then
			\[
			\tilde{f}(E[X|\mathcal{G}]) \leq \tilde{f}(X).
			\]
		\end{enumerate}
	\end{lemma}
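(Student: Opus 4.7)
My plan is to dispatch items (1), (2), and (4) with standard tools and focus the work on (3). For (1), the change-of-variables identity $\tilde{f}(X) = \int_{\R^m} f \, d\mu_X$ shows $\tilde{f}(X)$ depends only on the law $\mu_X$. For (2), convexity is pointwise convexity of $f$ pushed through the integral; for lower semi-continuity, assuming $f$ is proper, I fix a continuous affine minorant $f(x) \ge \langle a, x \rangle + b$ (which exists because a proper convex lsc function on $\R^m$ is the pointwise supremum of its affine minorants), and given $X_n \to X$ in $L^2$ with $\liminf_n \tilde{f}(X_n) < \infty$, I pass to an almost surely convergent subsequence and apply Fatou's lemma to the nonnegative integrand $f(X_n) - \langle a, X_n \rangle - b$ together with the pointwise lsc of $f$. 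For (4), the conditional form of Jensen's inequality gives $f(\E[X \mid \mathcal{G}]) \le \E[f(X) \mid \mathcal{G}]$ pointwise, and taking expectations yields the claim.

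Item (3) is the substantive step. The forward direction integrates the pointwise inequality $f(z) \ge f(X(\omega)) + \langle Y(\omega), z - X(\omega) \rangle$ at $z = Z(\omega)$, yielding $\tilde{f}(Z) \ge \tilde{f}(X) + \langle Y, Z - X\rangle_{L^2}$ for every $Z \in L^2$. For the converse, I will route through the Fenchel--Legendre transform. Set $h(\omega) = f(X(\omega)) + f^*(Y(\omega)) - \langle X(\omega), Y(\omega) \rangle$, which is pointwise nonnegative by Fenchel's inequality and vanishes precisely where $Y(\omega) \in \partial f(X(\omega))$. The hypothesis $Y \in \partial \tilde{f}(X)$ rearranges as
\[
\E[\langle X, Y \rangle - f(X)] \ge \E[\langle Y, Z\rangle - f(Z)] \text{ for all } Z \in L^2,
\]
so it suffices to prove the duality identity $\sup_{Z \in L^2} \E[\langle Y, Z\rangle - f(Z)] = \E[f^*(Y)]$; given that, the displayed inequality becomes $\E[h] \le 0$, forcing $h = 0$ almost surely and hence $Y(\omega) \in \partial f(X(\omega))$ almost surely.

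The nontrivial half of the duality identity requires constructing, for each $n$, a test function $Z_n \in L^2$ with $\langle Y(\omega), Z_n(\omega) \rangle - f(Z_n(\omega)) \ge f^*(Y(\omega)) - 1/n$ almost surely, and this is the main obstacle. I will handle it by truncating to $\Omega_n = \{|X| \le n,\, |Y| \le n\}$, applying the Kuratowski--Ryll-Nardzewski measurable selection theorem to the closed-valued (by upper semi-continuity in $z$) multifunction $\omega \mapsto \{z \in \R^m : |z| \le M_n,\; \langle Y(\omega), z\rangle - f(z) \ge f^*(Y(\omega)) - 1/n\}$ for a suitably chosen radius $M_n$ making the set nonempty on $\Omega_n$, setting $Z_n = X$ off $\Omega_n$, and then letting $n \to \infty$ by monotone convergence. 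The rest is routine bookkeeping around affine minorants, Fatou, and Jensen.
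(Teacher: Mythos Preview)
Your arguments for (1), (2), (4) and the forward direction of (3) are correct and essentially match the paper's. For the converse of (3) your route via Fenchel duality and measurable selection is genuinely different from the paper's argument, and while the overall strategy is sound, the sketch has a real gap at the selection step.

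The problem is the assertion that a single radius $M_n$ can be chosen so that the multifunction
\[
\omega \longmapsto \bigl\{\,z \in \R^m : |z|\le M_n,\ \langle Y(\omega),z\rangle - f(z) \ge f^*(Y(\omega)) - 1/n \,\bigr\}
\]
is nonempty on all of $\Omega_n = \{|X|\le n,\ |Y|\le n\}$. Nothing in the hypothesis $Y\in\partial\tilde f(X)$ rules out $f^*(Y(\omega))=+\infty$ \emph{a priori}; at such $\omega$ the set is empty for every finite $M_n$, so Kuratowski--Ryll-Nardzewski does not apply. Even on $\{f^*(Y)<\infty\}$, a uniform bound on the near-maximizer over $\{|y|\le n\}$ is not automatic: the point $z$ witnessing $\langle y,z\rangle - f(z)\ge f^*(y)-1/n$ need not be bounded uniformly in $y$ (for instance, with $f(x)=e^{-x}$ on $\R$ the maximizer at $y=-\epsilon$ is $\ln(1/\epsilon)\to\infty$ as $\epsilon\to 0^+$). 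Both issues are repairable---one can first argue separately, again via test functions, that $f^*(Y)<\infty$ a.s., then add $\{f^*(Y)\le n\}$ to $\Omega_n$, and drop the constraint $|z|\le M_n$ from the multifunction, truncating \emph{after} selection---but these are substantive extra steps, not routine bookkeeping.

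The paper's argument for this direction is more elementary and sidesteps both Fenchel duality and measurable selection. It fixes a countable dense subset $\Xi$ of $\operatorname{dom} f$ and, for each $\xi\in\Xi$ and $n\in\N$, tests the subdifferential hypothesis for $\tilde f$ at $X$ against the simple perturbation $Z = X\,\mathbf{1}_{E_{n,\xi}^c} + \xi\,\mathbf{1}_{E_{n,\xi}}$, where $E_{n,\xi}=\{f(\xi)\le f(X)+\langle \xi-X,Y\rangle - 1/n\}$; comparing the two resulting inequalities forces $P(E_{n,\xi})=0$. Countability of $\Xi$ then gives $f(\xi)\ge f(X)+\langle \xi-X,Y\rangle$ for all $\xi\in\Xi$ on a single full-measure event, and density extends this to all of $\operatorname{dom} f$. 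Your approach has the conceptual merit of identifying $(\tilde f)^*$ with $\widetilde{f^*}$, but the paper's test-against-constants argument is shorter and needs no selection theorem.
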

	
	\begin{proof}[Sketch of proof]
		(1) This is immediate.
		
		(2) Convexity of $\tilde{f}$ is immediate from convexity of $f$.  To show lower semi-continuity of $\tilde{f}$, not that $f(x) + |x|^2/2$ is bounded from below by some constant $C$ and thus $g(x) := f(x) + |x|^2 / 2 - C$ is a nonnegative convex function.  If $X_n \to X$ in $L^2(\Omega,P;\R^m)$, then $X_n \to X$ in probability, and hence $\liminf_{n \to \infty} g(X_n) \geq g(X)$ in probability.  Thus, by Fatou's lemma for convergence in probability $\liminf_{n \to \infty} \tilde{g}(X_n) \geq \tilde{g}(X)$, which implies that $\tilde{f}$ is also lower semicontinuous.
		
		(3) If $Y$ is in the subdifferential of $f$ at $X$ almost surely and $Z \in L^2(\Omega,P)$, then $f(Z) \geq f(X) + \ip{Z-X,Y}_{\R^m}$ almost surely, and thus by taking expectations $\tilde{f}(Z) \geq \tilde{f}(X) + \ip{Z-X,Y}_{L^2(\Omega,P;\R^m)}$.  For the converse, let $S = \{x \in \R^m: f(x) < \infty\}$ and fix a countable dense subset $\Xi$ of $S$.  For each $n > 0$ and $\xi \in \Xi$, let $E_{n,\xi}$ be the event
		\[
		E_{n,\xi} = \{f(\xi) \leq f(X) + \ip{\xi - X,Y}_{\R^m} - 1/n\}.
		\]
		Because $Y$ is in the subdifferential of $\tilde{f}$ at $X$, we have
		\[
		\tilde{f}(1_{E_{n,\xi}^c}X + 1_{E_{n,\xi}} \xi) \geq \tilde{f}(X) + \ip{1_{E_{n,\xi}}(\xi - X),Y}_{L^2(\Omega,P;\R^m)}.
		\]
		On the other hand, by definition of $E_{n,\xi}$, we have
		\[
		\tilde{f}(1_{E_{n,\xi}^c}X + 1_{E_{n,\xi}} \xi) \leq \tilde{f}(X) + \ip{1_{E_{n,\xi}}(\xi - X),Y}_{L^2(\Omega,P;\R^m)} + \frac{1}{n} P(E_{n,\xi}).
		\]
		Therefore, $P(E_{n,\xi}) = 0$.  Since this holds for all $n \in \N$, we have $f(\xi) \geq f(X) + \ip{\xi - X,Y}_{\R^m}$ almost surely for each $\xi$.  Since $\Xi$ is countable, we have this condition every $\xi \in \Xi$ at once almost surely.  On this event, if $x \in \R^m$ with $f(x) < \infty$, then $f$ is continuous at $x$, and therefore by taking sequence of $\xi \in \Xi$ that converges to $x$ we obtain $f(x) \geq f(X) + \ip{x-X,Y}_{\R^m}$.
		
		(4) This follows from Jensen's inequality and the existence of regular conditional distributions for standard Borel probability spaces.
	\end{proof}
	
	\begin{remark}
		Similar reasoning shows that if $g$ is the Legendre transform of $f$ on $\R^m$, then $\tilde{g}$ is the Legendre transform of $\tilde{f}$ on $L^2(\Omega,P;\R^m)$.
	\end{remark}
	
	Let us call a function $F: L^2(\Omega,P;\R^m) \to (-\infty,\infty]$ \emph{classically $E$-convex} if
	\begin{enumerate}[(1)]
		\item $F(X)$ depends only on the law of $X$.
		\item $F$ is convex and lower semi-continuous.
		\item We have $F(E[X|\mathcal{G}]) \leq F(X)$ for every sub-$\sigma$-algebra $\mathcal{G}$ and every $X \in L^2(\Omega,P;\R^m)$.
	\end{enumerate}
	Then we have the following version of Monge-Kantorovich duality using classically $E$-convex functions on $L^2(\Omega,P;\R^m)$.
	
	\begin{corollary} \label{cor:classicalMK}
		Let $(X,Y)$ be a coupling on $(\Omega,P)$ of two compactly supported measures $\mu$ and $\nu$ on $\R^m$.  Then $(X,Y)$ is optimal if and only if there exists a pair of classically $E$-convex functions $F$, $G: L^2(\Omega,P;\R^m) \to (-\infty,\infty]$ such that
		\[
		F(X') + G(Y') \geq \ip{X',Y'}_{L^2(\Omega,P;\R^m)} \text{ for all } X', Y' \in L^2(\Omega,P;\R^m),
		\]
		and
		\[
		F(X) + G(Y) = \ip{X,Y}_{L^2(\Omega,P;\R^m)}.
		\]
	\end{corollary}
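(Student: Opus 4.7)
The plan is to read off both directions directly from Lemma \ref{lem:scalarconvex} together with the classical Monge-Kantorovich duality theorem quoted above; no additional analytic input is required, since the lifting lemma already packages every regularity property of $\tilde f$ and $\tilde g$ that one could want.

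For the easy direction, I would suppose such a pair $(F,G)$ exists. The first axiom of classical $E$-convexity guarantees that $F$ and $G$ depend only on the law of the argument, so for every coupling $(X',Y')$ of $(\mu,\nu)$ on $(\Omega,P)$ we have $F(X') = F(X)$ and $G(Y') = G(Y)$. Combining this with the pointwise inequality $F(X') + G(Y') \geq \ip{X',Y'}_{L^2(\Omega,P;\R^m)}$ forces $\ip{X',Y'}_{L^2} \leq \ip{X,Y}_{L^2}$, so $(X,Y)$ attains the supremum $C(\mu,\nu)$ among $(\Omega,P)$-couplings. Since $(\Omega,P)$ is non-atomic standard Borel, every coupling of $\mu$ and $\nu$ can be realized on it, hence $(X,Y)$ is globally optimal.

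For the converse, given an optimal coupling $(X,Y)$ I would invoke the classical Monge-Kantorovich theorem cited above to produce convex lower semi-continuous functions $f, g : \R^m \to (-\infty,\infty]$ satisfying $f(x) + g(y) \geq \ip{x,y}$ pointwise and $\E[f(X)] + \E[g(Y)] = \E\ip{X,Y}$. Set $F = \tilde f$ and $G = \tilde g$ via Lemma \ref{lem:scalarconvex}. Parts (1), (2), and (4) of that lemma verify exactly the three defining axioms of classical $E$-convexity. Integrating the pointwise inequality at arbitrary $X', Y' \in L^2(\Omega,P;\R^m)$ yields $F(X') + G(Y') \geq \ip{X',Y'}_{L^2}$, and the equality at $(X,Y)$ is precisely the equality of expectations furnished by the classical theorem.

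There is really no obstacle here beyond bookkeeping, since Lemma \ref{lem:scalarconvex} and the cited classical duality do all the work; notably, part (3) of the lemma (subdifferential characterization) is not needed for the corollary itself. The point worth emphasizing is that the classically $E$-convex functions arising as lifts of scalar convex functions already form a class rich enough to witness Monge-Kantorovich duality on $L^2(\Omega,P;\R^m)$, which motivates the search for an analogously rich class of $E$-convex tracial $\mathrm{W}^*$-functions in the free probabilistic setting treated in the rest of the paper.
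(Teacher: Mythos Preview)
Your proposal is correct and follows essentially the same argument as the paper's proof: for $(\impliedby)$ you use law-invariance of $F,G$ to compare any coupling $(X',Y')$ against $(X,Y)$, and for $(\implies)$ you lift the classical dual pair $(f,g)$ to $(\tilde f,\tilde g)$ via Lemma~\ref{lem:scalarconvex}, reading off the three $E$-convexity axioms from parts (1), (2), (4). The only cosmetic addition is your explicit remark that a non-atomic standard Borel space realizes every coupling, which the paper leaves implicit.
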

	
	\begin{proof}
		($\implies$) By the classical Monge-Kantorovich duality, there are convex functions $f, g: \R^m \to (-\infty,\infty]$ with $f(x) + g(y) \geq \ip{x,y}_{\R^m}$ and $\mathbb{E}f(X) + \mathbb{E} f(Y) = \ip{X,Y}_{L^2(\Omega,P;\R^m)}$.  Let $F = \tilde{f}$ and $G = \tilde{g}$.  By Lemma \ref{lem:scalarconvex}, $F$ and $G$ are classical $E$-convex and clearly $F(X) + G(Y) = \ip{X,Y}_{L^2(\Omega,P;\R^m)}$.  Also, $F(X') + G(Y') \geq \ip{X',Y'}_{L^2(\Omega,P;\R^m)}$ since $f(x) + f(y) \geq \ip{x,y}_{\R^m}$.
		
		($\impliedby$) Suppose that $(X',Y')$ is another coupling of $\mu$ and $\nu$ on $(\Omega,P)$.  Then
		\[
		\ip{X',Y'}_{L^2(\Omega,P;\R^m)} \leq F(X') + G(Y') = F(X) + G(Y) = \ip{X,Y}_{L^2(\Omega,P;\R^m)},
		\]
		where in the middle equality we have used that $F(X) = F(X')$ and $G(Y) = G(Y')$ since $X \sim X'$ and $Y \sim Y'$ in law.  Therefore, the coupling $(X,Y)$ is optimal.
	\end{proof}
	
	Corollary \ref{cor:classicalMK} is the statement that we will generalize to the non-commutative setting.  We remark that although classically $E$-convex functions are much less concrete than convex functions on $\R^m$, Corollary \ref{cor:classicalMK} still has the power to prove the classical analogs of Theorems \ref{thm:decompositionsummary} and \ref{thm:displacementW*} by exactly the same arguments that we will use in the non-commutative case.
	
	In fact, convex functions on a space of classical random variables have also been used in the theory of mean field games \cite{GM2021}.  Mean field games involves the study of the \emph{master equation} \cite{CDLL2019,GMMZ2021}, a differential equation for a function $u(t,x,\mu)$ depending on a time variable $t$, a space variable $x$ (representing the position of an individual agent), and a measure $\mu$ (representing the distribution of the positions of a continuum of other agents).  We can define a function $\widehat{u}$ on $[0,\infty) \times \R^m \times L^2(\Omega,P;\R^m)$ by $\widehat{u}(t,x,X) = u(t,x,\mu_X)$, where $\mu_X$ is the law of $X$.  The first-order regularity conditions needed to solve the master equation are more easily stated in terms of the function $\widehat{u}$ on the Hilbert space $\R^m \times L^2(\Omega,P;\R^m)$.  Moreover, the proof of existence and uniqueness of solutions to Hamilton-Jacobi equations on Wasserstein space $\mathcal{P}_2(\R^m)$ \cite{GMS2021,GaTu2018} relies on the theory of viscosity solutions to Hamilton-Jacobi equations on Hilbert spaces \cite{CrLi1985,CrLi1986a,CrLi1986b,LL1986}.
	
	The inf-convolution techniques that we use in \S \ref{sec:displacement} are an important special case of this theory of Hamilton-Jacobi equations on Hilbert spaces.  In fact, part of our motivation was to understand the non-commutative version of Hamilton-Jacobi equations for functions of a random variable.  Recent work has connected random matrix theory to viscosity solutions of Hamilton-Jacobi equations \cite{BDLL2021} and mean field games \cite{CCP2020}.  However, these connections are restricted to the setting of a single random matrix because they rely heavily on the description of self-adjoint random matrices in terms of their eigenvalues.  It would be of great interest to have a theory of viscosity solutions to partial differential equations in several non-commuting variables as is suggested by the study of heat equations in \cite{DGS2016,Jekel2018,JLS2021} and the Hamilton-Jacobi-Bellman equation in \cite{Dabrowski2017,Jekel2018}.
	
	\subsection{Motivation from matrix tuples} \label{subsec:matrixcoupling}
	
	In order to motivate some of the ideas of our paper, we explain a toy model of couplings between tuples of $n \times n$ matrices.  Let $M_n(\C)$ denote the space of complex $n \times n$ matrices.  Let $\tr_n = (1/n) \Tr_n$ be the normalized trace on $M_n(\C)$. We define an inner product on $M_n(\C)$ by
	\[
	\ip{S,T}_{\tr_n} = \tr_n(S^*T).
	\]
	Let $M_n(\C)_{\sa}$ denote the real subspace of self-adjoint matrices.  Then $\ip{X,Y}_{\tr_n} \in \R$ for all $X, Y \in M_n(\C)_{\sa}$.  Every element of $M_n(\C)$ can be uniquely written as $S + iT$ with $S, T \in M_n(\C)_{\sa}$, and hence there is a natural identification of the complex inner product space $M_n(\C)$ with the complexification of the real inner product space $M_n(\C)_{\sa}$.
	
	From a non-commutative probability viewpoint, we can view $M_n(\C)$ as an algebra of ``random variables'' and the normalized trace $\tr_n: M_n(\C) \to \C$ as the ``expectation.''  To motivate this, suppose $X \in M_n(\C)_{\sa}$.  The \emph{empirical spectral distribution} of $X$ is the measure $\mu = \frac{1}{n} \sum_{j=1}^n \delta_{\lambda_j}$ where $\lambda_1$, \dots, $\lambda_n$ are the eigenvalues of $X$ listed with multiplicity.  We then have for every polynomial $p$ that
	\[
	\tr_n(p(X)) = \int p\,d\mu.
	\]
	Thus, $\mu$ is analogous to the distribution of a random variable.
	
	If $X = (X_1,\dots,X_m) \in M_n(\C)_{\sa}^m$, the ``joint distribution'' of $X_1$, \dots, $X_m$ is not described by a measure on $\R^m$, since $X_1$, \dots, $X_m$ do not commute.  Rather we consider the \emph{non-commutative law} $\lambda_X$, which is the linear functional on the algebra of $m$-variable non-commutative polynomials given by
	\[
	p \mapsto \tr_n(p(X_1,\dots,X_m)).
	\]
	It turns out that two tuples $X$ and $Y \in M_n(\C)_{\sa}^m$ have the same non-commutative law if and only if they are unitarily conjugate.
	
	\begin{lemma}[] \label{lem:matricesunitarilyconjugate}
		Let $X, Y \in M_n(\C)_{\sa}^m$.  Then the following are equivalent
		\begin{enumerate}[(1)]
			\item $\tr_n(p(X)) = \tr_n(p(Y))$ whenever $p$ is a non-commutative polynomial in $m$ variables.
			\item There exists a unitary $U$ in $M_n(\C)$ such that $Y_j = UX_j U^*$ for $j = 1$, \dots, $m$.
		\end{enumerate}
	\end{lemma}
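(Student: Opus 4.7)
The direction $(2) \Rightarrow (1)$ is immediate: if $Y_j = UX_jU^*$ for all $j$, then $p(Y) = Up(X)U^*$ for every non-commutative polynomial $p$, and $\tr_n(Up(X)U^*) = \tr_n(p(X))$ by the trace property. So the substance is in the converse.

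For $(1) \Rightarrow (2)$, the plan is to produce a trace-preserving $*$-isomorphism between the unital $*$-subalgebras $\cB_X, \cB_Y \subseteq M_n(\C)$ generated by $X$ and $Y$ respectively, and then invoke the structure theory of finite-dimensional $C^*$-subalgebras of $M_n(\C)$ to upgrade this isomorphism to a unitary conjugation. First I would define $\phi: \cB_X \to \cB_Y$ by $p(X) \mapsto p(Y)$. To see this is well-defined, note that if $p(X) = 0$ then $\tr_n(p(X)p(X)^*) = 0$, so by hypothesis $\tr_n(p(Y)p(Y)^*) = 0$, and faithfulness of $\tr_n$ on $M_n(\C)$ forces $p(Y) = 0$. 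The same argument gives injectivity, so $\phi$ is a $*$-algebra isomorphism, and it is trace-preserving by hypothesis.

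Next, I would use the structure theorem for finite-dimensional $C^*$-algebras: both $\cB_X$ and $\cB_Y$, being isomorphic via $\phi$, have the same abstract form $\cB \cong \bigoplus_{i=1}^r M_{k_i}(\C)$. Each unital $*$-embedding of $\cB$ into $M_n(\C)$ is determined up to unitary conjugation by the multiplicity vector $(m_1,\dots,m_r)$ describing how many copies of each irreducible representation appear, subject to $\sum_i m_i k_i = n$. The restriction of $\tr_n$ to the embedded copy of $\cB$ is then $\sum_i \frac{m_i k_i}{n} \tr_{k_i}$ on the corresponding block, so the multiplicities $m_i$ are uniquely determined by the induced trace. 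Since $\phi$ is trace-preserving, the two embeddings $\cB \hookrightarrow M_n(\C)$ given by $\cB_X$ and $\cB_Y$ have identical multiplicity data, hence are unitarily equivalent: there is a unitary $U \in M_n(\C)$ with $UaU^* = \phi(a)$ for all $a \in \cB_X$, and in particular $UX_jU^* = Y_j$.

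The main obstacle, or at least the only nontrivial input, is the uniqueness of finite-dimensional $*$-representations of a multi-matrix algebra up to unitary equivalence; this is a standard consequence of the Artin--Wedderburn structure theorem together with the fact that the irreducible representations of $M_k(\C)$ are all equivalent to its standard representation on $\C^k$. Everything else is a direct verification using the faithfulness and traciality of $\tr_n$ on $M_n(\C)$.
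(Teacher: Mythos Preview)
Your argument is correct and follows the standard route. The paper does not include its own proof of this lemma; it simply cites \cite[\S 2]{Effros1981} and remarks in the sentence preceding the lemma that the result ``is equivalent to the fact that any two trace-preserving embeddings of a finite-dimensional tracial $*$-algebra into $M_n(\C)$ are unitarily conjugate.'' Your proof is precisely an unpacking of that parenthetical: you build the trace-preserving $*$-isomorphism $\phi:\cB_X\to\cB_Y$, then invoke Artin--Wedderburn and the multiplicity classification of representations of multimatrix algebras to realize $\phi$ by a unitary. So there is no divergence to discuss.
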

	
	This lemma follows from the multivariate version of Specht's theorem \cite{Specht1940} observed by Wiegmann \cite{Wiegmann1961} and verified in \cite[Theorem 2.2]{Jing2015}.  This result is closely connected to the invariant theory of matrices \cite{Procesi1976}, and related results have been rediscovered many times as the survey \cite{Shapiro1991} explains.  Moreover, many in the operator algebras community are aware it can be deduced from Lemma \ref{lem:lawisomorphism} below, and the fact that any two trace-preserving embeddings of a finite-dimensional tracial $*$-algebra into $M_n(\C)$ are unitarily conjugate, which is a consequence of the Artin-Wedderburn-type classification of finite-dimensional $*$-algebras and their representations (see e.g.\ \cite[\S 2]{Effros1981}).
	
	We consider the toy problem of optimally coupling two matrix tuples inside $M_n(\C)$ (beware that because of Proposition \ref{prop:counterexamples} an optimal coupling inside $M_n(\C)$ is not necessarily optimal among all couplings in tracial $\mathrm{W}^*$-algebras).  Because of Lemma \ref{lem:matricesunitarilyconjugate}, the toy problem reduces to the following:  Given $X, Y \in M_n(\C)_{\sa}^m$, find a unitary $U$ so that $\norm{UXU^* - Y}_{\tr_n}$ is as small as possible, where $UXU^* = (UX_1U^*, \dots, UX_mU^*)$, and where $\norm{\cdot}_{\tr_n}$ is the normalized Hilbert-Schmidt norm
	\[
	\norm{T}_{\tr_n} = \left(\sum_{j=1}^m \tr_n(T_j^*T_j) \right)^{1/2}.
	\]
	This motivates the following definition: For $X, Y \in M_n(\C)_{\sa}^m$, we say that $(X,Y)$ are an \emph{optimal coupling in $M_n(\C)$} if $\norm{UXU^* - Y}_{\tr_n} \geq \norm{X - Y}_{\tr_n}$ for every unitary $U$.  The next lemma guarantees existence of optimal couplings.
	
	\begin{lemma} \label{lem:sumofcommutators}
		Let $X$, $Y \in M_n(\C)_{\sa}^m$.  Then there exists an $n \times n$ unitary $U$ that minimizes $\norm{UXU^* - Y}_{\tr_n}$.  Moreover, every such unitary must satisfy
		\[
		\sum_{j=1}^m [UX_jU^*,Y_j] = 0,
		\]
		where $[S,T] = ST - TS$ is the commutator.
	\end{lemma}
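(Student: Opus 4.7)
The plan is to prove existence by a standard compactness argument and then derive the commutator identity as a first-order optimality condition from variations within the unitary group.

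For existence, I would use that the unitary group $U(n)$ is a compact subset of $M_n(\C)$ and that the map $U \mapsto \norm{UXU^* - Y}_{\tr_n}^2 = \sum_j \tr_n((UX_jU^* - Y_j)^2)$ is continuous (in fact polynomial) in the entries of $U$. Therefore the infimum is attained on $U(n)$.

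For the first-order condition, let $U$ be a minimizer and set $X_j' = UX_jU^*$. I would consider the smooth one-parameter family of unitaries $U(t) = e^{itH}U$ where $H \in M_n(\C)_{\sa}$ is an arbitrary self-adjoint matrix. Then $U(t) X_j U(t)^* = e^{itH} X_j' e^{-itH}$, and the function $t \mapsto \sum_j \tr_n\bigl((e^{itH}X_j'e^{-itH} - Y_j)^2\bigr)$ must have a critical point at $t=0$. Differentiating at $t=0$, using $\frac{d}{dt}\big|_{t=0} e^{itH}X_j'e^{-itH} = i[H,X_j']$ and the cyclicity identity $\tr_n(A[H,B]) = -\tr_n([A,B]H)$, I obtain
\[
0 \;=\; 2i \sum_{j=1}^m \tr_n\bigl((X_j' - Y_j)[H,X_j']\bigr) \;=\; -2i \sum_{j=1}^m \tr_n\bigl([X_j' - Y_j, X_j']H\bigr) \;=\; -2i\,\tr_n\!\left(\sum_{j=1}^m [X_j', Y_j]\,H\right).
\]

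Since $X_j'$ and $Y_j$ are self-adjoint, each commutator $[X_j', Y_j]$ is skew-adjoint, and hence $A := i\sum_j [X_j', Y_j]$ is self-adjoint. The vanishing of the derivative for every self-adjoint $H$ gives $\tr_n(AH) = 0$ for all such $H$, and taking $H = A$ forces $A = 0$, i.e.\ $\sum_j [UX_jU^*, Y_j] = 0$. The main (minor) thing to be careful about is the bookkeeping with the factor of $i$ and verifying that the derivative is real so that no separate calculation of the real part is needed; this is immediate from the observation that $\tr_n([X_j', Y_j]H)$ is purely imaginary for self-adjoint $H$, $X_j'$, $Y_j$.
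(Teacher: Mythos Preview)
Your proof is correct and follows essentially the same approach as the paper: compactness for existence, then a one-parameter variation $e^{itH}$ through the unitary group to obtain the commutator identity as a first-order condition. The only cosmetic difference is that the paper first rewrites $\norm{e^{itH}Ze^{-itH}-Y}_{\tr_n}^2$ as $\norm{Z}_{\tr_n}^2 - 2\ip{e^{itH}Ze^{-itH},Y}_{\tr_n} + \norm{Y}_{\tr_n}^2$ and differentiates only the inner-product term, whereas you differentiate the squared norm directly; your extra remark that $i\sum_j [X_j',Y_j]$ is self-adjoint and the choice $H=A$ to conclude make the final step slightly more explicit than the paper's version.
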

	
	\begin{proof}
		Existence of a minimizer follows from the fact that the unitary group is compact and $U \mapsto \norm{UXU^* - Y}_{\tr_n}$ is continuous.  Now suppose that $U$ is a minimizer and let $Z = UXU^*$.  Let $A$ be a self-adjoint matrix, and consider the unitary $e^{itA}$ for $t \in \R$.  By minimality, we have $\norm{e^{itA}Ze^{-itA} - Y}_{\tr_n}^2 \geq \norm{Z - Y}_{\tr_n}^2$.  Since $\norm{e^{itA}Ze^{-itA}}_{\tr_n}^2 = \norm{Z}_{\tr_n}^2$, it follows that $\ip{e^{itA}Ze^{-itA},Y}_{\tr_n}$ is minimized at $t = 0$.  Differentiating at $t = 0$ yields
		\[
		\sum_{j=1}^m \tr_n((iAZ_j - iZ_jA) Y_j) = \sum_{j=1}^m \tr_n(A i(Z_jY_j - Y_jZ_j)) = \tr_n \left( A \sum_{j=1}^m i[Z_j,Y_j] \right).
		\]
		Since this holds for all $A \in M_n(\C)_{\sa}$, it follows that $\sum_{j=1}^m [Z_j,Y_j] = 0$ as desired.
	\end{proof}
	
	\begin{remark}
		In the case $m = 1$, this lemma actually provides an alternative proof the spectral theorem as follows.  Let $X \in M_n(\C)_{\sa}$.  Let $Y$ be a fixed diagonal matrix with distinct diagonal entries $y_1$ ,\dots, $y_n$.  Let $U$ be a unitary minimizing $\norm{UXU^* - Y}_{\tr_n}$.  Then $[UXU^*,Y] = 0$.  Any matrix $A$ that commutes with $Y$ must satisfy $a_{i,j} y_j = y_i a_{i,j}$, and hence $A$ must be diagonal.  Therefore, $UXU^*$ is diagonal.\footnote{One might object that the preceding lemma seems to assume the spectral theorem already because it uses functional calculus to define $e^{itA}$.  However, this only requires analytic functional calculus, not continuous functional calculus.  One can use power series to define $e^{itA}$, show that $e^{i(s+t)A} = e^{isA} e^{itA}$ for $s, t \in \R$, show that $(e^{itA})^* = e^{-itA^*}$, and hence conclude that $e^{itA}$ is unitary when $A$ is self-adjoint.}
	\end{remark}
	
	Next, we describe an analog of the Monge-Kantorovich duality for the setting of matrix tuples.
	
	\begin{lemma}
		Let $X, Y \in M_n(\C)_{\sa}^m$.  Then $(X,Y)$ is an optimal coupling in $M_n(\C)$ if and only if there exist functions $f, g: M_n(\C)_{\sa}^m \to \R$ satisfying the following properties:
		\begin{enumerate}[(1)]
			\item $f$ and $g$ are convex.
			\item $f$ and $g$ are unitarily invariant, that is, $f(UX'U^*) = f(X')$ and $g(UY'U^*) = g(Y')$ for $U$ unitary and $X'$, $Y' \in M_n(\C)_{\sa}^m$.
			\item $f(X') + g(Y') \geq \ip{X',Y'}_{\tr_n}$ for all $X'$, $Y' \in M_n(\C)_{\sa}^m$.
			\item $f(X) + g(Y) = \ip{X,Y}_{\tr_n}$.
		\end{enumerate}
	\end{lemma}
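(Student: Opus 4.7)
My plan is to prove both implications via Fenchel--Young duality applied to the support function of the unitary orbit of $X$. For the easy direction ($\Leftarrow$), note that conjugation by a unitary preserves the trace-norm, so
\[
\norm{UXU^* - Y}_{\tr_n}^2 = \norm{X}_{\tr_n}^2 + \norm{Y}_{\tr_n}^2 - 2\ip{UXU^*, Y}_{\tr_n},
\]
and optimality of $(X,Y)$ in $M_n(\C)$ is equivalent to the assertion that $\ip{UXU^*, Y}_{\tr_n} \leq \ip{X, Y}_{\tr_n}$ for every unitary $U$. This is immediate from (3), (2), and (4) used in that order:
\[
\ip{UXU^*, Y}_{\tr_n} \leq f(UXU^*) + g(Y) = f(X) + g(Y) = \ip{X, Y}_{\tr_n}.
\]

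For the forward direction ($\Rightarrow$), I construct $g$ as the support function of the unitary orbit of $X$ and $f$ as its Legendre transform: set
\[
g(Y') := \sup_{U} \ip{UXU^*, Y'}_{\tr_n}, \qquad f(X') := \sup_{Y' \in M_n(\C)_{\sa}^m} \bigl[\ip{X', Y'}_{\tr_n} - g(Y')\bigr],
\]
where the first supremum runs over $n \times n$ unitaries. Then $g$ is finite by compactness of the unitary group, convex as a supremum of affine functions in $Y'$, and unitarily invariant because the orbit of $X$ is itself unitary-invariant: for any unitary $V$, the substitution $U \mapsto V^*U$ yields $g(VY'V^*) = g(Y')$. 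The Legendre transform $f$ is automatically convex and lower semi-continuous, and inherits unitary invariance from $g$ via the analogous substitution $Y'' = V^*Y'V$ in its defining supremum. Property (3) is then the Fenchel--Young inequality built into the definition of $f$.

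It remains to verify (4). Taking $U = I$ in the definition of $g$ gives $g(Y') \geq \ip{X, Y'}_{\tr_n}$ for all $Y'$, whence $f(X) \leq 0$; conversely $Y' = 0$ in the definition of $f$ gives $f(X) \geq 0$, so $f(X) = 0$. Optimality of $(X, Y)$ says that $\ip{UXU^*, Y}_{\tr_n}$ is maximized at $U = I$, hence $g(Y) = \ip{X, Y}_{\tr_n}$, and summing yields $f(X) + g(Y) = \ip{X, Y}_{\tr_n}$.

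The argument is routine convex analysis once the correct construction is in place. The only subtlety worth flagging is that unitary invariance of $f$ is not visible from $X$ directly but must be deduced from unitary invariance of $g$, which is why one should define $g$ as the full orbit support function rather than as the single linear functional $Y' \mapsto \ip{X, Y'}_{\tr_n}$; otherwise the resulting $f$ would fail to be invariant under arbitrary conjugations.
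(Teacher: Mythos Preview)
Your proof is correct and follows essentially the same approach as the paper: define one function as the support function of a unitary orbit and the other as its Legendre transform, then verify the Fenchel--Young equality at $(X,Y)$ using optimality. The only difference is a harmless swap of roles --- the paper builds $f$ from the orbit of $Y$ and takes $g$ as its Legendre transform, whereas you build $g$ from the orbit of $X$ and take $f$ as its transform.
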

	
	\begin{proof}
		($\implies$).  Let $\cU(M_n(\C))$ be the unitary group.  Let
		\[
		f(X') = \sup_{U \in \cU(M_n(\C))} \ip{X',UYU^*}_{\tr_n}.
		\]
		Note that $f$ is convex because it is the supremum of a family of affine functions.  Moreover, $f$ is unitarily invariant because we took the supremum over all unitaries  $U$.
		
		Let $g$ be the Legendre transform of $f$, that is,
		\[
		g(Y') = \sup_{X' \in M_n(\C)_{\sa}^m} \left( \ip{Y',X'}_{\tr_n} - f(X') \right).
		\]
		It is immediate that $g$ is convex, $g$ is unitarily invariant because $f$ is unitarily invariant and the inner product is unitarily invariant, and $f(X') + g(Y') \geq \ip{X',Y'}_{\tr_n}$ for all $X'$, $Y' \in M_n(\C)_{\sa}^m$.  In particular, $f(X) + g(Y) \geq \ip{X,Y}_{\tr_n}$.
		
		On the other hand, note that the supremum defining $f(X)$ is achieved when $U = 1$ because we assumed that $(X,Y)$ is optimal coupling, hence $\ip{X,UYU^*}$ is maximized when $U = 1$.  Hence, $f(X) = \ip{X,Y}$.  Moreover,
		\[
		f(X') \geq \ip{X',Y}_{\tr_n},
		\]
		hence
		\[
		g(Y) \leq \sup_{X' \in M_n(\C)_{\sa}^n} \left( \ip{X',Y}_{\tr_n} - \ip{X',Y}_{\tr_n} \right) = 0.
		\]
		Thus, $f(X) + g(Y) \leq \ip{X,Y}_{\tr_n}$.  Hence, $f(X) + g(Y) = \ip{X,Y}$ as desired.
		
		($\impliedby$) Suppose that $f$ and $g$ satisfy (1)--(4).  Let $U$ be a unitary.  Then
		\[
		\ip{UXU^*,Y}_{\tr_n} \leq f(UXU^*) + g(Y) = f(X) + g(Y) = \ip{X,Y}_{\tr_n}.
		\]
		Therefore, $(X,Y)$ is optimal.
	\end{proof}
	
	Unitarily invariant convex functions on $M_n(\C)_{\sa}^m$ satisfy a monotonicity property with respect to the non-commutative condition expectation from $M_n(\C)$ onto a $*$-subalgebra $A$, which is one motivation for our notion of $E$-convexity in the tracial $\mathrm{W}^*$-setting.
	
	\begin{lemma} \label{lem:Econvexitymotivation}
		Let $A$ be a $*$-subalgebra of $M_n(\C)$, and let $E: M_n(\C) \to A \subseteq M_n(\C)$ be the orthogonal projection with respect to the inner product $\ip{S,T}_{\tr_n} = \tr_n(S^*T)$.  Then $E[ST] = S E[T]$ and $E[TS] = E[T]S$ and $E[T^*] = E[T]^*$ for $T \in M_n(\C)$ and $S \in A$.  Moreover, if $f: M_n(\C)_{\sa}^m \to \R$ is a convex function that is invariant under unitary conjugation, then for $X = (X_1,\dots,X_m) \in M_n(\C)_{\sa}^m$, we have
		\[
		f(E[X]) \leq f(X).
		\]
		Here $E[X] = (E[X_1],\dots,E[X_m])$.
	\end{lemma}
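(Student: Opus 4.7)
The plan is to handle the two claims separately: the bimodule/adjoint properties of $E$ follow directly from the characterization of $E$ as the $\tr_n$-orthogonal projection onto $A$, while the inequality $f(E[X]) \leq f(X)$ reduces, via Jensen's inequality and unitary invariance, to realizing $E$ as an averaging operation over a suitable compact group of unitaries.

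For the first claim, I would fix $S \in A$ and $T \in M_n(\C)$ and verify $E[ST] = SE[T]$ by testing against arbitrary $S' \in A$: using that $E$ is the $\tr_n$-orthogonal projection and that $\tr_n$ is tracial,
\[
\ip{S', ST}_{\tr_n} = \tr_n((S^*S')^* T) = \ip{S^*S', T}_{\tr_n} = \ip{S^*S', E[T]}_{\tr_n} = \ip{S', SE[T]}_{\tr_n},
\]
and since $SE[T] \in A$, this characterizes $E[ST]$. The identity $E[TS] = E[T]S$ is analogous, and $E[T^*] = E[T]^*$ follows because $A$ is $*$-closed and $\ip{S', T^*}_{\tr_n} = \overline{\ip{T, (S')^*}_{\tr_n}} = \overline{\ip{E[T], (S')^*}_{\tr_n}} = \ip{S', E[T]^*}_{\tr_n}$ for every $S' \in A$.

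The key step for the second claim is the following averaging representation. Let $A' \subseteq M_n(\C)$ denote the commutant of $A$ and let $\mathcal{U}(A')$ be its unitary group, equipped with normalized Haar measure $dU$. I claim that for every $T \in M_n(\C)$,
\[
E[T] = \int_{\mathcal{U}(A')} U T U^* \, dU.
\]
To prove this, observe first that the right-hand side lies in the commutant of $A'$ (by invariance of Haar measure under left/right translation), and since $A$ is a finite-dimensional $*$-subalgebra of $M_n(\C)$, the double commutant theorem gives $(A')' = A$, so the integral lies in $A$. Second, for any $S \in A$, since $U$ commutes with $S$ for $U \in \mathcal{U}(A')$, one has $\tr_n(S \cdot UTU^*) = \tr_n(U^* S U T) = \tr_n(ST)$, so integration yields $\tr_n(S \int UTU^* \, dU) = \tr_n(ST)$. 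These two properties characterize $E[T]$ uniquely.

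With this in hand, the monotonicity $f(E[X]) \leq f(X)$ is immediate: writing $E[X] = \int_{\mathcal{U}(A')} U X U^*\, dU$ coordinate-by-coordinate and applying Jensen's inequality for the convex function $f$ on the finite-dimensional Hilbert space $M_n(\C)_{\sa}^m$,
\[
f(E[X]) = f\!\left(\int_{\mathcal{U}(A')} UXU^*\,dU\right) \leq \int_{\mathcal{U}(A')} f(UXU^*)\,dU = \int_{\mathcal{U}(A')} f(X)\,dU = f(X),
\]
where the second-to-last equality uses unitary invariance. The main obstacle, such as it is, is justifying the averaging formula; once one is comfortable invoking the double commutant theorem in finite dimensions (or alternatively proves directly that averaging projects onto the $\mathcal{U}(A')$-fixed points and identifies these with $A$), everything else is formal.
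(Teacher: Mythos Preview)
Your proof is correct and follows essentially the same approach as the paper: both verify the bimodule and adjoint properties of $E$ by testing against elements of $A$, then establish the averaging formula $E[T] = \int_{\mathcal{U}(A')} UTU^*\,dU$ via the double commutant theorem, and finish with Jensen's inequality combined with unitary invariance. The organization and level of detail are nearly identical.
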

	
	\begin{proof}
		For a subalgebra $A \subseteq M_n(\C)$, we denote by $\cU(A)$ the group of unitary matrices that are contained in $A$.  We define the \emph{commutant}
		\[
		A' = \{S \in M_n(\C) | [S,T] = 0 \text{ for all } T \in A \}.
		\]
		We recall that $A'' = A$ by von Neumann's bicommutant theorem \cite[Theorem II.3.9]{TakesakiI}.
		
		Let $\mu$ be the Haar measure on $\cU(A')$, and define $F:M_n(\C) \to M_n(\C)$ by
		\[
		F(X) = \int_{\cU(A')} UXU^* \,d\mu(U).
		\]
		We claim that $F(X) = E[X]$.  First, to show that $F(X) \in A$, note that for $V \in \cU(\cA')$, we have $VF(X)V^* = F(X)$, hence $[F(X),V] = 0$ by invariance of the Haar measure.  Since $A'$ is a $*$-algebra, it is linearly spanned by its unitaries, and therefore, $[F(X),S] = 0$ for all $S \in A'$.  So $F(X) \in A'' = A$.  Furthermore, for all $T \in A$, we have
		\[
		\tr_n(T^* F(X)) = \int_{\cU(A')} \tr_n(T^* UXU^*) \,d\mu(U) = \int_{\cU(A')} \tr_n(UT^*XU^*) \,d\mu(U) = \tr_n(T^*X).
		\]
		Thus, $F(X)$ is the orthogonal projection of $X$ onto $A$, or $F(X) = E[X]$, as desired.
		
		Similar computations from definition of $F$ show that $F$ is an $A$-$A$-bimodule map and $F(X^*) = F(X)^*$, and hence these properties also hold for $E$.
		
		Since $\mu$ is a probability measure, Jensen's inequality and the unitary invariance of $f$ imply that
		\[
		f(E[X]) \leq \int_{\cU(A')} f(UXU^*)\,d\mu(U) = f(X). \qedhere
		\]
	\end{proof}

	\section{Background on tracial $\mathrm{W}^*$-algebras} \label{sec:background}
	
	For the sake of readers who are less familiar tracial $\mathrm{W}^*$-algebras, we explain the prerequisites needed for the paper:\ the definition of a tracial $\mathrm{W}^*$-algebra, its interpretation as a non-commutative generalization of probability spaces, inclusions and trace-preserving conditional expectations of tracial $\mathrm{W}^*$-algebras, free products with amalgamation, and non-commutative laws.
	
	\subsection{Tracial $\mathrm{W}^*$-algebras}
	
	Historically, von Neumann algebras and $\mathrm{W}^*$-algebras were defined differently, but it turns out that these two definitions give the same objects thanks to work of Sakai; see e.g.\ \cite[Theorem 1.16.7]{Sakai1971}.  Here we follow Sakai's approach that starts with the definition of $\mathrm{W}^*$-algebras as $\mathrm{C}^*$-algebras which are dual Banach spaces \cite{Sakai1971}.  Other background references on von Neumann algebras include \cite{ADP,TakesakiI,TakesakiII,TakesakiIII}.
	
	\begin{definition}
		A \emph{unital $*$-algebra} is a (unital) algebra $A$ over $\C$ together with a skew-linear involution $a \mapsto a^*$ such that $(ab)^* = b^*a^*$.  If $A$ and $B$ are $*$-algebras, then a map $\rho: A \to B$ is said to be a \emph{$*$-homomorphism} if it is linear and respects multiplication and the $*$-operation.
	\end{definition}
	
	\begin{definition}
		A \emph{unital $\mathrm{C}^*$-algebra} is a $*$-algebra $A$ equipped with a norm $\norm{\cdot}$ such that
		\begin{itemize}
			\item $A$ is a Banach space with respect to $\norm{\cdot}$;
			\item $\norm{ab} \leq \norm{a} \norm{b}$ for $a, b \in A$;
			\item $\norm{a^*a} = \norm{a}^2$ for $a \in A$.
		\end{itemize}
	\end{definition}
	
	\begin{definition}
		A \emph{$\mathrm{W}^*$-algebra} is a $\mathrm{C}^*$-algebra $A$ together with a topology $\mathscr{T}$, such that $A$ as a Banach space is the dual of some Banach space $A_*$ and $\mathscr{T}$ is the weak-$*$ topology on $A$.
	\end{definition}
	
	We remark that $A_*$ can be uniquely recovered from $(A,\mathscr{T})$ as the subspace of $A^{**}$ consisting of linear functionals that are continuous with respect to $\mathscr{T}$.  In fact, it turns out that the predual of $A_*$ of a $\mathrm{W}^*$-algebra $A$ is uniquely determined by $A$ alone without reference to its weak-$*$ topology \cite[Corollary 1.13.3]{Sakai1971}.
	
	\begin{definition}
		If $A$ is a $\mathrm{W}^*$-algebra and $A_*$ is a predual of $A$, then a \emph{faithful normal trace on $A$} is an element $\tau \in A_*$ satisfying the following properties:
		\begin{itemize}
			\item $\tau(1) = 1$;
			\item $\tau(a^*a) \geq 0$ for $a \in A$;
			\item $\tau(a^*a) = 0$ if and only if $a = 0$;
			\item $\tau(ab) = \tau(ba)$ for $a, b \in A$.
		\end{itemize}
	\end{definition}
	
	We remark that in general von Neumann algebra theory, the word ``trace'' is often used to refer to the semi-finite trace on a semi-finite von Neumann algebra, but in this paper ``trace'' always means ``tracial state.''
	
	\begin{definition}
		A \emph{tracial $\mathrm{W}^*$-algebra} is a pair $\cA = (A,\tau)$, where $A$ is a $\mathrm{W}^*$-algebra and $\tau$ is a faithful normal trace.
	\end{definition}
	
	\begin{example}
		Let $(\Omega,P)$ be a probability space.  We take $A = L^\infty(\Omega,P)$, with the pointwise addition and multiplication operations.  The $*$-operation is pointwise complex conjugation.  The norm is the standard one for $L^\infty(\Omega,P)$, and note that $\norm{fg} \leq \norm{f} \norm{g}$ and $\norm{f^*f} = \norm{f}^2$.  By the Riesz representation theorem, $L^\infty(\Omega,P) = L^1(\Omega,P)^*$, and therefore, we can take $A_* = L^1(\Omega,P)$, and then equip $L^\infty(\Omega,P)$ with the corresponding weak-$*$ topology.  We define $\tau$ using the element $1 \in L^1(\Omega,P)$, so that $\tau(f) = \int_{\Omega} f\,dP$.  Since $L^\infty(\Omega,P)$ is commutative, it is immediate that $\tau(fg) = \tau(gf)$.  The other properties of $\tau$ are straightforward to check from well-known facts in measure theory.  	Conversely, it turns out that every commutative tracial $\mathrm{W}^*$-algebra is isomorphic to $L^\infty$ of some probability space \cite[\S 1.18]{Sakai1971}, \cite[Theorem 1.18]{TakesakiI}.
	\end{example}
	
	\begin{example}
		Let $H$ be an infinite-dimensional Hilbert space, and let $A = B(H)$ be the algebra of bounded operators on $H$ equipped with the operator norm.  Let $A_*$ be the space of trace class operators.  Then $A$ can be canonically identified with the dual of $A_*$ by the pairing $(a,T) = \Tr(aT)$ for $a \in A$ and $T \in A_*$.  The weak-$*$ topology on $B(H)$ is also known as the \emph{$\sigma$-weak operator topology}.  Thus, $B(H)$ is a $\mathrm{W}^*$-algebra.  However, it is not a tracial $\mathrm{W}^*$-algebra because $\Tr$ is not well-defined on all of $B(H)$ and $\Tr(1) = \infty$.  See for instance \cite[Theorem 1.15.3]{Sakai1971}.
	\end{example}
	
	\begin{theorem}[GNS construction for tracial $\mathrm{W}^*$-algebras] \label{thm:W*GNSrep}
		Let $\cA = (A,\tau)$ be a tracial $\mathrm{W}^*$-algebra.  Note that $\ip{a,b}_{\cA} := \tau(a^*b)$ defines an inner product on $A$ (which is non-degenerate because $\tau$ is faithful).  This can be completed to a Hilbert space, which we denote by $L^2(\cA)$.  Let us denote the map $A \to L^2(\cA)$ by $a \mapsto \widehat{a}$.  Then for each $a \in A$, there is are unique operators $\pi_\ell(a), \pi_r(a) \in B(L^2(\cA))$ such that $\pi_\ell(a) \widehat{b} = \widehat{ab}$ and $\pi_r(a) \widehat{b} = \widehat{ba}$ for $b \in A$.  Moreover, $\pi_\ell$ defines a $*$-homomorphism $A \to B(L^2(\cA))$ which is continuous with respect to the weak-$*$ topologies on $A$ and $B(L^2(\cA))$.  Similarly, $\pi_r$ is a $*$-anti-homomorphism (it preserves $+$ and $*$ but reverses the order of multiplication) that is weak-$*$ continuous.  Furthermore, since $\norm{a^*}_{L^2(\cA)} = \norm{a}_{L^2(\cA)}$, there is a unique skew-linear isometry $J: L^2(\cA) \to L^2(\cA)$ such that $J(\widehat{a}) = \widehat{a^*}$.  See \cite[\S IV]{MvN2} and \cite[\S 7]{ADP}.
	\end{theorem}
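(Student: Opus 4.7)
The plan is to verify the listed properties of the GNS construction in sequence, extending everything from the dense subspace $\widehat{A} \subset L^2(\cA)$. First, I would check that $(a,b) \mapsto \tau(a^*b)$ is sesquilinear, positive (from $\tau(a^*a) \geq 0$), and nondegenerate (from faithfulness), and then form the Hilbert-space completion $L^2(\cA)$. To construct $\pi_\ell(a)$, I would invoke the $\mathrm{C}^*$-inequality $a^*a \leq \|a\|^2 \cdot 1$ in $A$; positivity of $\tau$ then yields the key bound
\[
\|\widehat{ab}\|_{L^2(\cA)}^2 = \tau(b^*a^*ab) \leq \|a\|^2 \tau(b^*b) = \|a\|^2 \|\widehat{b}\|_{L^2(\cA)}^2,
\]
so $\widehat{b} \mapsto \widehat{ab}$ extends uniquely to an operator $\pi_\ell(a) \in B(L^2(\cA))$ with $\|\pi_\ell(a)\| \leq \|a\|$. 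For $\pi_r(a)$, the trace property converts the estimate via
\[
\|\widehat{ba}\|_{L^2(\cA)}^2 = \tau(a^*b^*ba) = \tau(b^*baa^*) \leq \|a\|^2 \|\widehat{b}\|_{L^2(\cA)}^2.
\]

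Next, I would verify the algebraic structure by density. Multiplicativity and unitality of $\pi_\ell$ reduce to associativity in $A$ applied to vectors in $\widehat{A}$, and the adjoint relation $\pi_\ell(a)^* = \pi_\ell(a^*)$ follows by computing $\ip{\pi_\ell(a)\widehat{b},\widehat{c}}_{\cA} = \tau(c^*ab) = \ip{\widehat{b},\widehat{a^*c}}_{\cA}$ on the dense subset $\widehat{A}$; the order reversal for $\pi_r$ is immediate from $(bc)a = b(ca)$. Similarly, $\tau(a^*a) = \tau(aa^*)$ shows $\widehat{a} \mapsto \widehat{a^*}$ is a well-defined skew-linear isometry on $\widehat{A}$, which extends uniquely to the desired $J$. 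Uniqueness of $\pi_\ell(a)$, $\pi_r(a)$, and $J$ in each case is forced by density of $\widehat{A}$ and continuity.

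The most delicate step is weak-$*$ continuity of $\pi_\ell$ (and $\pi_r$). My plan is to invoke the Krein-Smulian theorem: because $\pi_\ell$ maps the closed unit ball of $A$ into the unit ball of $B(L^2(\cA))$, it suffices to prove continuity of its restriction to bounded balls, after which weak-$*$ continuity on all of $A$ follows. On a bounded set, pairing against the predual of $B(L^2(\cA))$ is controlled by the matrix coefficients $T \mapsto \ip{T\xi,\eta}$ for $\xi, \eta$ ranging in the total set $\widehat{A}$. For $\xi = \widehat{b}$ and $\eta = \widehat{c}$, the map is
\[
a \mapsto \ip{\pi_\ell(a)\widehat{b},\widehat{c}}_{\cA} = \tau(c^*ab) = \tau\bigl((bc^*)a\bigr),
\]
which is the pairing of $a \in A$ with the functional $\omega_{b,c}: x \mapsto \tau(bc^*x)$; this lies in the predual $A_*$ by the standard $A$-bimodule structure on $A_*$, equivalently, because right multiplication by $bc^*$ is weak-$*$ continuous on $A$. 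Thus $\pi_\ell$ is weak-$*$ continuous on bounded sets, hence on all of $A$. The same argument, applied to $x \mapsto \tau(c^*xb)$, handles $\pi_r$. The main obstacle really is just bookkeeping around the Krein-Smulian reduction and the predual module structure; the rest is formal.
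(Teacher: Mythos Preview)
Your proof is correct and follows the standard textbook route; the paper itself does not supply a proof of this statement but only cites \cite[\S IV]{MvN2} and \cite[\S 7]{ADP}, so there is nothing to compare against beyond noting that your argument is the one those references give. Two small remarks: in the $\pi_r$ estimate, the inequality $\tau(b^*baa^*) \leq \|a\|^2 \tau(b^*b)$ is most cleanly seen by one more cyclic rotation to $\tau(baa^*b^*)$ and then using $aa^* \leq \|a\|^2 \cdot 1$ sandwiched by $b(\cdot)b^*$; and in the weak-$*$ continuity step, the map $x \mapsto bc^*x$ is \emph{left} multiplication by $bc^*$, not right---but the point stands, since multiplication in a $\mathrm{W}^*$-algebra is separately weak-$*$ continuous (equivalently, $A_*$ is an $A$-bimodule), so $x \mapsto \tau(bc^*x)$ lies in $A_*$.
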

	
	\begin{example}
		Let $A = L^\infty(\Omega,P)$ and let $\tau$ be integration against $P$.  Then $\ip{f,g}_{L^2(\cA)} = \int_{\Omega} \overline{f}g\,dP$.  The completion $L^2(\cA)$ can be canonically identified with $L^2(\Omega,P)$.  The map $\widehat{~}$ is the standard inclusion $L^\infty(\Omega,P) \to L^2(\Omega,P)$.  The operator $\pi(f) \in B(L^2(\Omega,P))$ is the operator of multiplication by $f$.
	\end{example}
	
	\begin{remark}
		Our examples indicate that if $\cA = (A,\tau)$ is a tracial $\mathrm{W}^*$-algebra, then $A$ is an analog of $L^\infty(\Omega,P)$, $A_*$ is an analog of $L^1(\Omega,P)$ and $L^2(\cA)$ is an analog of $L^2(\Omega,P)$.  In fact, there is an even a non-commutative analog of measurable functions on $\Omega$ that are finite almost everywhere; this is known as the algebra $\Aff(\cA)$ of operators \emph{affiliated to $\cA$}, certain closed unbounded operators on the Hilbert space $L^2(\cA)$.  The space $L^2(\cA)$ can be canonically identified with a subspace of the affiliated operators.  Thus, the left and right multiplication operators $\pi_\ell(a)$ and $\pi_r(a)$ for $a \in A$ become instances of multiplying affiliated operators.  Moreover, there are subspaces $L^p(\cA) \subseteq \Aff(\cA)$ for $p \in [1,\infty)$ which share many properties of the classical $L^p$ spaces.  There is also a natural identification of $A_*$ with $L^1(\cA)$.  See \S \ref{subsec:affiliated} and the references therein for details.
	\end{remark}
	
	\subsection{$\mathrm{W}^*$-embeddings, trace-preserving conditional expectations, and $\mathrm{W}^*$-isomorphisms}
	
	\begin{notation}
		If $\cA = (A,\tau)$ is a tracial $\mathrm{W}^*$-algebra, we will use the notation $L^\infty(\cA)$ for $A$ and $\tau_{\cA}$ for $\tau$ when it is convenient to avoid naming $A$ and $\tau$ explicitly.  In particular, the norm on $A$ will be denoted $\norm{\cdot}_{L^\infty(\cA)}$.  Furthermore, we will treat $L^\infty(\cA)$ as a subspace of $L^2(\cA)$.  We will also write $ab$ rather than $\pi_\ell(a) \widehat{b}$ and $ba$ rather than $\pi_r(a) \widehat{b}$ for $a \in L^\infty(\cA)$ and $b \in L^2(\cA)$.  Finally, we write $a^*$ instead of $J(a)$ for $a \in L^2(\cA)$.  We denote by $L^2(\cA)_{\sa}$ the real subspace of $L^2(\cA)$ consisting of those elements fixed by $J$.
	\end{notation}
	
	\begin{definition}
		Let $\cA$ and $\cB$ be tracial $\mathrm{W}^*$-algebras.  A linear map $\phi: L^\infty(\cA) \to L^\infty(\cB)$ is said to be \emph{trace-preserving} if $\tau_{\cA} = \tau_{\cB} \circ \phi$.
	\end{definition}
	
	\begin{lemma}[{See \cite[Lemma 1.5.11]{BrownOzawa2008} and \cite[\S 9.1]{ADP}}] \label{lem:inclusionconditionalexpectation}
		Let $\cA$ and $\cB$ be tracial $\mathrm{W}^*$-algebras.  Let $\phi: L^\infty(\cA) \to L^\infty(\cB)$ be a trace-preserving unital $*$-homomorphism.  Then
		\begin{enumerate}[(1)]
			\item $\phi$ extends to an isometry $L^2(\cA) \to L^2(\cB)$, and in particular $\phi$ is injective on $A$.
			\item $\phi$ is a contraction $L^\infty(\cA) \to L^\infty(\cB)$.
			\item The adjoint map $E = \phi^*: L^2(\cB) \to L^2(\cA)$ restricts to a map $L^\infty(\cB) \to L^\infty(\cA)$ that is contractive with respect to the $L^\infty$ norm.
			\item We have $E[b^*] = E[b]^*$ for $b \in L^\infty(\cB)$, and in fact also for $b \in L^2(\cB)$.
			\item $E$ is a bimodule map over $L^\infty(\cA)$, that is, for $a \in L^\infty(\cA)$ and $b \in L^2(\cB)$, we have $E(\phi(a)b) = a E(b)$ and $E(b \phi(a)) = E(b)a$.
			\item $E$ is unital ($E(1) = 1$) and trace-preserving ($\tau_{\cA} \circ E = \tau_{\cB}$).
		\end{enumerate}
	\end{lemma}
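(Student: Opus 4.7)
The natural order of attack is to first establish the $L^2$-isometry, then deduce normality of $\phi$, invoke the existence of a trace-preserving conditional expectation onto $\phi(L^\infty(\cA))$, and finally read off the remaining properties as formal consequences.

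For (1), compute
\[
\norm{\phi(a)}_{L^2(\cB)}^2 = \tau_{\cB}(\phi(a)^*\phi(a)) = \tau_{\cB}(\phi(a^*a)) = \tau_{\cA}(a^*a) = \norm{a}_{L^2(\cA)}^2,
\]
using that $\phi$ is a trace-preserving $*$-homomorphism, then extend by density. Injectivity of $\phi$ on $L^\infty(\cA)$ follows from faithfulness of $\tau_{\cA}$. For (2), invoke the standard $\mathrm{C}^*$-algebraic fact that every unital $*$-homomorphism between $\mathrm{C}^*$-algebras is automatically norm-decreasing.

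For (3), I would first establish that $\phi$ is normal. On norm-bounded subsets of $L^\infty(\cA)$, the weak-$*$ topology agrees with the $L^2$-topology; by (1), $\phi$ is $L^2$-continuous, and by (2) it maps bounded sets to bounded sets, so $\phi$ is weak-$*$ continuous on bounded subsets, hence normal. Consequently $\phi(L^\infty(\cA))$ is a weak-$*$-closed $*$-subalgebra of $L^\infty(\cB)$. Now invoke the Umegaki/Takesaki theorem furnishing a unique $\tau_{\cB}$-preserving normal conditional expectation $F: L^\infty(\cB) \to \phi(L^\infty(\cA))$, and define $E|_{L^\infty(\cB)} := \phi^{-1} \circ F$, which is well-defined by injectivity of $\phi$ and $L^\infty$-contractive since $F$ is contractive and $\phi^{-1}$ is isometric on its image. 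To verify this coincides with the Hilbert-space adjoint, compute, for $a \in L^\infty(\cA)$ and $b \in L^\infty(\cB)$,
\[
\ip{\phi^{-1}(F(b)), a}_{L^2(\cA)} = \tau_{\cB}(\phi(a)^* F(b)) = \tau_{\cB}(F(\phi(a)^* b)) = \tau_{\cB}(\phi(a)^* b) = \ip{\phi(a), b}_{L^2(\cB)},
\]
using trace-preservation of $\phi$, the bimodule property of $F$, and trace-preservation of $F$.

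Properties (4) and (5) now descend from the corresponding properties of $F$, which commutes with the $*$-operation and is a bimodule map over its image; transporting through $\phi^{-1}$ yields the claims. For (6), $E(1) = \phi^{-1}(F(1)) = \phi^{-1}(1) = 1$, and $\tau_{\cA} \circ E = \tau_{\cB}$ is the Hilbert-space adjoint statement of the identity $\phi(1) = 1$. The main technical obstacle is item (3): it rests on $\phi(L^\infty(\cA))$ being a von Neumann subalgebra of $L^\infty(\cB)$, which is exactly what makes the conditional expectation theorem applicable. Without first establishing normality of $\phi$, the image would only be a $\mathrm{C}^*$-subalgebra and no such trace-preserving conditional expectation would exist in general; once the identification $E = \phi^{-1} \circ F$ is in hand, all remaining assertions are essentially formal.
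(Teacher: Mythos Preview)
The paper does not supply its own proof of this lemma; it simply cites \cite[Lemma 1.5.11]{BrownOzawa2008} and \cite[\S 9.1]{ADP}. Your argument is correct, though one phrase in (3) deserves care: the weak-$*$ topology on bounded subsets of $L^\infty(\cA)$ agrees with the \emph{weak} $L^2$ topology (testing against vectors in $L^2(\cA)$, which is dense in the predual $L^1(\cA)$), not the norm $L^2$ topology. With that reading, your normality argument goes through: for a bounded net $a_\alpha \to a$ weak-$*$ and $d \in L^2(\cB)$, one has $\ip{\phi(a_\alpha),d}_{L^2(\cB)} = \ip{a_\alpha,\phi^*(d)}_{L^2(\cA)} \to \ip{a,\phi^*(d)}_{L^2(\cA)} = \ip{\phi(a),d}_{L^2(\cB)}$.

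As a point of comparison, the cited references take a more self-contained route: rather than invoking the Umegaki--Takesaki theorem as a black box, they define $E$ directly as the $L^2$-orthogonal projection onto the closure of $\phi(L^\infty(\cA))$ in $L^2(\cB)$, verify the bimodule property (5) by an inner-product computation, and then deduce $L^\infty$-contractivity from positivity and the bimodule property via Kadison--Schwarz. Your approach is cleaner if one is willing to import the conditional expectation theorem, while the direct approach has the virtue of making the lemma independent of that machinery. Either way, the extension of (4) and (5) from $b \in L^\infty(\cB)$ to $b \in L^2(\cB)$, which you do not spell out, follows immediately by density and $L^2$-continuity of the relevant maps.
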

	
	\begin{definition}
		In the situation of the previous lemma, we call $\phi$ a \emph{(tracial $\mathrm{W}^*$)-embedding $\cA \to \cB$} and $E$ the associated \emph{trace-preserving conditional expectation}.  (Note that both maps are unital by definition and the previous proposition.)
	\end{definition}
	
	\begin{remark}
		It turns out that a trace-preserving $*$-homomorphism $L^\infty(\cA) \to L^\infty(\cB)$ is automatically continuous with respect to the weak-$*$ topology, essentially because the weak-$*$ topology can be recovered from the action of $L^\infty(\cA)$ on $L^2(\cA)$ by Theorem \ref{thm:W*GNSrep}; see \cite{Dixmier1953} or \cite[Proposition 2.6.4]{ADP}.  For similar reasons, the trace-preserving conditional expectation is also weak-$*$ continuous.
	\end{remark}
	
	\begin{example}
		Suppose that $\cB = L^\infty(\Omega,\mathcal{F},P)$ for some probability space $(\Omega,\mathcal{F},P)$, where $\mathcal{F}$ is the $\sigma$-algebra associated to the measure.  Let $\mathcal{G}$ be a $\sigma$-subalgebra of $\mathcal{F}$.  Then there is an expectation-preserving inclusion $L^\infty(\Omega,\mathcal{G},P) \to L^\infty(\Omega,\mathcal{F},P)$.  This extends to a map on the $L^2$ spaces, and the adjoint of this map is the conditional expectation $E: L^2(\Omega,\mathcal{F},P) \to L^2(\Omega,\mathcal{G},P)$ sending $X$ to $E[X|\mathcal{G}]$.  The properties in Lemma \ref{lem:inclusionconditionalexpectation} then reduce to the well-known classical properties of conditional expectation.  For instance, (2) the conditional expectation is contractive on $L^\infty$, (3) The conditional expectation respects complex conjugation, (4) ff $X \in L^2(\Omega,\mathcal{F},P)$ and $Y \in L^\infty(\Omega,\mathcal{G},P)$, then $E[XY | \mathcal{G}] = E[X | \mathcal{G}] Y$, (5) the conditional expectation is expectation-preserving: $E[E[X|\mathcal{G}]] = E[X]$.
	\end{example}
	
	\begin{notation} \label{not:inclusionCE}
		If $\cA$ and $\cB$ are tracial $\mathrm{W}^*$-algebras, we say that $\cA \subseteq \cB$ if $L^\infty(\cA) \subseteq L^\infty(\cB)$, the addition, product, $*$-operation and weak-$*$ topology for $L^\infty(\cA)$ are the restrictions of those from $L^\infty(\cB)$, and $\tau_{\cA} = \tau_{\cB}|_{L^\infty(\cA)}$.  In this case, we denote the conditional expectation $\cB \to \cA$ by $E_{\cA}$.
	\end{notation}
	
	As the paper will often deal with $m$-tuples of self-adjoint elements of $L^2$, we introduce the following convention to simplify notation.
	
	\begin{notation} \label{not:maptuples}
		If $\cA$ and $\cB$ are tracial $\mathrm{W}^*$-algebras and $\phi: L^\infty(\cA) \to L^\infty(\cB)$ is a tracial $\mathrm{W}^*$-embedding or a trace-preserving conditional expectation, then we will use the same letter $\phi$ to denote the extension of the map to the $L^2$ spaces.  Furthermore, if $X = (X_1,\dots,X_m) \in L^2(\cA)_{\sa}^m$, then we will write $\phi(X) = (\phi(X_1),\dots,\phi(X_m))$.
	\end{notation}
	
	\begin{definition}
		A tracial $\mathrm{W}^*$-embedding $\phi: \cA \to \cB$ is said to be a \emph{tracial $\mathrm{W}^*$-isomorphism} if it is bijective and the inverse map is also a tracial $\mathrm{W}^*$-embedding.
	\end{definition}
	
	For reasons of mathematical logic, the class of tracial $\mathrm{W}^*$-algebras is not a set.  However, it will be convenient for us in \S \ref{subsec:Econvex} to have a \emph{set} of isomorphism class representatives of tracial $\mathrm{W}^*$-algebras with separable predual.
	
	\begin{lemma} \label{lem:setofrepresentatives}
		There exists a set $\mathbb{W}$ of tracial $\mathrm{W}^*$-algebras, such that
		\begin{enumerate}[(1)]
			\item the elements of $\mathbb{W}$ are pairwise non-isomorphic,
			\item for every tracial $\mathrm{W}^*$-algebra with separable predual, there is a tracial $\mathrm{W}^*$-isomorphism to some element of $\mathbb{W}$.
		\end{enumerate}
	\end{lemma}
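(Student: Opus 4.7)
The plan is purely set-theoretic once we realize every tracial $\mathrm{W}^*$-algebra with separable predual as a structure on a single fixed underlying set. Fix the separable Hilbert space $H = \ell^2(\N)$. The key preliminary step is to verify that every tracial $\mathrm{W}^*$-algebra $\cA = (A,\tau)$ with separable predual is tracial $\mathrm{W}^*$-isomorphic to one of the form $(B,\rho)$, where $B$ is a $\mathrm{W}^*$-subalgebra of $B(H)$ and $\rho: B \to \C$ is a faithful normal tracial state. This is exactly what the GNS construction from Theorem \ref{thm:W*GNSrep} provides: the left-regular representation $\pi_\ell: A \to B(L^2(\cA))$ is a weak-$*$-continuous, injective, trace-preserving $*$-homomorphism, and a unitary identification $L^2(\cA) \cong H$ transports $\pi_\ell(A)$ to a $\mathrm{W}^*$-subalgebra of $B(H)$.

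The nontrivial ingredient in the above is that $L^2(\cA)$ is separable. I would deduce this from separability of $A_*$ as follows: by Banach--Alaoglu the unit ball of $A$ is weak-$*$-compact and, since $A_*$ is norm-separable, also weak-$*$-metrizable, hence weak-$*$-separable. Pick a countable weak-$*$-dense subset $\{a_n\}$ of the unit ball. Since $\tau \in A_*$, the map $a \mapsto \tau(a^*a)$ is weak-$*$-lower-semicontinuous and bounded on the unit ball, and in fact on bounded subsets weak-$*$ convergence coincides with strong-operator convergence in the GNS representation, so $\{\widehat{a_n}\}$ is dense in the unit ball of $L^2(\cA)$; scaling then yields a countable $L^2$-dense subset. (Alternatively, one invokes Sakai's theorem that a $\mathrm{W}^*$-algebra with separable predual admits a faithful normal representation on a separable Hilbert space \cite[Prop.~1.16.7]{Sakai1971}, and applies it to $\pi_\ell$, which is already faithful and normal.)

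Once we have this realization, consider
\[
\mathcal{S} = \bigl\{ (B,\rho) : B \subseteq B(H) \text{ a } \mathrm{W}^*\text{-subalgebra}, \ \rho: B \to \C \text{ a faithful normal trace} \bigr\}.
\]
This is genuinely a set, because $\mathcal{S}$ is a subcollection of $\mathcal{P}(B(H)) \times \C^{B(H)}$, and both factors are sets. By the preceding step, every tracial $\mathrm{W}^*$-algebra with separable predual is tracial $\mathrm{W}^*$-isomorphic to some element of $\mathcal{S}$. Tracial $\mathrm{W}^*$-isomorphism defines an equivalence relation on $\mathcal{S}$, and the axiom of choice produces a transversal $\mathbb{W} \subseteq \mathcal{S}$ selecting exactly one representative from each equivalence class. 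By construction, (1) the elements of $\mathbb{W}$ are pairwise non-isomorphic, and (2) every tracial $\mathrm{W}^*$-algebra with separable predual is isomorphic to some element of $\mathbb{W}$.

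The main (and essentially only) obstacle is the separability step for $L^2(\cA)$; everything else is a direct application of the power-set axiom and the axiom of choice. I expect the argument to be short, with the bulk of any write-up spent either citing or justifying the identification of $\cA$ with a substructure of $B(H)$.
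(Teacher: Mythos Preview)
Your proposal is correct and follows essentially the same approach as the paper's sketch: realize every tracial $\mathrm{W}^*$-algebra with separable predual inside $B(\ell^2(\N))$ via the GNS representation, collect all pairs $(B,\rho)$ with $B \subseteq B(\ell^2(\N))$ a $\mathrm{W}^*$-subalgebra and $\rho$ a faithful normal trace, then pass to isomorphism classes. Your version is slightly more careful in justifying separability of $L^2(\cA)$ and in explicitly invoking choice to select a transversal (the paper just takes the set of equivalence classes); both versions share the same minor oversight that $L^2(\cA) \cong \ell^2(\N)$ only holds when $\cA$ is infinite-dimensional, but this is trivially repaired by using an isometric embedding instead.
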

	
	\begin{proof}[Sketch of proof]
		We saw earlier that if $\cA = (A,\tau)$ is a tracial $\mathrm{W}^*$-algebra with separable predual, then there is a $\mathrm{W}^*$-embedding $A \to B(H_{\cA})$ (here by $\mathrm{W}^*$-embedding, we mean an injective normal $*$-homomorphism in the theory of von Neumann algebras).  Also, it is well-known (see e.g.\ \cite{Sakai1971}) that if $A$ has separable predual, then $H_{\cA} \cong L^2(\cA)$ is separable and hence isomorphic as a Hilbert space to $\ell^2(\N)$.  Therefore, $A$ is isomorphic to some $\mathrm{W}^*$-subalgebra of $B(\ell^2(\N))$.  Let $S_1$ be the set of $\mathrm{W}^*$-subalgebras of $B(\ell^2(\N))$ (which is a subset of the power set of $B(\ell^2(\N))$).  Let $S_2$ be the set of pairs $\{(A,\tau): A \in S_1, \tau: A \to \C \text{ faithful normal trace}\}$.  If $(A,\tau) \in S_2$, then the adjoint of the inclusion map produces a map from the space $B(\ell^2(\N))_*$ of trace class operators to $A_* \cong L^1(A,\tau)$, and hence $A_*$ is separable.  Thus, $S_2$ is a set of tracial $\mathrm{W}^*$-algebras such that every tracial $\mathrm{W}^*$-algebra with separable predual is isomorphic to some element of $S_2$.  Finally, observe that tracial $\mathrm{W}^*$-isomorphism defines an equivalence relation on $S_2$, and let $S_3$ be the set of equivalence classes.
	\end{proof}
	
	\subsection{Amalgamated free products}
	
	Next, we explain the definition of free independence with amalgamation.  This is an analog of conditional independence in classical probability theory.  For background see for instance \cite{VDN1992} or \cite[\S 4.7]{BrownOzawa2008}.
	
	\begin{definition} \label{def:freeproduct}
		Let $\cA = (A,\tau)$ be a tracial $\mathrm{W}^*$-algebra.  Let $B$, $A_1$, \dots, $A_N$ be $\mathrm{W}^*$-subalgebras of $A$ with $B \subseteq A_j$ for every $j$.  Let $\cB = (B,\tau|_B)$ and let $E_{\cB}: \cA \to \cB$ be the trace-preserving conditional expectation.  We say that $A_1$, \dots, $A_N$ are \emph{freely independent with amalgamation over $\cB$} if the following condition holds:  Whenever $\ell \in \N$ and $i_1$, \dots, $i_\ell \in \{1,\dots,N\}$ with $i_1 \neq i_2$, $i_2 \neq i_3$, \dots, $i_{\ell-1} \neq i_\ell$ and $a_j \in A_{i_j}$ with $E_{\cB}[a_j] = 0$ for $j = 1$, \dots, $\ell$, then $E_{\cB}[a_1 \dots a_\ell] = 0$.
	\end{definition}
	
	\begin{proposition} \label{prop:amalgamatedfreeproduct}
		Let $\cB = (B,\sigma)$ be a tracial $\mathrm{W}^*$-algebra.  For $j = 1$, \dots, $N$, let $\cA_j = (A_j,\tau_j)$ be a tracial $\mathrm{W}^*$-algebra and let $\iota_j: \cB \to \cA_j$ be a tracial $\mathrm{W}^*$-embedding.  Then there exists a tracial $\mathrm{W}^*$-algebra $\cA = (A,\tau)$ and tracial $\mathrm{W}^*$-embeddings $\iota: \cB \to \cA$ and $\phi_j: \cA_j \to \cA$ such that $\iota = \phi_j \circ \iota_j$ for all $j$, and such that $\phi_1(A_1)$, \dots, $\phi_N(A_N)$ are freely independent in $\cA$ with amalgamation over $\iota(B)$.  Moreover, $(\cA,\tau,\iota,\phi_1,\dots,\phi_N)$ are unique up to a canonical isomorphism; in other words, if $(\tilde{\cA},\tilde{\tau},\tilde{\iota},\tilde{\phi}_1,\dots,\tilde{\phi}_N)$ is another such tuple, then there is a unique tracial $\mathrm{W}^*$-isomorphism $\pi: \cA \to \tilde{\cA}$ satisfying $\pi \circ \iota = \tilde{\iota}$ and $\pi \circ \phi_j = \tilde{\phi}_j$ for all $j$.
	\end{proposition}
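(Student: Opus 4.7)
The plan is to prove this classical result by the Voiculescu-type construction of the amalgamated free product via a free product of Hilbert $\cB$-bimodules. First I would construct the ambient Hilbert space. For each $j$, the Hilbert space $L^2(\cA_j)$ is a $\cB$-$\cB$-bimodule through $\iota_j$, and decomposes as $L^2(\cA_j) = \iota_j(L^2(\cB)) \oplus L^2(\cA_j)^\circ$, where $L^2(\cA_j)^\circ$ is the orthogonal complement of $\iota_j(L^2(\cB))$. Then I would form the amalgamated Fock-style Hilbert bimodule
\[
\cH = L^2(\cB) \oplus \bigoplus_{\ell \geq 1} \bigoplus_{\substack{i_1,\dots,i_\ell \\ i_k \neq i_{k+1}}} L^2(\cA_{i_1})^\circ \boxtimes_{\cB} \cdots \boxtimes_{\cB} L^2(\cA_{i_\ell})^\circ,
\]
where $\boxtimes_{\cB}$ denotes Connes' fusion tensor product over $\cB$.

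Next I would define, for each $j$, a left action $\phi_j: \cA_j \to B(\cH)$ using the decomposition of $\cH$ with respect to the first tensor factor indexed by $i_1 = j$ versus $i_1 \neq j$ (the standard ``creation/annihilation/preservation'' decomposition familiar from free probability). The elements $\iota_j(b) \in \cA_j$ act through the common $\cB$-bimodule structure, so the maps $\iota = \phi_j \circ \iota_j: \cB \to B(\cH)$ agree for all $j$. I would let $A$ be the $\mathrm{W}^*$-algebra generated by $\bigcup_j \phi_j(A_j)$ inside $B(\cH)$, let $\Omega \in L^2(\cB) \subseteq \cH$ be the vector corresponding to $1$, and define $\tau(a) = \langle \Omega, a\Omega\rangle_\cH$. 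The key facts to verify (for which one standardly invokes \cite{VDN1992} and \cite[\S 4.7]{BrownOzawa2008}) are: the $\phi_j$ are trace-preserving $*$-homomorphisms (hence $\mathrm{W}^*$-embeddings by Lemma \ref{lem:inclusionconditionalexpectation}), $\tau$ is a faithful normal trace on $A$, and freeness with amalgamation over $\iota(\cB)$ holds because any alternating product $a_1 \cdots a_\ell$ with $E_{\cB}[a_k] = 0$ (hence $\phi_{i_k}(a_k)\Omega \in L^2(\cA_{i_k})^\circ$) applied to $\Omega$ lands in the summand indexed by the word $(i_1,\dots,i_\ell)$, which is orthogonal to $L^2(\cB)$.

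For uniqueness, I would use that the conditions of Definition \ref{def:freeproduct} together with the embeddings $\iota$ and $\phi_j$ uniquely determine $\tau$ on alternating products of centered elements from the various $\phi_j(\cA_j)$, and hence on the $*$-algebra $\cA_0$ generated by $\bigcup_j \phi_j(A_j)$; in other words, given another tuple $(\tilde\cA, \tilde\tau, \tilde\iota, \tilde\phi_1,\dots,\tilde\phi_N)$ with the same freeness property, one defines $\pi_0: \cA_0 \to \tilde\cA$ by $\pi_0(\phi_j(a)) = \tilde\phi_j(a)$ and checks that this is a well-defined trace-preserving $*$-homomorphism, using freeness on each side to reduce moment computations to the common data $(\cB, \cA_j, \iota_j)$. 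Since $\pi_0$ is trace-preserving, it extends to an isometry on $L^2$ by Lemma \ref{lem:inclusionconditionalexpectation}, and the double commutant (or equivalently weak-$*$ closure via the $L^2$ action) then produces a unique tracial $\mathrm{W}^*$-isomorphism $\pi: \cA \to \tilde\cA$ with $\pi \circ \phi_j = \tilde\phi_j$.

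The main technical obstacle is the construction of the Connes fusion tensor products $L^2(\cA_{i_1})^\circ \boxtimes_{\cB} \cdots \boxtimes_{\cB} L^2(\cA_{i_\ell})^\circ$ and the verification that the formula for $\phi_j$ defines a bounded $*$-homomorphism into $B(\cH)$ with $\phi_j$ weak-$*$ continuous. This is genuinely nontrivial because the bimodules are generally infinite-dimensional over $\cB$, and one must handle left bounded vectors carefully to make the creation operators well-defined. However, this is standard material for which I would cite \cite{VDN1992, BrownOzawa2008}, so in practice the proof becomes a matter of assembling the cited construction and checking the universal property.
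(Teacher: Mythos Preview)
The paper does not provide its own proof of this proposition: it is stated in \S\ref{sec:background} as standard background material on tracial $\mathrm{W}^*$-algebras, with the surrounding text referring the reader to \cite{VDN1992} and \cite[\S 4.7]{BrownOzawa2008} for amalgamated free products. Your proposal is exactly the standard Voiculescu-type Fock space construction that those references carry out, so there is nothing to compare---your sketch is correct and is the argument the paper is implicitly citing.
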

	
	\begin{definition}
		If $\cB$, $\cA_1$, \dots, $\cA_N$, and $\cA$ are as above (with the specified maps $\iota$, $\phi_1$, \dots, $\phi_N$), then we say that $\cA$ is a \emph{free product of $\cA_1$, \dots, $\cA_N$ with amalgamation over $\iota_1(\cB)$, \dots, $\iota_N(\cB)$}.
	\end{definition}
	
	In the case where $\cB = \C$, we refer to these concepts simply as \emph{free independence} and \emph{free products}.
	
	\subsection{Non-commutative laws and generators}
	
	Next, we describe the space of non-commutative laws.  A non-commutative law is the analog of a linear functional $\C[x_1,\dots,x_m] \to \R$ given by $f \mapsto \int f\,d\mu$ for some compactly supported measure on $\R^m$.  Instead of $\C[x_1,\dots,x_m]$, we use the non-commutative polynomial algebra in $m$ variables.
	
	\begin{definition}[Non-commutative polynomial algebra] \label{def:NCpolynomial}
		We denote by $\C\ip{x_1,\dots,x_m}$ the universal unital algebra generated by variables $x_1$, \dots, $x_m$.  As a vector space, $\C\ip{x_1,\dots,x_m}$ has a basis consisting of all products $x_{i_1} \dots x_{i_\ell}$ for $\ell \geq 0$ and $i_1$, \dots, $i_\ell \in \{1,\dots,m\}$.  We equip $\C\ip{x_1,\dots,x_m}$ with the unique $*$-operation such that $x_j^* = x_j$; more explicitly, the $*$-operation is defined on monomials by $(x_{i_1} \dots x_{i_\ell})^* = x_{i_\ell}^* \dots x_{i_1}^*$.
	\end{definition}
	
	\begin{definition}[Non-commutative law] \label{def:NClaw}
		A linear functional $\lambda: \C\ip{x_1,\dots,x_m}$ is said to be \emph{exponentially bounded} if there exists $R > 0$ such that $|\lambda(x_{i_1} \dots x_{i_\ell})| \leq R^\ell$ for all $\ell \in \N_0$ and $i_1$, \dots, $i_\ell \in \{1,\dots,m\}$, and in this case we say $R$ is an \emph{exponential bound} for $\lambda$.  A \emph{non-commutative law} is a unital, positive, tracial, exponentially bounded linear functional $\lambda: \C\ip{x_1,\dots,x_m} \to \C$. We denote the space of non-commutative laws by $\Sigma_m$, and we equip it with the weak-$*$ topology (that is, the topology of pointwise convergence on $\C\ip{x_1,\dots,x_m}$).  We denote by $\Sigma_{m,R}$ the subset of $\Sigma_m$ comprised of non-commutative laws with exponential bound $R$.
	\end{definition}
	
	\begin{observation}
		The space $\Sigma_{m,R}$ is convex, compact, and metrizable.
	\end{observation}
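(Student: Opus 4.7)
The plan is to establish the three properties separately, with convexity being immediate and compactness/metrizability following by realizing $\Sigma_{m,R}$ as a closed subset of a countable product of compact disks.

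For convexity, I would observe that if $\lambda_0, \lambda_1 \in \Sigma_{m,R}$ and $t \in [0,1]$, then $\lambda_t := (1-t) \lambda_0 + t \lambda_1$ is linear, unital ($\lambda_t(1) = 1$), tracial, and positive (as a convex combination of positive functionals). The exponential bound $R$ is preserved since $|\lambda_t(x_{i_1} \cdots x_{i_\ell})| \leq (1-t) R^\ell + t R^\ell = R^\ell$.

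For compactness and metrizability, let $W_m$ denote the (countable) set of words in the alphabet $\{1,\dots,m\}$ (including the empty word), and for $w = (i_1,\dots,i_\ell) \in W_m$ write $|w| = \ell$ and $x_w = x_{i_1} \cdots x_{i_\ell}$. Consider the compact metric product space
\[
K = \prod_{w \in W_m} \overline{D}_{R^{|w|}},
\]
where $\overline{D}_r \subset \C$ is the closed disk of radius $r$; this is a countable product of compact metric spaces, hence compact metrizable. Since the monomials $\{x_w : w \in W_m\}$ form a basis of $\C\ip{x_1,\dots,x_m}$, the map $\Phi: \Sigma_{m,R} \to K$ given by $\Phi(\lambda) = (\lambda(x_w))_{w \in W_m}$ is well-defined and injective, and by definition of the weak-$*$ topology, $\Phi$ is a homeomorphism onto its image.

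It remains to verify that $\Phi(\Sigma_{m,R})$ is closed in $K$. Any $c = (c_w)_{w \in W_m} \in K$ extends uniquely to a linear functional $\lambda_c$ on $\C\ip{x_1,\dots,x_m}$ by setting $\lambda_c(x_w) = c_w$, and this functional automatically satisfies the exponential bound $R$. The remaining defining conditions of $\Sigma_{m,R}$ are all closed in the product topology: unitality ($c_\emptyset = 1$) is a single coordinate equation; the tracial property ($c_{(i_1,\dots,i_\ell)} = c_{(i_2,\dots,i_\ell,i_1)}$ for every word) is a countable collection of coordinate equations; and positivity ($\lambda_c(p^*p) \geq 0$ for every $p \in \C\ip{x_1,\dots,x_m}$) is, for each fixed $p$, a polynomial inequality in the finitely many coordinates $\{c_w : w \text{ appearing in } p^*p\}$, hence closed. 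Therefore $\Phi(\Sigma_{m,R})$ is an intersection of countably many closed subsets of $K$, and $\Sigma_{m,R}$ inherits compactness and metrizability from $K$. The only mildly subtle point is verifying that positivity as a condition on a linear functional translates into closed conditions on the coordinates $c_w$, but this is clear once one notes that $\lambda_c(p^*p)$ depends polynomially on finitely many of the $c_w$.
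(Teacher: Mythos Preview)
Your argument is correct and standard; the paper itself does not prove this Observation, so there is nothing to compare against. One cosmetic point: at the end you say $\Phi(\Sigma_{m,R})$ is an intersection of \emph{countably} many closed sets, but the positivity condition $\lambda_c(p^*p)\ge 0$ is indexed by all $p\in\C\ip{x_1,\dots,x_m}$, which is uncountable. This does not matter, since an arbitrary intersection of closed sets is closed (and if you insist on countability you can restrict to $p$ with Gaussian-rational coefficients by continuity).
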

	
	\begin{observation}
		Let $A$ be a $*$-algebra and $X = (X_1, \dots, X_m) \in A_{\sa}^m$.  Then there is a unique $*$-homomorphism $\pi_{X}: \C\ip{x_1,\dots,x_m} \to \cA$ such that $\pi_{X}(x_j) = X_j$ for $j = 1$, \dots, $m$.
	\end{observation}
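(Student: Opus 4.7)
The statement is the universal property of the non-commutative polynomial algebra $\C\ip{x_1,\dots,x_m}$, so the proof is essentially a book-keeping exercise in the explicit description given in Definition \ref{def:NCpolynomial}. The plan is to build $\pi_X$ directly on the monomial basis and then verify the required compatibilities.

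First I would define the map on the monomial basis by
\[
\pi_X(x_{i_1} \cdots x_{i_\ell}) := X_{i_1} \cdots X_{i_\ell},
\]
with the empty product ($\ell = 0$) sent to $1 \in L^\infty(\cA)$, and extend by linearity. This uses the fact, stated in Definition \ref{def:NCpolynomial}, that the monomials form a basis, so the extension is well-defined. Next I would check that $\pi_X$ respects multiplication: on a pair of basis monomials the product in $\C\ip{x_1,\dots,x_m}$ is the concatenated monomial, and its image under $\pi_X$ is, by associativity of multiplication in $L^\infty(\cA)$, the product of the two images; bilinearity extends this to arbitrary elements. Unitality is built into the empty-product convention.

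To check that $\pi_X$ is a $*$-homomorphism, I would verify the identity $\pi_X(p^*) = \pi_X(p)^*$ on basis monomials, where it reduces to
\[
\pi_X\bigl((x_{i_1}\cdots x_{i_\ell})^*\bigr) = \pi_X(x_{i_\ell}\cdots x_{i_1}) = X_{i_\ell}\cdots X_{i_1} = (X_{i_1}\cdots X_{i_\ell})^*,
\]
using $X_j = X_j^*$ and the fact that $(ab)^* = b^*a^*$ in any $*$-algebra; skew-linear extension to all of $\C\ip{x_1,\dots,x_m}$ then gives the general case.

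For uniqueness, any $*$-homomorphism $\pi: \C\ip{x_1,\dots,x_m} \to \cA$ with $\pi(x_j) = X_j$ must satisfy $\pi(x_{i_1}\cdots x_{i_\ell}) = X_{i_1}\cdots X_{i_\ell}$ because $\pi$ respects products, and hence agrees with $\pi_X$ on a basis, so the two maps coincide. There is no real obstacle here; the only thing to be careful about is that the basis description in Definition \ref{def:NCpolynomial} is precisely what makes the construction of $\pi_X$ unambiguous, so I would cite that definition at the point of defining $\pi_X$ on monomials.
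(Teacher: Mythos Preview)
Your proof is correct and is exactly the standard verification of the universal property of the free unital $*$-algebra on self-adjoint generators. The paper itself does not supply a proof: the statement is recorded as an Observation and left to the reader, so there is nothing further to compare.
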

	
	\begin{definition}[Non-commutative law of an $m$-tuple]
		Let $\cA$ be a tracial $\mathrm{W}^*$-algebra.  Let $X = (X_1,\dots,X_m) \in L^\infty(\cA)_{\sa}^m$.  Then we define $\lambda_{X}: \C\ip{x_1,\dots,x_m} \to \C$ by $\lambda_{X} = \tau \circ \pi_{X}$, where $\pi_X$ is the map defined in the previous observation.
	\end{definition}
	
	\begin{notation}
		If $\cA$ is a tracial $\mathrm{W}^*$-algebra and $X \in L^\infty(\cA)^m$, we write
		\[
		\norm{X}_{L^\infty(\cA)^m} := \max(\norm{X_j}_{L^\infty(\cA)}: j = 1,\dots,m).
		\]
	\end{notation}
	
	\begin{observation}
		If $\cA$ and $X$ are as above, then $\lambda_{X}$ is a non-commutative law with exponential bound $\norm{X}_\infty$.  Conversely, if $R$ is an exponential bound for $\lambda_{X}$, then
		\[
		\norm{X}_{L^\infty(\cA)_{\sa}^m} = \max_j \lim_{n \to \infty} \tau(X_j^{2n})^{1/2n} \leq R.
		\]
		Hence, $\norm{X}_\infty$ is the smallest exponential bound for $\lambda_{X}$ and in particular it is uniquely determined by $\lambda_{X}$.
	\end{observation}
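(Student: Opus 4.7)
The observation has two parts: first that $\lambda_X$ is a non-commutative law with exponential bound $\norm{X}_{L^\infty(\cA)_{\sa}^m}$, and second the formula for the $L^\infty$-norm as a limit of trace moments.

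For the first part, the plan is to verify each defining property of a non-commutative law in turn. Linearity is immediate from $\lambda_X = \tau_{\cA} \circ \rho_X$. Unitality follows from $\rho_X(1) = 1$ and $\tau_{\cA}(1) = 1$. Positivity uses that $\rho_X$ is a $*$-homomorphism, so $\rho_X(p^*p) = \rho_X(p)^* \rho_X(p)$, and then $\tau_{\cA}$ of this is nonnegative because $\tau_{\cA}$ is a positive functional on $L^\infty(\cA)$. Traciality follows similarly from traciality of $\tau_{\cA}$ applied to $\rho_X(p)\rho_X(q)$. For the exponential bound, I would use submultiplicativity of $\norm{\cdot}_{L^\infty(\cA)}$ together with the state bound $|\tau_{\cA}(a)| \le \norm{a}_{L^\infty(\cA)}$ to obtain
\[
|\lambda_X(x_{i_1} \dots x_{i_\ell})| = |\tau_{\cA}(X_{i_1} \dots X_{i_\ell})| \le \norm{X_{i_1} \dots X_{i_\ell}}_{L^\infty(\cA)} \le \norm{X}_{L^\infty(\cA)_{\sa}^m}^\ell,
\]
which gives $\norm{X}_{L^\infty(\cA)_{\sa}^m}$ as an exponential bound.

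For the second part, fix $j$ and consider the self-adjoint element $X_j \in L^\infty(\cA)_{\sa}$. The plan is to apply the continuous functional calculus / spectral theorem to produce a Borel probability measure $\mu_j$ on $\R$, supported in $[-\norm{X_j},\norm{X_j}]$, such that $\tau_{\cA}(f(X_j)) = \int_\R f\,d\mu_j$ for every continuous $f$ on the spectrum. Then $\tau_{\cA}(X_j^{2n}) = \int x^{2n}\,d\mu_j(x)$, and the standard convergence $\norm{\cdot}_{L^{2n}(\mu_j)} \to \norm{\cdot}_{L^\infty(\mu_j)}$ for a probability measure gives
\[
\lim_{n \to \infty} \tau_{\cA}(X_j^{2n})^{1/2n} = \sup \supp(\mu_j).
\]
Because $\tau_{\cA}$ is faithful, $\supp(\mu_j)$ coincides with the spectrum of $X_j$, so this supremum equals $\norm{X_j}_{L^\infty(\cA)}$, establishing the claimed formula.

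Finally, using the exponential bound $R$ for $\lambda_X$, we have $\tau_{\cA}(X_j^{2n}) = \lambda_X(x_j^{2n}) \le R^{2n}$, so taking $2n$-th roots and letting $n \to \infty$ yields $\norm{X_j}_{L^\infty(\cA)} \le R$. Taking the maximum over $j$ gives $\norm{X}_{L^\infty(\cA)_{\sa}^m} \le R$, which combined with the first part shows $\norm{X}_{L^\infty(\cA)_{\sa}^m}$ is indeed the smallest exponential bound, and that this quantity depends only on $\lambda_X$. The only subtle ingredient is the passage from the $L^{2n}$ moments to the $L^\infty$ norm via the spectral measure, which crucially relies on faithfulness of $\tau_{\cA}$; everything else is bookkeeping with the definitions.
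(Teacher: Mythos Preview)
Your argument is correct and is exactly the standard justification the paper has in mind; the paper states this as an observation without proof, so there is nothing to compare against beyond noting that your expansion is sound. One small notational slip: the limit $\lim_{n\to\infty}\tau_{\cA}(X_j^{2n})^{1/2n}$ equals $\sup\{|t|:t\in\supp(\mu_j)\}$ (the $L^\infty(\mu_j)$-norm of the identity function), not $\sup\supp(\mu_j)$; since the spectrum of a self-adjoint element can contain negative values, it is the maximum of $|t|$ over the support that recovers $\norm{X_j}_{L^\infty(\cA)}$.
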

	
	In the case of a single operator $X$, we can apply the spectral theorem to show that there is a unique probability measure $\mu_X$ on $\R$ satisfying
	\[
	\int_{\R} f\,d\mu_X = \tau(f(X)) \text{ for } f \in C_0(\R).
	\]
	Since $X$ is bounded, $\mu_X$ is compactly supported and thus makes sense to evaluate on polynomials.  If $p$ is a polynomial, then $\lambda_X[p] = \int_{\R} p\,d\mu_X$.  Thus, $\lambda_X$ is simply the linear functional on polynomials corresponding to the spectral distribution.
	
	We use the notation $\lambda_{X}$ in particular when $\cA = M_n(\C)$.  We denote by $\tr_n$ the normalized trace $(1/n) \Tr$ on $M_n(\C)$; recall that this is the unique (unital) trace on $M_n(\C)$.  Thus, for any $X \in M_n(\C)_{\sa}^m$, a non-commutative law $\lambda_{X}$ is unambiguously specified by the previous definition.  In the $m = 1$ case, the non-commutative law is given by the empirical spectral distribution.  Note that when $\mathrm{X}$ is a random $m$-tuple of matrices, we will use the notation $\lambda_{X}$ by default to refer to the empirical non-commutative law, that is, the (random) non-commutative law of $X$ with respect to $\tr_n$.
	
	The next proposition shows that any non-commutative law can be realized by a self-adjoint $m$-tuple in some tracial $\mathrm{W}^*$-algebra.  This is a version of the \emph{Gelfand-Naimark-Segal construction} (or \emph{GNS construction}).  A proof can be found in \cite[Proposition 5.2.14(d)]{AGZ2009}.
	
	\begin{proposition}[GNS construction for non-commutative laws] \label{prop:GNS}
		Let $\lambda \in \Sigma_{m,R}$.  Then we may define a semi-inner product on $\C\ip{x_1,\dots,x_m}$ by
		\[
		\ip{p,q}_\lambda = \lambda(p^*q).
		\]
		Let $H_\lambda$ be the separation-completion of $\C\ip{x_1,\dots,x_m}$ with respect to this inner product, that is, the completion of $\C\ip{x_1,\dots,x_m} / \{p: \lambda(p^*p) = 0\}$, and let $[p]$ denote the equivalence class of a polynomial $p$ in $H_\lambda$.
		
		There is a unique unital $*$-homomorphism $\pi: \C\ip{x_1,\dots,x_m} \to B(H_\lambda)$ satisfying $\pi(p)[q] = [pq]$ for $p$, $q \in \C\ip{x_1,\dots,x_m}$.  Moreover, $\norm{\pi(x_j)} \leq R$.
		
		Let $X_j = \pi(x_j)$, let $X = (X_1,\dots,X_m)$ and let $A$ be the $\mathrm{W}^*$-subalgebra of $B(H_\lambda)$ generated by $X_1$, \dots, $X_m$.  Define $\tau: A \to \C$ by $\tau(Y) = \ip{[1], Y[1]}_\lambda$.  Then $\tau$ is a faithful normal trace on $A$, and hence $\cA = (A,\tau)$ is a tracial $\mathrm{W}^*$-algebra.
	\end{proposition}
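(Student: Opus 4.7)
The plan is to follow the classical Gelfand-Naimark-Segal template, adapted to the exponentially bounded tracial positive functional $\lambda$. First I verify that $\ip{p,q}_\lambda := \lambda(p^*q)$ defines a positive semi-definite Hermitian form on $\C\ip{x_1,\ldots,x_m}$: sesquilinearity is immediate, positivity is the hypothesis on $\lambda$, and conjugate symmetry $\lambda(p^*q) = \overline{\lambda(q^*p)}$ follows from the standard fact that any unital positive functional on a unital $*$-algebra is automatically $*$-preserving in the sense that $\lambda(a^*) = \overline{\lambda(a)}$ (polarize $\lambda((1+a)^*(1+a))$ and $\lambda((1+ia)^*(1+ia))$). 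Then $H_\lambda$ is the completion of the quotient by the null space, with the images $[p]$ of polynomials forming a dense subspace.

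The heart of the proof is showing that left multiplication by $x_j$ descends to a bounded self-adjoint operator of norm $\leq R$. I apply the Cauchy-Schwarz inequality $|\lambda(a^*b)|^2 \leq \lambda(a^*a)\lambda(b^*b)$ iteratively, with $a = q$ and $b = x_j^{2^k} q$ for $k = 1,\ldots,n$; telescoping yields
\[
\lambda(q^* x_j^2 q) \leq \lambda(q^*q)^{1 - 2^{-n}}\,\lambda(q^* x_j^{2^{n+1}} q)^{2^{-n}}.
\]
Since $q^* x_j^{2^{n+1}} q$ is a polynomial of degree $2\deg(q) + 2^{n+1}$, the exponential bound applied monomial by monomial gives $|\lambda(q^* x_j^{2^{n+1}} q)| \leq C(q)\, R^{2^{n+1}}$ with $C(q)$ depending only on $q$. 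Letting $n \to \infty$ produces $\lambda(q^* x_j^2 q) \leq R^2 \lambda(q^*q)$, so $[q] \mapsto [x_j q]$ extends uniquely to a bounded operator $X_j := \pi(x_j)$ of norm at most $R$. Self-adjointness is $\ip{[p],\pi(x_j)[q]}_\lambda = \lambda(p^* x_j q) = \lambda((x_j p)^* q) = \ip{\pi(x_j)[p],[q]}_\lambda$, and $\pi$ extends to a unital $*$-homomorphism via $\pi(p)[q] := [pq]$.

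Now set $A := \mathrm{W}^*(X_1,\ldots,X_m)$ and $\tau(Y) := \ip{[1], Y[1]}_\lambda$. Unitality and positivity are obvious; normality is automatic because $\tau$ is a vector state. For traciality and faithfulness I introduce the parallel right action. By cyclicity of $\lambda$, $\lambda((qx_j)^*(qx_j)) = \lambda(q x_j^2 q^*)$, and applying the left-action bound to $q^*$ in place of $q$ yields $\|[qx_j]\|^2 \leq R^2 \|[q]\|^2$; hence $\pi_r(x_j):[q]\mapsto[qx_j]$ extends to a bounded operator, and $\pi_r$ extends to a unital $*$-anti-homomorphism whose image lies in the commutant $A'$, since $\pi(p)\pi_r(r)[q] = [pqr] = \pi_r(r)\pi(p)[q]$ on a dense set. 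Traciality on the polynomial $*$-subalgebra is $\tau(\pi(p)\pi(q)) = \lambda(pq) = \lambda(qp) = \tau(\pi(q)\pi(p))$, and extends to all of $A$ via Kaplansky's density theorem (approximating arbitrary $Y, Z \in A$ by bounded nets of polynomials in the $X_j$), combined with normality of $\tau$ and joint strong-operator continuity of multiplication on bounded sets. Finally, if $\tau(Y^*Y) = \|Y[1]\|^2 = 0$ with $Y \in A$, then for every polynomial $q$, $Y[q] = Y\pi_r(q)[1] = \pi_r(q) Y[1] = 0$, and density of the $[q]$ in $H_\lambda$ forces $Y = 0$; so $\tau$ is faithful.

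The main obstacle is the iterated Cauchy-Schwarz step that converts the polynomial growth hypothesis on $\lambda$ into a genuine operator-norm bound; this is the only point where the exponential bound hypothesis is genuinely used, and the same trick must be reinvoked for the right action in order for the commutant-based faithfulness argument to have any setting in which to operate.
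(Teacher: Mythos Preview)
Your proof is correct and follows the standard GNS template. The paper itself does not supply a proof of this proposition; it simply cites \cite[Proposition 5.2.14(d)]{AGZ2009}. Your argument---iterated Cauchy--Schwarz to convert the exponential bound into $\norm{\pi(x_j)}\le R$, construction of the commuting right action $\pi_r$, traciality on polynomials extended via Kaplansky density and SOT-continuity of the vector state, and faithfulness via $Y[q]=\pi_r(q)Y[1]$---is exactly the classical route one finds in such references, so there is nothing substantive to compare.
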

	
	\begin{definition}
		In the situation of the previous proposition, we call $(\cA,X)$ the \emph{GNS realization of $\lambda$}.
	\end{definition}
	
	The tracial $\mathrm{W}^*$-algebra associated to $\lambda$ is canonical in the sense that any other construction would yield an isomorphic tracial $\mathrm{W}^*$-algebra.  The following lemma can be deduced from the well-known properties of the GNS representation associated to a faithful trace $\tau$ on a $\mathrm{W}^*$-algebra $\cA$ (which gives the so-called standard form of a tracial $\mathrm{W}^*$-algebra).
	
	\begin{lemma} \label{lem:lawisomorphism}
		Let $\cA$ and $\cB$ be tracial $\mathrm{W}^*$-algebras.  Let $X \in L^\infty(\cA)_{\sa}^m$ and $Y \in L^\infty(\cB)_{\sa}^m$ such that $\lambda_{X} = \lambda_{Y}$.  Let $\mathrm{W}^*(X)$ and $\mathrm{W}^*(Y)$ be the $\mathrm{W}^*$-subalgebras of $L^\infty(\cA)$ and $L^\infty(\cB)$ generated by $X$ and $\mathrm{Y}$ respectively, equipped with the traces $\tau_{\cA}|_{\mathrm{W}^*(X)}$ and $\tau_{\cB}|_{\mathrm{W}^*(Y)}$.  Then there is a unique tracial  $\mathrm{W}^*$-isomorphism $\rho: \mathrm{W}^*(X) \to \mathrm{W}^*(Y)$ such that $\rho(X_j) = Y_j$.
	\end{lemma}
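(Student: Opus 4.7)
The plan is to build $\rho$ in two stages: first by the tautological polynomial assignment $p(X) \mapsto p(Y)$ on the unital $*$-subalgebras generated by $X$ and $Y$, and then to extend this to the $\mathrm{W}^*$-closures using the standard form of tracial $\mathrm{W}^*$-algebras on their $L^2$ Hilbert spaces. Uniqueness will then follow from a density argument.

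Let $\lambda := \lambda_{X} = \lambda_{Y}$ and write $\cA_0 \subseteq \mathrm{W}^*(X)$ and $\cB_0 \subseteq \mathrm{W}^*(Y)$ for the unital $*$-subalgebras generated by $X$ and $Y$ respectively. First I would define $\rho_0(p(X)) = p(Y)$ for $p \in \C\ip{x_1,\dots,x_m}$. The identity
\[
\norm{p(X)}_{L^2(\cA)}^2 = \tau_{\cA}(p(X)^* p(X)) = \lambda(p^* p) = \tau_{\cB}(p(Y)^* p(Y)) = \norm{p(Y)}_{L^2(\cB)}^2,
\]
combined with the faithfulness of both traces, shows in one stroke that $\rho_0 : \cA_0 \to \cB_0$ is well-defined, injective, and $L^2$-isometric; it is manifestly a trace-preserving unital $*$-homomorphism, and the symmetric construction supplies its inverse. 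Since $\cA_0$ and $\cB_0$ are $L^2$-dense in $L^2(\mathrm{W}^*(X))$ and $L^2(\mathrm{W}^*(Y))$, $\rho_0$ extends uniquely to a Hilbert space unitary $U : L^2(\mathrm{W}^*(X)) \to L^2(\mathrm{W}^*(Y))$ sending $\widehat{1}$ to $\widehat{1}$.

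The second stage promotes $U$ to a $*$-isomorphism of $\mathrm{W}^*$-algebras. For $T \in \mathrm{W}^*(X)$, I would consider $U \pi_\ell(T) U^{-1} \in B(L^2(\mathrm{W}^*(Y)))$ and identify it with $\pi_\ell(S)$ for a necessarily unique $S \in \mathrm{W}^*(Y)$, which defines $\rho(T)$. A direct check on the $L^2$-dense subspace $\cA_0$ yields $U \pi_r(q(X)) U^{-1} = \pi_r(q(Y))$ for every polynomial $q$; taking weak-operator-topology limits gives $U \pi_r(\mathrm{W}^*(X)) U^{-1} = \pi_r(\mathrm{W}^*(Y))$. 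The standard-form identity $\pi_\ell(\cM) = \pi_r(\cM)'$ for tracial $\mathrm{W}^*$-algebras (Theorem \ref{thm:W*GNSrep} together with von Neumann's bicommutant theorem) then yields $U \pi_\ell(\mathrm{W}^*(X)) U^{-1} = \pi_\ell(\mathrm{W}^*(Y))$, so $\rho$ is well defined. It is a $*$-isomorphism by construction, trace-preserving thanks to $U\widehat{1} = \widehat{1}$, and sends $X_j$ to $Y_j$ because it agrees with $\rho_0$ on $\cA_0$.

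For uniqueness, any candidate $\rho' : \mathrm{W}^*(X) \to \mathrm{W}^*(Y)$ with $\rho'(X_j) = Y_j$ is automatically trace-preserving (it pushes $\lambda_{X}$ to $\lambda_{Y}$), so by Lemma \ref{lem:inclusionconditionalexpectation}(1) it extends to an $L^2$-isometry which must coincide with $U$ on the dense subspace $\cA_0$ and hence everywhere; restricted to $\mathrm{W}^*(X) \subseteq L^2(\mathrm{W}^*(X))$ this forces $\rho' = \rho$. The main obstacle is precisely the passage from $\cA_0$ to its weak-$*$ closure: the Hilbert space unitary $U$ by itself does not obviously carry $\pi_\ell(\mathrm{W}^*(X))$ onto $\pi_\ell(\mathrm{W}^*(Y))$, and the bicommutant/standard-form identity for tracial $\mathrm{W}^*$-algebras is what bridges this gap.
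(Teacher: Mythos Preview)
The paper does not actually prove this lemma; it merely remarks that it ``can be deduced from the well-known properties of the GNS representation associated to a faithful trace $\tau$ on a $\mathrm{W}^*$-algebra $\cA$ (which gives the so-called standard form of a tracial $\mathrm{W}^*$-algebra).'' Your proof correctly supplies exactly these details, building the $L^2$-unitary from the polynomial isomorphism and invoking the standard-form commutant identity $\pi_\ell(\cM)' = \pi_r(\cM)$ to pass to the $\mathrm{W}^*$-closure, so your approach is precisely what the paper has in mind.

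One minor remark: the detour through $\pi_r$ and commutants, while correct, is slightly more than needed. Since $U \pi_\ell(q(X)) U^{-1} = \pi_\ell(q(Y))$ already holds for polynomials $q$ (the same one-line check you did for $\pi_r$), and conjugation by the unitary $U$ is a WOT-homeomorphism $B(L^2(\mathrm{W}^*(X))) \to B(L^2(\mathrm{W}^*(Y)))$, taking WOT closures directly gives $U \pi_\ell(\mathrm{W}^*(X)) U^{-1} = \pi_\ell(\mathrm{W}^*(Y))$ without passing through the commutant theorem. For uniqueness, the paper's Lemma~\ref{lem:generators} (a $\mathrm{W}^*$-embedding is determined by its values on a generating set) gives it in one line, though your $L^2$-density argument via Lemma~\ref{lem:inclusionconditionalexpectation}(1) is equally valid.
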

	
	Here is a related lemma about generating sets for a tracial $\mathrm{W}^*$-algebra, which relies on the Kaplansky density theorem \cite[Theorem II.4.8]{TakesakiI}.
	
	\begin{lemma} \label{lem:generators}
		Let $\cA$ be a tracial $\mathrm{W}^*$-algebra.  Let $S \subseteq L^\infty(\cA)$.  Let $\mathrm{W}^*(S)$ be the smallest $\mathrm{W}^*$-subalgebra of $\cA$ containing $S$, which is equal to  the weak-$*$ closure of the unital $*$-algebra generated by $S$.  Then every $Z \in \mathrm{W}^*(S)$ can be approximated in the $L^2(\cA)$ norm by a sequence $Z_n$ in the unital $*$-algebra generated by $S$ such that $\norm{Z_n}_{L^\infty(\cA)} \leq \norm{Z}_{L^\infty(\cA)}$.  Furthermore, if $\phi: \cA \to \cB$ is a $\mathrm{W}^*$-embedding, then $\phi|_{\mathrm{W}^*(S)}$ is uniquely determined by $\phi|_S$.
	\end{lemma}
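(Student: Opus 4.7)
The plan is to unpack the claim into three pieces and handle them in turn, all driven by the Kaplansky density theorem as cited in the statement.

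\emph{Step 1: the equality $\mathrm{W}^*(S) = \overline{A_0}^{w^*}$.} Let $A_0$ denote the unital $*$-subalgebra of $L^\infty(\cA)$ generated by $S$. Any $\mathrm{W}^*$-subalgebra of $\cA$ that contains $S$ must contain $A_0$ and, being weak-$*$ closed, must contain $\overline{A_0}^{w^*}$. Conversely, the weak-$*$ closure of a unital $*$-subalgebra of $L^\infty(\cA)$ is itself a $\mathrm{W}^*$-subalgebra containing $S$, so it is the smallest such, namely $\mathrm{W}^*(S)$.

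\emph{Step 2: $L^2$-approximation with the $L^\infty$-bound preserved.} Identify $L^\infty(\cA)$ with $\pi_\ell(L^\infty(\cA)) \subseteq B(L^2(\cA))$ as in Theorem \ref{thm:W*GNSrep}. Under this identification, the weak-$*$ topology on $L^\infty(\cA)$ agrees with the $\sigma$-weak operator topology, so $\mathrm{W}^*(S)$ is also the strong operator closure of $A_0$. Fix $Z \in \mathrm{W}^*(S)$ and set $r = \norm{Z}_{L^\infty(\cA)}$. By the Kaplansky density theorem \cite[Theorem II.4.8]{TakesakiI}, the norm-$r$ ball of $A_0$ is strong-$*$ dense in the norm-$r$ ball of $\mathrm{W}^*(S)$, so there is a net $(W_\alpha)$ in $A_0$ with $\norm{W_\alpha}_{L^\infty(\cA)} \leq r$ and $\pi_\ell(W_\alpha) \to \pi_\ell(Z)$ in the strong operator topology. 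Applying this convergence to the cyclic vector $\widehat{1} \in L^2(\cA)$ gives
\[
\norm{W_\alpha - Z}_{L^2(\cA)} = \norm{\pi_\ell(W_\alpha - Z)\widehat{1}}_{L^2(\cA)} \longrightarrow 0.
\]
Since $L^2(\cA)$ is metrizable, one may extract a sequence $Z_n := W_{\alpha_n}$ with $\norm{Z_n - Z}_{L^2(\cA)} < 1/n$ and $\norm{Z_n}_{L^\infty(\cA)} \leq r$, giving the desired sequence.

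\emph{Step 3: uniqueness of the embedding on $\mathrm{W}^*(S)$.} Suppose $\phi, \psi: \cA \to \cB$ are two tracial $\mathrm{W}^*$-embeddings agreeing on $S$. Because both maps are unital $*$-homomorphisms, they agree on $A_0$. By Lemma \ref{lem:inclusionconditionalexpectation}, each of $\phi$ and $\psi$ extends to an isometry $L^2(\cA) \to L^2(\cB)$. Given $Z \in \mathrm{W}^*(S)$, choose $Z_n \in A_0$ with $\norm{Z_n - Z}_{L^2(\cA)} \to 0$ as in Step 2. Then $\phi(Z_n) = \psi(Z_n)$ for every $n$, while $\phi(Z_n) \to \phi(Z)$ and $\psi(Z_n) \to \psi(Z)$ in $L^2(\cB)$ by the isometry property. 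Hence $\phi(Z) = \psi(Z)$, as desired. Thus $\phi|_{\mathrm{W}^*(S)}$ depends only on $\phi|_S$.

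No single step presents a serious obstacle; the only real content is invoking Kaplansky density and recognizing that the trace vector $\widehat{1}$ converts strong operator convergence of a net of operators with a uniform $L^\infty$-bound into $L^2$-convergence of the underlying algebra elements, from which both the $L^2$-density statement and the uniqueness statement follow cleanly.
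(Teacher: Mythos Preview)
Your proof is correct. The paper does not actually supply a proof of this lemma; it only states it and points to the Kaplansky density theorem \cite[Theorem II.4.8]{TakesakiI} as the main input. Your argument fills in exactly the details one would expect from that hint: Kaplansky density provides the bounded approximants in the strong topology, evaluation at the cyclic vector $\widehat{1}$ converts this to $L^2$-convergence, and the $L^2$-isometry of tracial $\mathrm{W}^*$-embeddings from Lemma~\ref{lem:inclusionconditionalexpectation} then forces uniqueness on $\mathrm{W}^*(S)$.
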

	
	In fact, the notion of generators for a $\mathrm{W}^*$-algebra extends to elements of $L^2(\cA)$.  For instance, for a self-adjoint tuple $X \in L^2(\cA)_{\sa}^m$, using the theory of affiliated operators sketched in \S \ref{subsec:affiliated}, it is valid to apply a bounded Borel function $f$ to $X_j$ through functional calculus, and $f(X_j)$ will be an element of $\cA$.  Thus, we may define $\mathrm{W}^*(X)$ as (for instance) the $\mathrm{W}^*$-subalgebra generated by $\arctan(X_1)$, \dots, $\arctan(X_m)$, and then, as one would hope, $X$ turns out to be in $L^2(\mathrm{W}^*(X))_{\sa}^m$.  See also \cite[p.~482-483]{BrownOzawa2008}.  We can state a characterization of $\mathrm{W}^*(X)$ without reference to affiliated operators as follows.
	
	\begin{lemma} \label{lem:L2generated}
		Let $\cA$ be a tracial $\mathrm{W}^*$-algebra and $X \in L^2(\cA)_{\sa}^m$.  Then there exists a unique smallest $\mathrm{W}^*$-subalgebra $B \subseteq L^\infty(\cA)$ such that $X \in B_{\sa}^m$.  We use the notation $\mathrm{W}^*(X)$ for $B$ and for $(B,\tau_{\cA}|_B)$ as needed.
	\end{lemma}

	\section{Duality for $L^2$ optimal couplings} \label{sec:duality}
	
	Our goal is to prove a version of the Monge-Kantorovich duality for the non-commutative version of the $L^2$ Wasserstein distance defined by Biane and Voiculescu \cite{BV2001}.  In \S \ref{subsec:basiccouplings} we recall the definitions of optimal couplings that were stated more succinctly in the introduction.  We define $E$-convex functions in \S \ref{subsec:Econvex} and the corresponding Legendre transform in \S \ref{subsec:Legendre}.  Then we prove the non-commutative Monge-Kantorovich duality in \S \ref{subsec:MKduality}, and as an application we prove a decomposition result for optimal couplings in \S \ref{subsec:decomposition}.
	
	\subsection{Wasserstein distance and optimal couplings} \label{subsec:basiccouplings}
	
	\begin{definition}[{Biane-Voiculescu \cite[\S 1.1]{BV2001}}] \label{def:boundedcoupling}
		Let $\mu$, $\nu \in \Sigma_m$ be non-commutative laws.  A \emph{coupling} of $\mu$ and $\nu$ is a triple $(\cA,X,Y)$ where $\cA$ is a tracial $\mathrm{W}^*$-algebra and $X$, $Y \in L^\infty(\cA)_{\sa}^m$ such that $\lambda_X = \mu$ and $\lambda_Y = \nu$.  For $\mu$, $\nu \in \Sigma_m$, the \emph{(non-commutative $L^2$) Wasserstein distance} $d_W^{(2)}(\mu,\nu)$ is the infimum of $\norm{X - Y}_{L^2(\cA)_{\sa}^m}$ over all couplings $(\cA,X,Y)$ for $\cA \in \mathbb{W}$.
	\end{definition}
	
	It is shown in \cite[Theorem 1.3]{BV2001} that $d_W^{(2)}$ is a metric on the set $\Sigma_m$, and for each $R > 0$, $\Sigma_{m,R}$ is complete in this metric.  However, as shown in \S \ref{subsec:twotopologies}, the topology generated by $d_W^{(2)}$ is strictly stronger than the weak-$*$ topology on $\Sigma_m$.  The notion of optimal couplings corresponding to the Wasserstein distance is as follows.
	
	\begin{definition} \label{def:boundedoptimalcoupling}
		A coupling $(\cA,X,Y)$ of two non-commutative laws $\mu$ and $\nu$ is \emph{optimal} if $\norm{X - Y}_{L^2(\cA)_{\sa}^m} = d_W^{(2)}(\mu,\nu)$.
	\end{definition}
	
	\begin{remark}
		As remarked in \cite{BV2001}, for every $\mu$, $\nu \in \Sigma_m$, some optimal coupling exists.  To see this, suppose $R > 0$ is an exponential bound for $\mu$ and $\nu$.  Note that that if $(\cA,X,Y)$ is a coupling and $\gamma$ is the joint law of $(X,Y)$, then $\norm{X - Y}_{L^2(\cA)_{\sa}^m} = \left( \sum_{j=1}^m \gamma((x_j - x_{m+j})^2)\right)^{1/2}$.  The space of joint laws $\gamma \in \Sigma_{2m,R}$ with marginals $\mu$ and $\nu$ is closed in $\Sigma_{2m,R}$ and therefore compact, and $\gamma \mapsto \left( \sum_{j=1}^m \gamma((x_j - x_{m+j})^2)\right)^{1/2}$ is continuous.  Thus, it achieves a minimum at some $\gamma^*$, and we obtain an optimal coupling $(\cA,X,Y)$ from the GNS construction with $\gamma^*$ (Proposition \ref{prop:GNS}).
	\end{remark}
	
	Just as in classical optimal transport theory, it is convenient to frame $L^2$ optimal couplings in terms of inner products rather than $L^2$ norms in order to relate them with Legendre transforms.  If $(\cA,X,Y)$ is a coupling of $\mu$ and $\nu$, then
	\[
	\norm{X - Y}_{L^2(\cA)_{\sa}^m}^2 = \norm{X}_{L^2(\cA)_{\sa}^m}^2 - 2\ip{X,Y}_{L^2(\cA)_{\sa}^m} + \norm{Y}_{L^2(\cA)_{\sa}^m}^2.
	\]
	Since $\norm{X}_{L^2(\cA)_{\sa}^m}^2$ and $\norm{Y}_{L^2(\cA)_{\sa}^m}^2$ are uniquely determined by $\mu$ and $\nu$, a coupling minimizes $\norm{X - Y}_{L^2(\cA)_{\sa}^m}$ if and only if it maximizes the inner product $\ip{X,Y}_{L^2(\cA)_{\sa}^m}$.  This motivates the following definition.
	
	\begin{definition}
		For $\mu$, $\nu \in \Sigma_m$, we denote by $C(\mu,\nu)$ the maximal value of $\ip{X,Y}_{L^2(\cA)_{\sa}^m}$ over all couplings $(\cA,X,Y)$ of $\mu$ and $\nu$.
	\end{definition}
	
	The preceding paragraph shows that
	\[
	d_W^{(2)}(\mu,\nu)^2 = \sum_{j=1}^m \mu(x_j^2) + \sum_{j=1}^m \nu(x_j^2) - 2 C(\mu,\nu).
	\]
	The goal of the section is to establish a duality result that $C(\mu,\nu)$ is the infimum of $\mu(f) + \nu(g)$ over certain pairs $(f,g)$ of $E$-convex functions.
	
	\subsection{$E$-convex functions} \label{subsec:Econvex}
	
	Fix a set $\mathbb{W}$ of isomorphism class representatives for tracial $\mathrm{W}^*$-algebras with separable predual, as was given by Lemma \ref{lem:setofrepresentatives}. (Although we are only considering a set of isomorphism class representatives, we make no identifications between different tracial $\mathrm{W}^*$-embeddings from a given $\cA \in \mathbb{W}$ to a given $\cB \in \mathbb{W}$.)
	
	\begin{definition}
		Let $S$ be a set.  A \emph{tracial $\mathrm{W}^*$-function with values in $S$} is tuple $f = (f^{\cA})_{\cA \in \mathbb{W}}$, where $f^{\cA}: L^2(\cA)_{\sa}^m \to S$ if whenever $\iota: \cA \to \cB$ is a tracial $\mathrm{W}^*$-embedding, we have $f^{\cA} = f^{\cB} \circ \iota$.  (Here the inclusion $\iota$ is understood to extend to a map $L^2(\cA)_{\sa}^m \to L^2(\cB)_{\sa}^m$ per Notation \ref{not:maptuples}.)
	\end{definition}
	
	Thus, roughly speaking, being a $\mathrm{W}^*$-function means that the evaluation of $f$ on some $X \in L^2(\cA)_{\sa}^m$ is independent of the ambient algebra.  Hence, in particular, for bounded operators, $f^{\cA}(X)$ only depends on the non-commutative law of $X$.
	
	Although the definition of $f$ only specifies $f^{\cA}$ when $\cA$ is in the set $\mathbb{W}$, it will sometimes be convenient to use the notation $f^{\cA}$ for a general tracial $\mathrm{W}^*$-algebra $\cA$ with separable predual.  Indeed, by our choice of $\mathbb{W}$, there exists an isomorphism $\phi$ from $\cA$ to some $\cB \in \mathbb{W}$.  We can then set $f^{\cA} = f^{\cB} \circ \phi$.  This is well-defined, that is, independent of the particular choice of $\phi$, because $f^{\cB} \circ \psi = f^{\cB}$ for every automorphism $\psi$ of $\cB$; this in turn follows from the definition of $\mathrm{W}^*$-functions since an automorphism $\psi$ is in particular an inclusion from $\cB$ into $\cB$.
	
	\begin{definition}
		A tracial $\mathrm{W}^*$-function $f = (f^{\cA})_{\cA \in \mathbb{W}}$ with values in $[-\infty,+\infty]$ is said to be \emph{$E$-convex} if either it is identically equal to $-\infty$ or the following conditions hold:
		\begin{enumerate}[(1)]
			\item For each $\cA$, $f^{\cA}$ is a convex and lower semi-continuous function $L^2(\cA)_{\sa}^m \to (-\infty,+\infty]$.
			\item If $\iota: \cA \to \cB$ is a trace-preserving embedding, and if $E = \iota^*: \cB \to \cA$ is the corresponding trace-preserving conditional expectation, then
			\[
			f^{\cA}(E[X]) \leq f^{\cB}(X)
			\]
			for $X \in \cB_{\sa}^m$.  (Here $E$ is understood to extend to a map $L^2(\cB)_{\sa}^m \to L^2(\cA)_{\sa}^m$ per Notation \ref{not:maptuples}.)
		\end{enumerate}
	\end{definition}
	
	\begin{example}
		For $t \in (0,\infty)$, let $q_t^{\cA}(X) = (1/2t) \norm{X}_{L^2(\cA)_{\sa}^m}^2$.  Then $q_t$ is $E$-convex.  Indeed, it is convex because of the Cauchy-Schwarz and arithmetic-geometric mean inequalities.  It is clearly continuous.  Finally, it satisfies monotonicity under conditional expectation because conditional expectations are contractive in $\norm{\cdot}_{L^2(\cA)_{\sa}^m}$.
	\end{example}
	
	We next explain an equivalent characterization of $E$-convexity using subgradient vectors.
	
	\begin{definition}
		If $H$ is a real Hilbert space and $f: H \to (-\infty,\infty]$ is a function, we say that $y \in H$ is a \emph{subgradient} for $f$ at $x$ if
		\[
		f(x') \geq f(x) + \ip{y, x' - x} \text{ for all } x' \in H.
		\]
		We define the \emph{subdifferential} $\eth f(x)$ as the set of subgradient vectors at $x$.
	\end{definition}
	
	The following facts are well-known in convex analysis.
	
	\begin{lemma} \label{lem:Hilbertsubgradient}
		Let $H$ be a Hilbert space.  If $f: H \to [-\infty,\infty]$ is convex and lower semi-continuous and $f(x)$ is finite, then $\eth f(x)$ is nonempty, closed, and convex.  Conversely, $f: H \to (-\infty,\infty)$ and $\eth f$ is nonempty for every $x$, then $f$ is convex.
	\end{lemma}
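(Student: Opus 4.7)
The plan is to handle the three forward-direction properties through the identity
\[
\eth f(x) = \bigcap_{x' \in H} \{y \in H : \langle y, x - x' \rangle \leq f(x') - f(x)\},
\]
which exhibits the subdifferential as an intersection of closed affine half-spaces (with the constraints from $x' \notin \dom(f)$ being vacuous); this makes $\eth f(x)$ automatically closed and convex. The substantive forward-direction claim is nonemptiness, which I would obtain by Hahn-Banach.

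Specifically, I would apply Hahn-Banach separation to the closed convex epigraph $\operatorname{epi}(f) = \{(x',t) \in H \times \R : t \geq f(x')\}$ and a point lying just below $(x, f(x))$, which is outside the epigraph because $f(x)$ is finite. The resulting continuous linear functional $(y_0, \alpha)$ satisfies $\alpha \geq 0$ (since the epigraph extends to $t = +\infty$) and in fact $\alpha > 0$ (a vertical hyperplane would contradict having $x$ in $\dom(f)$). Rescaling to $\alpha = 1$, the vector $y_0$ yields a continuous affine minorant of $f$ whose value at $x$ agrees with $f(x)$; its linear part is the desired element of $\eth f(x)$.

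For the converse, fix $x_0, x_1 \in H$ and $t \in (0,1)$, set $x_t = (1-t)x_0 + t x_1$, and choose any $y \in \eth f(x_t)$ supplied by the hypothesis. Testing the subgradient inequality against $x_0$ and $x_1$ gives
\[
f(x_0) \geq f(x_t) + \langle y, x_t - x_0 \rangle, \qquad f(x_1) \geq f(x_t) + \langle y, x_t - x_1 \rangle,
\]
and forming the convex combination with weights $1-t$ and $t$ cancels the inner-product contributions (since $(1-t)(x_t - x_0) + t(x_t - x_1) = 0$), producing $(1-t) f(x_0) + t f(x_1) \geq f(x_t)$, which is convexity. The principal obstacle is ensuring that the Hahn-Banach separating functional is non-vertical, since without some implicit regularity at $x$ the subdifferential can be empty (for example $f(y) = -\sqrt{y}$ on $[0,1]$ extended by $+\infty$ has $\eth f(0) = \emptyset$); finiteness of $f(x)$ together with lower semi-continuity of $f$ must be combined carefully to exclude this behavior.
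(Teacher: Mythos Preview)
The paper does not supply a proof; the lemma is introduced as ``well-known in convex analysis.'' Your arguments for closedness, convexity, and the converse direction are correct and entirely standard.

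Your hesitation about nonemptiness is warranted: the claim as literally stated is false, and your own counterexample $f(y) = -\sqrt{y}$ on $[0,\infty)$ extended by $+\infty$ (with $\eth f(0) = \emptyset$) proves it. The step where you assert $\alpha > 0$ in the Hahn--Banach separation fails precisely in such boundary situations, since a vertical supporting hyperplane to the epigraph is then possible; merely having $x \in \dom(f)$ does not rule this out. The paper's actual uses of this lemma (in showing that $E$-convex functions admit subgradients) take place under the stronger hypothesis that $f$ is finite-valued on all of $H$; combined with convexity and lower semi-continuity this forces continuity on all of $H$ (via a Baire category argument), which guarantees local boundedness above and rules out vertical separating hyperplanes. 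With that additional hypothesis---equivalently, with $x$ in the interior of the effective domain---your Hahn--Banach approach is correct and is the standard proof.
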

	
	Analogously, we will show that $E$-convex $\mathrm{W}^*$-functions are characterized by the existence of a subgradient vector $Y$ to $f^{\cA}$ at $X$ such that $Y \in L^2(\mathrm{W}^*(X))_{\sa}^m$ (where $\mathrm{W}^*(X)$ is given by Lemma \ref{lem:L2generated}).  In addition, we handle the case where $f$ can take the value $+\infty$.
	
	\begin{lemma} \label{lem:Econvex}
		Let $f$ be a $\mathrm{W}^*$-function taking values in $(-\infty,\infty)$.  Then $f$ is $E$-convex if and only if for each $\cA \in \mathbb{W}$ and $X \in L^2(\cA)_{\sa}^m$, there exists $Y \in L^2(\mathrm{W}^*(X))_{\sa}^m$ which is a subgradient vector to $f^{\cA}$ at $X$.  Here $\mathrm{W}^*(X)$ is given by Lemma \ref{lem:L2generated}.
	\end{lemma}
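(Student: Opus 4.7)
The proof splits into two directions.

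For the easy direction $(\Leftarrow)$, assume that at every point a subgradient lying in $L^2(\mathrm{W}^*(X))_{\sa}^m$ exists. The existence of subgradients everywhere forces $f^{\cA}$ to be the pointwise supremum of its affine minorants, hence convex and lower semi-continuous. For the conditional-expectation monotonicity, suppose $\iota:\cA\to\cB$ is a trace-preserving embedding with corresponding conditional expectation $E=\iota^*:\cB\to\cA$, and let $X\in L^2(\cB)_{\sa}^m$ and $Z=E[X]\in L^2(\cA)_{\sa}^m$. Apply the hypothesis \emph{inside $\cB$} at the point $Z$: there is a subgradient $Y\in L^2(\mathrm{W}^*(Z))_{\sa}^m\subseteq L^2(\cA)_{\sa}^m\subseteq L^2(\cB)_{\sa}^m$ of $f^{\cB}$ at $Z$. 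Then
\[
f^{\cB}(X)\;\geq\; f^{\cB}(Z)+\ip{Y,X-Z}_{L^2(\cB)_{\sa}^m},
\]
and the inner product vanishes because $Y\in L^2(\cA)_{\sa}^m$ while $E[X-Z]=0$ and $E$ is the orthogonal projection onto $L^2(\cA)$. Using the $\mathrm{W}^*$-function identity $f^{\cB}(Z)=f^{\cA}(Z)=f^{\cA}(E[X])$ gives the required inequality.

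For the nontrivial direction $(\Rightarrow)$, fix $\cA\in\mathbb{W}$ and $X\in L^2(\cA)_{\sa}^m$, and set $\cB=\mathrm{W}^*(X)$ with conditional expectation $E=E_{\cB}:\cA\to\cB$. First, since $f$ is $\mathrm{W}^*$-valued in $\R$ and $f^{\cA}$ is convex and lower semi-continuous, a standard Baire-category argument shows $f^{\cA}$ is continuous on $L^2(\cA)_{\sa}^m$, and hence the subdifferential $\eth f^{\cA}(X)$ is nonempty; pick any $Y\in\eth f^{\cA}(X)$ and define $Y'=E[Y]\in L^2(\cB)_{\sa}^m$. I claim $Y'$ is still a subgradient of $f^{\cA}$ at $X$. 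For any $X''\in L^2(\cA)_{\sa}^m$, let $Z=E[X'']\in L^2(\cB)_{\sa}^m$; by $E$-convexity of $f$ applied to the inclusion $\cB\subseteq\cA$,
\[
f^{\cA}(X'')\;\geq\; f^{\cB}(Z)\;=\;f^{\cA}(Z),
\]
and since $Y$ is a subgradient of $f^{\cA}$ at $X$,
\[
f^{\cA}(Z)\;\geq\; f^{\cA}(X)+\ip{Y,Z-X}_{L^2(\cA)_{\sa}^m}.
\]
Now use that $X\in L^2(\cB)_{\sa}^m$ (so $E[X]=X$) and self-adjointness of $E$ on $L^2(\cA)$ to rewrite
\[
\ip{Y,Z-X}_{L^2(\cA)_{\sa}^m}=\ip{Y,E[X''-X]}_{L^2(\cA)_{\sa}^m}=\ip{E[Y],X''-X}_{L^2(\cA)_{\sa}^m}=\ip{Y',X''-X}_{L^2(\cA)_{\sa}^m},
\]
and chain the inequalities. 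This proves $Y'\in L^2(\cB)_{\sa}^m=L^2(\mathrm{W}^*(X))_{\sa}^m$ is a subgradient to $f^{\cA}$ at $X$.

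The genuine content is in $(\Rightarrow)$, where the main (small) obstacle is verifying that finiteness plus convexity plus lower semi-continuity of $f^{\cA}$ on the infinite-dimensional Hilbert space $L^2(\cA)_{\sa}^m$ is enough to guarantee a classical subgradient exists at every point; once that is in hand, the projection-of-a-subgradient trick $Y\mapsto E[Y]$ combined directly with the defining inequality of $E$-convexity yields the desired refinement. One should also note where Lemma \ref{lem:L2generated} is used: it legitimizes the notation $\mathrm{W}^*(X)$ for $X\in L^2(\cA)_{\sa}^m$ and ensures $X=E_{\mathrm{W}^*(X)}[X]$, the property driving the last computation.
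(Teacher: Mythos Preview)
Your proof is correct and follows essentially the same route as the paper: in both directions the key move is the ``project the subgradient'' trick $Y\mapsto E_{\mathrm{W}^*(X)}[Y]$, combined with the $E$-convexity inequality $f^{\cA}(X'')\geq f^{\cA}(E[X''])$ and self-adjointness of $E$ to shift the conditional expectation across the inner product. Your Baire-category justification for the existence of a subgradient at every point (since $f^{\cA}$ is finite-valued, convex, and l.s.c.\ on a Banach space) is in fact more careful than the paper's passing appeal to Hahn-Banach.
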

	
	\begin{proof}
		First, suppose that $f$ is $E$-convex. Fix $X \in L^2(\cA)_{\sa}^m$.  By Lemma \ref{lem:Hilbertsubgradient}, there exists some subgradient vector $Z$ to $f^{\cA}(X)$.  Let $\cB = \mathrm{W}^*(X)$, let $E: \cA \to \cB$ be the trace-preserving conditional expectation, and let $Y = E[Z]$.  Then for $X' \in L^2(\cA)_{\sa}^m$, we have
		\begin{align*}
			f^{\cA}(X') &\geq f^{\cB}(E[X']) = f^{\cA}(E[X']) \\
			&\geq f^{\cA}(X) + \ip{Z, E[X'] - X}_{L^2(\cA)_{\sa}^m} \\
			&= f^{\cA}(X) + \ip{Z, E[X' - X]}_{L^2(\cA)_{\sa}^m} \\
			&= f^{\cA}(X) + \ip{Y, X' - X}_{L^2(\cA)_{\sa}^m}.
		\end{align*}
		Thus, the desired subgradient condition holds.
		
		Conversely, suppose this subgradient condition holds.  Lower semi-continuity of $f^{\cA}$ follows from the existence of subgradient vectors.  For $X_0$, $X_1 \in L^2(\cA)_{\sa}^m$ and $t \in (0,1)$, we have
		\[
		f^{\cA}((1 - t) X_0 + t X_1) \leq (1 - t) f^{\cA}(X_0) + t f^{\cA}(X_1).
		\]
		because of the existence of a subgradient vector at $(1 - t) X_0 + t X_1$.  To check the monotonicity under conditional expectation, consider an embedding $\iota: \cA \to \cB$ and let $E: \cB \to \cA$ be the corresponding conditional expectation.  Let $X \in L^2(\cB)_{\sa}^m$ and let $X' = E[X]$.  By (1), there is a subgradient vector $Y$ to $f^{\cB}$ at the point $X'$ that is in $L^2(\mathrm{W}^*(X'))_{\sa}^m$, and in particular $Y \in L^2(\cA)_{\sa}^m$.  But then
		\begin{align*}
			f^{\cB}(X) &\geq f^{\cB}(X') + \ip{Y, X - X'}_{L^2(\cB)_{\sa}^m} \\
			&= f^{\cB}(E[X]) + \ip{Y, X - E[X]}_{L^2(\cB)_{\sa}^m} \\
			&= f^{\cB}(E[X]).
		\end{align*}
	\end{proof}
	
	\begin{remark} \label{rem:Econvexitytest}
		The same argument shows that for a $\mathrm{W}^*$-function taking values in $(-\infty,+\infty]$, $E$-convexity is equivalent to the combination of the following three conditions:
		\begin{enumerate}[(1)]
			\item For each $\cA \in \mathbb{W}$ and $X \in L^2(\cA)_{\sa}^m$, if $f^{\cA}(X) < \infty$, then there exists $Y \in L^2(\mathrm{W}^*(X))$ which is a subgradient vector to $f^{\cA}$ at $X$.
			\item For each $\cA$, the set $(f^{\cA})^{-1}((-\infty,M])$ is closed and convex in $L^2(\cA)_{\sa}^m$.
			\item If $\iota: \cA \to \cB$ is a tracial $\mathrm{W}^*$-embedding and $E = \iota^*: \cB \to \cA$ is the corresponding conditional expectation, then $f^{\cB}(X) < +\infty$ implies $f^{\cA}(E[X]) < +\infty$.
		\end{enumerate}
	\end{remark}
	
	\begin{remark}
		If $f$ is a tracial $\mathrm{W}^*$-function, then $f^{\cA}(UXU^*) = f^{\cA}(X)$ for every unitary $U$ in $L^\infty(\cA)$ and $X \in L^2(\cA)_{\sa}^m$; this is because conjugation by $U$ defines an automorphism of $\cA$ (hence in particular a tracial $\mathrm{W}^*$-embedding $\cA \to \cA$), and $f$ respects tracial $\mathrm{W}^*$-embeddings.
		
		If $f$ is $E$-convex, then this unitary invariance gives rise to a ``sum of commutators'' condition on subgradient vectors related to Lemma \ref{lem:sumofcommutators}.  More precisely, suppose $f$ is $E$-convex, $Y \in \eth f^{\cA}(X)$ and $U$ is a unitary in $L^\infty(\cA)$.  Then
		\[
		f^{\cA}(X) = f^{\cA}(UXU^*) \geq f^{\cA}(X) + \ip{UXU^* - X, Y}_{L^2(\cA)_{\sa}^m}.
		\]
		As in Lemma \ref{lem:sumofcommutators}, by taking $U = e^{itA}$ for $A \in L^\infty(\cA)_{\sa}$ and differentiating at $t = 0$, we obtain $\sum_{j=1}^m [X_j,Y_j] = 0$.
	\end{remark}
	
	The next lemma describes how the subdifferential interacts with conditional expectations.
	
	\begin{lemma} \label{lem:CEgradient1} 
		Let $f$ be an $E$-convex $\mathrm{W}^*$-function.  Let $\cA \in \mathbb{W}$ and $X \in L^2(\cA)_{\sa}^m$.  Let $\cB$ be a tracial $\mathrm{W}^*$-subalgebra of $\cA$.
		\begin{enumerate}[(1)]
			\item If $f^{\cB}(E_{\cB}[X]) = f^{\cA}(X)$, then
			\[
			\eth f^{\cB}(E_{\cB}[X]) = L^2(\cB)_{\sa}^m \cap \eth f^{\cA}(X).
			\]
			\item If $Y \in \eth f^{\cA}(X)$, then $E_{\mathrm{W}^*(X)}[Y] \in \eth f^{\cA}(X)$.
		\end{enumerate}
	\end{lemma}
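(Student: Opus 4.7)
The proof plan is to derive both parts directly from the definitions, using two bookkeeping facts: the $\mathrm{W}^*$-function property gives $f^{\cA}(Z) = f^{\cB}(Z)$ for $Z \in L^2(\cB)_{\sa}^m$, and the conditional expectation $E_{\cB}$ is the Hilbert-space adjoint of the inclusion $L^2(\cB)_{\sa}^m \hookrightarrow L^2(\cA)_{\sa}^m$, so that
\[
\ip{Y, Z}_{L^2(\cA)_{\sa}^m} = \ip{Y, E_{\cB}[Z]}_{L^2(\cB)_{\sa}^m} \qquad \text{for } Y \in L^2(\cB)_{\sa}^m,\ Z \in L^2(\cA)_{\sa}^m.
\]

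For the forward inclusion of (1), let $Y \in \eth f^{\cB}(E_{\cB}[X])$ (so automatically $Y \in L^2(\cB)_{\sa}^m$) and $X' \in L^2(\cA)_{\sa}^m$. I would chain the inequalities: by $E$-convexity, $f^{\cA}(X') \geq f^{\cB}(E_{\cB}[X'])$; by the subgradient hypothesis at $E_{\cB}[X]$, this is bounded below by $f^{\cB}(E_{\cB}[X]) + \ip{Y, E_{\cB}[X'] - E_{\cB}[X]}_{L^2(\cB)_{\sa}^m}$; by the adjoint identity (using $Y \in L^2(\cB)_{\sa}^m$ and $E_{\cB}[X] = E_{\cB}[X]$), this equals $f^{\cB}(E_{\cB}[X]) + \ip{Y, X' - X}_{L^2(\cA)_{\sa}^m}$; and finally the hypothesis $f^{\cB}(E_{\cB}[X]) = f^{\cA}(X)$ converts this to the required subgradient inequality for $f^{\cA}$ at $X$.

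For the reverse inclusion, let $Y \in L^2(\cB)_{\sa}^m \cap \eth f^{\cA}(X)$ and $X' \in L^2(\cB)_{\sa}^m$. By the subgradient inequality in $\cA$ and the $\mathrm{W}^*$-function property (applied both to $X'$ and to $E_{\cB}[X]$ via the assumed equality), I get $f^{\cB}(X') = f^{\cA}(X') \geq f^{\cA}(X) + \ip{Y, X'-X}_{L^2(\cA)_{\sa}^m} = f^{\cB}(E_{\cB}[X]) + \ip{Y, X' - E_{\cB}[X]}_{L^2(\cB)_{\sa}^m}$, where the last step uses the adjoint identity together with $\ip{Y, X}_{L^2(\cA)} = \ip{Y, E_{\cB}[X]}_{L^2(\cB)}$.

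For (2), I would apply (1) with $\cB = \mathrm{W}^*(X)$. The hypothesis $f^{\cB}(E_{\cB}[X]) = f^{\cA}(X)$ is automatic because $E_{\mathrm{W}^*(X)}[X] = X$ and $f$ is a $\mathrm{W}^*$-function. Then it suffices to show $E_{\cB}[Y] \in \eth f^{\cB}(X)$ and invoke (1) to upgrade to $\eth f^{\cA}(X)$. The former is immediate: for $X' \in L^2(\cB)_{\sa}^m$, the subgradient inequality for $Y$ at $X$ in $\cA$ gives $f^{\cB}(X') = f^{\cA}(X') \geq f^{\cA}(X) + \ip{Y, X' - X}_{L^2(\cA)} = f^{\cB}(X) + \ip{E_{\cB}[Y], X' - X}_{L^2(\cB)}$, using once more that the inclusion/conditional-expectation pair is adjoint and that $X', X \in L^2(\cB)_{\sa}^m$. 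There is no real obstacle here; the only point requiring care is consistently tracking which Hilbert space hosts each inner product, and recognizing that the adjoint identity lets the two viewpoints switch freely whenever one of the vectors lies in $L^2(\cB)$.
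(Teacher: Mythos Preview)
Your proof is correct and follows essentially the same approach as the paper's own proof: both inclusions in (1) are obtained by combining $E$-convexity, the subgradient inequality, and the adjointness of the inclusion and conditional expectation, and (2) is deduced by specializing to $\cB = \mathrm{W}^*(X)$ (where $E_{\cB}[X] = X$), first showing $E_{\cB}[Y] \in \eth f^{\cB}(X)$ and then invoking (1).
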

	
	\begin{proof}
		(1) First, we show that $\eth f^{\cB}(E_{\cB}[X]) \subseteq L^2(\cB)_{\sa}^m \cap \eth f^{\cA}(X)$.  If $Y \in \eth f^{\cB}(E_{\cB}[X])$, then clearly $Y \in L^2(\cB)_{\sa}^m$.  Moreover, for all $Z \in L^2(\cA)$, we have
		\begin{align*}
			f^{\cA}(Z) &\geq f^{\cB}(E_{\cB}[Z]) \\
			&\geq f^{\cB}(E_{\cB}[X]) + \ip{Y, E_{\cB}[Z] - E_{\cB}[X]}_{L^2(\cB)_{\sa}^m} \\
			&= f^{\cA}(X) + \ip{Y, Z - X}_{L^2(\cA)_{\sa}^m},
		\end{align*}
		where we have used the fact that $E_{\cB}$ is self-adjoint and $E_{\cB}[Y] = Y$.  Hence, $Y \in \eth f^{\cA}(X)$ as desired.
		
		Conversely, to show that $L^2(\cB)_{\sa}^m \cap \eth f^{\cA}(X) \subseteq \eth f^{\cB}(E_{\cB}[X])$, suppose that $Y \in L^2(\cB)_{\sa}^m \cap \eth f^{\cA}(X)$.  Then for $Z \in L^2(\cB)_{\sa}^m$,
		\begin{align*}
			f^{\cB}(Z) &= f^{\cA}(Z) \\
			&\geq f^{\cA}(X) + \ip{Y, Z - X}_{L^2(\cA)_{\sa}^m} \\
			&= f^{\cB}(E_{\cB}[X]) + \ip{Y, Z - E_{\cB}[X]}_{L^2(\cB)_{\sa}^m},
		\end{align*}
		because $E_{\cB}$ is a self-adjoint operator on $L^2(\cA)$ and $Y \in L^2(\cB)_{\sa}^m$.
		
		(2) Let $\cB = \mathrm{W}^*(X)$ (where the trace is given by the restriction of $\tau_{\cA}$). Let $Z \in L^2(\cB)_{\sa}^m$.  Then
		\begin{align*}
			f^{\cB}(Z)  &= f^{\cA}(Z) \\
			&\geq f^{\cA}(X) + \ip{Y,Z - X}_{L^2(\cA)_{\sa}^m} \\
			&= f^{\cB}(X) + \ip{E_{\cB}[Y], Z - X}_{L^2(\cB)_{\sa}^m}.
		\end{align*}
		Thus, $E_{\cB}[Y] \in \eth f^{\cB}(X)$, and so by (1), $E_{\cB}[Y] \in \eth f^{\cA}(X)$.
	\end{proof}
	
	\begin{lemma}
		Let $f$ be an $E$-convex $\mathrm{W}^*$-function.  Let $\cA \in \mathbb{W}$ and $X\in L^2(\cA)_{\sa}^m$.
		\begin{enumerate}[(1)]
			\item There exists a unique $Y \in \eth f^{\cA}(X)$ of minimal $L^2$-norm.
			\item The $Y$ from (1) satisfies $Y \in L^2(\mathrm{W}^*(X))_{\sa}^m$.
			\item Let $\cB = \mathrm{W}^*(Y)$ as described in Lemma \ref{lem:L2generated}, where $Y$ is as in (1).  Then $f^{\cB}(E_{\cB}[X]) = f^{\cA}(X)$ and $\cB = \mathrm{W}^*(E_{\cB}[X])$.
		\end{enumerate}
	\end{lemma}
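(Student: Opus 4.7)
The plan is to handle the three parts in order, with the main work concentrated in part (3). Throughout I assume $f^{\cA}(X)<\infty$ (otherwise the subdifferential is empty and there is nothing to prove; the degenerate case $f\equiv-\infty$ gives $\eth f^{\cA}(X)=L^2(\cA)_{\sa}^m$ with minimum norm element $0$, and $\cB=\C$ makes (3) trivial).

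For (1), I would invoke the standard corollary of Hahn-Banach (used already in Lemma \ref{lem:Econvex}) to see that $\eth f^{\cA}(X)$ is nonempty, and observe that this set is the intersection of the closed half-spaces $\{Y : f^{\cA}(Z) - f^{\cA}(X) \geq \langle Y, Z-X\rangle\}$ over $Z\in L^2(\cA)_{\sa}^m$, so it is closed and convex. In a Hilbert space a nonempty closed convex set has a unique element of minimum norm; take $Y$ to be that element.

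For (2), the idea is to feed $Y$ into Lemma \ref{lem:CEgradient1}(2) with the subalgebra $\mathrm{W}^*(X)$: this yields $E_{\mathrm{W}^*(X)}[Y]\in\eth f^{\cA}(X)$, and since the trace-preserving conditional expectation is contractive on $L^2$, $\|E_{\mathrm{W}^*(X)}[Y]\|\le\|Y\|$. Minimality of $Y$ then forces $E_{\mathrm{W}^*(X)}[Y]=Y$, placing $Y\in L^2(\mathrm{W}^*(X))_{\sa}^m$.

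Part (3) is the main step. Setting $\cB=\mathrm{W}^*(Y)$, note $\cB\subseteq\mathrm{W}^*(X)\subseteq\cA$ by (2). Apply the defining subgradient inequality at the point $Z=E_{\cB}[X]\in L^2(\cA)_{\sa}^m$; since $Y\in L^2(\cB)_{\sa}^m$ and $E_{\cB}$ is the orthogonal projection onto $L^2(\cB)$, the inner product $\langle Y,E_{\cB}[X]-X\rangle_{L^2(\cA)_{\sa}^m}$ vanishes, so $f^{\cA}(E_{\cB}[X])\ge f^{\cA}(X)$. The reverse inequality $f^{\cB}(E_{\cB}[X])\le f^{\cA}(X)$ is just $E$-convexity, and the tracial $\mathrm{W}^*$-function property identifies $f^{\cA}(E_{\cB}[X])=f^{\cB}(E_{\cB}[X])$; together these give the asserted equality. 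For the second claim, set $\cC=\mathrm{W}^*(E_{\cB}[X])\subseteq\cB$. Since $f^{\cB}(E_{\cB}[X])=f^{\cA}(X)$, Lemma \ref{lem:CEgradient1}(1) (applied to the inclusion $\cB\subseteq\cA$) gives $Y\in\eth f^{\cB}(E_{\cB}[X])$; then Lemma \ref{lem:CEgradient1}(2) applied inside $\cB$ at the point $E_{\cB}[X]$ produces $E_{\cC}[Y]\in\eth f^{\cB}(E_{\cB}[X])\subseteq\eth f^{\cA}(X)$. Contractivity of the conditional expectation together with uniqueness of the minimum norm element in $\eth f^{\cA}(X)$ then forces $E_{\cC}[Y]=Y$, so $Y\in L^2(\cC)$ and therefore $\cB=\mathrm{W}^*(Y)\subseteq\cC$, yielding $\cB=\cC$.

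The main obstacle I anticipate is the bookkeeping in (3), particularly making sure that applying Lemma \ref{lem:CEgradient1} to the pair $\cB\subseteq\cA$ and then again inside $\cB$ is compatible with identifying elements of $L^2(\cB)$ as living in $L^2(\cA)$. The key conceptual point that ties everything together is that ``minimum norm in the subdifferential'' is stable under conditional expectation onto any subalgebra over which the function value is unchanged, which is exactly what drives both (2) and the fixed-point argument for $\cB=\cC$ in (3).
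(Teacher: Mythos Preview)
Your proposal is correct and follows essentially the same route as the paper. The only organizational difference is that in (3) you are slightly more explicit than the paper: you first invoke Lemma~\ref{lem:CEgradient1}(1) to pass from $Y\in\eth f^{\cA}(X)$ to $Y\in\eth f^{\cB}(E_{\cB}[X])$ before applying Lemma~\ref{lem:CEgradient1}(2) inside $\cB$, whereas the paper compresses this into a single citation of Lemma~\ref{lem:CEgradient1}(2).
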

	
	\begin{proof}
		(1) Because $\eth f^{\cA}(X)$ is a closed convex set, it has a unique element of minimal $L^2$-norm.
		
		(2) Let $\cC = \mathrm{W}^*(X)$. Let $Y' = E_{\cC}[Y]$.  We claim that $Y' \in \eth f^{\cC}(X)$.  Let $Z \in L^2(\cC)_{\sa}^m$.  Then
		\begin{align*}
			f^{\cC}(Z)  &= f^{\cA}(Z) \\
			&\geq f^{\cA}(X) + \ip{Y,Z - X}_{L^2(\cA)_{\sa}^m} \\
			&= f^{\cC}(X) + \ip{Y', Z - X}_{ L^2(\cC)_{\sa}^m}.
		\end{align*}
		Thus, $Y' \in \eth f^{\cC}(X)$.  By the previous lemma, $Y' \in \eth f^{\cA}(X)$.  But because $Y$ has minimal norm, we have $\norm{E_{\cC}[Y]}_{L^2(\cA)_{\sa}^m} = \norm{Y}_{L^2(\cA)_{\sa}^m}$, hence $E_{\cC}[Y] = Y$, so $Y \in L^2(\cC)_{\sa}^m$.
		
		(3) First, we show that $f^{\cB}(E_{\cB}[X]) = f^{\cA}(X)$.  By $E$-convexity, $f^{\cB}(E_{\cB}[X]) \leq f^{\cA}(X)$.  Conversely,
		\[
		f^{\cB}(E_{\cB}[X]) = f^{\cA}(E_{\cB}[X]) \geq f^{\cA}(X) + \ip{Y, E_{\cB}[X] - X}_{L^2(\cA)_{\sa}^m} = f^{\cA}(X).
		\]
		Thus, by Lemma \ref{lem:CEgradient1} (1), $Y \in \eth f^{\cB}(E_{\cB}[X])$.  Now letting $\cD = \mathrm{W}^*(E_{\cB}[X])$ with the trace $\tau|_{\cD}$, Lemma \ref{lem:CEgradient1} (2) implies that
		$E_{\cD}[Y] \in \eth f^{\cB}(E_{\cB}[X])$, hence also $E_{\cD}[Y] \in \eth f^{\cA}(X)$ by Lemma \ref{lem:CEgradient1} (1).  Because $Y$ was chosen to have minimal norm, we have $E_{\cD}[Y] = Y$, and thus, $\cD \supseteq \mathrm{W}^*(Y) = \cB$ by the characterization of $\mathrm{W}^*(Y)$ given in Lemma \ref{lem:L2generated}.  Hence, $\cB = \cD = \mathrm{W}^*(E_{\cB}[X])$.
	\end{proof}
	
	\subsection{Legendre transforms} \label{subsec:Legendre}
	
	\begin{definition}
		We define the \emph{Legendre transform} as the tuple $\mathcal{L}f = (\mathcal{L}f^{\cA})_{\cA \in \mathbb{W}}$ by
		\[
		\mathcal{L}f^{\cA}(X) = \sup \{ \ip{\iota(X),Y} - f^{\cB}(Y): \cB \in \mathbb{W}, \iota: \cA \to \cB \text{ a tracial $\mathrm{W}^*$-embedding}, Y \in L^2(\cB)_{\sa}^m\}
		\]
		for $X \in L^2(\cA)_{\sa}^m$.
	\end{definition}
	
	\begin{example}
		Consider again $q_t^{\cA}(X) = (1/2t) \norm{X}_{L^2(\cA)_{\sa}^m}^2$.  A standard computation with norms and inner products shows that $\mathcal{L} q_t = q_{1/t}$.
	\end{example}
	
	\begin{proposition} \label{prop:Legendre}
		Let $f$ be a tracial $\mathrm{W}^*$-function.
		\begin{enumerate}[(1)]
			\item The Legendre transform $\mathcal{L}f$ is an $E$-convex tracial $\mathrm{W}^*$-function.
			\item If $f \leq g$, then $\mathcal{L}f \geq \mathcal{L}g$.
			\item We have $\mathcal{L}^2 f \leq f$ with equality if and only if $f$ is $E$-convex.
			\item $\mathcal{L}^2 f$ is the maximal $E$-convex function that is less than or equal to $f$.
		\end{enumerate}
	\end{proposition}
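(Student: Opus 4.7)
The plan is to observe that parts (2) and (4) fall out formally from (1) and (3), leaving the real work in the latter two. Part (2) is immediate: if $f \le g$, then $-f^{\cB}(Y) \ge -g^{\cB}(Y)$ pointwise, and the suprema defining $\mathcal{L}f^{\cA}(X)$ and $\mathcal{L}g^{\cA}(X)$ are over the same range. Part (4) is then a formal consequence of (1)--(3): $\mathcal{L}^2 f$ is $E$-convex by two applications of (1) and lies below $f$ by (3); conversely, if $g$ is $E$-convex with $g \le f$, then (2) gives $\mathcal{L}f \le \mathcal{L}g$, so $\mathcal{L}^2 g \le \mathcal{L}^2 f$, and (3) applied to $g$ identifies $g = \mathcal{L}^2 g \le \mathcal{L}^2 f$.

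For part (1), I would first verify that $\mathcal{L}f$ is a tracial $\mathrm{W}^*$-function, i.e.\ $\mathcal{L}f^{\cA_1}(X) = \mathcal{L}f^{\cA_2}(\phi(X))$ for any embedding $\phi : \cA_1 \to \cA_2$. The inequality $\le$ comes from composing $\iota \circ \phi$ for any $\iota : \cA_2 \to \cB$ in the supremum for the right-hand side. The reverse uses the amalgamated free product $\cC = \cA_2 *_{\cA_1} \cB$: a witness $(\iota : \cA_1 \to \cB, Y \in L^2(\cB)_{\sa}^m)$ for $\mathcal{L}f^{\cA_1}(X)$ transfers into $\cC$ with the same inner product and the same $f$-value (since $f$ is a $\mathrm{W}^*$-function), while the two copies of $X$ coming from $\cA_2$ and from $\cA_1 \subseteq \cB$ become identified in $\cC$. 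Convexity and lower semi-continuity of $\mathcal{L}f^{\cA}$ are automatic once $\mathcal{L}f \not\equiv -\infty$, since it is a supremum of continuous affine functions. The main technical step is the conditional-expectation inequality $\mathcal{L}f^{\cA}(E[X_0]) \le \mathcal{L}f^{\cB_0}(X_0)$ when $\iota_0 : \cA \to \cB_0$ and $E = \iota_0^*$; given a witness $(j : \cA \to \cC, Y \in L^2(\cC)_{\sa}^m)$ for the left side, I would form $\cD = \cC *_{\cA} \cB_0$ with amalgamated inclusions $p_1 : \cC \to \cD$ and $p_2 : \cB_0 \to \cD$, and verify the identity
\[
\ip{j(E[X_0]), Y}_{L^2(\cC)} = \ip{p_2(X_0), p_1(Y)}_{L^2(\cD)}
\]
by exploiting the orthogonal decomposition of the amalgamated free product: after subtracting $j(j^*(Y)) \in j(\cA)$, the remainder of $p_1(Y)$ lies in the orthogonal complement of $p_2(\cB_0)$ in $L^2(\cD)$, and both sides reduce to $\ip{E[X_0], j^*(Y)}_{L^2(\cA)}$.

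For part (3), the inequality $\mathcal{L}^2 f \le f$ follows immediately by taking $\cB = \cA$ and $\iota = \id$ in $\mathcal{L}f^{\cA}(Y) \ge \ip{X, Y} - f^{\cA}(X)$ and rearranging. For the equality when $f$ is $E$-convex, the crucial intermediate claim is that $\mathcal{L}f^{\cA}|_{L^2(\cA)_{\sa}^m}$ coincides with the \emph{classical} Legendre transform $(f^{\cA})^*$ of $f^{\cA}$ on the Hilbert space $L^2(\cA)_{\sa}^m$: the $\ge$ direction is trivial, while the $\le$ uses $E$-convexity together with the identity $\ip{\iota(Y), X'}_{L^2(\cB)} = \ip{Y, \iota^*(X')}_{L^2(\cA)}$ to show that every embedding-witness for $\mathcal{L}f^{\cA}(Y)$ is dominated by one already sitting inside $\cA$. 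Once this reduction is in hand, the classical Fenchel--Moreau theorem on the Hilbert space $L^2(\cA)_{\sa}^m$ gives
\[
\mathcal{L}^2 f^{\cA}(X) \ge \sup_{Y \in L^2(\cA)_{\sa}^m} \bigl( \ip{X, Y} - \mathcal{L}f^{\cA}(Y) \bigr) = (f^{\cA})^{**}(X) = f^{\cA}(X),
\]
valid even where $f^{\cA}(X) = +\infty$. The hardest step is the conditional-expectation identity in part (1), where amalgamated free products are essential to carry witnesses between different ambient algebras; once part (3) is reduced to $\mathcal{L}f^{\cA}|_{L^2(\cA)_{\sa}^m} = (f^{\cA})^*$, classical convex duality on a Hilbert space takes over.
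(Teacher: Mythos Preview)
Your proposal is correct and follows essentially the same route as the paper: amalgamated free products carry the weight in part (1), and $E$-convexity collapses the supremum in $\mathcal{L}f$ to the ambient algebra in part (3). Two organizational differences are worth noting. First, in part (1) you prove the tracial $\mathrm{W}^*$-function property directly via an amalgamated free product, whereas the paper establishes the conditional-expectation monotonicity first and then derives the $\mathrm{W}^*$-function property as a consequence (one inequality from composition, the other from $\mathcal{L}f^{\cA}(X) = \mathcal{L}f^{\cA}(E\iota(X)) \le \mathcal{L}f^{\cB}(\iota(X))$). Second, in part (3) you isolate the identity $\mathcal{L}f^{\cA}|_{L^2(\cA)_{\sa}^m} = (f^{\cA})^*$ and then invoke Fenchel--Moreau as a black box; the paper instead unpacks Fenchel--Moreau by hand, writing $f^{\cA} = \sup_\alpha g_\alpha$ with $g_\alpha(X) = \ip{X,Z_\alpha} + c_\alpha$ and using $E$-convexity to bound $\mathcal{L}f^{\cA}(Z_\alpha) \le -c_\alpha$ directly. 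Your version is arguably cleaner, and indeed the paper records your key reduction $\mathcal{L}f^{\cA} = (f^{\cA})^*$ separately as a remark immediately after the proof.
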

	
	\begin{proof}
		(1) If $f$ is identically equal to $-\infty$ or $+\infty$, $\mathcal{L}f$ will be $+\infty$ or $-\infty$ respectively and there is nothing to prove.  Hence, assume that $f$ attains some finite value at some $Y \in L^2(\cB)_{\sa}^m$ for some $\cB \in \mathbb{W}$.
		
		For any $\cA \in \mathbb{W}$, the free product $\cA * \cB$ is isomorphic to some $\cC \in \mathbb{W}$.  Let $\iota_1: \cA \to \cC$ and $\iota_2: \cB \to \cC$ be the corresponding tracial $\mathrm{W}^*$-embeddings.  Then
		\[
		\mathcal{L}f^{\cA}(X) \geq \ip{\iota_1(X),\iota_2(Y)}_{L^2(\cC)_{\sa}^m} - f^{\cC}(\iota_2(Y)) > -\infty,
		\]
		since $f^{\cC}(\iota_2(Y)) = f^{\cB}(Y) < +\infty$.  Hence, $\mathcal{L} f$ is never equal to $-\infty$.
		
		For each $\cA$, the function $\mathcal{L} f^{\cA}$ is a supremum of affine functions, and therefore it is convex and lower semi-continuous.
		
		Let $\iota: \cA \to \cB$ be a tracial $\mathrm{W}^*$-embedding and let $E: \cB \to \cA$ be the corresponding trace-preserving conditional expectation.  Let $X \in L^2(\cB)_{\sa}^m$.  Let $\tilde{\iota}: \cA \to \tilde{\cB}$ be another inclusion.  Let $\cM \in \mathbb{W}$ be isomorphic to the amalgamated free product  of $\cB$ and $\tilde{\cB}$ over the subalgebra $\cA$ (or more precisely, over the images of $\iota(\cA) \subseteq \cB$ and $\tilde{\iota}(\cA) \subseteq \tilde{\cB}$) as in Proposition \ref{prop:amalgamatedfreeproduct}.  Let $\rho: \cB \to \cM$ and $\tilde{\rho}: \tilde{\cB} \to \cM$ be the inclusions.  Then for $Y \in L^2(\tilde{\cB})$,
		\begin{align*}
			\mathcal{L}f^{\cB}(X) &\geq \ip{\rho(X), \tilde{\rho}(Y)}_{L^2(\cM)_{\sa}^m}- f^{\cM}(\tilde{\rho}(Y)) \\
			&= \ip{\tilde{\iota} \circ E(X), Y}_{L^2(\tilde{\cB})_{\sa}^m} - f^{\tilde{\cB}}(Y),
		\end{align*}
		where we have used free independence with amalgamation to compute the inner product, and we have used the fact that $f$ is a tracial $\mathrm{W}^*$-function.  Because $\tilde{\iota}: \cA \to \tilde{\cB}$ and $Y$ were arbitrary, we have
		\[
		\mathcal{L}f^{\cB}(X) \geq \mathcal{L} f^{\cA}(E(X)),
		\]
		which establishes condition (2) in the definition of $E$-convexity.
		
		It only remains to show that $\mathcal{L}f$ is a tracial $\mathrm{W}^*$-function.  Suppose $\iota: \cA \to \cB$ is a tracial $\mathrm{W}^*$-inclusion.  If $\iota': \cB \to \cC$ is a tracial $\mathrm{W}^*$-inclusion, then so is $\iota' \circ \iota$, which implies that
		\[
		\mathcal{L} f^{\cA}(X) \geq \sup_{\substack{\iota': \cB \to \cC \\ Y \in L^2(\cC)_{\sa}^m}} \ip{\iota' \circ \iota(X),Y}_{L^2(\cC)_{\sa}^m} - f^{\cC}(Y) = \mathcal{L} f^{\cB}(\iota(X)).
		\]
		If $E: \cB \to \cA$ is the conditional expectation corresponding to $\iota$, then by the preceding argument
		\[
		\mathcal{L} f^{\cA}(X) = \mathcal{L} f^{\cA}(E \circ \iota(X)) \leq \mathcal{L} f^{\cB}(\iota(X)).
		\]
		Thus, $\mathcal{L} f^{\cA} = \mathcal{L} f^{\cB} \circ \iota$, so $\mathcal{L} f$ is a tracial $\mathrm{W}^*$-function.
		
		(2)  This is immediate from the definition and the properties of suprema and infima.
		
		(3) By definition of $\mathcal{L}f$, for every $\cA \in \mathbb{W}$ and $X$, $Y \in L^2(\cA)_{\sa}^m$, we have
		\[
		\mathcal{L}f^{\cA}(X) \geq \ip{X,Y}_{L^2(\cA)_{\sa}^m} - f^{\cA}(Y),
		\]
		hence
		\[
		\mathcal{L}f^{\cA}(X) + f^{\cA}(Y) \geq \ip{X,Y}_{L^2(\cA)_{\sa}^m}.
		\]
		Hence, given an inclusion $\iota$ of $\cA$ into $\cB$ and $Y \in L^2(\cA)_{\sa}^m$ and $X \in L^2(\cB)_{\sa}^m$, we have
		\[
		f^{\cA}(Y) = f^{\cB}(\iota(Y)) \geq \ip{\iota(Y),X}_{L^2(\cB)_{\sa}^m} - \mathcal{L}f^{\cB}(X).
		\]
		Taking the supremum on the right-hand side, $f^{\cA}(Y) \geq \mathcal{L}^2 f^{\cA}(Y)$.  Thus, $f \geq \mathcal{L}^2 f$.
		
		Now suppose that $f$ is $E$-convex, and we must show that $f = \mathcal{L}^2 f$.  If $f$ is identically $-\infty$ or $+\infty$, there is nothing to prove.  Otherwise, fix $\cA$.  Because $f^{\cA}$ is convex and lower semi-continuous, classical results about convex functions tell us that $f^{\cA}$ can be expressed as the supremum of a family of affine functions $(g_\alpha)_{\alpha \in I}$, where
		\[
		g_\alpha(X) = \ip{X,Z_\alpha}_{L^2(\cA)_{\sa}^m} + c_\alpha
		\]
		with $Z_\alpha \in L^2(\cA)_{\sa}^m$ and $c_\alpha \in \R$.  Let $\iota: \cA \to \cB$ be an inclusion and let $E: \cB \to \cA$ be the corresponding conditional expectation.  If $Y \in L^2(\cB)_{\sa}^m$, then by the $E$-convexity property
		\[
		\ip{\iota(Z_\alpha),Y}_{L^2(\cB)_{\sa}^m} - f^{\cB}(Y) \leq \ip{\iota(Z_\alpha),Y}_{L^2(\cB)_{\sa}^m} - f^{\cA}(E(Y)) \leq \ip{Z_\alpha,E(Y)}_{L^2(\cA)_{\sa}^m} - \ip{Z_\alpha,E(Y)}_{L^2(\cA)_{\sa}^m} - c_\alpha = -c_\alpha.
		\]
		Therefore, $\mathcal{L} f^{\cA}(Z_\alpha) \leq -c_\alpha$, which implies that
		\[
		\mathcal{L}^2 f^{\cA}(X) \geq \ip{X,Z_\alpha}_{L^2(\cA)_{\sa}^m} - \mathcal{L} f^{\cA}(Z_\alpha) \geq \ip{X,Z_\alpha}_{L^2(\cA)_{\sa}^m} + c_\alpha = g_\alpha(X).
		\]
		Therefore,
		\[
		\mathcal{L}^2 f^{\cA}(X) \geq \sup_{\alpha \in I} g_\alpha(X) = f^{\cA}(X).
		\]
		So $\mathcal{L}^2 f = f$ as desired.  Conversely, if $f = \mathcal{L}^2 f$, then $f$ is $E$-convex because it is the Legendre transform of some function.
		
		(4) We already showed that $\mathcal{L}^2 f$ is $E$-convex and $\mathcal{L}^2 f \leq f$.  Moreover, suppose $g$ is $E$-convex and $g \leq f$.  Then $\mathcal{L} g \geq \mathcal{L} f$ and hence $\mathcal{L}^2 g \leq \mathcal{L}^2 f$ by (2).  Meanwhile, $g = \mathcal{L}^2 g$ by (3), and therefore $g = \mathcal{L}^2 g \leq \mathcal{L}^2 f$.
	\end{proof}
	
	\begin{remark} \label{rem:EconvexLegendre}
		It follows from $E$-convexity that for every $\iota: \cA \to \cB$ and $X \in L^2(\cA)_{\sa}^m$ and $Y \in L^2(\cB)_{\sa}^m$, we have
		\[
		\ip{\iota(X),Y}_{L^2(\cB)_{\sa}^m} - f^{\cB}(Y) \leq \ip{X,E[Y]}_{L^2(\cA)_{\sa}^m} - f^{\cA}(E[Y]),
		\]
		where $E: \cB\to \cA$ is the conditional expectation corresponding to $\iota$.  Therefore,
		\[
		\mathcal{L} f^{\cA}(X) = \sup_{Y \in L^2(\cA)_{\sa}^m} \left( \ip{X,Y}_{L^2(\cA)_{\sa}^m} - f^{\cA}(Y) \right).
		\]
		Hence, if $f$ is $E$-convex, there is no need to consider a larger $\mathrm{W}^*$-algebra when computing the Legendre transform, and moreover $\mathcal{L} f^{\cA}$ agrees with the classical Legendre transform of $f^{\cA}$ as a function on the real Hilbert space $L^2(\cA)_{\sa}^m$.
	\end{remark}
	
	\begin{remark}
	In fact, the argument of Proposition \ref{prop:Legendre} can be used to prove slightly stronger statements:
	\begin{itemize}
		\item If $(f^{\cA})_{\cA\in \mathbb{W}}$ is any collection of functions, then $\mathcal{L}f$ is convex, and for any tracial $\mathrm{W}^*$-embedding $\iota: \cA \to \cB$, it satisfies $\mathcal{L}f^{\cA} \geq \mathcal{L}f^{\cB} \circ \iota$.
		\item If $(f^{\cA})_{\cA \in \mathbb{W}}$ satisfies $\mathcal{L}f^{\cA} \geq \mathcal{L}f^{\cB} \circ \iota$ for every embeddings $\iota: \cA \to \cB$, then $\mathcal{L}f$ is $E$-convex.
		\item In particular, if $(f^{\cA})_{\cA \in \mathbb{W}}$ is any collection of functions, then $\mathcal{L}^2 f$ is $E$-convex.
	\end{itemize}
	\end{remark}
	
	The next lemma states the relationship between Legendre transforms and subgradients, which is exactly analogous to the behavior of classical Legendre transforms.  We will use this lemma many times.
	
	\begin{lemma} \label{lem:Legendresubgradient}
		Let $f$ be an $E$-convex $\mathrm{W}^*$-function, let $\cA \in \mathbb{W}$ and $X, Y \in L^2(\cA)_{\sa}^m$.  Then the following are equivalent:
		\begin{enumerate}[(1)]
			\item $f^{\cA}(X) + \mathcal{L}f^{\cA}(Y) = \ip{X,Y}_{L^2(\cA)_{\sa}^m}$.
			\item $Y \in \eth f^{\cA}(X)$.
			\item $X \in \eth \mathcal{L}f^{\cA}(Y)$.
		\end{enumerate}
	\end{lemma}
	
	\begin{proof}
		(1) $\implies$ (2). Suppose that $f^{\cA}(X) + \mathcal{L}f^{\cA}(Y) = \ip{X,Y}_{L^2(\cA)_{\sa}^m}$.  By definition of $\mathcal{L}f$, we have for all $X' \in L^2(\cA)_{\sa}^m$ that
		\[
		\ip{X',Y}_{L^2(\cA)_{\sa}^m} - f^{\cA}(X') \leq \mathcal{L}f^{\cA}(Y) = \ip{X,Y}_{L^2(\cA)_{\sa}^m} - f^{\cA}(X),
		\]
		hence, $f^{\cA}(X') \geq f^{\cA}(X) + \ip{X'-X,Y}_{L^2(\cA)_{\sa}^m}$, so $Y \in \eth f^{\cA}(X)$.
		
		(2) $\implies$ (1).  Suppose $Y \in \eth f^{\cA}(X)$.  Let $\iota: \cA \to \cB$ be a $\mathrm{W}^*$-inclusion and $E: \cB\to \cA$ the corresponding conditional expectation.  Since $E[\iota(X)] = X$, Lemma \ref{lem:CEgradient1} (1) tells us that
		\[
		\iota(\eth f^{\cA}(X)) = \eth f^{\iota(\cA)}(\iota(X)) = L^2(\cA)_{\sa}^m \cap \eth f^{\cB}(\iota(X)),
		\]
		so in particular, $\iota(Y) \in \eth f^{\cB}(\iota(X))$. Hence, for any $Z \in L^2(\cB)_{\sa}^m$, we have
		\[
		\ip{Z,\iota(Y)}_{L^2(\cB)_{\sa}^m} - f^{\cB}(Z) \leq \ip{\iota(X),\iota(Y)}_{L^2(\cB)_{\sa}^m} - f^{\cB}(\iota(X)) = \ip{X,Y}_{L^2(\cA)_{\sa}^m} - f^{\cA}(X).
		\]
		Since $\iota$, $\cB$, and $Z$ were arbitrary, the supremum defining $\mathcal{L}f^{\cA}(Y)$ is attained at the point $X$, so that $f^{\cA}(X) + \mathcal{L}f^{\cA}(Y) = \ip{X,Y}_{L^2(\cA)_{\sa}^m}$.
		
		Therefore, we have proved that (1) $\iff$ (2).  Because $f$ is $E$-convex, we have $\mathcal{L}(\mathcal{L}f) = f$.  Therefore, (1) $\iff$ (3) follows from (1) $\iff$ (2) by switching the roles of $f$ and $\mathcal{L}f$ and the roles of $X$ and $Y$.
	\end{proof}

	\subsection{A non-commutative Monge-Kantorovich duality} \label{subsec:MKduality}
	
	\begin{definition}
		If $f$ is a tracial $\mathrm{W}^*$-function and $\mu \in \Sigma_m$, then we define $\mu(f) = f^{\cA}(X)$, where $\cA \in \mathbb{W}$ is (isomorphic to) the GNS representation of $\mu$ and $X$ is the canonical generating $m$-tuple.
	\end{definition}

	If $f$ is a tracial $\mathrm{W}^*$-function, for every $\cA$ and every $X \in \cA_{\sa}^m$ with $\lambda_{X} = \mu$, we have $\mu(f) = f^{\cA}(X)$.  This follows by the definition of tracial $\mathrm{W}^*$-function and the fact that $\mathrm{W}^*(X)$ is isomorphic to the GNS representation of $\mu$.
	
	\begin{definition} \label{def:admissible}
		Let us call a pair $(f,g)$ of tracial $\mathrm{W}^*$-functions \emph{admissible} if they take values in $(-\infty,\infty]$ and for every $\cA \in \mathbb{W}$,
		\[
		f^{\cA}(X) + g^{\cA}(Y) \geq \ip{X,Y}_{L^2(\cA)_{\sa}^m} \text{ for all } X, Y \in L^2(\cA)_{\sa}^m.
		\]
	\end{definition}
	
	\begin{proposition} \label{prop:MKduality1}
		Let $\mu$, $\nu \in \Sigma_m$.  The following quantities are equal:
		\begin{enumerate}[(1)]
			\item $C(\mu,\nu)$.
			\item $\inf \{\mu(f) + \nu(g): (f,g) \text{ admissible}\}$.
			\item $\inf \{\mu(f) + \nu(\mathcal{L}f): f \text{ a tracial } \mathrm{W}^*\text{-function not identically } \infty \}$.
			\item $\inf \{\mu(f) + \nu(g): (f,g) \text{ admissible and } E\text{-convex} \}$.
			\item $\inf \{\mu(f) + \nu(\mathcal{L}f): f ~E\text{-convex not identically } \infty \}$.
		\end{enumerate}
		Here all the functions under consideration take values in $(-\infty,\infty]$.
	\end{proposition}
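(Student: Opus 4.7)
The plan is to verify the chain $(1) = (2) = (3) = (4) = (5)$ in two stages: first the internal equivalences among (2)--(5), which are formal consequences of the Legendre transform machinery already developed in \S\ref{subsec:Legendre}; then the equality with $C(\mu,\nu)$, which requires constructing an explicit optimal $E$-convex function. For the internal equivalences, $(f,\mathcal{L}f)$ is always admissible by definition of $\mathcal{L}f$, giving $(3) \geq (2)$; conversely, for any admissible $(f,g)$, using that $g$ is a tracial $\mathrm{W}^*$-function and that admissibility holds in every algebra containing an embedded copy of $\cA$, one obtains $g \geq \mathcal{L}f$ pointwise, so $(2) \geq (3)$. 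The identical argument yields $(4) = (5)$. The inclusion of function classes gives $(4) \geq (2)$, and the reverse follows because for any admissible $(f,g)$, the pair $(\mathcal{L}^2 f, \mathcal{L}f)$ is $E$-convex and admissible (Proposition \ref{prop:Legendre}) with $\mu(\mathcal{L}^2 f) + \nu(\mathcal{L}f) \leq \mu(f) + \nu(g)$ by $\mathcal{L}^2 f \leq f$ and $\mathcal{L} f \leq g$. Finally, $(2) \geq C(\mu,\nu)$ is immediate from the definitions: any admissible $(f,g)$ and coupling $(\cA,X,Y)$ satisfy $\mu(f) + \nu(g) = f^{\cA}(X) + g^{\cA}(Y) \geq \ip{X,Y}_{L^2(\cA)_{\sa}^m}$.

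For the reverse inequality $C(\mu,\nu) \geq (5)$ I exhibit an $E$-convex function achieving the bound. For $\cA \in \mathbb{W}$ and $X \in L^2(\cA)_{\sa}^m$ define
\[
\phi_\nu^{\cA}(X) = \sup\bigl\{ \ip{\iota(X),X'}_{L^2(\cC)_{\sa}^m} : \cC \in \mathbb{W},\ \iota: \cA \to \cC \text{ embedding},\ X' \in L^\infty(\cC)_{\sa}^m,\ \lambda_{X'} = \nu \bigr\}.
\]
For $X \in L^\infty(\cA)_{\sa}^m$ one checks $\phi_\nu^{\cA}(X) = C(\lambda_X,\nu)$: the direction $\leq$ is immediate, and for $\geq$ any coupling $(\cC',X_1,Y_1)$ of $\lambda_X$ and $\nu$ is realized inside an extension of $\cA$ via the amalgamated free product $\cA *_{\mathrm{W}^*(X)} \cC'$, using the identification $X \leftrightarrow X_1$ from Lemma \ref{lem:lawisomorphism}. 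Convexity and lower semi-continuity of each $\phi_\nu^{\cA}$ are automatic since it is a supremum of affine continuous functions, and the tracial $\mathrm{W}^*$-function property follows by a similar amalgamated-free-product argument. The main technical step is monotonicity under conditional expectation: given $\iota: \cA \to \cB$ with $E_\cA: \cB \to \cA$ and any $(\cC,\tilde{\iota},X')$ with $\tilde{\iota}: \cA \to \cC$ and $\lambda_{X'} = \nu$, form $\cD = \cB *_\cA \cC$ with canonical inclusions $\rho: \cB \to \cD$ and $\tilde{\rho}: \cC \to \cD$ satisfying $\rho \circ \iota = \tilde{\rho} \circ \tilde{\iota}$. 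The defining property of amalgamation, $E_{\tilde{\rho}(\cC)}^{\cD} \circ \rho = \tilde{\rho} \circ \tilde{\iota} \circ E_\cA$, yields
\[
\ip{\tilde{\iota}(E_\cA[X]),X'}_{L^2(\cC)_{\sa}^m} = \ip{\rho(X),\tilde{\rho}(X')}_{L^2(\cD)_{\sa}^m} \leq \phi_\nu^{\cD}(\rho(X)) = \phi_\nu^{\cB}(X),
\]
and taking the supremum over $(\cC,\tilde{\iota},X')$ gives $\phi_\nu^{\cA}(E_\cA[X]) \leq \phi_\nu^{\cB}(X)$.

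To conclude, pick an optimal coupling $(\cA,X_0,Y_0)$ of $\mu$ and $\nu$ (existing in some $\cA \in \mathbb{W}$ by the compactness argument recalled after Definition \ref{def:boundedoptimalcoupling}). Then $\mu(\phi_\nu) = \phi_\nu^{\cA}(X_0) = C(\mu,\nu)$. Since $\phi_\nu$ is $E$-convex, Remark \ref{rem:EconvexLegendre} permits computing $\mathcal{L}\phi_\nu^{\cA}(Y_0) = \sup_{X \in L^2(\cA)_{\sa}^m}[\ip{X,Y_0} - \phi_\nu^{\cA}(X)]$ entirely inside $\cA$. Taking $\iota = \mathrm{id}_\cA$ and $X' = Y_0$ in the definition of $\phi_\nu^{\cA}$ shows $\phi_\nu^{\cA}(X) \geq \ip{X,Y_0}_{L^2(\cA)_{\sa}^m}$ for every $X \in L^2(\cA)_{\sa}^m$, so every term in the Legendre supremum is non-positive; at $X = X_0$ equality holds by optimality of the coupling. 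Hence $\nu(\mathcal{L}\phi_\nu) = \mathcal{L}\phi_\nu^{\cA}(Y_0) = 0$ and $\mu(\phi_\nu) + \nu(\mathcal{L}\phi_\nu) = C(\mu,\nu)$, which closes the chain. The hard part of the argument is the monotonicity property in the $E$-convexity of $\phi_\nu$: this is where the non-commutative geometry enters essentially, through amalgamated free products and the compatibility of conditional expectations with the amalgamated structure, a feature with no classical analogue.
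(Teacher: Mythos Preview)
Your proof is correct and follows essentially the same idea as the paper's, but with some redundancy worth noting. Your function $\phi_\nu$ is precisely $\mathcal{L}h$ where $h^{\cC}(X') = 0$ if $\lambda_{X'} = \nu$ and $+\infty$ otherwise; the paper uses the analogous indicator for $\mu$ instead of $\nu$. Your hands-on verification of the $E$-convexity of $\phi_\nu$ via amalgamated free products is therefore a re-derivation of the special case of Proposition~\ref{prop:Legendre}(1), which already establishes that any Legendre transform is $E$-convex --- you could have simply invoked that result. The paper's route is also slightly leaner in that it shows $(3) \leq (1)$ for general (not $E$-convex) $f$ first, obtaining $\mu(f) + \nu(\mathcal{L}f) = C(\mu,\nu)$ directly from the definition of $\mathcal{L}f$ without appealing to the existence of an optimal coupling; $E$-convexity is then obtained for free by passing to $(\mathcal{L}^2 f, \mathcal{L}f)$. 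Your approach instead uses an optimal coupling $(\cA,X_0,Y_0)$ to pin down $\mathcal{L}\phi_\nu^{\cA}(Y_0) = 0$, which is fine since existence was established earlier, but is not strictly needed (e.g.\ $X=0$ already gives the lower bound $\mathcal{L}\phi_\nu^{\cA}(Y_0) \geq 0$). Finally, your closing remark that the monotonicity step has ``no classical analogue'' is not quite right: Lemma~\ref{lem:scalarconvex}(4) is exactly the classical version of this phenomenon.
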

	
	\begin{proof}
		(1) $\leq$ (2) Let $(\cA,X,Y)$ be a coupling of $\mu$ and $\nu$, and let $(f,g)$ be an admissible pair.  Then
		\[
		\ip{X,Y}_{L^2(\cA)_{\sa}^m} \leq f^{\cA}(X) + g^{\cA}(Y) = \mu(f) + \nu(g).
		\]
		Taking the supremum over couplings on the left-hand side and the infimum over admissible pairs $(f,g)$ on the right-hand side, we have (1) $\leq$ (2).
		
		(2) $\leq$ (3).  It is clear from the definition of $\mathcal{L}f$ that $f^{\cA}(X) + \mathcal{L}f^{\cA}(Y) \geq \ip{X,Y}_{L^2(\cA)_{\sa}^m}$. Therefore, $(f,\mathcal{L}f)$ is always an admissible pair, and hence (3) is the infimum over a smaller set than (2).
		
		(3) $\leq$ (1).  Define
		\[
		f^{\cA}(X) = \begin{cases} 0, & \text{if } X \in L^\infty(\cA)_{\sa}^m \text{ and } \lambda_{X} = \mu, \\ +\infty, & \text{otherwise.} \end{cases}
		\]
		Note that $f$ is a tracial $\mathrm{W}^*$-function.  Let $\cA$ be the GNS-representation of $\nu$ with the canonical generators $Y$.  Then $\mathcal{L}f^{\cA}(Y)$ is the supremum of $\ip{\iota(Y),X}_{L^2(\cB)_{\sa}^m}$ where $\iota: \cA \to \cB$ is an inclusion and $X \in L^\infty(\cB)_{\sa}^m$ satisfies $\lambda_{X} = \mu$.  In particular for a non-commutative law $\nu$, letting $(\cA,Y)$ be the GNS realization of $\nu$, we have $\nu(\mathcal{L}f) = \mathcal{L}f^{\cA}(Y) = C(\mu,\nu)$.  Moreover, $\mu(f) = 0$ and hence $C(\mu,\nu) = \mu(f) + \nu(\mathcal{L}f)$.
		
		(2) $\leq$ (4).  This is immediate since (4) is the infimum over a smaller set.
		
		(4) $\leq$ (5).  Suppose that $f$ is $E$-convex.  Then $(f,\mathcal{L}f)$ is admissible as noted above. Also, $\mathcal{L}f$ is always $E$-convex, so (5) is the infimum over a smaller set than (4).
		
		(5) $\leq$ (3).  Let $f$ be a tracial $\mathrm{W}^*$-function.  Then $\mathcal{L}^2 f \leq f$ and $(\mathcal{L}^2 f, \mathcal{L} f)$ is an $E$-convex admissible pair.  Therefore,
		\[
		\mu(f) + \nu(\mathcal{L}f) \geq \mu(\mathcal{L}^2f) + \nu(\mathcal{L}f).
		\]
		Of course, since $\mathcal{L}(\mathcal{L}^2f) = \mathcal{L}^2 (\mathcal{L}f) = \mathcal{L}f$, the term on the right-hand side participates in the infimum (5).  Since the $f$ on the left-hand side was chosen arbitrarily, (3) $\geq$ (5).
	\end{proof}
	
	\begin{proposition} \label{prop:MKduality2}
		Let $(\cA,X,Y)$ be a coupling of $\mu, \nu \in \Sigma_m$.  The following are equivalent:
		\begin{enumerate}[(1)]
			\item The coupling is optimal.
			\item There exists an admissible pair $(f,g)$ such that $\ip{X,Y}_{L^2(\cA)_{\sa}^m} = f^{\cA}(X) + g^{\cA}(Y)$.
			\item There exists a tracial $\mathrm{W}^*$-function $f$ such that $\ip{X,Y}_{L^2(\cA)_{\sa}^m} = f^{\cA}(X) + \mathcal{L}f^{\cA}(Y)$.
			\item There exists an admissible, $E$-convex pair $(f,g)$ such that $\ip{X,Y}_{L^2(\cA)_{\sa}^m} = f^{\cA}(X) + g^{\cA}(Y)$.
			\item There exists an $E$-convex $f$ such that $\ip{X,Y}_{L^2(\cA)_{\sa}^m} = f^{\cA}(X) + \mathcal{L}f^{\cA}(Y)$.
			\item There exists an $E$-convex $\mathrm{W}^*$-function $f$ such that $Y$ is a subgradient vector to $f^{\cA}$ at the point $X$.
		\end{enumerate}
	\end{proposition}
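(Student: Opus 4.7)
The plan is to establish the six equivalences in a cycle of short implications, leveraging the machinery already developed: Proposition \ref{prop:Legendre} on the Legendre transform, Lemma \ref{lem:Legendresubgradient} on the subgradient characterization, and the construction used to prove (3)$\leq$(1) in Proposition \ref{prop:MKduality1}. The key observation is that Proposition \ref{prop:MKduality1} gives the equality of five infima, but here we need to show these infima are actually attained by an explicit pair $(f,g)$ (respectively a single $f$) compatible with the given coupling $(\cA,X,Y)$.

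First, I would prove (1)$\Rightarrow$(3) by the indicator construction from Proposition \ref{prop:MKduality1}: take $f^{\cB}(Z) = 0$ if $\lambda_Z = \mu$ and $+\infty$ otherwise. This is a tracial $\mathrm{W}^*$-function, $f^{\cA}(X) = 0$ since $\lambda_X = \mu$, and the defining supremum gives $\mathcal{L}f^{\cA}(Y) = C(\mu,\nu)$. Optimality of the coupling means $\ip{X,Y}_{L^2(\cA)_{\sa}^m} = C(\mu,\nu)$, so $f^{\cA}(X) + \mathcal{L}f^{\cA}(Y) = \ip{X,Y}_{L^2(\cA)_{\sa}^m}$.

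Next I would pass from (3) to (5) by replacing $f$ with $\mathcal{L}^2 f$, which is $E$-convex by Proposition \ref{prop:Legendre}(1) and satisfies $\mathcal{L}(\mathcal{L}^2 f) = \mathcal{L}f$. The point is that $\mathcal{L}^2 f \leq f$ always, while the reverse at the single point $X$ follows from
\[
\mathcal{L}^2 f^{\cA}(X) \geq \ip{X,Y}_{L^2(\cA)_{\sa}^m} - \mathcal{L}f^{\cA}(Y) = f^{\cA}(X).
\]
Hence $\mathcal{L}^2 f^{\cA}(X) + \mathcal{L}f^{\cA}(Y) = \ip{X,Y}_{L^2(\cA)_{\sa}^m}$, giving (5) with an $E$-convex witness. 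From (5), Lemma \ref{lem:Legendresubgradient} immediately yields (6), since equality in the Legendre duality is equivalent to $Y$ being a subgradient. Conversely (6)$\Rightarrow$(5) by the same lemma. The implications (5)$\Rightarrow$(4) and (5)$\Rightarrow$(3) are immediate by taking $g = \mathcal{L}f$, which is admissible (and $E$-convex) by definition of $\mathcal{L}f$, while (4)$\Rightarrow$(2) is trivial.

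To close the loop I would prove (2)$\Rightarrow$(1): given an admissible pair $(f,g)$ with $\ip{X,Y}_{L^2(\cA)_{\sa}^m} = f^{\cA}(X) + g^{\cA}(Y)$ and any other coupling $(\cB,X',Y')$ of $\mu,\nu$, admissibility and the tracial $\mathrm{W}^*$-function property give
\[
\ip{X',Y'}_{L^2(\cB)_{\sa}^m} \leq f^{\cB}(X') + g^{\cB}(Y') = \mu(f) + \nu(g) = f^{\cA}(X) + g^{\cA}(Y) = \ip{X,Y}_{L^2(\cA)_{\sa}^m},
\]
so $(\cA,X,Y)$ maximizes the inner product and is optimal. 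Finally, (2)$\Rightarrow$(3) follows because admissibility of $(f,g)$ forces $g \geq \mathcal{L}f$ pointwise on every $L^2(\cC)_{\sa}^m$, and combining this with the chain $\ip{X,Y}_{L^2(\cA)_{\sa}^m} \leq f^{\cA}(X) + \mathcal{L}f^{\cA}(Y) \leq f^{\cA}(X) + g^{\cA}(Y) = \ip{X,Y}_{L^2(\cA)_{\sa}^m}$ forces equality in the middle.

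There is no deep obstacle; the only delicate point is bookkeeping around possibly $+\infty$ values and ensuring that the specific witnesses ($\mathcal{L}^2 f$ of the indicator, or $(\mathcal{L}^2 f,\mathcal{L}f)$) land in the stated function class. The substantive work was done in Proposition \ref{prop:Legendre}, Lemma \ref{lem:Legendresubgradient}, and Proposition \ref{prop:MKduality1}; the present proposition is essentially their pointwise version at $(X,Y)$.
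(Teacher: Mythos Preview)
Your proposal is correct and follows essentially the same approach as the paper: the indicator function $f^{\cB}(Z)=0$ if $\lambda_Z=\mu$, $+\infty$ otherwise, the passage to the $E$-convex pair $(\mathcal{L}^2 f,\mathcal{L}f)$, and Lemma~\ref{lem:Legendresubgradient} for (5)$\Leftrightarrow$(6) are exactly what the paper uses. The only organizational difference is that the paper dispatches (2)--(5)$\Rightarrow$(1) in one stroke by invoking Proposition~\ref{prop:MKduality1}, and then shows (1) implies all of (2)--(5) simultaneously via the single witness $(\mathcal{L}^2 f,\mathcal{L}f)$, whereas you thread a cycle of implications; your (3)$\Rightarrow$(5) step (valid for an arbitrary $f$ witnessing (3), not just the indicator) is slightly more general but relies on the same double-Legendre identity $\mathcal{L}(\mathcal{L}^2 f)=\mathcal{L}f$.
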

	
	\begin{proof}
		It is immediate from the previous proposition that each of the conditions (2) -- (5) implies (1).
		
		For the converse implication, assume the coupling is optimal.  Let
		\[
		f^{\cB}(Z) = \begin{cases} 0, & \text{if }  Z \in L^\infty(\cB)_{\sa}^m \text{ and } \lambda_{Z} = \mu, \\ +\infty, & \text{otherwise.} \end{cases}
		\]
		As in the proof of the previous proposition, we have $\mu(f) + \nu(\mathcal{L}f) = C(\mu,\nu)$, or equivalently $\ip{X,Y}_{L^2(\cA)_{\sa}^m} = f^{\cA}(X) + \mathcal{L}f^{\cA}(Y)$.  We also have $C(\mu,\nu) = \mu(\mathcal{L}^2f) + \nu(\mathcal{L}f) \leq \mu(f) + \nu(\mathcal{L}f) = C(\mu,\nu)$.  Thus, the pair $(\mathcal{L}^2f, \mathcal{L}f)$ fulfills all of the criteria of (2) -- (5).
		
		The equivalence of (5) and (6) follows from Lemma \ref{lem:Legendresubgradient}.
	\end{proof}
	
	\subsection{A decomposition result for optimal couplings} \label{subsec:decomposition}
	
	As an initial application of duality, we present the following result that expresses an optimal coupling $(X,Y)$ in terms of another optimal coupling $(X',Y')$ with $\cB = \mathrm{W}^*(X') = \mathrm{W}^*(Y')$.
	
	\begin{theorem} \label{thm:decomposition}
		Let $\mu$, $\nu \in \Sigma_{m,R}$, and let $(\cA,X,Y)$ be an optimal coupling of $\mu$ and $\nu$.  Then there exists a $\mathrm{W}^*$-subalgebra $\cB \subseteq \cA$ with the following properties, letting $X' = E_{\cB}[X]$ and $Y' = E_{\cB}[Y]$:
		\begin{enumerate}[(1)]
			\item $\cB = \mathrm{W}^*(X') = \mathrm{W}^*(Y')$.
			\item $X - X'$, $X' - Y'$, and $Y' - Y$ are orthogonal.
			\item $(\cA,X',Y')$ is an optimal coupling of $\lambda_{X'}$ and $\lambda_{Y'}$.  Similarly, $(\cA,X,Y')$ and $(\cA,X',Y)$ are optimal couplings of the respective laws.
		\end{enumerate}
		We may choose $\cB$ to be contained in $\mathrm{W}^*(X)$ (or symmetrically, we may choose it to be contained in $\mathrm{W}^*(Y)$).
		
		Furthermore, there exists some optimal coupling $(\cA,X,Y)$ and a $\cB$ satisfying (1) -- (3) with respect to this coupling such that $\mathrm{W}^*(X,\cB)$ and $\mathrm{W}^*(Y,\cB)$ are freely independent with amalgamation over $\cB$.
	\end{theorem}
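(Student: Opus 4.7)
The plan is to apply the Monge--Kantorovich duality of Proposition \ref{prop:MKduality2} to obtain an $E$-convex $\mathrm{W}^*$-function $f$ with $Y \in \eth f^{\cA}(X)$ (equivalently, by Lemma \ref{lem:Legendresubgradient}, $X \in \eth \mathcal{L}f^{\cA}(Y)$), and then extract $\cB$ from the subdifferential structure of $f$. Since $X \in \mathrm{W}^*(X)$ gives $f^{\mathrm{W}^*(X)}(X) = f^{\cA}(X)$, I invoke the minimal-norm subgradient lemma (the unnumbered lemma immediately following Lemma \ref{lem:CEgradient1}) applied to $f^{\mathrm{W}^*(X)}$ at $X$: this produces a unique $\tilde Y \in \eth f^{\mathrm{W}^*(X)}(X)$ of minimal $L^2$-norm, and setting $\cB := \mathrm{W}^*(\tilde Y) \subseteq \mathrm{W}^*(X)$ yields $\cB = \mathrm{W}^*(E_{\cB}[X])$ together with $f^{\cB}(E_{\cB}[X]) = f^{\cA}(X)$. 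Writing $X' := E_{\cB}[X]$ and $Y' := E_{\cB}[Y]$, the equality $\cB = \mathrm{W}^*(X')$ holds directly; the companion equality $\cB = \mathrm{W}^*(Y')$ in (1) reduces to identifying $Y'$ with $\tilde Y$.

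The two orthogonalities $X - X' \perp X' - Y'$ and $X' - Y' \perp Y - Y'$ in (2) are automatic because $X' - Y' \in L^2(\cB)$ while $X - X', Y - Y' \perp L^2(\cB)$ (conditional expectations are orthogonal projections). The remaining orthogonality reduces to $\ip{X,Y} = \ip{X',Y'}$, which I verify by chaining the Legendre identities
\[
\ip{X,Y} = f^{\cA}(X) + \mathcal{L}f^{\cA}(Y), \qquad \ip{X',Y'} = f^{\cB}(X') + \mathcal{L}f^{\cB}(Y')
\]
given by Lemma \ref{lem:Legendresubgradient}, together with $f^{\cB}(X') = f^{\cA}(X)$ from Step 1 and the dual equality $\mathcal{L}f^{\cB}(Y') = \mathcal{L}f^{\cA}(Y)$ (from applying the minimal-norm lemma to $\mathcal{L}f$ at $Y$). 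The optimality claims in (3) then follow from Proposition \ref{prop:MKduality2}: once $Y' \in \eth f^{\cB}(X')$, the coupling $(\cB, X', Y')$ is optimal, and since Wasserstein distance depends only on the laws, so is $(\cA, X', Y')$. For the mixed couplings, Lemma \ref{lem:CEgradient1}(1) gives $\eth f^{\cB}(X') = L^2(\cB)_{\sa}^m \cap \eth f^{\cA}(X)$, so $Y' \in \eth f^{\cA}(X)$, yielding optimality of $(\cA, X, Y')$ by another application of Proposition \ref{prop:MKduality2}; the case $(\cA, X', Y)$ is dual.

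For the final free-independence clause, I replace $\cA$ by the amalgamated free product $\tilde{\cA} := \mathrm{W}^*(X, \cB) *_{\cB} \mathrm{W}^*(Y, \cB)$ from Proposition \ref{prop:amalgamatedfreeproduct}. The marginal laws of $X$ and $Y$ are unchanged, and since $E_{\cB}[X_i - X_i'] = 0 = E_{\cB}[Y_j - Y_j']$, the defining property of freeness with amalgamation gives $E_{\cB}((X_i - X_i')(Y_j - Y_j')) = E_{\cB}[X_i - X_i'] \, E_{\cB}[Y_j - Y_j'] = 0$, whence $\ip{X, Y}_{\tilde{\cA}} = \ip{X', Y'} = \ip{X, Y}_{\cA}$, so the new coupling in $\tilde{\cA}$ is still optimal and carries the required free-with-amalgamation structure.

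The main obstacle will be closing the loop $\cB = \mathrm{W}^*(Y')$, that is, identifying the minimal-norm subgradient $\tilde Y$ with $Y' = E_{\cB}[Y]$. This requires combining the extremal characterization of $\tilde Y$ in $\mathrm{W}^*(X)$ with the specific origin of $Y'$ as a conditional expectation of the given $Y$, and I expect it to proceed via a dual minimal-norm argument applied to $\mathcal{L}f$ at $Y$ together with a careful double application of Lemma \ref{lem:CEgradient1} on both sides of the Legendre correspondence.
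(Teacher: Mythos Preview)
Your approach is genuinely different from the paper's and, as you yourself flag, the ``main obstacle'' is real and unresolved. The paper does not use the minimal-norm subgradient lemma at all; instead it defines
\[
\mathscr{B} = \{\cB \subseteq \cA : \ip{E_{\cB}[X],E_{\cB}[Y]}_{L^2(\cA)_{\sa}^m} = \ip{X,Y}_{L^2(\cA)_{\sa}^m}\},
\]
uses Zorn's lemma to find a minimal element $\cB$, and then minimality immediately forces $\cB = \mathrm{W}^*(X') = \mathrm{W}^*(Y')$. The key identity $\ip{X',Y'} = \ip{X,Y}$ is \emph{built into the definition} of $\cB$, rather than derived afterward, and the duality (Proposition~\ref{prop:MKduality2}) is invoked only at the end to certify the optimality claims in (3).

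The gap in your argument is precisely at the step $\mathcal{L}f^{\cB}(Y') = \mathcal{L}f^{\cA}(Y)$, and relatedly at $\tilde Y = Y'$. Applying the minimal-norm lemma to $\mathcal{L}f$ at $Y$ produces an algebra $\cB' = \mathrm{W}^*(\tilde X) \subseteq \mathrm{W}^*(Y)$, not your $\cB = \mathrm{W}^*(\tilde Y) \subseteq \mathrm{W}^*(X)$; there is no reason these coincide (indeed, the theorem only asserts $\cB$ can be taken inside $\mathrm{W}^*(X)$ \emph{or} inside $\mathrm{W}^*(Y)$, not both). More fundamentally, your $\cB$ is constructed solely from the subdifferential data of $f$ at $X$ and carries no information about the specific $Y$ handed to you by the coupling: $\tilde Y$ is the minimal-norm element of $\eth f^{\mathrm{W}^*(X)}(X)$, which depends on $f$ and $X$, while $Y' = E_{\cB}[Y]$ depends on the given $Y$. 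Since $\eth f^{\cA}(X)$ may contain many points, there is no mechanism forcing $E_{\cB}[Y]$ to equal $\tilde Y$, nor forcing $E_{\cB}[Y]$ to lie in $\eth f^{\cA}(X)$ at all (Lemma~\ref{lem:CEgradient1}(2) only lets you project a subgradient onto $\mathrm{W}^*(X)$, not onto an arbitrary subalgebra such as $\cB$). Without $\ip{X',Y'} = \ip{X,Y}$, the orthogonality $X - X' \perp Y - Y'$ and the optimality of $(\cA, X', Y')$ both fail to follow. Your treatment of the free-product clause is fine and matches the paper.
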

	
	\begin{proof}
		Let
		\[
		\mathscr{B} = \{\mathrm{W}^*\text{-subalgebras } \cB \subseteq \cA: \ip{E_{\cB}[X], E_{\cB}[Y]}_{L^2(\cA)_{\sa}^m} = \ip{X,Y}_{L^2(\cA)_{\sa}^m} \},
		\]
		which is partially ordered by inclusion.  We claim that $\mathscr{B}$ has a minimal element, and we will prove this by a transfinite reverse martingale argument.  By Zorn's lemma, it suffices to show that every chain in $\mathscr{B}$ has a lower bound.  Consider a chain $\mathscr{C} \subseteq \mathscr{B}$, and let $\cC = \bigcap_{\cB \in \mathscr{C}} \cB$.  We claim that $\lim_{\cB \in \mathscr{C}} E_{\cB}[X] = E_{\cC}[X]$ in $L^2(\cA)_{\sa}^m$.  Let
		\[
		\delta = \inf_{\cB \in \mathscr{C}} \norm{E_{\cB}[X]}_{L^2(\cA)_{\sa}^m}^2.
		\]
		Given $\epsilon > 0$, there exists $\cB_0 \in \mathscr{C}$ such that $\norm{E_{\cB_0}[X]}_{L^2(\cA)_{\sa}^m}^2 < \delta^2 + \epsilon^2$.  Then for all $\cB \in \mathscr{C}$ with $\cB \subseteq \cB_0$, we have
		\[
		\norm{E_{\cB}[X] - E_{\cB_0}[X]}_{L^2(\cA)_{\sa}^m}^2 = \norm{E_{\cB_0}[X]}_{L^2(\cA)_{\sa}^m}^2 - \norm{E_{\cB}[X]}_{L^2(\cA)_{\sa}^m}^2 \leq \delta^2 + \epsilon^2 - \delta^2 = \epsilon^2.
		\]
		This implies that $Z = \lim_{\cB \in \mathscr{C}} E_{\cB}[X]$ exists in $L^2(\cA)_{\sa}^m$.  Moreover, $\norm{Z_j}_{L^\infty(\cA)} \leq \norm{X_j}_{L^\infty(\cA)}$.  Clearly $Z_j \in \bigcap_{\cB \in \mathscr{C}} \cB = \cC$, and $\ip{Z,W}_{L^2(\cA)_{\sa}^m} = \ip{X,W}_{L^2(\cA)_{\sa}^m}$ for all $W \in L^2(\cC)_{\sa}^m$.  Thus, $\lim_{\cB \in \mathscr{C}} E_{\cB}[X] = E_{\cC}[X]$ in $L^2(\cA)_{\sa}^m$. By the same token $\lim_{\cB \in \mathscr{C}} E_{\cB}[Y] = E_{\cC}[Y]$ in $L^2(\cA)_{\sa}^m$.  Therefore,
		\[
		\ip{E_{\cC}[X], E_{\cC}[Y]}_{L^2(\cA)_{\sa}^m} = \lim_{\cB \in \mathscr{C}} \ip{E_{\cB}[X], E_{\cB}[Y]}_{L^2(\cA)_{\sa}^m} = \ip{X,Y}_{L^2(\cA)_{\sa}^m}.
		\]
		Therefore, $\cC \in \mathscr{B}$ as desired.
		
		So by Zorn's lemma, $\mathscr{B}$ has some minimal element, which we will call $\cB$.  Let $X' = E_{\cB}[X]$ and $Y' = E_{\cB}[Y]$.  Now $\mathrm{W}^*(X') \subseteq \cB$ and we have
		\[
		\ip{X', E_{\mathrm{W}^*(X')}[Y']}_{L^2(\cA)_{\sa}^m} = \ip{X',Y'}_{L^2(\cA)_{\sa}^m}.
		\]
		By minimality of $\cB$, we have $\cB = \mathrm{W}^*(X')$, and similarly, $\cB = \mathrm{W}^*(Y')$.  Hence, (1) holds.
		
		To show that $\cB$ can be chosen inside $\mathrm{W}^*(X)$, note that
		\[
		\ip{E_{\mathrm{W}^*(X)}[X], E_{\mathrm{W}^*(X)}[Y]}_{L^2(\cA)_{\sa}^m} = \ip{X, E_{\mathrm{W}^*(X)}[Y]}_{L^2(\cA)_{\sa}^m} =  \ip{X,Y}_{L^2(\cA)_{\sa}^m}.
		\]
		Thus, we can apply the same argument with $\mathscr{B}$ replaced by the elements of $\mathscr{B}$ contained inside $\mathrm{W}^*(X)$.
		
		To prove (2), since $X - X' = X - E_{\cB}[X]$ is orthogonal to $\cB$, it is immediate that $X - X'$ and $X' - Y'$ are orthogonal.  Similarly, $Y' - Y$ and $X' - Y'$ are orthogonal.  Finally, to show that $X - X'$ and $Y' - Y$ are orthogonal, note that
		\begin{equation} \label{eq:ipexpansion}
		\ip{X - X', Y' - Y}_{L^2(\cA)_{\sa}^m} = \ip{X, Y'}_{L^2(\cA)_{\sa}^m} + \ip{X',Y}_{L^2(\cA)_{\sa}^m} - \ip{X,Y}_{L^2(\cA)_{\sa}^m} - \ip{X',Y'}_{L^2(\cA)_{\sa}^m}.
		\end{equation}
		Observe that
		\[
		\ip{X, Y'}_{L^2(\cA)_{\sa}^m} = \ip{X,E_{\cB}[Y]}_{L^2(\cA)_{\sa}^m} = \ip{E_{\cB}[X],E_{\cB}[Y]}_{L^2(\cA)_{\sa}^m} = \ip{X',Y'}_{L^2(\cA)_{\sa}^m}.
		\]
		Similarly, $\ip{X',Y}_{L^2(\cA)_{\sa}^m} = \ip{X',Y'}_{L^2(\cA)_{\sa}^m}$.  Moreover, $\ip{X,Y}_{L^2(\cA)_{\sa}^m} = \ip{X',Y'}_{L^2(\cA)_{\sa}^m}$ by our choice of $\cB$.  Thus, all the terms in \eqref{eq:ipexpansion} cancel, and $X - X'$ and $Y' - Y$ are orthogonal.
		
		To prove (3), by Proposition \ref{prop:MKduality2}, there exists an admissible pair of $E$-convex $\mathrm{W}^*$-functions $f$ and $g$ such that $f^{\cA}(X) + g^{\cA}(Y) = \ip{X,Y}_{L^2(\cA)_{\sa}^m}$.  By construction of $\cB$ and by $E$-convexity,
		\begin{align*}
			f^{\cA}(X') + g^{\cA}(Y') &\geq \ip{X',Y'}_{L^2(\cA)_{\sa}^m} \\
			&= \ip{X,Y}_{L^2(\cA)_{\sa}^m} \\
			&= f^{\cA}(X) + g^{\cA}(Y) \\
			&\geq f^{\cA}(X') + g^{\cA}(Y').
		\end{align*}
		This implies that $(X',Y')$ is an optimal coupling.  By similar reasoning, since $\ip{X,Y'}_{L^2(\cA)_{\sa}^m} = \ip{X',Y'}_{L^2(\cA)_{\sa}^m}$ and $f^{\cA}(X') \leq f^{\cA}(X)$, we see that $(X',Y)$ is an optimal coupling, and symmetrically $(X,Y')$ is an optimal coupling.
		
		Let $\cA_1$ be a copy of $\mathrm{W}^*(X,\cB)$ and let $\cA_2$ be a copy of $\mathrm{W}^*(Y,\cB)$.  Let $\tilde{\cA} = \cA_1 *_{\cB} \cA_2$ be the amalgamated free product (with its canonical trace $\tilde{\tau}$).  Let $\tilde{X}$, $\tilde{X}'$, $\tilde{Y}$, and $\tilde{Y}'$ be the images of the original variables in $\tilde{\cA}$.  Then using free independence
		\begin{align*}
			\norm*{\tilde{X} - \tilde{Y}}_{L^2(\tilde{\cA})_{\sa}^m}^2 &= \norm*{\tilde{X} - \tilde{X}'}_{L^2(\tilde{\cA})_{\sa}^m}^2 + \norm*{\tilde{X}' - \tilde{Y}'}_{L^2(\tilde{\cA})_{\sa}^m}^2 + \norm*{\tilde{Y}' - \tilde{Y}}_{L^2(\tilde{\cA})_{\sa}^m}^2 \\
			&= \norm*{X - X'}_{L^2(\cA)_{\sa}^m}^2 + \norm*{X' - Y'}_{L^2(\cA)_{\sa}^m}^2 + \norm*{Y' - Y}_{L^2(\cA)_{\sa}^m}^2 \\
			&= \norm{X - Y}_{L^2(\cA)_{\sa}^m}^2.
		\end{align*}
		Therefore, $(\tilde{X}, \tilde{Y})$ is also an optimal coupling of $\mu$ and $\nu$.  The $\mathrm{W}^*$-subalgebra $\cB \subseteq \tilde{\cA}$ also satisfies
		\[
		\ip{E_{\cB}[\tilde{X}], E_{\cB}[\tilde{Y}]}_{\tilde{\tau}} = \ip{\tilde{X}, \tilde{Y}}_{\tilde{\tau}},
		\]
		and satisfies (1).  Thus, the same arguments as above show that $\cB$ in $\tilde{\cA}$ satisfies (2) and (3).
	\end{proof}

	\section{The displacement interpolation} \label{sec:displacement}
	
	If $(\cA,X,Y)$ is an $L^2$-optimal coupling of $\mu$, $\nu \in \Sigma_m$, then one can consider the displacement interpolation $X_t = (1 - t)X + tY$ for $t \in [0,1]$.  As shown in Proposition \ref{prop:geodesic} the corresponding family of laws defines a geodesic in $(\Sigma_m, d_W^{(2)})$.  In this section, we study how the displacement interpolation interacts with non-commutative Monge-Kantorovich duality and use this to prove 
	Theorem \ref{thm:displacementW*}.
	
	Motivated by analogous arguments in classical optimal transport theory, we approach the proof as follows (see \S \ref{subsec:W*displacement} for more detail).  By Proposition \ref{prop:MKduality2}, there exists an $E$-convex function $f$ such that $\ip{X,Y}_{L^2(\cA)_{\sa}^m} = f^{\cA}(X) + \mathcal{L} f^{\cA}(Y)$, or equivalently $Y \in \eth f^{\cA}(X)$.  Letting $q_t$ be the $\mathrm{W}^*$-function $q_t^{\cA}(X) = (1/2t) \norm{X}_{L^2(\cA)_{\sa}^m}^2$, we observe that $X_t \in \eth f_t^{\cA}(X)$ where $f_t = (1 - t) q_1 + t f$.  Hence, $X \in \eth (\mathcal{L} f_t)^{\cA}(X_t)$.  In order to show that $X \in L^2(\mathrm{W}^*(X_t))_{\sa}^m$, we want to understand the regularity properties of $\mathcal{L} f_t$.
	
	It is well-known that for a convex function $f$ on a Hilbert space $H$, the Legendre transform of $f(x) + (t/2) \norm{x}^2$ is given by the inf-convolution $g_t = \inf_{y \in H} [f^*(y) + (1/2t)\norm{x - y}^2]$, where $f^*$ is the Legendre transform of $f$.  Furthermore, $g_t$ has a Lipschitz gradient for every $t > 0$, and it satisfies the Hamilton-Jacobi equation
	\[
	\frac{d}{dt} g_t = - \frac{1}{2} \norm{\nabla g_t}^2.
	\]
	This can be checked by hand, or deduced for instance from \cite[\S 2, Theorem 1]{BdP1981}; also relevant to Hamilton-Jacobi equations on Hilbert space are \cite{BdP1985b,BdP1985a,CrLi1985,CrLi1986a,CrLi1986b,LL1986}.
	
	In this section, we adapt the theory of inf-convolutions to the setting tracial $\mathrm{W}^*$-functions. In \S \ref{subsec:infconvolution}, we define inf-convolutions of $\mathrm{W}^*$-functions and prove their basic properties.  In \S \ref{subsec:infconvolutions2}, we describe how inf-convolutions interact with $E$-convexity and semi-concavity.  In \S \ref{subsec:W*displacement}, we conclude the proof of Theorem \ref{thm:displacementW*}.
	
	We emphasize that the novelty in our work is not in the form of the Hamilton-Jacobi equation but rather in the fact that we study variables from infinite-dimensional non-commutative algebras and want the function to be defined consistently with respect to inclusions of these algebras (that is, to be a tracial $\mathrm{W}^*$-function).  This means for instance that if $f$ and $g$ are tracial $\mathrm{W}^*$-functions and $f \square g$ is their inf-convolution as defined below, then $(f \square g)^{\cA}$ need not agree with the inf-convolution of $f^{\cA}$ and $g^{\cA}$ as functions on the Hilbert space $L^2(\cA)_{\sa}^m$ (Remark \ref{rem:infconvolutiondifferent}); however, they do agree if $f$ and $g$ are $E$-convex (Lemma \ref{lem:infconvolutionconvex}).  Hence, a notion of viscosity solutions compatible with our theory of inf-convolutions will thus have to take into account the inclusions of one tracial $\mathrm{W}^*$-algebra into another.
	
	\subsection{Inf-convolutions} \label{subsec:infconvolution}
	
	We begin with the definition and basic properties of the inf-convolution.
	
	\begin{definition} \label{def:infconvolution}
		Let $f, g$ be two $\mathrm{W}^*$-functions with values in $[-\infty,\infty]$.  We define the \emph{inf-convolution} $f \square g$ by
		\[
		(f \square g)^{\cA}(X) = \inf \left\{ f^{\cB}(\iota(X) - Y) + g^{\cB}(Y) | \iota: \cA \to \cB \text{ embedding, } Y \in L^2(\cB)_{\sa}^m \right\}.
		\]
	\end{definition}
	
	\begin{lemma}
		The object $f \square g$ is a $\mathrm{W}^*$-function.
	\end{lemma}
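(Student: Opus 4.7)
The plan is to verify directly the two inequalities making up the defining property of a tracial $\mathrm{W}^*$-function: for any tracial $\mathrm{W}^*$-embedding $\iota_0: \cA \to \cA'$ (with $\cA, \cA' \in \mathbb{W}$) and $X \in L^2(\cA)_{\sa}^m$, one must show
\[
(f \square g)^{\cA}(X) = (f \square g)^{\cA'}(\iota_0(X)).
\]

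One direction is essentially formal. Given any admissible pair $(\iota': \cA' \to \cB', Y' \in L^2(\cB')_{\sa}^m)$ for the infimum defining $(f \square g)^{\cA'}(\iota_0(X))$, the composition $\iota' \circ \iota_0: \cA \to \cB'$ is again a tracial $\mathrm{W}^*$-embedding, and the same $Y'$ shows that $f^{\cB'}(\iota'(\iota_0(X)) - Y') + g^{\cB'}(Y')$ participates in the infimum defining $(f \square g)^{\cA}(X)$. Taking the infimum yields $(f \square g)^{\cA}(X) \leq (f \square g)^{\cA'}(\iota_0(X))$.

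For the reverse direction, the main tool is the amalgamated free product from Proposition \ref{prop:amalgamatedfreeproduct}. Given an admissible pair $(\iota: \cA \to \cB, Y \in L^2(\cB)_{\sa}^m)$ for $(f \square g)^{\cA}(X)$, form $\cB' = \cB *_{\cA} \cA'$, using $\iota$ and $\iota_0$ as the two embeddings of $\cA$. Since $\cB$, $\cA'$ (and $\cA$) have separable preduals, so does $\cB'$, and we may take $\cB' \in \mathbb{W}$. Let $\rho: \cB \to \cB'$ and $\iota': \cA' \to \cB'$ denote the canonical embeddings, which satisfy $\rho \circ \iota = \iota' \circ \iota_0$. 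Setting $Y' = \rho(Y)$, we compute $\iota'(\iota_0(X)) - Y' = \rho(\iota(X) - Y)$. Because $f$ and $g$ are themselves tracial $\mathrm{W}^*$-functions, $f^{\cB'}(\rho(\iota(X) - Y)) = f^{\cB}(\iota(X) - Y)$ and $g^{\cB'}(\rho(Y)) = g^{\cB}(Y)$. Hence the value of $f^{\cB'}(\iota'(\iota_0(X)) - Y') + g^{\cB'}(Y')$ matches that of $f^{\cB}(\iota(X) - Y) + g^{\cB}(Y)$, and taking the infimum over $(\iota, Y)$ gives the inequality $(f \square g)^{\cA'}(\iota_0(X)) \leq (f \square g)^{\cA}(X)$.

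The only mild subtlety is the one just mentioned: one must arrange that the ambient algebra used in realizing the infimum for $(f \square g)^{\cA'}(\iota_0(X))$ lies in $\mathbb{W}$, which is why the amalgamated free product (rather than some larger ambient algebra) is the natural construction; separability of the predual is preserved since $\cB'$ is generated as a $\mathrm{W}^*$-algebra by two algebras with separable preduals amalgamated over a third. No genuine obstacle is expected; this lemma is a consistency check that legitimizes the definition of $\square$ before one proves the more substantive regularity and Hamilton--Jacobi type properties in the subsequent subsections.
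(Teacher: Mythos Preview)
Your proof is correct and follows essentially the same approach as the paper: one inequality comes from composing embeddings, and the reverse inequality uses the amalgamated free product of $\cB$ and $\cA'$ over $\cA$ to transport a witness for the $\cA$-infimum to a witness for the $\cA'$-infimum. Your version is slightly more explicit (you spell out why the values match using that $f$ and $g$ are themselves $\mathrm{W}^*$-functions, and you address separability of the predual of the amalgamated free product), but the argument is the same.
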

	
	\begin{proof}
		Let $\iota: \cA \to \cB$ be an inclusion, and we first show that
		\begin{equation} \label{eq:infconvolution1}
			(f \square g)^{\cA}(X) \leq (f \square g)^{\cB}(\iota(X)).
		\end{equation}
		If $\iota': \cB \to \cC$ is another inclusion and $Y \in L^2(\cB)_{\sa}^m$ as in the definition of $(f \square g)^{\cB}$, then of course $\iota' \circ \iota$ is an inclusion and which can be used in the definition of $(f \square g)^{\cA}$.  This shows \eqref{eq:infconvolution1}.
		
		Conversely, suppose that $\iota': \cA \to \cC$ is an inclusion and $Y \in L^2(\cC)_{\sa}^m$ as in the definition of $(f \square g)^{\cA}$.  Then let $\tilde{\cC}$ be the free product of $\cB$ and $\cC$ with amalgamation over the images of $\cA$ in the respective algebras.  Then the image of $Y$ in $\tilde{\cC}$ participates in the infimum defining $(f \square g)^{\cB}(\iota(X))$ and hence $(f \square g)^{\cB}(\iota(X)) \leq (f \square g)^{\cA}(X)$.
	\end{proof}
	
	\begin{lemma}
		The inf-convolution is commutative and associative, that is, if $f$, $g$, $h$ are $\mathrm{W}^*$-functions, then $f \square g = g \square f$ and $(f \square g) \square h = f \square (g \square h)$.
	\end{lemma}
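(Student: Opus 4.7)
The plan is to show that both the commutative and associative identities reduce to a symmetric expression, namely a ``triple inf-convolution'' in the associative case, by using the fact that $f$, $g$, and $h$ are $\mathrm{W}^*$-functions (so their values are stable under inclusions) together with the observation that any inclusion $\cA \to \cB$ together with an element $Y \in L^2(\cB)_{\sa}^m$ can be absorbed into a single ambient algebra.

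For commutativity, I would give a direct substitution argument. Fix an inclusion $\iota: \cA \to \cB$ and $Y \in L^2(\cB)_{\sa}^m$. Setting $Z := \iota(X) - Y$ gives $Z \in L^2(\cB)_{\sa}^m$, and
\[
f^{\cB}(\iota(X) - Y) + g^{\cB}(Y) = g^{\cB}(\iota(X) - Z) + f^{\cB}(Z).
\]
Since this substitution is a bijection $Y \leftrightarrow Z$ on $L^2(\cB)_{\sa}^m$, taking the infimum over all admissible $(\iota, Y)$ on the left matches the infimum defining $(g \square f)^{\cA}(X)$ on the right.

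For associativity, the key step is to introduce the auxiliary quantity
\[
T(f,g,h)^{\cA}(X) := \inf \bigl\{ f^{\cC}(\iota(X) - Y_1 - Y_2) + g^{\cC}(Y_1) + h^{\cC}(Y_2) \bigr\},
\]
where the infimum ranges over inclusions $\iota: \cA \to \cC$ and pairs $Y_1, Y_2 \in L^2(\cC)_{\sa}^m$. I would then show that both $((f \square g) \square h)^{\cA}(X)$ and $(f \square (g \square h))^{\cA}(X)$ equal $T(f,g,h)^{\cA}(X)$. For the $\geq$ inequality, given nested inclusions $\iota_1: \cA \to \cB_1$ and $\iota_2: \cB_1 \to \cB_2$ with $Z \in L^2(\cB_1)_{\sa}^m$ and $Y \in L^2(\cB_2)_{\sa}^m$ appearing in the expansion of $((f \square g) \square h)^{\cA}(X)$, the $\mathrm{W}^*$-function property gives $h^{\cB_1}(Z) = h^{\cB_2}(\iota_2(Z))$, so the composite inclusion $\iota_2 \circ \iota_1: \cA \to \cB_2$ and pair $(Y_1, Y_2) = (Y, \iota_2(Z))$ realize the same value inside $T$. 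For the $\leq$ inequality, any triple $(\iota, Y_1, Y_2)$ appearing in $T$ can be viewed as an admissible configuration for $((f \square g) \square h)^{\cA}(X)$ by taking $\cB_1 = \cB_2 = \cC$ with $\iota_1 = \iota$, $\iota_2 = \mathrm{id}$, $Z = Y_2$, $Y = Y_1$. The symmetric argument gives the same equality for $(f \square (g \square h))^{\cA}(X)$, and associativity follows.

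The only mild subtlety is checking that the intermediate algebra $\cB_1$ in the nested version can always be ``flattened'' into a single ambient algebra without losing information, which is precisely the content of the $\mathrm{W}^*$-function axiom $h^{\cB_1} = h^{\cB_2} \circ \iota_2$. Once that bookkeeping is in place, the rest is formal manipulation of infima, and no analytic work (convexity, lower semi-continuity, or existence of minimizers) is required.
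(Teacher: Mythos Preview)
Your proposal is correct and follows essentially the same approach as the paper: the commutativity argument is identical (substitute $Z = \iota(X) - Y$), and for associativity both you and the paper reduce the nested infimum to a flat ``triple'' infimum over a single ambient algebra, using the $\mathrm{W}^*$-function axiom to push everything into the larger algebra. The only cosmetic difference is that the paper, after obtaining the triple expression, notes it is symmetric in $g$ and $h$ and then combines this with commutativity to conclude, whereas you argue directly that both $((f\square g)\square h)$ and $(f\square(g\square h))$ equal the triple $T(f,g,h)$; either route works.
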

	
	\begin{proof}
		We have
		\[
		(f \square g)^{\cA}(X) = \inf_{\iota: \cA \to \cB} \inf_{Y \in L^2(\cB)_{\sa}^m} [f^{\cB}(\iota(X) - Y) + g^{\cB}(Y)].
		\]
		We substitute $Z = \iota(X) - Y$ and thus obtain
		\[
		\inf_{\iota: \cA \to \cB} \inf_{Y \in L^2(\cB)_{\sa}^m} [f^{\cB}(Z) + g^{\cB}(\iota(X) - Z)] = (g \square f)^{\cA}(X).
		\]
		For associativity,
		\begin{align}
			((f \square g) \square h)^{\cA}(X) &= \inf_{\substack{\iota_1: \cA \to \cB \\ Y \in L^2(\cB)_{\sa}^m}} \left((f \square g)^{\cB}(\iota_1(X) - Y) + h^{\cB}(Y) \right) \nonumber \\
			&= \inf_{\substack{\iota_1: \cA \to \cB \\ Y \in L^2(\cB)_{\sa}^m}} \inf_{\substack{\iota_2: \cB \to \cC \\ Z \in L^2(\cB)_{\sa}^m}} \left( f^{\cC}(\iota_2(\iota_1(X)) - \iota_2(Y) - Z) + g^{\cC}(Z) + h^{\cC}(\iota_2(Y)) \right). \label{eq:associativity1}
		\end{align}
		We claim that is equal to
		\begin{equation}
			\inf_{\substack{\iota: \cA \to \cB \\ Y, Z \in L^2(\cB)_{\sa}^m}} \left( f^{\cC}(\iota(X) - Y - Z) + g^{\cB}(Z) + h^{\cB}(Y) \right), \label{eq:associativity2}
		\end{equation}
		or in other words, in our earlier expression we can without loss of generality impose the condition that $\cC = \cB$ and $\iota_2 = \id$.  The reason is that if we allowed $Z$ to come only from the smaller algebra $\cC$, then the infimum could only increase, hence by shrinking $\cC$ to $\cB$, \eqref{eq:associativity2} $\geq$ \eqref{eq:associativity1}.  On the other hand, if in \eqref{eq:associativity1}, we allowed $Y$ to come from the larger algebra $\cC$ instead of $\cB$, then the infimum could only decrease, and hence by enlarging $\cB$ to $\cC$, we see that \eqref{eq:associativity1} $\leq$ \eqref{eq:associativity2}.  Now the expression \eqref{eq:associativity2} is symmetric in $g$ and $h$, and hence
		\[
		(f \square g) \square h = (f \square h) \square g.
		\]
		This relation, together with commutativity, implies the associativity relation since
		\[
		(f \square g) \square h = (g \square f) \square h = (g \square h) \square f = f \square (g \square h). \qedhere
		\]
	\end{proof}
	
	The relationship between inf-convolution and Legendre transform is exactly what one would expect based on the classical case.
	
	\begin{lemma} \label{lem:Legendreinfconvolution}
		Let $f$ and $g$ be $\mathrm{W}^*$-functions.  Then
		\[
		\mathcal{L}(f \square g) = \mathcal{L}f + \mathcal{L}g.
		\]
	\end{lemma}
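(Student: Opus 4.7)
The plan is to prove both inequalities $\mathcal{L}(f \square g) \leq \mathcal{L}f + \mathcal{L}g$ and $\mathcal{L}(f \square g) \geq \mathcal{L}f + \mathcal{L}g$ by mimicking the classical argument, with amalgamated free products playing the role that simple direct sums of variables play in the classical case.

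For the inequality $\mathcal{L}(f \square g)^{\cA}(Z) \leq \mathcal{L}f^{\cA}(Z) + \mathcal{L}g^{\cA}(Z)$, I would unwind the definitions: fix $\iota: \cA \to \cB$, $W \in L^2(\cB)_{\sa}^m$, a further inclusion $\iota': \cB \to \cC$, and a splitting $\iota'(W) = U + Y$ with $U, Y \in L^2(\cC)_{\sa}^m$. Then
\[
\langle \iota' \circ \iota(Z), \iota'(W) \rangle_{L^2(\cC)_{\sa}^m} - f^{\cC}(U) - g^{\cC}(Y) = \bigl(\langle \iota'\iota(Z), U\rangle - f^{\cC}(U)\bigr) + \bigl(\langle \iota'\iota(Z), Y\rangle - g^{\cC}(Y)\bigr),
\]
and each bracket is bounded by the respective Legendre transform evaluated at $Z$, since $\iota' \circ \iota$ is itself an inclusion $\cA \to \cC$ admissible in the supremum defining $\mathcal{L}f^{\cA}(Z)$ and $\mathcal{L}g^{\cA}(Z)$. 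Taking the sup on the left over all such choices gives the inequality.

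For the reverse inequality, I would start with any two witnesses: inclusions $\iota_1 : \cA \to \cB_1$, $\iota_2 : \cA \to \cB_2$, and vectors $U \in L^2(\cB_1)_{\sa}^m$, $Y \in L^2(\cB_2)_{\sa}^m$ appearing in the suprema defining $\mathcal{L}f^{\cA}(Z)$ and $\mathcal{L}g^{\cA}(Z)$ respectively. Using Proposition \ref{prop:amalgamatedfreeproduct}, form the amalgamated free product $\cC = \cB_1 *_{\cA} \cB_2$ (which has separable predual and so may be taken in $\mathbb{W}$), with inclusions $\rho_j: \cB_j \to \cC$ sharing a common restriction $\iota: \cA \to \cC$. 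Let $U' = \rho_1(U)$, $Y' = \rho_2(Y)$, and $W = U' + Y'$. Since $f$ and $g$ are tracial $\mathrm{W}^*$-functions, $f^{\cC}(U') = f^{\cB_1}(U)$ and $g^{\cC}(Y') = g^{\cB_2}(Y)$, and the inner products $\langle \iota(Z), U'\rangle$ and $\langle \iota(Z), Y'\rangle$ compute within $\cB_1$ and $\cB_2$ to the original ones. Taking $\iota'' = \id_{\cC}$ in the definition of the inf-convolution yields $(f \square g)^{\cC}(W) \leq f^{\cC}(U') + g^{\cC}(Y')$, whence
\[
\mathcal{L}(f \square g)^{\cA}(Z) \geq \langle \iota(Z), W\rangle_{L^2(\cC)_{\sa}^m} - (f \square g)^{\cC}(W) \geq \bigl(\langle \iota_1(Z), U\rangle - f^{\cB_1}(U)\bigr) + \bigl(\langle \iota_2(Z), Y\rangle - g^{\cB_2}(Y)\bigr).
\]
Supping over the two witnesses independently gives $\mathcal{L}(f \square g)^{\cA}(Z) \geq \mathcal{L}f^{\cA}(Z) + \mathcal{L}g^{\cA}(Z)$.

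The only subtle point, and the main place care is needed, is the reverse direction: one must combine witnesses living in two \emph{different} ambient algebras $\cB_1$ and $\cB_2$ into a single ambient algebra in a way that preserves the relevant inner products and function values. This is exactly the role of the amalgamated free product over $\cA$, and it works because the embeddings $\rho_j$ preserve the trace and because $f, g$ are insensitive to the choice of ambient $\mathrm{W}^*$-algebra. No other issues arise; the rest is bookkeeping.
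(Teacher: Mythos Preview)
Your proof is correct and takes essentially the same approach as the paper: both arguments hinge on the amalgamated free product $\cB_1 *_{\cA} \cB_2$ to merge witnesses from two different ambient algebras into a common one, and both reduce the nested supremum by splitting $\iota'(W) = U + Y$. The paper organizes this as a single chain of equalities (arguing that one can collapse $\cB = \cC$ without loss of generality, then separate the two suprema via the free product), whereas you prove the two inequalities separately; the content is the same.
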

	
	\begin{proof}
		Observe that
		\begin{align*}
			\mathcal{L}(f \square g)^{\cA}(X) &= \sup_{\substack{\iota_1: \cA \to \cB \\ Y \in L^2(\cB)_{\sa}^m}} \left( \ip{\iota_1(X),Y}_{L^2(\cB)_{\sa}^m} - (f \square g)^{\cB}(Y) \right) \\
			&= \sup_{\substack{\iota_1: \cA \to \cB \\ Y \in L^2(\cB)_{\sa}^m}} \left( \ip{\iota_1(X),Y}_{L^2(\cB)_{\sa}^m} - \inf_{\substack{\iota_2: \cB \to \cC \\ Z \in L^2(\cC)_{\sa}^m} } \left( f^{\cC}(\iota_2(Y) - Z) + g^{\cC}(Z) \right) \right),
		\end{align*}
		where we take the supremum over $\cB$ and $\cC \in \mathbb{W}$ and inclusions $\iota_1: \cA \to \cB$ and $\iota_2: \cB \to \cC$ and $Y \in L^2(\cB)_{\sa}^m$ and $Z \in L^2(\cC)_{\sa}^m$.  This can be rewritten as
		\[
		\sup_{\substack{\iota_1: \cA \to \cB \\ Y \in L^2(\cB)_{\sa}^m}} \sup_{\substack{\iota_2: \cB \to \cC \\ Z \in L^2(\cC)_{\sa}^m} } \left( \ip{\iota_2(\iota_1(X)),\iota_2(Y)}_{L^2(\cC)_{\sa}^m} - f^{\cC}(\iota_2(Y) - Z) - g^{\cC}(Z) \right).
		\]
		We can assume without loss generality that $\cB = \cC$ and $\iota_2 = \id$.  Indeed, allowing $Y$ to range over the larger space $L^2(\cC)_{\sa}^m$ rather than $L^2(\cB)_{\sa}^m$ would only increase the supremum, but on the other hand, restricting $Z$ to the smaller space $L^2(\cB)_{\sa}^m$ instead of $L^2(\cC)_{\sa}^m$ would only decrease the supremum.  Thus, taking $\cB = \cC$ and renaming $\iota_1$ to $\iota$, we obtain
		\[
		\sup_{\iota: \cA \to \cB} \sup_{Y,Z \in L^2(\cB)_{\sa}^m} \left( \ip{\iota(X),Y}_{L^2(\cB)_{\sa}^m} - f^{\cB}(Y - Z) - g^{\cB}(Z) \right).
		\]
		Substituting $Z' = Y - Z$, we have
		\begin{multline} \label{eq:sumofLegendre?}
			\sup_{\iota: \cA \to \cB} \sup_{Z,Z' \in L^2(\cB)_{\sa}^m} \left( \ip{\iota(X),Z+Z'}_{L^2(\cB)_{\sa}^m} - f^{\cB}(Z') - g^{\cB}(Z) \right) \\
			= \sup_{\iota: \cA \to \cB} \sup_{Z,Z' \in L^2(\cB)_{\sa}^m} \left( \ip{\iota(X),Z'}_{L^2(\cB)_{\sa}^m} - f^{\cB}(Z') + \ip{\iota(X),Z}_{L^2(\cB)_{\sa}^m} - g^{\cB}(Z) \right).
		\end{multline}
		We want to show that this is equal to
		\begin{multline} \label{eq:sumofLegendre}
			\mathcal{L}f^{\cA}(X) + \mathcal{L}g^{\cA}(X) = 
			\sup_{\iota_1: \cA \to \cB_1} \sup_{Z' \in L^2(\cB_1)_{\sa}^m} \left( \ip{\iota(X),Z'}_{L^2(\cB_1)_{\sa}^m} - f^{\cB_1}(Z') \right) \\
			+ \sup_{\iota_2: \cA \to \cB_2} \sup_{Z \in L^2(\cB_2)_{\sa}^m} \left( \ip{\iota(X),Z}_{L^2(\cB_2)_{\sa}^m} - g^{\cB_2}(Z) \right).
		\end{multline}
		The only difference between the two expressions is that the latter allows $\iota_1: \cA \to \cB_1$ and $\iota_2: \cA \to \cB_2$ to be different, but the former takes them to be the same, and thus a priori \eqref{eq:sumofLegendre?} $\leq$ \eqref{eq:sumofLegendre}.  However, in \eqref{eq:sumofLegendre}, for any given $\cB_1$, $\cB_2$, $\iota_1$ and $\iota_2$, let $\cB$ be the free product of $\cB_1$ and $\cB_2$ with amalgamation over the subalgebras $\iota_1(\cA)$ in the first factor and $\iota_2(\cA)$ in the second factor.  Allowing $Z'$ and $Z$ to range over $L^2(\cB)_{\sa}^m$ rather than $L^2(\cB_1)_{\sa}^m$ and $L^2(\cB_2)_{\sa}^m$ respectively only increases the suprema over $Z$ and $Z'$, and hence \eqref{eq:sumofLegendre} remains unchanged when we restrict to the case $\iota_1 = \iota_2$, so it equals \eqref{eq:sumofLegendre?}.
	\end{proof}
	
	\begin{remark} \label{rem:infconvolutiondifferent}
		Suppose $f$ and $g$ are tracial $\mathrm{W}^*$-functions.  Let $f^{\cA} \square g^{\cA}$ denote the classical inf-convolution of $f^{\cA}$ and $g^{\cA}$ as functions on the Hilbert space $L^2(\cA)_{\sa}^m$.  Then $(f \square g)^{\cA} \leq f^{\cA} \square g^{\cA}$.  However, the following example shows that two functions do not necessarily agree.  Take $m = 2$, and $f^{\cA}(X_1,X_2) = (1/2) \norm{(X_1,X_2)}_{L^2(\cA)^2}^2$ and $g^{\cA}(X_1,X_2) = \tau_{\cA}([X_1,X_2]^2)$.  The formula for $g$ is to be understood in the sense of affiliated operators (see \S \ref{sec:Lp}); since $i[X_1,X_2]$ is a self-adjoint affiliated operator, $-[X_1,X_2]^2$ is positive and hence $\tau_{\cA}([X_1,X_2]^2)$ is well-defined in $[-\infty,0]$; see Theorem \ref{thm:affiliated} (4).  Then $g^{\C} = 0$ because $\C$ is commutative, and hence also $f^{\C} \square g^{\C} = 0$.  On the other hand, let $\iota: \C \to M_2(\C)$ be the canonical inclusion, and let
		\[
		Y_1 = \begin{pmatrix} 0 & 1 \\ 1 & 0 \end{pmatrix}, \qquad Y_2 = \begin{pmatrix} 0 & i \\ -i & 0 \end{pmatrix}, \qquad [Y_1,Y_2] = \begin{pmatrix} -2i & 0 \\ 0 & 2i \end{pmatrix}.
		\]
		Then for $x_1, x_2, t \in \R$,
		\begin{align*}
			(f \square g)^{\C}(x_1,x_2) &\leq \frac{1}{2} \norm{\iota(x_1) - tY_1}_{L^2(M_2(\C))}^2 + \frac{1}{2} \norm{\iota(x_2) - tY_2}_{L^2(M_2(\C))}^2 + t^4 \tau_{M_2(\C)}([Y_1,Y_2]^2) \\
			&= \frac{1}{2} \norm{\iota(x_1) - tY_1}_{L^2(M_2(\C))}^2 + \frac{1}{2} \norm{\iota(x_2) - tY_2}_{L^2(M_2(\C))}^2 - 4t^4.
		\end{align*}
		The first two terms are quadratic in $t$, and thus, taking the infimum over $t \in \R$, we see that $(f \square g)^{\C} = -\infty < f^{\C} \square g^{\C}$.
	\end{remark}
	
	\subsection{Inf-convolutions and regularity of $E$-convex functions} \label{subsec:infconvolutions2}
	
	\begin{lemma} \label{lem:infconvolutionconvex}
		If $f$ and $g$ are $E$-convex tracial $\mathrm{W}^*$-functions with $f < \infty$, then $f \square g$ is $E$-convex.  Moreover, for any $E$-convex $f$ and $g$, we have
		\begin{equation} \label{eq:convexinfconvolution}
			(f \square g)^{\cA}(X) = \inf_{Y \in L^2(\cA)_{\sa}^m} \left( f^{\cA}(X - Y) + g^{\cA}(Y) \right).
		\end{equation}
	\end{lemma}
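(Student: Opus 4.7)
The plan is to prove the two statements in the reverse of the order given: first establish the explicit formula \eqref{eq:convexinfconvolution}, and then use it to read off $E$-convexity of $f \square g$ directly. The motivation is that the formula reduces the defining infimum (which ranges over all larger algebras) to an infimum inside the ambient algebra $\cA$, after which checking $E$-convexity amounts to transferring the $E$-convexity of $f$ and $g$ to $f \square g$ in a routine way.

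For formula \eqref{eq:convexinfconvolution}, the inequality ``$\leq$'' is immediate by taking $\cB = \cA$ and $\iota = \id_{\cA}$ in the defining infimum. For the reverse inequality, I would fix an arbitrary embedding $\iota : \cA \to \cB$ and $Y \in L^2(\cB)_{\sa}^m$, and let $E : \cB \to \cA$ denote the associated trace-preserving conditional expectation. Since $E \circ \iota = \id_{\cA}$, one has $E[\iota(X) - Y] = X - E[Y]$, and applying the $E$-convexity inequality to each of $f$ and $g$ gives
\[
f^{\cB}(\iota(X) - Y) + g^{\cB}(Y) \geq f^{\cA}(X - E[Y]) + g^{\cA}(E[Y]) \geq \inf_{Z \in L^2(\cA)_{\sa}^m} \left( f^{\cA}(X - Z) + g^{\cA}(Z) \right).
\]
Passing to the infimum over $(\iota, Y)$ on the left completes the formula.

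With the formula in hand, convexity of $(f \square g)^{\cA}$ is the standard inf-convolution fact: given $X_0, X_1$ and $t \in (0,1)$, choose near-optimal decompositions $Y_0, Y_1$ at $X_0, X_1$, and observe that $(1-t)Y_0 + tY_1$ participates in the infimum at $(1-t)X_0 + tX_1$. Monotonicity under conditional expectation is equally direct: for any $\iota : \cA \to \cB$ with expectation $E$, any $X \in L^2(\cB)_{\sa}^m$, and any $Y' \in L^2(\cB)_{\sa}^m$, $E$-convexity of $f$ and $g$ yields
\[
f^{\cA}(E[X] - E[Y']) + g^{\cA}(E[Y']) \leq f^{\cB}(X - Y') + g^{\cB}(Y'),
\]
and taking the infimum over $Y'$ on the right (using the formula on each side, noting that $E[Y']$ ranges over all of $L^2(\cA)_{\sa}^m$ as $Y'$ varies) gives $(f \square g)^{\cA}(E[X]) \leq (f \square g)^{\cB}(X)$.

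The main obstacle will be verifying lower semi-continuity of $(f \square g)^{\cA}$, which is precisely where the hypothesis $f < \infty$ enters. A convex lower semi-continuous function that is everywhere finite on a Banach space is automatically continuous, so $f^{\cA}$ is continuous on $L^2(\cA)_{\sa}^m$. Fixing any $Y^*$ with $g^{\cA}(Y^*) < \infty$, the map $X \mapsto f^{\cA}(X - Y^*) + g^{\cA}(Y^*)$ is a continuous real-valued upper bound for $(f \square g)^{\cA}$, so $(f \square g)^{\cA}$ is locally bounded above on $L^2(\cA)_{\sa}^m$; combined with convexity (and Hahn-Banach linear lower bounds on $f$ and $g$ to rule out $-\infty$ values), this yields continuity and in particular lower semi-continuity. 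The degenerate cases in which $f \square g$ is identically $\pm \infty$ are covered by the first clause of the definition of $E$-convexity, completing the proof.
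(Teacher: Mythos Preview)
Your approach is essentially the same as the paper's: prove the formula \eqref{eq:convexinfconvolution} first via $E$-convexity of $f$ and $g$, then read off convexity and monotonicity under conditional expectation from it.

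One correction is needed in your last paragraph: Hahn--Banach lower bounds on $f$ and $g$ do \emph{not} rule out $-\infty$ values for $f \square g$. Affine minorants $\ip{\cdot,a}+c$ for $f^{\cA}$ and $\ip{\cdot,b}+d$ for $g^{\cA}$ yield only $f^{\cA}(X-Y)+g^{\cA}(Y) \geq \ip{X,a} + \ip{Y,b-a} + c + d$, whose infimum over $Y$ is $-\infty$ unless $a=b$. The fix (which is what the paper does) is to first note that $(f \square g)^{\cA} < \infty$ everywhere, and then observe that a convex function taking the value $-\infty$ at one point $X_0$ and finite values elsewhere must be identically $-\infty$: for any $X_1$, write $X_1 = \tfrac{1}{2}X_0 + \tfrac{1}{2}(2X_1 - X_0)$ and apply convexity. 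This forces the $-\infty$ case into your degenerate case. Your argument for lower semi-continuity via local boundedness above (using that $f^{\cA}$, being convex, lower semi-continuous and everywhere finite, is continuous) is in fact more explicit than the paper's one-line assertion that ``convexity automatically implies lower semi-continuity at points where $(f \square g)^{\cA} < \infty$,'' which on its own is false in infinite dimensions and really requires exactly the local boundedness input you supply.
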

	
	\begin{proof}
		We prove the second claim first.  Clearly,
		\[
		(f \square g)^{\cA}(X) \leq \inf_{Y \in L^2(\cA)_{\sa}^m} \left( f^{\cA}(X - Y) + g^{\cA}(Y) \right).
		\]
		For the opposite inequality, suppose that $\iota: \cA \to \cB$ is an embedding and $Y \in L^2(\cB)_{\sa}^m$.  Let $E: \cB \to \cA$ be the conditional expectation.  Then by $E$-convexity of $f$ and $g$,
		\[
		f^{\cA}(X - E[Y]) + g^{\cA}(E[Y]) \leq f^{\cB}(\iota(X) - Y) + g^{\cB}(Y),
		\]
		and hence
		\[
		\inf_{Y \in L^2(\cA)_{\sa}^m} \left( f^{\cA}(X - Y) + g^{\cA}(Y) \right) \leq (f \square g)^{\cA}(X).
		\]

		Now let us show that $f \square g$ is $E$-convex when $f < \infty$.  If $g$ is identically $\infty$, then $f \square g$ is identically $\infty$, so there is nothing to prove.  Suppose $g^{\cB}(Y)$ is finite for some $\cB$ and $Y \in L^2(\cB)_{\sa}^m$.  Then $(f \square g)^{\cA}(X) < \infty$ everywhere because, letting $\cC$ be the free product of $\cA$ and $\cB$ and letting $\iota_1: \cA \to \cC$ and $\iota_2: \cB \to \cC$ be the corresponding inclusions,
		\[
		(f \square g)^{\cA}(X) \leq f^{\cC}(\iota_1(X) - \iota_2(Y)) + g^{\cC}(\iota_2(Y)) < \infty.
		\]
		To prove convexity of $(f \square g)^{\cA}$, let $X_0$, $X_1 \in L^2(\cA)_{\sa}^m$, and let $X_t = (1 - t)X_0 + tX_1$ for $t \in (0,1)$.  If $Y_0$, $Y_1 \in L^2(\cA)_{\sa}^m$ and if $Y_t = (1 - t)Y_0 + tY_1$, then
		\begin{align*}
			(f \square g)^{\cA}(X_t) &\leq f^{\cA}(X_t - Y_t) + g^{\cA}(Y_t) \\
			&\leq (1 - t) f^{\cA}(X_0 - Y_0) + t f^{\cA}(X_1 - X_1) + (1 - t) g^{\cA}(Y_0) + t g^{\cA}(Y_1).
		\end{align*}
		Since $Y_0$ and $Y_1$ were arbitrary, we can take the infimum over $Y_0$ and $Y_1$ and apply \eqref{eq:convexinfconvolution} to conclude that
		\[
		(f \square g)^{\cA}(X_t) \leq (1 - t) (f \square g)^{\cA}(X_0) + t (f \square g)^{\cA}(X_1).
		\]
		This shows that $(f \square g)^{\cA}$ is convex.  Furthermore, since $f \square g < \infty$, this relation implies that if $(f \square g)^{\cA}$ is $-\infty$ at one point in $L^2(\cA)_{\sa}^m$, then it is $-\infty$ everywhere.  Moreover, if $(f \square g)^{\cB}$ is $-\infty$, then so $(f \square g)^{\cA}$, as we can see by considering the free product of $\cA$ and $\cB$.
		
		It is automatic from these facts that $(f \square g)^{\cA}$ is lower semi-continuous, since convexity automatically implies lower semi-continuity at points where $(f \square g)^{\cA} < \infty$.
		
		Finally, we must show the monotonicity of $(f \square g)$ under conditional expectation.  Let $\iota: \cA \to \cB$ be an embedding and let $E: \cB \to \cA$ be the corresponding conditional expectation.  If $X, Y \in L^2(\cA)_{\sa}^m$, then
		\[
		(f \square g)^{\cA}(E[X]) \leq f^{\cA}(E[X] - E[Y]) + g^{\cA}(E[Y]) \leq f^{\cB}(X - Y) + g^{\cB}(Y).
		\]
		Since $Y$ on right-hand side was arbitrary, we conclude by \eqref{eq:convexinfconvolution} that $(f \square g)^{\cA}(E[X]) \leq (f \square g)^{\cB}(X)$ as desired.
	\end{proof}
	
	\begin{observation}
		For $t \in (0,\infty)$, let $q_t^{\cA}(X) = (1/2t) \norm{X}_{L^2(\cA)_{\sa}^m}^2$.  For $s, t \in (0,\infty)$, because $q_s$ and $q_t$ are $E$-convex and take finite values, we have
		\[
		q_s \square q_t = \mathcal{L}^2(q_s \square q_t) = \mathcal{L}(\mathcal{L} q_s + \mathcal{L} q_t) = \mathcal{L}(q_{1/s} + q_{1/t}) = \mathcal{L}(q_{1/(s+t)}) = q_{s+t}.
		\]
		Then by associativity of inf-convolution, for any tracial $\mathrm{W}^*$-function $f$, we have
		\[
		q_s \square (q_t \square f) = (q_s \square q_t) \square f = q_{s+t} \square f.
		\]
		Thus, $(q_t \square (\cdot))_{t > 0}$ defines a semigroup acting on tracial $\mathrm{W}^*$-functions.  This is the tracial $\mathrm{W}^*$-analog of the Hopf-Lax semigroup.
	\end{observation}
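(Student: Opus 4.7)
The plan is to prove the semigroup identity in two steps: first the key equality $q_s \square q_t = q_{s+t}$, and then the general statement $q_s \square (q_t \square f) = q_{s+t} \square f$ by invoking associativity of inf-convolution, which has already been established.

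For the first step, I would exploit the round trip through the Legendre transform. Since $q_s$ and $q_t$ are $E$-convex and everywhere finite, Lemma \ref{lem:infconvolutionconvex} ensures that $q_s \square q_t$ is $E$-convex and nowhere $-\infty$, so Proposition \ref{prop:Legendre}(3) yields $q_s \square q_t = \mathcal{L}^2(q_s \square q_t)$. Lemma \ref{lem:Legendreinfconvolution} then rewrites the inner Legendre transform as $\mathcal{L} q_s + \mathcal{L} q_t$. Using the already-computed example that $\mathcal{L} q_r = q_{1/r}$, this sum equals $q_{1/s} + q_{1/t}$, which in turn equals $q_{1/(s+t)}$ by a one-line scalar computation: writing $q_r^{\cA}(X) = (1/2r) \norm{X}_{L^2(\cA)_{\sa}^m}^2$, we get $q_{1/s}^{\cA}(X) + q_{1/t}^{\cA}(X) = \tfrac{s+t}{2} \norm{X}_{L^2(\cA)_{\sa}^m}^2 = q_{1/(s+t)}^{\cA}(X)$. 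Applying $\mathcal{L}$ one more time and reusing $\mathcal{L} q_r = q_{1/r}$ yields $q_{s+t}$, closing the chain.

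For the second step, the general semigroup identity follows immediately from the associativity of inf-convolution combined with the identity just shown: $q_s \square (q_t \square f) = (q_s \square q_t) \square f = q_{s+t} \square f$ for any tracial $\mathrm{W}^*$-function $f$.

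I do not anticipate any genuine obstacle here: every step is either a direct appeal to a previously established result (Lemma \ref{lem:infconvolutionconvex}, Lemma \ref{lem:Legendreinfconvolution}, Proposition \ref{prop:Legendre}(3), and the example $\mathcal{L} q_r = q_{1/r}$) or the trivial scalar identity $1/(1/s) + 1/(1/t) = s + t$ rewritten in terms of the $q_r$'s. The observation is essentially bookkeeping, confirming that the preceding Legendre-transform and inf-convolution machinery organizes itself into a Hopf-Lax semigroup on tracial $\mathrm{W}^*$-functions, which will be the engine behind the regularization results needed in \S \ref{subsec:W*displacement}.
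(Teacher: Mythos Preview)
Your proposal is correct and follows exactly the approach the paper takes: the Observation already embeds its own proof in the displayed chain of equalities, and you have simply unpacked each step with the appropriate reference (Lemma~\ref{lem:infconvolutionconvex} for $E$-convexity of the inf-convolution, Proposition~\ref{prop:Legendre}(3) for $\mathcal{L}^2 = \id$ on $E$-convex functions, Lemma~\ref{lem:Legendreinfconvolution} for $\mathcal{L}(f \square g) = \mathcal{L}f + \mathcal{L}g$, and the example $\mathcal{L}q_r = q_{1/r}$). There is nothing to add.
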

	
	\begin{definition}
		If $f$ is a tracial $\mathrm{W}^*$-function, we say that $f$ is \emph{convex} if $f^{\cA}$ is convex for every $\cA \in \mathbb{W}$.  We say that $f$ is \emph{semi-concave} if $q_t - f$ is convex for some $t > 0$.
	\end{definition}
	
	\begin{lemma} \label{lem:infconvolutionconcave}
		Suppose $f$ and $g$ are tracial $\mathrm{W}^*$-functions and $q_t - f$ is convex.  Then $q_t - f \square g$ is convex.
	\end{lemma}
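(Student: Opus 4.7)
The plan is to rewrite $(q_t - f\square g)^{\cA}$ as a pointwise supremum of functions that are manifestly convex in $X$, which is automatically convex. First I would unfold the definition of $\square$ to obtain
\begin{equation*}
(q_t - f\square g)^{\cA}(X) = \sup_{\iota: \cA \to \cB,\, Y \in L^2(\cB)_{\sa}^m} \left( q_t^{\cA}(X) - f^{\cB}(\iota(X) - Y) - g^{\cB}(Y) \right),
\end{equation*}
where the supremum is taken over tracial $\mathrm{W}^*$-embeddings $\iota$ and over $Y$. Next I would use that $q_t$ is a tracial $\mathrm{W}^*$-function, so $q_t^{\cA}(X) = q_t^{\cB}(\iota(X))$, together with the parallelogram identity obtained by expanding $\norm{(\iota(X) - Y) + Y}^2_{L^2(\cB)_{\sa}^m}$:
\begin{equation*}
q_t^{\cB}(\iota(X)) = q_t^{\cB}(\iota(X) - Y) + \tfrac{1}{t}\ip{\iota(X) - Y,\, Y}_{L^2(\cB)_{\sa}^m} + q_t^{\cB}(Y).
\end{equation*}

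Substituting this identity and grouping the $q_t^{\cB}(\iota(X) - Y)$ term with $-f^{\cB}(\iota(X) - Y)$ recasts the expression as
\begin{equation*}
(q_t - f\square g)^{\cA}(X) = \sup_{\iota,\,Y} \left( (q_t - f)^{\cB}(\iota(X) - Y) + \tfrac{1}{t}\ip{\iota(X) - Y,\, Y}_{L^2(\cB)_{\sa}^m} + q_t^{\cB}(Y) - g^{\cB}(Y) \right).
\end{equation*}
For each fixed pair $(\iota, Y)$, the last two summands are constant in $X$; the inner-product summand is affine in $X$ because $X \mapsto \iota(X) - Y$ is affine; and $(q_t - f)^{\cB}(\iota(X) - Y)$ is convex in $X$, since $X \mapsto \iota(X) - Y$ is affine and $(q_t - f)^{\cB}$ is convex by hypothesis. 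Thus the bracketed expression is a convex function of $X$, and a pointwise supremum of convex functions is convex.

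The main obstacle is nothing deeper than bookkeeping with the extended real line. If $g \equiv +\infty$ then $f \square g \equiv +\infty$ and the statement is vacuous; otherwise, restricting to those $(\iota,Y)$ with $g^{\cB}(Y) < +\infty$ the bracketed summand is a genuine $(-\infty,+\infty]$-valued convex function of $X$ (by the convention for convex functions), and taking suprema preserves this. No special feature of tracial $\mathrm{W}^*$-algebras beyond the compatibility $q_t^{\cA}(X) = q_t^{\cB}(\iota(X))$ enters; morally, this is just the Hilbert-space fact that inf-convolution with a quadratic preserves semi-concavity, lifted to the tracial $\mathrm{W}^*$-setting via the definition of $\square$.
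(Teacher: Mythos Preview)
Your proof is correct and takes essentially the same approach as the paper: both rewrite $(q_t - f\square g)^{\cA}(X)$ as a supremum over $(\iota,Y)$ by expanding $q_t^{\cB}(\iota(X))$ via the quadratic identity, then observe that each term in the supremum is convex in $X$. The only cosmetic difference is that the paper writes the inner-product term as $\tfrac{1}{t}\ip{\iota(X),Y}_{L^2(\cB)_{\sa}^m} - q_t^{\cB}(Y)$ rather than your equivalent $\tfrac{1}{t}\ip{\iota(X)-Y,Y}_{L^2(\cB)_{\sa}^m} + q_t^{\cB}(Y)$.
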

	
	\begin{proof}
		Note that
		\begin{align*}
			(q_t - f \square g)^{\cA}(X) &= \sup_{\substack{\iota: \cA \to \cB \\ Y \in L^2(\cB)_{\sa}^m}} \left( q_t^{\cB}(\iota(X)) - f^{\cB}(\iota(X) - Y) - g^{\cB}(Y) \right) \\
			&= \sup_{\substack{\iota: \cA \to \cB \\ Y \in L^2(\cB)_{\sa}^m}} \left( q_t^{\cB}(\iota(X) - Y) - f^{\cB}(\iota(X) - Y) + \frac{1}{t} \ip{\iota(X),Y}_{L^2(\cB)_{\sa}^m} - q_t^{\cB}(Y) - g^{\cB}(Y) \right).
		\end{align*}
		The right-hand side is the supremum of a family convex functions of $X$ and therefore is convex.
	\end{proof}
	
	As a consequence of Lemmas \ref{lem:infconvolutionconvex} and \ref{lem:infconvolutionconcave}, if $f$ is $E$-convex, then $q_t \square f$ is an $E$-convex and semi-concave function.  The next results give a characterization of such functions as well as some of their regularity properties.  These results are quite close to the standard results about convex functions on a Hilbert space, so we do not claim any originality, but nonetheless we include the proofs for the sake of completeness.
	
	\begin{proposition} \label{prop:convexityconversion}
		Let $f$ be an $E$-convex $\mathrm{W}^*$-function that is not identically $\infty$ or $-\infty$.  Then the following are equivalent:
		\begin{enumerate}[(1)]
			\item $f = q_t \square g$ for some $E$-convex function $g$.
			\item $q_t - f$ is convex.
			\item $q_t - f$ is $E$-convex.
			\item $\mathcal{L}f - q_{1/t}$ is convex and lower semi-continuous.
			\item $\mathcal{L}f - q_{1/t}$ is $E$-convex.
		\end{enumerate}
		Moreover, in this case, $f < \infty$ everywhere.
	\end{proposition}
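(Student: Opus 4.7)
The plan is to establish the cycle $(1) \Rightarrow (3) \Rightarrow (2) \Rightarrow (4) \Rightarrow (5) \Rightarrow (1)$ together with the trivial $(5) \Rightarrow (4)$, giving the equivalence of all five conditions.  The implications $(3) \Rightarrow (2)$ and $(5) \Rightarrow (4)$ are immediate since $E$-convexity entails convexity and lower semi-continuity on each $L^2(\cA)_{\sa}^m$.  For $(1) \Rightarrow (3)$ my plan is to apply Lemma \ref{lem:infconvolutionconvex} to write $f^{\cA}(X) = \inf_{Y \in L^2(\cA)_{\sa}^m}[q_t^{\cA}(X - Y) + g^{\cA}(Y)]$, expand $q_t(X - Y) = q_t(X) - (1/t)\ip{X, Y} + q_t(Y)$, and rearrange to obtain $(q_t - f)^{\cA}(X) = \mathcal{L}(q_t + g)^{\cA}(X/t)$; the right-hand side is a classical Legendre transform by Remark \ref{rem:EconvexLegendre} applied to the $E$-convex function $q_t + g$, and is $E$-convex because $\mathcal{L}(q_t + g)$ is $E$-convex (Proposition \ref{prop:Legendre}(1)) and scaling by $1/t$ commutes with every conditional expectation.

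For $(2) \Rightarrow (4)$ I plan to invoke the classical Baillon--Haddad duality between Lipschitz gradient and strong convexity on a Hilbert space: since Remark \ref{rem:EconvexLegendre} identifies $\mathcal{L}f^{\cA}$ with the classical Legendre transform of $f^{\cA}$ on each $L^2(\cA)_{\sa}^m$, convexity of $q_t^{\cA} - f^{\cA}$ is equivalent to convex lower semi-continuity of $\mathcal{L}f^{\cA} - q_{1/t}^{\cA}$ algebra by algebra.  For $(5) \Rightarrow (1)$ the plan is to set $g = \mathcal{L}(\mathcal{L}f - q_{1/t})$, which is $E$-convex; then (5) and Proposition \ref{prop:Legendre}(3) give $\mathcal{L}g = \mathcal{L}f - q_{1/t}$, so Lemma \ref{lem:Legendreinfconvolution} yields $\mathcal{L}(q_t \square g) = q_{1/t} + \mathcal{L}g = \mathcal{L}f$, and taking one further Legendre transform of both $E$-convex sides gives $f = q_t \square g$; the finiteness $f < \infty$ then follows because $f^{\cA}(X) \leq q_t^{\cA}(X - Y_0) + g^{\cA}(Y_0) < \infty$ for any $Y_0$ at which $g$ is finite.

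The remaining step $(4) \Rightarrow (5)$ I propose to handle via an amalgamated free product averaging argument.  Given an inclusion $\iota: \cA \to \cB$ with conditional expectation $E: \cB \to \cA$ and $Y \in L^2(\cB)_{\sa}^m$, I would take $\tilde\cB$ to be the $n$-fold amalgamated free product $\cB *_{\cA} \cdots *_{\cA} \cB$ from Proposition \ref{prop:amalgamatedfreeproduct}, with $Y^{(1)}, \dots, Y^{(n)}$ the images of $Y$ in the respective factors.  A short computation using the amalgamated-free factorization $E[a_1 a_2] = E[a_1] E[a_2]$ for elements coming from distinct free factors yields $\ip{Y^{(i)}, Y^{(j)}} = \norm{E[Y]}^2$ whenever $i \neq j$, so the averages $\bar Y_n = (1/n) \sum_i Y^{(i)}$ satisfy $\norm{\bar Y_n - E[Y]}^2 = (1/n)\norm{Y - E[Y]}^2 \to 0$ in $L^2(\tilde\cB)_{\sa}^m$.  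Applying convexity of $\mathcal{L}f - q_{1/t}$ on $L^2(\tilde\cB)_{\sa}^m$ at $\bar Y_n$, together with the fact that each $Y^{(i)}$ has the same value of $\mathcal{L}f$ and of $q_{1/t}$ as $Y$ (by the tracial $\mathrm{W}^*$-function property), gives
\[
\mathcal{L}f^{\tilde\cB}(\bar Y_n) \leq \mathcal{L}f^{\cB}(Y) - \tfrac{t(n-1)}{2n} \norm{Y - E[Y]}^2.
\]
Lower semi-continuity of $\mathcal{L}f^{\tilde\cB}$ at $E[Y]$ (automatic since $\mathcal{L}f$ is $E$-convex by Proposition \ref{prop:Legendre}(1)), together with the identity $\mathcal{L}f^{\tilde\cB}(E[Y]) = \mathcal{L}f^{\cA}(E[Y])$, then gives $\mathcal{L}f^{\cA}(E[Y]) \leq \mathcal{L}f^{\cB}(Y) - (t/2)\norm{Y - E[Y]}^2$ in the limit $n \to \infty$, which rearranges exactly to the $E$-convexity monotonicity inequality for $\mathcal{L}f - q_{1/t}$.

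The main obstacle will be executing this free-product averaging step.  What should make the argument succeed with only lower semi-continuity (rather than full continuity) of $\mathcal{L}f$ is that the direction of the semi-continuity inequality pairs correctly with the convexity-induced upper bound on $\mathcal{L}f^{\tilde\cB}(\bar Y_n)$.  By contrast, a naive attempt at $(2) \Rightarrow (3)$ directly, using $f$ and $q_t - f$ in place of $\mathcal{L}f$ and $\mathcal{L}f - q_{1/t}$, reverses the inequality directions and would require full continuity of $f$ at $E[X]$; this is why I prefer to route the $f$-side condition through $(4)$ via Baillon--Haddad rather than handle it head-on.
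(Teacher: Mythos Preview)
Your proof is correct in its essentials and takes a genuinely different route from the paper.  There is one technical wrinkle and one point where the paper's argument is considerably simpler.

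\textbf{The wrinkle.}  In your free-product averaging for $(4) \Rightarrow (5)$, the algebra $\tilde\cB$ depends on $n$, so the sequence $\bar Y_n$ does not live in a single Hilbert space and you cannot invoke lower semi-continuity of $\mathcal{L}f^{\tilde\cB}$ ``at $E[Y]$'' directly.  The fix is easy: embed all finite free products into the countable amalgamated free product $\tilde\cB_\infty = *_{\cA}^{k \in \N} \cB$ (or any common overalgebra), so that $\bar Y_n \to E[Y]$ in $L^2(\tilde\cB_\infty)_{\sa}^m$ and lower semi-continuity of $(\mathcal{L}f)^{\tilde\cB_\infty}$ applies.  You should say this explicitly.

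\textbf{Comparison.}  The paper proves the cycle $(1) \Rightarrow (2) \Rightarrow (3) \Rightarrow (4) \Rightarrow (5) \Rightarrow (1)$ and handles both $(2) \Rightarrow (3)$ and $(4) \Rightarrow (5)$ by the same short subgradient trick, exploiting Lemma~\ref{lem:Econvex}.  For $(2) \Rightarrow (3)$: since $f$ is $E$-convex, pick $Y \in \eth f^{\cA}(X) \cap L^2(\mathrm{W}^*(X))_{\sa}^m$; convexity of $q_t - f$ gives some subgradient $Z$, and comparing the two inequalities termwise forces $Z = X/t - Y \in L^2(\mathrm{W}^*(X))_{\sa}^m$, so Lemma~\ref{lem:Econvex} gives $E$-convexity of $q_t - f$.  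The step $(4) \Rightarrow (5)$ is identical with $\mathcal{L}f$ in place of $f$.  Your belief that a direct attack on $(2) \Rightarrow (3)$ ``would require full continuity of $f$'' applies only to the averaging strategy; the subgradient route needs no continuity at all.  Your $(1) \Rightarrow (3)$ via the identity $(q_t - f)^{\cA}(X) = \mathcal{L}(q_t + g)^{\cA}(X/t)$ is a clean alternative to the paper's $(1) \Rightarrow (2) \Rightarrow (3)$, and your free-product averaging is an interesting technique that might be useful when no subgradient characterization is available; but here the paper's argument is both shorter and more elementary.
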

	
	\begin{proof}
		(1) $\implies$ (2) follow from Lemma \ref{lem:infconvolutionconcave}.
		
		(2) $\implies$ (3). Because $q_t - f$ takes finite values everywhere, by Lemma \ref{lem:Econvex}, it suffices to show that for every $X \in L^2(\cA)$, there exists a some $Z \in \eth (q_t-f)^{\cA}(X) \cap L^2(\mathrm{W}^*(X))_{\sa}^m$.  Because $q_t - f$ is convex, it has a subgradient vector $Z$ at $X$, so that
		\[
		q_t^{\cA}(X') - f^{\cA}(X') - q_t^{\cA}(X) + f^{\cA}(X) \geq \ip{X'-X,Z}_{L^2(\cA)_{\sa}^m},
		\] 
		which implies that
		\begin{equation} \label{eq:supergradient}
			f^{\cA}(X') - f^{\cA}(X) \leq \ip{X-X',Z} + \frac{1}{2t} (\norm{X'}_{L^2(\cA)_{\sa}^m}^2 - \norm{X}_{L^2(\cA)_{\sa}^m}^2)
			= \ip{X'-X,Z + (1/t)X}_{L^2(\cA)_{\sa}^m} + \frac{1}{2t} \norm{X' - X}_{L^2(\cA)_{\sa}^m}^2.
		\end{equation}
		Because $f$ is $E$-convex, there exists some $Y \in \eth f^{\cA}(X) \cap L^2(\mathrm{W}^*(X))_{\sa}^m$.  Of course,
		\begin{equation} \label{eq:subgradient}
			f^{\cA}(X') - f^{\cA}(X) \geq \ip{X'-X,Y}_{L^2(\cA)_{\sa}^m}.
		\end{equation}
		This implies that
		\[
		\ip{X'-X,Z + (1/t)X - Y}_{L^2(\cA)_{\sa}^m} \geq - \frac{1}{2t} \norm{X' - X}_{L^2(\cA)_{\sa}^m}^2
		\]
		for all $X'$.  Now take $X' = -tZ + tY$ and obtain
		\[
		-t \norm{Z + (1/t)X - Y}_{L^2(\cA)_{\sa}^m}^2 = \ip{X'-X,Z + (1/t)X - Y}_{L^2(\cA)_{\sa}^m} \geq - \frac{1}{2t} \norm{X' - X}_{L^2(\cA)_{\sa}^m}^2 \geq -\frac{t}{2} \norm{Z + (1/t)X - Y}_{L^2(\cA)_{\sa}^m}^2,
		\]
		which implies that $Z + (1/t)X - Y = 0$, hence $Z = Y - (1/t)X \in L^2(\mathrm{W}^*(X))_{\sa}^m$.
		
		(3) $\implies$ (4).  Note that
		\begin{align*}
			\mathcal{L}f^{\cA}(X) - q_{1/t}^{\cA}(X) &= \sup_{\substack{\iota:\cA \to \cB \\ Y \in L^2(\cB)_{\sa}^m}} \left( \ip{\iota(X),Y}_{L^2(\cB)_{\sa}^m} - \frac{t}{2} \norm{\iota(X)}_{L^2(\cB)_{\sa}^m}^2 - f^{\cB}(Y) \right) \\
			&= \sup_{\substack{\iota:\cA \to \cB \\ Z \in L^2(\cB)_{\sa}^m}} \left( \ip{\iota(X),Z+t \iota(X)}_{L^2(\cB)_{\sa}^m} - \frac{t}{2} \norm{\iota(X)}_{L^2(\cB)_{\sa}^m}^2 - f^{\cB}(Z + t\iota(X)) \right) \\
			&= \sup_{\substack{\iota:\cA \to \cB \\ Z \in L^2(\cB)_{\sa}^m}} \left( -\frac{1}{2t} \norm{Z}_{L^2(\cB)_{\sa}^m}^2 + \frac{1}{2t} \norm{Z + t \iota(X)}_{L^2(\cB)_{\sa}^m}^2 - f^{\cB}(Z + t\iota(X)) \right).
		\end{align*}
		Because $q_t - f$ is convex and lower semi-continuous, the right-hand side is the supremum of convex lower semi-continuous functions of $X$, and therefore is convex and lower semi-continuous.
		
		(4) $\implies$ (5).  Let $h = \mathcal{L}f$.  Since $f$ is not identically $-\infty$ or $\infty$, the same is true of $h$.  We assumed in (3) that $h - q_{1/t}$ is convex and lower semi-continuous.  Moreover, if $E: \cB \to \cA$ is a conditional expectation, then $h^{\cB}(X) < \infty$ implies $(h - q_{1/t})^{\cB}(X) < \infty$ implies $(h - q_{1/t})^{\cA}(E[X]) < \infty$ implies $h^{\cA}(E[X]) < \infty$.  Thus, it remains to show that $h^{\cA}(E[X]) \leq h^{\cB}(X)$ whenever $h^{\cA}(E[X])$ is finite.  As in Lemma \ref{lem:Econvex}, it suffices to show that for every $\cA$ and $X \in L^2(\cA)_{\sa}^m$ with $h^{\cA}(X) < \infty$, there exists some subgradient vector $Y \in L^2(\mathrm{W}^*(X),\tau|_{\mathrm{W}^*(X)})_{\sa}^m$.  By $E$-convexity of $h$, there exists some $Z \in \eth h^{\cA}(X) \cap L^2(\mathrm{W}^*(X))_{\sa}^m$.  Then we claim that $Z - tX \in \eth (h-q_{1/t})^{\cA}(X)$.  To prove this, observe that by convexity of $h - q_{1/t}$, for $s \in (0,1)$, and $X' \in L^2(\cA)_{\sa}^m$,
		\begin{align*}
			s (h - q_{1/t})^{\cA}(X') &\geq (h - q_{1/t})^{\cA}((1-s)X + sX') - (1 - s)(h - q_{1/t})^{\cA}(X) \\
			&\geq h^{\cA}(X) + \ip{(1 - s)X + sX'-X, Z}_{L^2(\cA)_{\sa}^m} - q_{1/t}^{\cA}((1-s)X + sX') - h^{\cA}(X) \\
			& \qquad + q_{1/t}^{\cA}(X) + s (h - q_{1/t})^{\cA}(X) \\
			&= s (h - q_{1/t})^{\cA}(X) + s \ip{X' - X,Z}_{L^2(\cA)_{\sa}^m} + q_{1/t}^{\cA}(X) - q_{1/t}^{\cA}((1-s)X + sX') \\
			&= s (h - q_{1/t})^{\cA}(X) + s \ip{X' - X,Z}_{L^2(\cA)_{\sa}^m} + \frac{t}{2} \norm{X}_{L^2(\cA)_{\sa}^m}^2 - \frac{t}{2} \norm{X + s(X' - X)}_{L^2(\cA)_{\sa}^m}^2 \\
			&= s (h - q_{1/t})^{\cA}(X) + s \ip{X' - X,Z - tX}_{L^2(\cA)_{\sa}^m} - \frac{ts^2}{2} \norm{X' - X}_{L^2(\cA)_{\sa}^m}^2.
		\end{align*}
		Dividing by $s$ and sending $s \to 0^+$, we obtain
		\[
		(h - q_{1/t})^{\cA}(X') \geq (h - q_{1/t})^{\cA}(X) + \ip{X' - X, Z - tX}_{L^2(\cA)_{\sa}^m}.
		\]
		Hence, $Z - tX \in \eth (h - q_{1/t})^{\cA}(X)$.  Since $Z - tX \in L^2(\mathrm{W}^*(X))_{\sa}^m$, the proof is complete.
		
		(5) $\implies$ (1).  Since $\mathcal{L}f - q_{1/t}$ is $E$-convex, we have $\mathcal{L}f - q_{1/t} = \mathcal{L} g$ for some $E$-convex function $g$ by Proposition \ref{prop:Legendre}.  Thus, since $g$ and $q_{1/t}$ are both $E$-convex, we have
		\[
		f = \mathcal{L}^2 f = \mathcal{L}(\mathcal{L}g + q_{1/t}) = \mathcal{L} \mathcal{L}(g \square q_t) = g \square q_t,
		\]
		where the last line follows because $g \square q_t$ is $E$-convex by Lemma \ref{lem:infconvolutionconvex}.
		
		Finally, (1) implies that $f < \infty$ everywhere.  Indeed, if $X \in L^2(\cA)_{\sa}^m$, and if $Y$ is some point where $g^{\cB}(Y) < \infty$, then let $\cC$ be the free product of $\cA$ and $\cB$ and let $\iota_1: \cA \to \cC$ and $\iota_2: \cB \to \cC$ be the corresponding inclusions.  Then $(g \square q_t)^{\cA}(X) \leq \frac{1}{2t} \norm{\iota_1(X) - \iota_2(Y)}_{L^2(\cC)_{\sa}^m}^2 + g^{\cC}(\iota_2(Y)) < \infty$.
	\end{proof}
	
	\begin{proposition} \label{prop:convexsemiconcave}
		Let $f$ be an $E$-convex $\mathrm{W}^*$-function taking values in $\R$.  Then the following are equivalent:
		\begin{enumerate}[(1)]
			\item $q_t - f$ is convex.
			\item If $\cA \in \mathbb{W}$ and $Y \in \eth f^{\cA}(X)$ and $Y' \in \eth f^{\cA}(X')$, then $\norm{Y - Y'}_{L^2(\cA)_{\sa}^m} \leq (1/t) \norm{X - X'}_{L^2(\cA)_{\sa}^m}$.
			\item If $\cA \in \mathbb{W}$, then $\eth f^{\cA}(X)$ consists of a single point $\nabla f^{\cA}(X) \in L^2(\mathrm{W}^*(X))_{\sa}^m$, and $\nabla f^{\cA}$ defines a $(1/t)$-Lipschitz function $L^2(\cA)_{\sa}^m \to L^2(\cA)_{\sa}^m$.
			\item For each $\cA$ and $X \in L^2(\cA)_{\sa}^m$ and  $Y \in \eth f^{\cA}(X)$, we have
			\begin{equation} \label{eq:convexsemiconcave}
				\ip{X'-X,Y}_{L^2(\cA)_{\sa}^m} \leq f^{\cA}(X') - f^{\cA}(X) \leq \ip{X'-X, Y}_{L^2(\cA)_{\sa}^m} + \frac{1}{2t} \norm{X' - X}_{L^2(\cA)_{\sa}^m}^2
			\end{equation}
			for all $X' \in L^2(\cA)_{\sa}^m$.
		\end{enumerate}
	\end{proposition}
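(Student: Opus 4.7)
The strategy is to close the circular chain $(1) \Rightarrow (4) \Rightarrow (2) \Rightarrow (3) \Rightarrow (1)$. All four conditions amount to the standard Hilbert-space fact that a convex real-valued function with Hessian bounded above by $1/t$ has $(1/t)$-Lipschitz gradient; the only non-classical ingredient is $E$-convexity, which via Lemma~\ref{lem:Econvex} forces subgradients to lie in $L^2(\mathrm{W}^*(X))_{\sa}^m$.

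For $(1) \Rightarrow (4)$, since $f^{\cA}$ is real-valued and lsc and $q_t^{\cA}$ is continuous, $(q_t - f)^{\cA}$ is upper semi-continuous and hence locally bounded above; combined with convexity this gives continuity and therefore subdifferentiability at every point (Hahn--Banach). Fix $X$, pick $Y \in \eth f^{\cA}(X)$ (nonempty by Lemma~\ref{lem:Econvex}) and $Z \in \eth(q_t - f)^{\cA}(X)$. The elementary sum rule $\eth(q_t - f)^{\cA}(X) + \eth f^{\cA}(X) \subseteq \eth q_t^{\cA}(X) = \{X/t\}$ forces $Z = X/t - Y$, and writing out the defining inequality for $Z$ yields exactly the upper bound of \eqref{eq:convexsemiconcave}; the lower bound is the definition of $Y \in \eth f^{\cA}(X)$.

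For $(4) \Rightarrow (2)$, fix $X, X'$ with respective subgradients $Y, Y'$. Applying the upper bound at $X$ to a generic $Z$ and the lower bound at $X'$ to the same $Z$ and subtracting gives
\[
f^{\cA}(X') - f^{\cA}(X) + \langle Z, Y' - Y\rangle + \langle X, Y\rangle - \langle X', Y'\rangle \leq \tfrac{1}{2t}\|Z - X\|^2_{L^2(\cA)_{\sa}^m}.
\]
The left side, viewed as a concave quadratic in $Z$, is maximized at $Z = X + t(Y' - Y)$, producing $(t/2)\|Y - Y'\|^2 \leq f^{\cA}(X) - f^{\cA}(X') + \langle X' - X, Y'\rangle$; the symmetric inequality with $(X,Y) \leftrightarrow (X',Y')$ and addition give $t\|Y - Y'\|^2 \leq \langle X - X', Y - Y'\rangle$, and Cauchy--Schwarz closes it. Then $(2) \Rightarrow (3)$ is immediate: setting $X = X'$ in (2) yields singleton subdifferentials, Lemma~\ref{lem:Econvex} places the unique subgradient in $L^2(\mathrm{W}^*(X))_{\sa}^m$, and (2) is then the $(1/t)$-Lipschitz assertion on the resulting map $\nabla f^{\cA}$.

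For $(3) \Rightarrow (1)$, a real-valued lsc convex function on a Hilbert space is continuous, and a continuous convex function with single-valued subdifferential is Gateaux differentiable, so $f^{\cA}$ is Gateaux differentiable with gradient $\nabla f^{\cA}$. Along the segment $X_s = (1-s)X_0 + s X_1$, the scalar function $s \mapsto (q_t - f)^{\cA}(X_s)$ has derivative $\langle X_s/t - \nabla f^{\cA}(X_s), X_1 - X_0\rangle_{L^2(\cA)_{\sa}^m}$; for $s_1 < s_2$ its increment equals $\frac{s_2-s_1}{t}\|X_1 - X_0\|^2 - \langle \nabla f^{\cA}(X_{s_2}) - \nabla f^{\cA}(X_{s_1}), X_1 - X_0\rangle$, which is nonnegative by Cauchy--Schwarz and the Lipschitz bound. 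Hence $(q_t - f)^{\cA}$ is convex along every line, therefore convex. The main points requiring care are the continuity argument for $(q_t - f)^{\cA}$ in $(1) \Rightarrow (4)$ and the appeal to Gateaux differentiability in $(3) \Rightarrow (1)$; both are standard but rely crucially on $f$ being real-valued.
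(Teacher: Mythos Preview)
Your proof is correct but follows a different cycle from the paper: the paper proves $(1)\Rightarrow(2)\Rightarrow(3)\Rightarrow(4)\Rightarrow(1)$, whereas you prove $(1)\Rightarrow(4)\Rightarrow(2)\Rightarrow(3)\Rightarrow(1)$. The main structural difference is that the paper's $(1)\Rightarrow(2)$ goes through Legendre duality, invoking Proposition~\ref{prop:convexityconversion} to pass to $\mathcal{L}f - q_{1/t}$ and then using monotonicity of subgradients on the dual side; your route stays entirely primal. Your $(1)\Rightarrow(4)$ via the subdifferential sum rule $\eth(q_t-f)^{\cA}(X)+\eth f^{\cA}(X)\subseteq\{X/t\}$ is particularly clean and makes the uniqueness of subgradients fall out immediately, and your $(4)\Rightarrow(2)$ by optimizing over the test point $Z$ is a nice alternative to the paper's telescoping-sum argument for $(3)\Rightarrow(4)$. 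On the other hand, the paper closes the loop with a very short $(4)\Rightarrow(1)$ (simply exhibiting $X/t - Y$ as a subgradient of $(q_t-f)^{\cA}$ at $X$), whereas your $(3)\Rightarrow(1)$ has to pass through continuity and Gateaux differentiability of real-valued lsc convex functions on a Banach space; this is standard but heavier than the paper's route at that step. Overall, your argument is self-contained (it does not need Proposition~\ref{prop:convexityconversion}), while the paper's argument ties the result more tightly into the Legendre-transform framework developed earlier.
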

	
	\begin{proof}
		(1) $\implies$ (2).  By the previous Proposition \ref{prop:convexityconversion}, $\mathcal{L}f - q_{1/t}$ is $E$-convex.  Let $Y \in \eth f^{\cA}(X)$ and $Y' \in \eth f^{\cA}(X')$.  Then by Lemma \ref{lem:Legendresubgradient}, we have $X \in \eth \mathcal{L}f^{\cA}(Y)$ and $X' \in \eth \mathcal{L}f^{\cA}(Y')$.  By the same argument as (4) $\implies$ (5) in the proof of Proposition \ref{prop:convexityconversion}, we have $Z := X - tY \in \eth(\mathcal{L}f - q_{1/t})(Y)$ and $Z' := X' - tY' \in \eth(\mathcal{L}f - q_{1/t})(Y')$.  It follows that
		\[
		\ip{Z', Y - Y'}_{L^2(\cA)_{\sa}^m} \leq \mathcal{L}f^{\cA}(Y) - \mathcal{L}f^{\cA}(Y') \leq \ip{Z, Y - Y'}_{L^2(\cA)_{\sa}^m},
		\]
		hence
		\begin{align*}
			0 &\leq \ip{Z' - Z, Y' - Y}_{L^2(\cA)_{\sa}^m} \\
			&= \ip{X' - X - t(Y' - Y), Y' - Y}_{L^2(\cA)_{\sa}^m} \\
			&= \ip{X' - X,Y' - Y}_{L^2(\cA)_{\sa}^m} - t \norm{Y' - Y}_{L^2(\cA)_{\sa}^m}^2 \\
			&\leq \norm{X' - X}_{L^2(\cA)_{\sa}^m} \norm{Y' - Y}_{L^2(\cA)_{\sa}^m} - t \norm{Y' - Y}_{L^2(\cA)_{\sa}^m}^2.
		\end{align*}
		Therefore, $\norm{Y' - Y}_{L^2(\cA)_{\sa}^m} \leq (1/t) \norm{X - X'}_{L^2(\cA)_{\sa}^m}$ as desired.
		
		(2) $\implies$ (3).  By taking $X = X'$ in (2), we see that there is a unique $Y \in \eth f^{\cA}(X)$ and that $X \mapsto Y$ is a $(1/t)$-Lipschitz function.  By Lemma \ref{lem:Econvex}, we know that $\eth f^{\cA}(X)$ contains some point in $L^2(\mathrm{W}^*(X))_{\sa}^m$, and this point must equal $Y$.
		
		(3) $\implies$ (4).  Let $\cA$ and $X$ be given.  By our assumption of (3), there is a unique point $Y = \nabla f^{\cA}(X)$ in $\eth f^{\cA}(X)$.  Let $X' \in L^2(\cA)_{\sa}^m$.  The lower bound $\ip{X' - X,Y}_{L^2(\cA)_{\sa}^m} \leq f^{\cA}(X') - f^{\cA}(X)$ follows immediately from convexity.  For the upper bound, let $X_t = (1 - t)X' + tX$ and let $Y_t = \nabla f^{\cA}(X_t)$.
		
		For $n \in \N$, observe that
		\begin{align*}
			f^{\cA}(X') - f^{\cA}(X) &= \sum_{j=1}^n \left( f^{\cA}(X_{j/n}) - f^{\cA}(X_{(j-1)/n}) \right) \\
			&\leq \sum_{j=1}^n \ip{X_{j/n} - X_{(j-1)/n}, Y_{j/n}}_{L^2(\cA)_{\sa}^m} \\
			&\leq \sum_{j=1}^n \ip{X_{j/n} - X_{(j-1)/n}, Y}_{L^2(\cA)_{\sa}^m} + \sum_{j=1}^n \norm{X_{j/n} - X_{(j-1)/n}}_{L^2(\cA)_{\sa}^m} \norm{Y_{j/n} - Y}_{L^2(\cA)_{\sa}^m} \\
			&\leq \ip{X' - X,Y}_{L^2(\cA)_{\sa}^m} + \sum_{j=1}^n \frac{1}{n} \norm{X' - X}_{L^2(\cA)_{\sa}^m} \frac{1}{t} \norm{X_{j/n} - X}_{L^2(\cA)_{\sa}^m} \\
			&\leq \ip{X' - X,Y}_{L^2(\cA)_{\sa}^m} + \frac{1}{t} \norm{X' - X}_{L^2(\cA)_{\sa}^m}^2 \sum_{j=1}^n \frac{j}{n^2} \\
			&= \ip{X' - X,Y}_{L^2(\cA)_{\sa}^m} + \frac{1}{t} \norm{X' - X}_{L^2(\cA)_{\sa}^m}^2 \frac{n(n+1)}{2n^2}.
		\end{align*}
		Taking $n \to \infty$ shows that $f^{\cA}(X') - f^{\cA}(X) \leq \ip{X' - X,Y}_{L^2(\cA)_{\sa}^m} + (1/2t) \norm{X' - X}_{L^2(\cA)_{\sa}^m}^2$ as desired.
		
		(4) $\implies$ (1).  Let $\cA \in \mathbb{W}$.  We show that $(q_t - f)^{\cA}$ is convex by exhibiting a subgradient vector for every $X \in L^2(\cA)_{\sa}^m$.  Let $Y \in \eth f^{\cA}(X)$ and let $X' \in L^2(\cA)_{\sa}^m$.  By (4),
		\begin{align*}
			(q_t - f)^{\cA}(X') - (q_t - f)^{\cA}(X) &\geq \frac{1}{2t} \norm{X'}_{L^2(\cA)_{\sa}^m}^2 - \frac{1}{2t} \norm{X}_{L^2(\cA)_{\sa}^m}^2 - \ip{X' - X,Y}_{L^2(\cA)_{\sa}^m}^2 - \frac{1}{2t} \norm{X' - X}_{L^2(\cA)_{\sa}^m}^2 \\
			&= \ip{X' - X, -Y + (1/t)X}_{L^2(\cA)_{\sa}^m}.
		\end{align*}
		Hence, $-Y + (1/t)X$ is a subgradient vector for $q_t - f$ at $X$ as desired.
	\end{proof}
	
	\subsection{Main results on the displacement interpolation} \label{subsec:W*displacement}
	
	We start out by proving Theorem \ref{thm:displacementW*} which states that if $(\cA,X,Y)$ is an $L^2$ optimal coupling and $X_t = (1 - t)X + tY$, then $\mathrm{W}^*(X_t) = \mathrm{W}^*(X,Y)$ for all $t \in (0,1)$.
	
	\begin{proof}[{Proof of Theorem \ref{thm:displacementW*}}]
		By Proposition \ref{prop:MKduality2}, there exists an $E$-convex function $f$ such that $Y \in \partial f^{\cA}(X)$.  Let $f_t = (1 - t) q_1 + tf$, where $q_1^{\cA}(X) = (1/2) \norm{X}_{L^2(\cA)_{\sa}^m}^2$.  Since $(1 - t)X$ is a subgradient to $(1-t)q_1$ at $X$ and $tY$ is a subgradient to $f^{\cA}$ at $X$, we have $X_t \in \eth f_t^{\cA}(X)$.  By Lemma \ref{lem:Legendresubgradient}, we have $X \in \eth \mathcal{L} f_t^{\cA}(X_t)$.  Since $f_t - q_{1/(1-t)} = f_t - (1 - t)q_1 = tf$ is $E$-convex, $q_{1-t} - \mathcal{L}f_t$ is $E$-convex by Proposition \ref{prop:convexityconversion}.  Hence, by Proposition \ref{prop:convexsemiconcave}, $\eth \mathcal{L} f_t^{\cA}(X_t)$ consists of a single point which is in $L^2(\mathrm{W}^*(X_t))_{\sa}^m$.  But we already know that $X \in \eth \mathcal{L} f_t^{\cA}(X_t)$, and therefore $X \in L^2(\mathrm{W}^*(X_t))_{\sa}^m$.
		
		A symmetrical argument shows that $Y \in L^2(\mathrm{W}^*(X_t))_{\sa}^m$.  Therefore, $\mathrm{W}^*(X,Y) \subseteq \mathrm{W}^*(X_t)$.  The reverse inclusion $\mathrm{W}^*(X_t) \subseteq \mathrm{W}^*(X,Y)$ is obvious since $X_t = (1 - t)X + tY$.
	\end{proof}
	
	It follows from the triangle inequality that $(\cA,X_s,X_t)$ is an optimal coupling of the laws of $X_s$ and $X_t$ (see Proposition \ref{prop:geodesic}).  Another way to show that is, given an $E$-convex function $f$ such that $Y \in \eth f^{\cA}(X)$, to derive $E$-convex functions $f_{t,s}$ for $s, t \in [0,1]$ such that $X_t \in \eth f_{t,s}^{\cA}(X_s)$.  The next proposition gives an explicit construction of $f_{t,s}$ from $f$, and gives the properties of $f_{t,s}$.  The specific cases relevant to the displacement interpolation are then summarized in Corollary \ref{cor:optimaltransportinterpolation}.  All of these results are completely analogous to the classical statements.
	
	\begin{proposition} \label{prop:functioninterpolation}
		Let $f$ be an $E$-convex function.  For $s, t \in [0,1]$, define $f_{t,s}$ as follows: For $s = 0$, set
		\[
		f_{t,0} = (1 - t) q_1 + t f; \qquad f_{0,t} = \mathcal{L} f_{t,0};
		\]
		if $s > 0$ and $s \leq t$, set
		\[
		f_{t,s}^{\cA}(X) = \inf_{Y \in L^2(\cA)_{\sa}^m} \left( \frac{t}{2s} \norm{X}_{L^2(\cA)_{\sa}^m}^2 - \frac{t - s}{s} \ip{X,Y}_{L^2(\cA)_{\sa}^m} + \frac{(t - s)(1 - s)}{2s} \norm{Y}_{L^2(\cA)_{\sa}^m}^2 + (t - s) f^{\cA}(Y) \right);
		\]
		if $s > 0$ and $s \geq t$, set
		\[
		f_{t,s}^{\cA}(X) = \sup_{Y \in L^2(\cA)_{\sa}^m} \left( \frac{t}{2s} \norm{X}_{L^2(\cA)_{\sa}^m}^2 - \frac{t - s}{s} \ip{X,Y}_{L^2(\cA)_{\sa}^m} + \frac{(t - s)(1 - s)}{2s} \norm{Y}_{L^2(\cA)_{\sa}^m}^2 + (t - s) f^{\cA}(Y) \right).
		\]
		(In particular, $f_{t,t} = q_1$ for all $t \in [0,1]$.)  Then we have the following:
		\begin{enumerate}[(1)]
			\item $f_{t,s}$ is $E$-convex and $f_{s,t} = \mathcal{L}f_{t,s}$.
			\item If $s \leq t$, then $f_{t,s} - \frac{1 - t}{1 - s} q_1$ is $E$-convex for $s < 1$ and $\frac{t}{s} q_1 - f_{t,s}$ is $E$-convex for $s > 0$.
			\item If $t \leq s$, then $f_{t,s} - \frac{t}{s} q_1$ is $E$-convex for $s > 0$ and $\frac{1 - t}{1 - s} q_1 - f_{t,s}$ is $E$-convex for $s < 1$.
			\item In particular, if $s \in (0,1)$ and $X \in L^2(\cA)_{\sa}^m$, then $\eth f_{t,s}^{\cA}(X)$ consists of a unique point $\nabla f_{t,s}^{\cA}(X)$ and $\nabla f_{t,s}^{\cA}$ is Lipschitz.
			\item Suppose $0 \leq s < t \leq 1$.  If $u \in (s,t)$, then
			\[
			f_{u,s} = \frac{t - u}{t - s} q_1 + \frac{u - s}{t - s} f_{t,s}
			\]
			and
			\[
			f_{t,u} = \left( \frac{t - s}{u - s} q_1 \right) \square \left( \frac{t - u}{t - s} f_{t,s} \left( \frac{t - s}{t - u} (\cdot) \right) \right).
			\]
			\item Suppose $0 \leq s < t \leq 1$ and $X, Y \in L^2(\cA)_{\sa}^m$ with $Y \in \eth f_{t,s}^{\cA}(X)$.  For $u \in [s,t]$, let
			\[
			X_u = \frac{t - u}{t - s} X + \frac{u - s}{t - s} Y.
			\]
			Then $X_u \in \eth f_{u,s}^{\cA}(X)$ and $Y \in \eth f_{t,u}^{\cA}(X_u)$.
			\item For $s, t, u \in (0,1)$, we have $\nabla f_{u,t} \circ \nabla f_{t,s} = \nabla f_{u,s}$.
		\end{enumerate}
	\end{proposition}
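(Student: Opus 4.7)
The plan rests on a uniform closed-form identity valid for $s > 0$:
\[ f_{t,s} = \frac{t}{s}\,q_1 - \frac{t-s}{s}\,f_{0,s}, \qquad \text{where } f_{0,s} = \mathcal{L}f_{s,0} = \mathcal{L}\bigl((1-s)q_1 + sf\bigr). \]
To verify this I would factor $(t-s)/s$ out of the bracket in the definition of $f_{t,s}$, rewriting it as $-\frac{t-s}{s}[\langle X,Y\rangle - (1-s)q_1(Y) - sf(Y)]$; the sign of $(t-s)/s$ exactly interchanges $\inf\leftrightarrow\sup$, so both regimes collapse to the Legendre transform of $f_{s,0}$.

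The crucial auxiliary identity is $f_{0,s} = q_{1-s} \,\square\, \bigl(s\,\mathcal{L}f(\cdot/s)\bigr)$. This follows from Lemma~\ref{lem:Legendreinfconvolution}: taking $\mathcal{L}$ of both sides gives $q_{1/(1-s)} + \mathcal{L}g = f_{s,0} = (1-s)q_1 + sf$, and since $q_{1/(1-s)} = (1-s)q_1$, this forces $\mathcal{L}g = sf$. Proposition~\ref{prop:convexityconversion} then produces the $E$-convexity of $q_{1-s} - f_{0,s}$ and the real-valuedness of $f_{0,s}$. Substituting back into the closed form gives the two decompositions
\[ f_{t,s} - \tfrac{1-t}{1-s}q_1 = \tfrac{t-s}{s}(q_{1-s}-f_{0,s}), \qquad f_{t,s} - \tfrac{t}{s}q_1 = \tfrac{s-t}{s}f_{0,s}, \]
from which (2) and (3) follow by tracking signs according to whether $s \leq t$ or $s \geq t$; expressing $f_{t,s}$ as a positive combination of $E$-convex summands in each regime gives the $E$-convexity part of (1), and Proposition~\ref{prop:convexsemiconcave} applied to the combined convex/semi-concave structure gives (4). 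The equality $\mathcal{L}f_{t,s} = f_{s,t}$ in (1) is then a direct computation: performing the sup in $X$ first in the definition of $\mathcal{L}f_{t,s}$ reduces to a quadratic optimum, leaving an expression that matches the defining sup/inf formula for $f_{s,t}$.

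For (5), eliminating $f_{0,s}$ between the closed forms for $f_{u,s}$ and $f_{t,s}$ yields the master identity $(t-s)f_{u,s} - (u-s)f_{t,s} = (t-u)q_1$, valid for any three indices sharing a common base; solving for $f_{u,s}$ gives the first formula, while applying the same identity with base $t$ yields $f_{u,t} = \tfrac{u-s}{t-s}q_1 + \tfrac{t-u}{t-s}f_{s,t}$. For the second formula in (5), taking the Legendre transform of the proposed right-hand side (using Lemma~\ref{lem:Legendreinfconvolution} together with the rescaling rule $\mathcal{L}[\alpha g(\beta\,\cdot)](Y) = \alpha\mathcal{L}g(Y/(\alpha\beta))$, with $\alpha\beta = 1$ in our case) produces exactly this base-$t$ expression, so both sides have the same Legendre transform, and since both sides are $E$-convex, they agree. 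Property (6) follows by differentiating (5a) via the subgradient sum rule (using that $q_1$ is smooth with $\nabla q_1 = \mathrm{id}$) for the first claim, and from the base-$u$ version of the master identity combined with Legendre-subgradient duality $X \in \eth f_{s,u}(X_u) \Leftrightarrow X_u \in \eth f_{u,s}(X)$ for the second. Finally (7) is cleanest via the explicit gradient formula $\nabla f_{t,s}(X) = \tfrac{t}{s}X - \tfrac{t-s}{s}\nabla f_{0,s}(X)$: one first establishes $\nabla f_{0,t} \circ \nabla f_{t,s} = \nabla f_{0,s}$ by writing $X = (1-s)Y_0 + sZ$ with $Y_0 = \nabla f_{0,s}(X)$ and $Z \in \eth f(Y_0)$, computing $\nabla f_{t,s}(X) = (1-t)Y_0 + tZ$, and observing that $Y_0 \in \eth f_{0,t}((1-t)Y_0 + tZ)$; then straightforward algebraic simplification using this semigroup property gives $\nabla f_{u,t}(\nabla f_{t,s}(X)) = \nabla f_{u,s}(X)$.

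The main obstacles I anticipate are careful sign bookkeeping when factoring $(t-s)/s$ across the two regimes $s \leq t$ and $s \geq t$, handling the boundary cases $s \in \{0,1\}$ where the closed form degenerates (so that $f_{t,0}$ and $f_{0,t}$ must be treated directly from their definitions), and ensuring that the subgradient arithmetic in (6) respects the $E$-convex structure so that the produced subgradients lie in $L^2(\mathrm{W}^*(X))_{\sa}^m$; this last point is delivered by Lemma~\ref{lem:Econvex} combined with the semi-concavity supplied by (2)–(3).
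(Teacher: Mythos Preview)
Your approach is correct and takes a genuinely different organizational route from the paper. The paper proves an auxiliary lemma (Lemma~\ref{lem:fts}) giving four separate inf-convolution identities for $f_{t,s}$ and $\mathcal{L}f_{t,s}$ in the regime $0<s<t$, namely \eqref{eq:fts1}--\eqref{eq:fts4}, and then reads off (1)--(3) from these, while (5)--(7) are derived by combining \eqref{eq:fts1} and \eqref{eq:fts4} with the relation $\mathcal{L}f_{t,s}=f_{s,t}$. Your single closed form $f_{t,s}=\tfrac{t}{s}q_1-\tfrac{t-s}{s}f_{0,s}$ together with the structural identity $f_{0,s}=q_{1-s}\,\square\,(s\,\mathcal{L}f(\cdot/s))$ does the same work more compactly: it unifies the two regimes $s\leq t$ and $s\geq t$ through the sign of $(t-s)/s$, and it makes the ``master identity'' $(t-s)f_{u,s}-(u-s)f_{t,s}=(t-u)q_1$ for (5) immediate. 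Your treatment of (7) via the explicit gradient formula $\nabla f_{t,s}(X)=(1-t)Y_0+tZ$ (where $X=(1-s)Y_0+sZ$, $Z\in\eth f(Y_0)$) is also cleaner than the paper's, which deduces (7) from (6) by composition and inversion. What the paper's approach buys is a richer set of representations: \eqref{eq:fts1} and \eqref{eq:fts3} express $f_{t,s}$ both as ``quadratic plus inf-convolution'' and as ``inf-convolution with a quadratic,'' making the link to Proposition~\ref{prop:convexityconversion} entirely mechanical, whereas in your approach the semi-concavity of $f_{0,s}$ has to be extracted from the auxiliary identity. Both routes ultimately rest on the same machinery (Lemma~\ref{lem:Legendreinfconvolution}, Propositions~\ref{prop:convexityconversion} and~\ref{prop:convexsemiconcave}); yours just packages it with fewer moving parts.
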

	
	The next corollary describes the most relevant cases of the proposition for optimal transport; the claims are special cases of (4) and (6) of the proposition.
	
	\begin{corollary} \label{cor:optimaltransportinterpolation}
		Let $(\cA,X,Y)$ be an optimal coupling of $\mu$, $\nu \in \Sigma_m$.  Let $f$ be an $E$-convex function such that $Y \in \eth f(X)$.  Let $f_{t,s}$ be as in Proposition \ref{prop:functioninterpolation}.  Let $X_t = (1-t)X + tY$ for $t \in [0,1]$.  Then $X_t \in \eth f_{t,s}(X_s)$ for all $s,t \in [0,1]$.  In particular, if $s \in (0,1)$, then $f_{t,s}$ has a Lipschitz gradient and we have $X_t = \nabla f_{t,s}(X_s)$.
	\end{corollary}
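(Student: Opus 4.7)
The plan is to derive the corollary directly from parts (1), (4), and (6) of Proposition \ref{prop:functioninterpolation}. First, observe that from the defining formula $f_{t,0} = (1-t)q_1 + tf$ evaluated at $t=1$ we get $f_{1,0} = f$, so the hypothesis $Y \in \eth f^{\cA}(X)$ can be rewritten as $Y \in \eth f_{1,0}^{\cA}(X_0)$ (where $X_0 = X$). This puts us in position to apply (6) of the proposition with the parameters $s = 0$, $t = 1$, which asserts that for every $u \in [0,1]$, the point $X_u = (1-u)X + uY$ lies in $\eth f_{u,0}^{\cA}(X_0)$ and simultaneously $Y \in \eth f_{1,u}^{\cA}(X_u)$. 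This already handles the cases $s = 0$ and $t = 1$.

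Next I would bootstrap to general $0 < s \leq t \leq 1$. From the previous step we have $Y \in \eth f_{1,s}^{\cA}(X_s)$, so I would apply (6) a second time with parameters $s' = s$, $t' = 1$. The interpolation formula in (6) produces the point $\tfrac{1-u}{1-s}X_s + \tfrac{u-s}{1-s}Y$, and a direct substitution $X_s = (1-s)X + sY$ shows this collapses to $X_u = (1-u)X + uY$. Therefore $X_t \in \eth f_{t,s}^{\cA}(X_s)$ for every $s \leq t$ in $[0,1]$. For the opposite case $s > t$, I would use (1), which says $f_{t,s} = \mathcal{L}f_{s,t}$. Combined with Lemma \ref{lem:Legendresubgradient}, the relation $X_s \in \eth f_{s,t}^{\cA}(X_t)$ (already established by the previous case with the roles of $s$ and $t$ swapped) is equivalent to $f_{s,t}^{\cA}(X_t) + f_{t,s}^{\cA}(X_s) = \ip{X_s,X_t}_{L^2(\cA)_{\sa}^m}$, which in turn gives $X_t \in \eth f_{t,s}^{\cA}(X_s)$.

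Finally, for $s \in (0,1)$, part (4) of Proposition \ref{prop:functioninterpolation} tells us $\eth f_{t,s}^{\cA}$ is single-valued and Lipschitz, so the inclusion $X_t \in \eth f_{t,s}^{\cA}(X_s)$ upgrades to the equality $X_t = \nabla f_{t,s}^{\cA}(X_s)$.

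I do not expect any serious obstacle: the proposition has already done all the work of constructing $f_{t,s}$, showing it is $E$-convex, establishing the Legendre duality between $f_{t,s}$ and $f_{s,t}$, and proving the semi-concavity/semi-convexity sandwich that forces a Lipschitz gradient for $s \in (0,1)$. The only minor verification is the algebraic identity $\tfrac{1-u}{1-s}X_s + \tfrac{u-s}{1-s}Y = X_u$ that lets the two applications of (6) compose, and checking that $f_{1,0} = f$.
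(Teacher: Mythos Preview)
Your proof is correct and follows essentially the same route the paper indicates: the paper simply remarks that the corollary is a special case of parts (4) and (6) of Proposition~\ref{prop:functioninterpolation}, and you have filled in exactly those details, including the bootstrap to reach intermediate pairs $s \leq t$ and the Legendre flip via (1) and Lemma~\ref{lem:Legendresubgradient} for $s > t$. The algebraic check $\tfrac{1-u}{1-s}X_s + \tfrac{u-s}{1-s}Y = X_u$ and the identification $f_{1,0} = f$ are both fine.
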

	
	In order to prove Proposition \ref{prop:functioninterpolation}, we need the following scaling relation for Legendre transform.
	
	\begin{lemma} \label{lem:Legendrescaling}
		Let $f$ be a tracial $\mathrm{W}^*$-function and let $c > 0$.  Then $\mathcal{L}(cf)^{\cA}(X) = c \mathcal{L}f^{\cA}(c^{-1}X)$.
	\end{lemma}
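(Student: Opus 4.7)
The plan is to unpack the definition of the Legendre transform and factor out the positive constant $c$ using linearity of both the embedding $\iota$ and the inner product. Specifically, by definition
\[
\mathcal{L}(cf)^{\cA}(X) = \sup\bigl\{ \langle \iota(X), Y\rangle_{L^2(\cB)_{\sa}^m} - cf^{\cB}(Y) : \iota: \cA \to \cB \text{ a tracial $\mathrm{W}^*$-embedding},\ Y \in L^2(\cB)_{\sa}^m\bigr\},
\]
and since $c > 0$, the supremum commutes with the scalar multiplication. First I would pull $c$ out of each term inside the supremum, writing $\langle \iota(X),Y\rangle - cf^{\cB}(Y) = c\bigl(\langle c^{-1}\iota(X), Y\rangle - f^{\cB}(Y)\bigr)$.

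Next, I would use linearity of $\iota$ on $L^2$, giving $c^{-1}\iota(X) = \iota(c^{-1}X)$, so the expression becomes $c\bigl(\langle \iota(c^{-1}X), Y\rangle - f^{\cB}(Y)\bigr)$. Taking the supremum over all $(\cB, \iota, Y)$ now precisely recovers $c\,\mathcal{L}f^{\cA}(c^{-1}X)$ by definition.

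There is essentially no obstacle here: the result is a purely formal consequence of the definition and the fact that the family of admissible pairs $(\cB, \iota, Y)$ is invariant under our rescaling (we do not need to change variables in $Y$, only use positive homogeneity of the scalar multiplication by $c$). The only point worth noting is that $c > 0$ is needed for $c \cdot \sup = \sup \circ (c \cdot)$; for $c = 0$ or $c < 0$ the identity would fail. No appeal to $E$-convexity, to the discussion of compatible embeddings in Remark \ref{rem:EconvexLegendre}, or to any regularity of $f$ is needed.
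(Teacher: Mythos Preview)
Your proof is correct and follows essentially the same approach as the paper: unfold the definition of $\mathcal{L}(cf)^{\cA}(X)$, factor $c$ out of the inner product using linearity of $\iota$ to rewrite $\langle \iota(X),Y\rangle = c\langle \iota(c^{-1}X),Y\rangle$, and then pull the positive constant through the supremum.
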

	
	\begin{proof}
		Observe that
		\begin{align*}
			\mathcal{L}(cf)^{\cA}(X) &= \sup_{\substack{\iota: \cA \to \cB \\ Y \in L^2(\cB)_{\sa}^m}} \left( \ip{\iota(X),Y}_{L^2(\cB)_{\sa}^m} - cf^{\cB}(Y) \right) \\
			&= \sup_{\substack{\iota: \cA \to \cB \\ Y \in L^2(\cB)_{\sa}^m}} \left( c\ip{\iota(c^{-1}X),Y}_{L^2(\cB)_{\sa}^m} - cf^{\cB}(Y) \right) \\
			&= c \mathcal{L}f^{\cA}(c^{-1}X). \qedhere
		\end{align*}
	\end{proof}
	
	The bulk of the proof of the proposition is the following lemma which explains how $f_{s,t}$ were obtained through addition of and inf-convolution with quadratic functions, using the same idea as in the proof of Theorem \ref{thm:displacementW*}.
	
	\begin{lemma} \label{lem:fts}
		Consider the setup of Proposition \ref{prop:functioninterpolation}.  If $0 < s < t \leq 1$, then
		\begin{align}
			f_{t,s} &= \frac{1-t}{1-s} q_1 + \frac{t-s}{1-s} \left[ \left( \frac{1}{s} q_1 \right) \square \left( (1 - s) f\left( \frac{1}{1 - s}(\cdot) \right) \right) \right] \label{eq:fts1} \\
			\mathcal{L}f_{t,s} &= \left( \frac{1 - s}{1 - t} q_1 \right) \square \left[ \frac{t-s}{1-s} [sq_1 + (1 - s) \mathcal{L} f]\left( \frac{1-s}{t-s} (\cdot) \right) \right] \label{eq:fts2}
		\end{align}
		and
		\begin{align}
			f_{t,s} &= \frac{t}{s} q_1 \square \left[ \frac{t - s}{t} [(1-t)q_1 + tf]\left( \frac{t}{t - s}(\cdot) \right) \right] \label{eq:fts3} \\
			\mathcal{L}f_{t,s} &= \frac{s}{t} q_1 + \frac{t - s}{t} \left[ \left(\frac{1}{1-t} q_1\right) \square t \mathcal{L}f \left(\frac{1}{t} (\cdot) \right) \right]. \label{eq:fts4}
		\end{align}
	\end{lemma}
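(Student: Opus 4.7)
The plan is to verify \eqref{eq:fts1} directly from the definition of $f_{t,s}$ via an affine change of variable in the infimand, and then to derive \eqref{eq:fts2} from it by applying the Legendre transform using the scaling and duality identities already established. Identities \eqref{eq:fts3} and \eqref{eq:fts4} are handled in exactly the same way, with a different affine substitution. The computation is essentially algebraic, with no real obstruction beyond careful bookkeeping of quadratic coefficients.

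For \eqref{eq:fts1}, since $f$ is assumed $E$-convex and the quadratic functions (and their rescalings and shifts) are $E$-convex, Lemma \ref{lem:infconvolutionconvex} lets me write the inf-convolution over a single ambient algebra:
\[
\Bigl(\tfrac{1}{s}q_1 \square (1-s)f(\tfrac{1}{1-s}(\cdot))\Bigr)^{\cA}(X) = \inf_{W \in L^2(\cA)_{\sa}^m} \Bigl[\tfrac{1}{2s}\norm{X - W}^2 + (1-s) f^{\cA}\bigl(\tfrac{W}{1-s}\bigr)\Bigr].
\]
Substituting $Y = W/(1-s)$, expanding $\norm{X - (1-s)Y}^2$, multiplying the result by $(t-s)/(1-s)$, and adding $\tfrac{1-t}{1-s}q_1(X)$, I get an infimum over $Y$ of an expression whose $\norm{X}^2$ coefficient simplifies via $\tfrac{t-s}{2s(1-s)} + \tfrac{1-t}{2(1-s)} = \tfrac{t}{2s}$ to precisely the defining formula of $f_{t,s}^{\cA}(X)$. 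An entirely parallel calculation, this time substituting $Y = tW/(t-s)$ inside $\tfrac{t}{s}q_1 \square \bigl[\tfrac{t-s}{t}[(1-t)q_1 + tf](\tfrac{t}{t-s}(\cdot))\bigr]$ and checking that the $\norm{Y}^2$ coefficient simplifies via $\tfrac{(t-s)^2}{2st} + \tfrac{(1-t)(t-s)}{2t} = \tfrac{(t-s)(1-s)}{2s}$, yields \eqref{eq:fts3}.

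Having \eqref{eq:fts1} and \eqref{eq:fts3}, the dual identities follow by applying $\mathcal{L}$. The key tools are: the scaling law $\mathcal{L}(cF)(Y) = c\mathcal{L}F(Y/c)$ from Lemma \ref{lem:Legendrescaling} and its companion $\mathcal{L}(F(c\,\cdot))(Y) = \mathcal{L}F(Y/c)$ (proved by the same substitution argument), the formula $\mathcal{L}q_r = q_{1/r}$ from the example, and the dual of Lemma \ref{lem:Legendreinfconvolution}, namely $\mathcal{L}(F + G) = \mathcal{L}F \square \mathcal{L}G$ for $E$-convex $F, G$ (obtained by applying $\mathcal{L}$ to both sides of Lemma \ref{lem:Legendreinfconvolution} and using $\mathcal{L}^2 = \Id$ on $E$-convex functions from Proposition \ref{prop:Legendre}(3)). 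Writing \eqref{eq:fts1} schematically as $f_{t,s} = a q_1 + b(\phi \square \psi)$ with $a = \tfrac{1-t}{1-s}$, $b = \tfrac{t-s}{1-s}$, $\phi = \tfrac{1}{s}q_1$, and $\psi = (1-s)f(\tfrac{1}{1-s}(\cdot))$, I get $\mathcal{L}(aq_1) = \tfrac{1}{a}q_1 = \tfrac{1-s}{1-t}q_1$, $\mathcal{L}\phi = sq_1$, $\mathcal{L}\psi = (1-s)\mathcal{L}f$ (since $cc' = 1$ in that case), and $\mathcal{L}(b\,\cdot) = b\,\mathcal{L}(\cdot)(\cdot/b)$; putting these together gives exactly \eqref{eq:fts2}. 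Starting from \eqref{eq:fts3} and running the same mechanical procedure produces \eqref{eq:fts4}.

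The main thing requiring care is the combined scaling rule $\mathcal{L}(cF(c'\cdot))(Y) = c\,\mathcal{L}F\!\bigl(Y/(cc')\bigr)$ and, in particular, recognizing when $cc' = 1$ so that the argument of $\mathcal{L}F$ is unrescaled; this is what makes the factor $(1-s)$ (respectively $t$) appear outside $\mathcal{L}f$ without a dilation in \eqref{eq:fts2} and \eqref{eq:fts4}. No further subtleties arise, because the $E$-convexity of every function in sight guarantees that all inf-convolutions can be computed inside a single algebra and that Legendre duality is an involution.
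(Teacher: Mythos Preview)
Your proposal is correct and follows essentially the same approach as the paper: verify \eqref{eq:fts1} (and \eqref{eq:fts3}) by expanding the inf-convolution over a single algebra via Lemma \ref{lem:infconvolutionconvex}, making the affine substitution $W = (1-s)Y$ (respectively $W = \tfrac{t-s}{t}Y$), and matching coefficients; then obtain \eqref{eq:fts2} and \eqref{eq:fts4} by applying $\mathcal{L}$ with the scaling relation and the Legendre--inf-convolution duality. Your write-up is in fact more explicit than the paper's, which leaves the Legendre-transform step as a one-line remark invoking Lemmas \ref{lem:Legendreinfconvolution} and \ref{lem:Legendrescaling}.
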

	
	\begin{proof}
		Fix $\cA \in \mathbb{W}$ and $X \in L^2(\cA)_{\sa}^m$, and evaluate the right-hand side of \eqref{eq:fts1} at $X$ to obtain
		\begin{multline*}
			\frac{1-t}{1-s} q_1(X) + \frac{t-s}{1-s} \left[ \left( \frac{1}{s} q_1 \right) \square \left( (1 - s) f\left( \frac{1}{1 - s}(\cdot) \right) \right) \right](X) \\
			= \frac{1-t}{2(1-s)} \norm{X}_{L^2(\cA)_{\sa}^m}^2 + \frac{t-s}{1-s} \inf_{Y \in L^2(\cA)_{\sa}^m} \left[ \frac{1}{2s} \norm{X - Y}_{L^2(\cA)_{\sa}^m}^2 + (1 - s) f^{\cA}\left(\frac{1}{1-s} Y \right) \right],
		\end{multline*}
		where we have used the result from Lemma \ref{lem:infconvolutionconvex} that it suffices to take the infimum over $Y \in L^2(\cA)_{\sa}^m$ rather than $Y$ in $L^2(\cB)_{\sa}^m$ for some larger tracial $\mathrm{W}^*$-algebra $\cB$.  Next, we substitute $(1 - s)Y$ instead of $Y$ to obtain
		\begin{multline*}
			\frac{1-t}{2(1-s)} \norm{X}_{L^2(\cA)_{\sa}^m}^2 + \frac{t-s}{1-s} \inf_{Y \in L^2(\cA)_{\sa}^m} \left[ \frac{1}{2s} \norm{X - (1-s)Y}_{L^2(\cA)_{\sa}^m}^2 + (1 - s) f^{\cA}(Y) \right]
			\\ = \inf_{Y \in L^2(\cA)_{\sa}^m} \biggl[ \frac{1-t}{2(1-s)} \norm{X}_{L^2(\cA)_{\sa}^m}^2 + \frac{t - s}{2s(1-s)} \norm{X}_{L^2(\cA)_{\sa}^m}^2 - \frac{t - s}{s} \ip{X,Y}_{L^2(\cA)_{\sa}^m} \\ + \frac{(t-s)(1-s)}{2s} \norm{Y}_{L^2(\cA)_{\sa}^m}^2 + (t - s) f^{\cA}(Y) \biggr].
		\end{multline*}
		Combining the two coefficients in front of $\norm{X}_{L^2(\cA)_{\sa}^m}^2$, we arrive at the formula for $f_{t,s}^{\cA}(X)$.
		
		The equation \eqref{eq:fts2} is obtained from \eqref{eq:fts1} by applying the Legendre transform, using the fact that $\mathcal{L} (cq_1) = c^{-1} q_1$ for $c > 0$, the relation between Legendre transform and inf-convolution in Lemma \ref{lem:Legendreinfconvolution}, and the scaling relation Lemma \ref{lem:Legendrescaling}.
		
		The proof of \eqref{eq:fts3} is similar to the proof of \eqref{eq:fts1}, and then \eqref{eq:fts4} is obtained by taking the Legendre transform.
	\end{proof}
	
	\begin{proof}[{Proof of Proposition \ref{prop:functioninterpolation}}]
		(1) It is immediate that $f_{t,0}$ and $f_{0,t}$ are $E$-convex and are Legendre transforms of each other. Also, in the case of $s = t$, we have $f_{t,s} = q_1$, so there is nothing to prove.  For $0 < s < t \leq 1$, it follows from Lemma \ref{lem:fts} that $f_{t,s}$ is $E$-convex for because it is expressed by applying scaling, addition of quadratics, and inf-convolution with quadratics to $f$.
		
		Next, we show that for $0 < s < t$, we have $\mathcal{L}f_{t,s} = f_{s,t}$.  We evaluate $\mathcal{L} f_{t,s}$ starting from \eqref{eq:fts3} as
		\[
		\mathcal{L}f_{t,s} = \frac{s}{t} q_1 + \mathcal{L} \left[ \frac{t - s}{t} [(1-t)q_1 + tf]\left( \frac{t}{t - s}(\cdot) \right) \right]
		\]
		Now we evaluate the second term at some $X \in L^2(\cA)_{\sa}^m$, where $\cA \in \mathbb{W}$.  Using Remark \ref{rem:EconvexLegendre}, we may compute the Legendre transform of an $E$-convex function by taking the Hilbert-space Legendre transform for each $\cA$ (without considering a larger $\mathrm{W}^*$-algebra $\cB$).  This yields
		\[
		\sup_{Y \in L^2(\cA)_{\sa}^m} \left[ \ip{X,Y} - \frac{t - s}{t} [(1-t)q_1 + tf]^{\cA}\left( \frac{t}{t - s}Y \right) \right]. 
		\]
		We substitute $\frac{t-s}{t} Y$ for $Y$ to obtain
		\[
		\sup_{Y \in L^2(\cA)_{\sa}^m} \left[ \frac{t-s}{t} \ip{X,Y}_{L^2(\cA)_{\sa}^m} - \frac{(t - s)(1-t)}{2t} \norm{Y}_{L^2(\cA)_{\sa}^m}^2 - (t - s) f(Y) \right].
		\]
		Adding back the term $\frac{s}{t} q_1^{\cA}(X)$, we obtain
		\[
		\sup_{Y \in L^2(\cA)_{\sa}^m} \left[ \frac{s}{2t} \norm{X}_{L^2(\cA)_{\sa}^m}^2 - \frac{s-t}{t} \ip{X,Y}_{L^2(\cA)_{\sa}^m} + \frac{(s - t)(1-t)}{2t} \norm{Y}_{L^2(\cA)_{\sa}^m}^2 + (s - t) f(Y) \right],
		\]
		which is precisely $f_{s,t}$.
		
		Therefore, for $s > t > 0$, we have $f_{t,s} = \mathcal{L} f_{s,t}$, hence $f_{t,s}$ is $E$-convex and $\mathcal{L} f_{t,s} = f_{s,t}$.  This is the last remaining case.
		
		(2) Let $0 \leq s \leq t \leq 1$.  If $s = 0$, then $f_{t,s} = (1 - t) q_1 + tf$, so $f_{t,s} - \frac{1-t}{1-s} q_1 = tf$ is $E$-convex.  If $s \in (0,1)$, then by \eqref{eq:fts1}, $f_{t,s}$ is $\frac{1-t}{1-s} q_1$ plus an $E$-convex function, hence $f_{t,s} - \frac{1-t}{1-s} q_1$ is $E$-convex.  If $s \in (0,1]$, then by \eqref{eq:fts3}, $f_{t,s}$ is the inf-convolution of $\frac{t}{s} q_1$ with an $E$-convex function and therefore $\frac{t}{s} q_1 - f_{t,s}$ is $E$-convex by Proposition \ref{prop:convexityconversion}.
		
		(3) Let $s \geq t$.  Then $f_{t,s} = \mathcal{L}f_{s,t}$.  Thus, can we argue symmetrically to (2) using \eqref{eq:fts2} and \eqref{eq:fts4}.
		
		(4) This follows from (2) and (3) together with Proposition \ref{prop:convexsemiconcave}.
		
		(5) Consider the first relation $f_{u,s} = \frac{t - u}{t - s} q_1 + \frac{u - s}{t - s} f_{t,s}$.  If $s = 0$, this follows from direct computation from the definition of $f_{u,0}$ and $f_{t,0}$.  In the case $s > 0$, we apply \eqref{eq:fts1} to get
		\begin{align*}
			\frac{t - u}{t - s} q_1 + \frac{u - s}{t - s} f_{t,s} &= \frac{t - u}{t - s} q_1 + \frac{u - s}{t - s} \frac{1 - t}{1 - s} q_1 + \frac{u - s}{1 - s} \left[ \left( \frac{1}{s} q_1 \right) \square \left( (1 - s) f\left( \frac{1}{1 - s}(\cdot) \right) \right) \right] \\
			&= \frac{1 - u}{1 - s} q_1 + \frac{u - s}{1 - s} \left[ \left( \frac{1}{s} q_1 \right) \square \left( (1 - s) f\left( \frac{1}{1 - s}(\cdot) \right) \right) \right] \\
			&= f_{u,s}.
		\end{align*}
		Analogously, using \eqref{eq:fts4}, we obtain for $s \in (0,1)$ that
		\[
		\mathcal{L} f_{t,u} = \frac{u-s}{t-s} q_1 + \frac{t-u}{t-s} \mathcal{L} f_{t,s};
		\]
		in fact, this relation also holds when $s = 0$ by evaluating $\mathcal{L} f_{t,u}$ on the left-hand side with \eqref{eq:fts4} and evaluating $\mathcal{L} f_{t,0}$ on the right-hand side $ \mathcal{L}[(1-t)q_1 + tf] = (\frac{1}{1-t} q_1) \square t \mathcal{L}f(\frac{1}{t}(\cdot))$.  Taking Legendre transforms of the previous equation implies that
		\[
		f_{t,u} = \left( \frac{t - s}{u - s} q_1 \right) \square \left( \frac{t - u}{t - s} f_{t,s} \left( \frac{t - s}{t - u} (\cdot) \right) \right).
		\]
		
		(6) Since $X \in \eth  q_1^{\cA}(X)$ and $Y \in \eth f_{t,s}^{\cA}(X)$, we have
		\[
		X_u = \frac{t-u}{t-s} X + \frac{u-s}{t-s} Y \in \eth \left[ \frac{t-u}{t-s} q_1 + \frac{u-s}{t-s} f_{t,s} \right]^{\cA}(X) = \eth f_{u,s}^{\cA}(X).
		\]
		Since $Y \in \eth f_{t,s}^{\cA}(X)$, we have $X \in \eth (\mathcal{L} f_{t,s})^{\cA}(Y)$.  Hence, using the same relation as in the proof (5),
		\[
		X_u = \frac{u-s}{t-s} Y + \frac{t-u}{t-s} X \in \eth \left[ \frac{u-s}{t-s} q_1 + \frac{t-u}{t-s} \mathcal{L} f_{t,s}\right]^{\cA}(Y) = \eth(\mathcal{L}f_{t,u})^{\cA}(Y).
		\]
		So $X_u \in \eth(\mathcal{L} f_{t,u})^{\cA}(Y)$, so that $Y \in \eth f_{t,u}^{\cA}(X_u)$.
		
		(7) In light of (4), for $s, t \in (0,1)$, the functions $f_{s,t}$ and $f_{t,s}$ have Lipschitz gradients.  They are Legendre transforms of each other, which implies that $X \in \eth f_{s,t}(Y)$ if and only if $Y \in \eth f_{t,s}(X)$.  Hence, $\nabla f_{s,t} = (\nabla f_{t,s})^{-1}$.
		
		Suppose that $s < u < t$.  Let $Y = \nabla f_{t,s}(X)$, and let $X_u = \frac{u-s}{t-s}X + \frac{t-u}{t-s} Y$.  Then by (6), $X_u = \nabla f_{u,s}(X)$ and $Y = \nabla f_{t,u}(X_u)$, hence $\nabla f_{t,s}(X) = Y = \nabla f_{t,u}(X_u) = \nabla f_{t,u} \circ \nabla f_{u,s}(X)$.
		
		So $\nabla f_{t,s} = \nabla f_{t,u} \circ \nabla f_{u,s}$.  By applying $\nabla f_{s,u} = (\nabla f_{u,s})^{-1}$ on the right, we obtain $\nabla f_{t,s} \circ \nabla f_{s,u} = \nabla f_{t,u}$.  By taking inverses, $\nabla f_{u,s} \circ \nabla f_{s,t} = \nabla f_{u,t}$.  In fact, using composition and inverses in this way, we can achieve all permutations of $u$, $s$, and $t$.  The only remaining case is when some of $s, t, u$ are equal to each other, but this follows from the relations $\nabla f_{t,t} = \id$ and $\nabla f_{s,t} = (\nabla f_{t,s})^{-1}$.
	\end{proof}
	
	\section{Optimal couplings, quantum information theory, and operator algebras} \label{sec:qinfo}
	
	In this section, we give several indications of why non-commutative optimal couplings are significantly more complicated than the commutative case by making connections to other results in operator algebras and quantum information theory.  Specifically, using results from \cite{MuRo2020a}, we show that there exist $n \times n$ matrix tuples for which an optimal coupling requires a tracial $\mathrm{W}^*$-algebra of arbitrarily large dimension.  Next, based on \cite{HaMu2015} and \cite{JNVWY2020}, we conclude that there exist matrix tuples for which the optimal coupling requires a non-Connes-embeddable tracial $\mathrm{W}^*$-algebra (that is, it cannot even be approximated by couplings in finite-dimensional algebras).  Next, we show that the topology induced by the Wasserstein distance is strictly stronger than the weak-$*$ topology on $\Sigma_{m,R}$, and we characterize the points at which the two topologies agree.  Finally, we show that $\Sigma_{m,R}$ with the Wasserstein distance is not separable based on \cite[Theorem 1]{Ozawa2004}.
	
	\subsection{Completely positive and factorizable maps}
	
	We recall some standard definitions in operator algebras; see e.g.\ \cite{Paulsen2003}.  If $\cA$ is a tracial $\mathrm{W}^*$-algebra, we denote by $M_n(\cA)$ the algebra $M_n(L^\infty(\cA)) \cong M_n(\C) \otimes L^\infty(\cA)$ equipped with the trace $\tr_n \otimes \tau_{\cA}$ and the weak-$*$ topology given by the entrywise weak-$*$ topology on $L^\infty(\cA)$; it is a standard fact that $M_n(\cA)$ is indeed a tracial $\mathrm{W}^*$-algebra.  If $\Phi: \cA \to \cB$ is a linear map between tracial $\mathrm{W}^*$-algebras, then we define $\Phi^{(n)}: M_n(\cA) \to M_n(\cB)$ as the map obtained from entrywise application of $\Phi$.  If $\cA$ is a tracial $\mathrm{W}^*$-algebra and $a \in L^\infty(\cA)$, then we say that $a \geq 0$ if $a = x^*x$ for some $x \in L^\infty(\cA)$; this is equivalent to $a$ defining a positive operator on $L^2(\cA)$ by left multiplication.\footnote{Of course, definitions make sense more generally for $\mathrm{C}^*$-algebras.}
	
	\begin{definition}
		We say that $\Phi$ is \emph{completely positive} if for every $n \in \N$, if $a \in M_n(\cA)$ with $a \geq 0$, then $\Phi^{(n)}(a) \geq 0$.  For tracial $\mathrm{W}^*$-algebras $\cA$ and $\cB$, we denote by $\operatorname{CP}(\cA,\cB)$ the space of completely positive maps $\cA \to \cB$.  We denote by $\operatorname{UCPT}(\cA,\cB)$ the space of unital completely positive trace-preserving maps.  These maps are known in quantum information theory as \emph{quantum channels} from $\cA$ to $\cB$.
	\end{definition}
	
	\begin{definition}[{Anantharaman-Delaroche \cite{AD2006}}]
		Let $\cA$ and $\cB$ be tracial $\mathrm{W}^*$-algebras.  A linear map $\Phi: \cA \to \cB$ is said to be \emph{factorizable} if there exist tracial $\mathrm{W}^*$-inclusions $\iota_1: \cA \to \cC$ and $\iota_2: \cB \to \cC$ such that $\Phi = \iota_2^* \circ \iota_1$, where $\iota_2^*: \cC \to \cB$ is the conditional expectation adjoint to $\iota_2$.  We also say that $\Phi$ \emph{factorizes through $\cC$} if there exist $\iota_1$ and $\iota_2$ as above.
		
		We denote the space of factorizable maps by $\operatorname{FM}(\cA,\cB)$.  We denote by $\operatorname{FM}_{\operatorname{fin}}(\cA,\cB)$ the set of maps that factorize through a finite-dimensional algebra $\cC$.
	\end{definition}
	
	\begin{proposition} \label{prop:CPmaps}
		Let $\cA$, $\cB$, and $\cC$ be tracial $\mathrm{W}^*$-algebras.
		\begin{enumerate}[(1)]
			\item We have $\operatorname{FM}(\cA,\cB) \subseteq \operatorname{UCPT}(\cA,\cB)$.
			\item $\operatorname{UCPT}(\cA,\cB)$, $\operatorname{FM}(\cA,\cB)$, and $\operatorname{FM}_{\operatorname{fin}}(\cA,\cB)$ are convex sets.
			\item $\operatorname{UCPT}(\cA,\cB)$ and $\operatorname{FM}(\cA,\cB)$ are closed in the pointwise weak-$*$ topology.
			\item If $\Phi \in \operatorname{UCPT}(\cA,\cB)$ and $\Psi \in \operatorname{UCPT}(\cB,\cC)$, then $\Psi \circ \Phi \in \operatorname{UCPT}(\cA,\cC)$.  The same holds with $\operatorname{UCPT}$ replaced by $\operatorname{FM}$.
		\end{enumerate}
	\end{proposition}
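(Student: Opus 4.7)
The plan is to treat each of the four claims in turn. Claim (1), the convexity in (2), and the $\operatorname{UCPT}$ parts of (3) and (4) are essentially routine; the substantive steps are closure of $\operatorname{FM}$ in (3) and composition of factorizable maps in (4).

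For (1), I would verify separately that $\Phi = \iota_2^* \circ \iota_1$ is unital (both $\iota_1$ and the trace-preserving conditional expectation $\iota_2^*$ send $1$ to $1$ by Lemma \ref{lem:inclusionconditionalexpectation}), trace-preserving (since $\tau_\cB \circ \iota_2^* = \tau_\cC$ and $\tau_\cC \circ \iota_1 = \tau_\cA$), and completely positive (the $*$-homomorphism $\iota_1$ is CP, and $\iota_2^*$ is CP, which is a standard consequence of Tomiyama's theorem or the GNS construction). For the convexity in (2), $\operatorname{UCPT}(\cA,\cB)$ is immediate since each defining property is preserved under convex combinations. For $\operatorname{FM}$ and $\operatorname{FM}_{\operatorname{fin}}$, given $\Phi_j = (\iota_2^j)^* \circ \iota_1^j$ factoring through $\cC_j$ ($j = 1, 2$) and $t \in [0,1]$, I form the weighted direct sum $\cC = \cC_1 \oplus \cC_2$ with trace $t \tau_{\cC_1} \oplus (1-t)\tau_{\cC_2}$, diagonal embeddings $\iota_k(x) = (\iota_k^1(x), \iota_k^2(x))$, and check directly that the corresponding trace-preserving conditional expectation recovers $t \Phi_1 + (1-t) \Phi_2$; finite-dimensionality is preserved.

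For (3), $\operatorname{UCPT}$ closure is straightforward: unitality and trace-preservation pass to pointwise weak-$*$ limits (using weak-$*$ continuity of $\tau_\cB$), and complete positivity passes because the positive cone of each $M_n(\cB)$ is weak-$*$ closed. Closure of $\operatorname{FM}$ requires tracial ultraproducts: for a net $\Phi_\alpha = (\iota_2^\alpha)^* \circ \iota_1^\alpha$ converging pointwise weak-$*$ to $\Phi$, I would fix an ultrafilter $\cU$ refining the tail filter, form the Ocneanu ultraproduct $\cC = \prod_\cU \cC_\alpha$, and let $\iota_k: \cA, \cB \to \cC$ be the tracial $\mathrm{W}^*$-embeddings induced by the sequences $(\iota_k^\alpha)$. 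The identity $\iota_2^* \iota_1 = \Phi$ then follows by testing against $b \in L^\infty(\cB)$:
\[
\tau_\cB(b \cdot \iota_2^* \iota_1(a)) = \tau_\cC(\iota_2(b) \iota_1(a)) = \lim_\cU \tau_{\cC_\alpha}(\iota_2^\alpha(b) \iota_1^\alpha(a)) = \lim_\cU \tau_\cB(b \Phi_\alpha(a)) = \tau_\cB(b \Phi(a)).
\]

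For (4), composition preserves each $\operatorname{UCPT}$ defining property trivially. For $\Phi = \iota_2^* \iota_1: \cA \to \cB$ factoring through $\cC_1$ and $\Psi = \kappa_2^* \kappa_1: \cB \to \cC$ factoring through $\cC_2$, I construct the amalgamated free product $\cM = \cC_1 *_\cB \cC_2$ using $\iota_2$ and $\kappa_1$, with inclusions $\rho_j: \cC_j \to \cM$ satisfying $\rho_1 \iota_2 = \rho_2 \kappa_1$ (Proposition \ref{prop:amalgamatedfreeproduct}). The key identity, standard from the moment characterization of free independence in Definition \ref{def:freeproduct}, is that the conditional expectation $\rho_2^* \circ \rho_1: \cC_1 \to \cC_2$ equals $\kappa_1 \circ \iota_2^*$. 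Therefore $(\rho_2 \kappa_2)^* \circ (\rho_1 \iota_1) = \kappa_2^* \rho_2^* \rho_1 \iota_1 = \kappa_2^* \kappa_1 \iota_2^* \iota_1 = \Psi \circ \Phi$, exhibiting $\Psi \circ \Phi$ as factorizing through $\cM$. The main obstacles are the two $\operatorname{FM}$-specific arguments, which hinge on having the tracial ultraproduct and amalgamated free product machinery available; both are well-developed in the finite von Neumann algebra literature, but they constitute the technical substance beyond the routine definitional checks.
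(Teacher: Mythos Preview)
Your proposal is correct and aligns with the paper's treatment. The paper does not give a full proof: it declares the proposition ``well-known in operator algebras,'' sketches only (1) via the same argument you give, states that (4) for $\operatorname{FM}$ ``uses amalgamated free products'' (exactly your construction), and cites \cite[Lemma 2.3.6]{BrownOzawa2008} for convexity of $\operatorname{FM}$ (your direct-sum construction is the standard proof of that lemma). Your ultraproduct argument for closure of $\operatorname{FM}$ in (3) fills in a detail the paper omits entirely; it is the natural approach and works as written.
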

	
	This proposition is well-known in operator algebras.  For the sake of exposition, let us recall why $\operatorname{FM}(\cA,\cB) \subseteq \operatorname{UCPT}(\cA,\cB)$.  Let $\Phi \in \operatorname{FM}(\cA,\cB)$, and take a factorization $\Phi = \iota_2^* \iota_1$ where $\iota_1: \cA \to \cC$ and $\iota_2: \cB \to \cC$ are tracial $\mathrm{W}^*$-inclusions.  Since $\iota_1$ and $\iota_2$ are $*$-homomorphisms, they are completely positive and unital.  Now observe that $\ip{\iota_2^*(c),b}_{L^2(\cB)_{\sa}^m} = \ip{c,\iota_2(b)}_{L^2(\cC)_{\sa}^m} \geq 0$ for $c \in M_n(\cC)_+$ and $b \in M_n(\cB)_+$; it follows that $\iota_2^*(c) \geq 0$ in $M_n(\cB)$ for every $c \in M_n(\cC)_+$.  Since $\iota_2$ is unital, $\iota_2^*$ is trace-preserving, and since $\iota_2$ is trace-preserving, $\iota_2^*$ is unital.  Finally, one verifies directly that $\operatorname{UCPT}$ is closed under composition, hence, $\iota_2^* \iota_1 \in \operatorname{UCPT}(\cA,\cB)$.
	
	To show that factorizable maps are closed under composition in (4), one uses amalgamated free products.  For convexity of $\operatorname{FM}(\cA,\cB)$, see e.g.\ \cite[Lemma 2.3.6]{BrownOzawa2008}.
	
	The next lemma summarizes some well-known facts about completely positive maps.
	
	\begin{lemma} \label{lem:CPadjoint}
		Let $\cA$ and $\cB$ be tracial $\mathrm{W}^*$-algebras, and let $\Phi \in \operatorname{UCPT}(\cA,\cB)$.
		\begin{enumerate}[(1)]
			\item $\Phi(X^*) = \Phi(X)^*$ for all $X \in L^\infty(\cA)$.
			\item $\Phi$ extends to a contractive map $L^2(\cA) \to L^2(\cB)$.
			\item There exists a unique $\Phi^* \in \operatorname{UCPT}(\cB,\cA)$ such that $\ip{X,\Phi^*(Y)}_{L^2(\cA)} = \ip{\Phi(X),Y}_{L^2(\cB)}$ for $X \in L^2(\cA)$ and $Y \in L^2(\cB)$.
		\end{enumerate}
	\end{lemma}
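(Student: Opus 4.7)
I treat the three parts in sequence. Part (1) is a formal consequence of positivity: every self-adjoint $Y \in L^\infty(\cA)_{\sa}$ is a difference of positives (e.g.\ $Y = (Y + \norm{Y}\cdot 1) - \norm{Y}\cdot 1$), so $\Phi$ maps self-adjoints to self-adjoints; decomposing $X = \re X + i\im X$ with self-adjoint parts and using linearity then yields $\Phi(X^*) = \Phi(\re X) - i\Phi(\im X) = \Phi(X)^*$. For Part (2) I will invoke the Kadison--Schwarz inequality $\Phi(X)^*\Phi(X) \leq \Phi(X^*X)$, valid for every $2$-positive unital map (see e.g.\ \cite{Paulsen2003}), and in particular for our completely positive $\Phi$. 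Applying $\tau_{\cB}$ and using trace preservation gives $\norm{\Phi(X)}_{L^2(\cB)}^2 \leq \norm{X}_{L^2(\cA)}^2$, so $\Phi$ extends uniquely, by density of $L^\infty(\cA) \subseteq L^2(\cA)$, to a contractive linear map $L^2(\cA) \to L^2(\cB)$.

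For Part (3), I define $\Phi^*$ as the Hilbert-space adjoint of the $L^2$-extension from (2); uniqueness, the inner-product identity, and $L^2$-contractivity are then automatic. Unitality follows from $\ip{a, \Phi^*(1)}_{L^2(\cA)} = \ip{\Phi(a), 1}_{L^2(\cB)} = \overline{\tau_{\cA}(a)} = \ip{a, 1}_{L^2(\cA)}$ for all $a$, and trace preservation from $\tau_{\cA}(\Phi^*(b)) = \ip{1, \Phi^*(b)}_{L^2(\cA)} = \ip{\Phi(1), b}_{L^2(\cB)} = \tau_{\cB}(b)$. Using Part (1), a short calculation with the inner-product identity gives $\Phi^*(b^*) = \Phi^*(b)^*$, so $\Phi^*$ preserves self-adjointness; then for $a \in L^\infty(\cA)_+$ and $b \in L^\infty(\cB)_+$ the identity $\tau_{\cA}(a \Phi^*(b)) = \tau_{\cB}(\Phi(a) b) \geq 0$, combined with the tracial criterion (a self-adjoint $c \in \cA$ is positive iff $\tau_{\cA}(ac) \geq 0$ for all $a \in L^\infty(\cA)_+$), forces $\Phi^*(b) \geq 0$. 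Complete positivity follows by applying this argument to $\Phi^{(n)}$, noting $(\Phi^{(n)})^* = (\Phi^*)^{(n)}$.

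The step I expect to be the main obstacle is verifying that $\Phi^*$ actually maps $L^\infty(\cB)$ into $L^\infty(\cA)$, since a priori the Hilbert adjoint only takes values in $L^2(\cA)$. I plan to close this gap using $L^1$--$L^\infty$ duality for tracial $\mathrm{W}^*$-algebras. Via Jordan decomposition combined with positivity and trace preservation, one shows $\norm{\Phi(a)}_{L^1(\cB)} \leq 2\norm{a}_{L^1(\cA)}$ for $a \in L^\infty(\cA)$ (the factor $2$ accounts for splitting into real and imaginary parts), so $\Phi$ extends by density to a bounded operator $L^1(\cA) \to L^1(\cB)$. Then for $b \in L^\infty(\cB)$ the linear functional $a \mapsto \tau_{\cB}(\Phi(a) b)$ on $L^\infty(\cA)$ satisfies $|\tau_{\cB}(\Phi(a) b)| \leq 2 \norm{a}_{L^1(\cA)} \norm{b}_{L^\infty(\cB)}$ and extends continuously to $L^1(\cA)$; its representing element under the duality $L^1(\cA)^* = L^\infty(\cA)$ must coincide on $L^\infty(\cA)$ with the $\Phi^*(b)$ obtained from the $L^2$-adjoint, forcing $\Phi^*(b) \in L^\infty(\cA)$ with $\norm{\Phi^*(b)}_{L^\infty(\cA)} \leq 2 \norm{b}_{L^\infty(\cB)}$.
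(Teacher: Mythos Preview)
The paper does not supply a proof of this lemma, merely introducing it as a summary of ``well-known facts about completely positive maps.'' Your argument is correct and follows the standard route: Part~(1) via Jordan decomposition, Part~(2) via the Kadison--Schwarz inequality, and Part~(3) by taking the $L^2$-adjoint and then showing it lands in $L^\infty$ through the $L^1$--$L^\infty$ duality; the only cosmetic improvement is that for self-adjoint $a$ the bound $\norm{\Phi(a)}_{L^1(\cB)} \le \tau_{\cA}(a_+) + \tau_{\cA}(a_-) = \norm{a}_{L^1(\cA)}$ already gives constant $1$, so the factor $2$ (while harmless) is not needed.
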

	
	
	The connection between factorizable maps and non-commutative optimal couplings is as follows.
	
	\begin{observation} \label{obs:factorizablecoupling}
		Let $\cA$ and $\cB$ be tracial $\mathrm{W}^*$-algebras and let $X \in L^\infty(\cA)_{\sa}^m$ and $Y \in L^\infty(\cB)_{\sa}^m$.  Then
		\[
		C(\lambda_X,\lambda_Y) = \sup_{\Phi \in \operatorname{FM}(\cA,\cB)} \ip{\Phi(X),Y}_{L^2(\cB)_{\sa}^m}.
		\]
	\end{observation}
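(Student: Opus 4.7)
The plan is to prove both inequalities by translating between couplings and factorizable maps using amalgamated free products and the law-determined isomorphism of Lemma \ref{lem:lawisomorphism}.

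For the inequality $\sup_{\Phi \in \operatorname{FM}(\cA,\cB)} \ip{\Phi(X),Y}_{L^2(\cB)_{\sa}^m} \leq C(\lambda_X,\lambda_Y)$, I would fix a factorizable $\Phi = \iota_2^* \circ \iota_1$ with inclusions $\iota_1 : \cA \to \cC$ and $\iota_2 : \cB \to \cC$. Then by the definition of the adjoint in Lemma \ref{lem:CPadjoint},
\[
\ip{\Phi(X),Y}_{L^2(\cB)_{\sa}^m} = \ip{\iota_2^* \iota_1(X), Y}_{L^2(\cB)_{\sa}^m} = \ip{\iota_1(X), \iota_2(Y)}_{L^2(\cC)_{\sa}^m}.
\]
Since $\iota_1$ and $\iota_2$ are trace-preserving embeddings, $\lambda_{\iota_1(X)} = \lambda_X$ and $\lambda_{\iota_2(Y)} = \lambda_Y$, so $(\cC, \iota_1(X), \iota_2(Y))$ is a coupling of $\lambda_X$ and $\lambda_Y$. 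Hence the inner product is bounded by $C(\lambda_X, \lambda_Y)$, and taking the supremum over $\Phi$ gives the desired inequality.

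For the reverse inequality, I would start with an arbitrary coupling $(\cC, X', Y')$ of $\lambda_X$ and $\lambda_Y$ and manufacture a factorizable map realizing the same inner product. By Lemma \ref{lem:lawisomorphism}, there are canonical tracial $\mathrm{W}^*$-isomorphisms $\rho : \mathrm{W}^*(X) \to \mathrm{W}^*(X')$ and $\sigma : \mathrm{W}^*(Y) \to \mathrm{W}^*(Y')$ sending generators to generators. Using these to identify subalgebras, I form successive amalgamated free products $\cC_1 = \cA *_{\mathrm{W}^*(X)} \cC$ and then $\cD = \cC_1 *_{\mathrm{W}^*(Y)} \cB$, which exist by Proposition \ref{prop:amalgamatedfreeproduct}. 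This yields canonical trace-preserving embeddings $\iota_1 : \cA \to \cD$ and $\iota_2 : \cB \to \cD$ whose restrictions to $\mathrm{W}^*(X)$ and $\mathrm{W}^*(Y)$ agree, via $\rho$ and $\sigma$, with the original inclusions $\mathrm{W}^*(X') \hookrightarrow \cC \hookrightarrow \cD$ and $\mathrm{W}^*(Y') \hookrightarrow \cC \hookrightarrow \cD$. In particular $\iota_1(X)$ and $\iota_2(Y)$ are identified, inside $\cD$, with $X'$ and $Y'$ viewed through the inclusion $\cC \hookrightarrow \cD$.

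Setting $\Phi = \iota_2^* \circ \iota_1$ produces an element of $\operatorname{FM}(\cA,\cB)$, and using that $\cC$ sits inside $\cD$ as a trace-preserving subalgebra, one computes
\[
\ip{\Phi(X), Y}_{L^2(\cB)_{\sa}^m} = \ip{\iota_1(X), \iota_2(Y)}_{L^2(\cD)_{\sa}^m} = \ip{X', Y'}_{L^2(\cC)_{\sa}^m}.
\]
Taking the supremum over couplings then delivers $\sup_\Phi \ip{\Phi(X), Y}_{L^2(\cB)_{\sa}^m} \geq C(\lambda_X, \lambda_Y)$. The only nontrivial point is the existence and compatibility of the iterated amalgamated free products, which is essentially a bookkeeping exercise once Proposition \ref{prop:amalgamatedfreeproduct} is in hand; no estimate or regularity issue arises, since everything is algebraic.
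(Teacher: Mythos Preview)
Your proof is correct; both inequalities are established cleanly.  The first direction (factorizable map $\Rightarrow$ coupling) is identical to the paper's.  For the converse, however, you and the paper take slightly different routes.

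The paper does not enlarge $\cC$.  Instead it composes four factorizable maps: the conditional expectation $\phi_1^*:\cA\to\mathrm{W}^*(X)$, the embedding $\iota_1:\mathrm{W}^*(X)\to\cC$, the conditional expectation $\iota_2^*:\cC\to\mathrm{W}^*(Y)$, and the inclusion $\phi_2:\mathrm{W}^*(Y)\to\cB$, setting $\Phi=\phi_2\iota_2^*\iota_1\phi_1^*$; factorizability follows from closure of $\operatorname{FM}$ under composition (Proposition~\ref{prop:CPmaps}(4)).  Your construction builds a single ambient algebra $\cD$ via iterated amalgamated free products so that $\Phi=\iota_2^*\iota_1$ is factorizable directly from the definition.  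The two are essentially equivalent---Proposition~\ref{prop:CPmaps}(4) itself is proved using amalgamated free products---but the paper's version is shorter because it outsources the bookkeeping to that proposition, whereas yours is more self-contained and makes the single factorization $\iota_2^*\iota_1$ explicit.
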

	
	\begin{proof}
		In fact, we will show that the two sets $\{\ip{X',Y'}_{L^2(\cC)_{\sa}^m}: (\cC,X',Y') \text{ a coupling}\}$ and $\{\ip{\Phi(X),Y}_{L^2(\cB)_{\sa}^m}: \Phi \in \operatorname{FM}(\cA,\cB)\}$ are equal.  Suppose that $(\cC,X',Y')$ is a coupling of $\lambda_X$ and $\lambda_Y$.  Since $\lambda_{X'} = \lambda_X$, there is a tracial $\mathrm{W}^*$ embedding $\iota_1: \mathrm{W}^*(X) \to \cC$ sending $X$ to $X'$.  Similarly, there is a tracial $\mathrm{W}^*$-embedding $\iota_2: \mathrm{W}^*(Y) \to \cC$ sending $Y$ to $Y'$.  Let $\phi_1: \mathrm{W}^*(X) \to \cA$ and $\phi_2: \mathrm{W}^*(Y) \to \cB$ be the canonical inclusion maps, and let $\Phi = \phi_2 \iota_2^* \iota_1 \phi_1^*: \cA \to \cB$, which is factorizable by Proposition \ref{prop:CPmaps} (4) because it is a composition of factorizable maps.  Moreover,
		\[
		\ip{\Phi(X),Y}_{L^2(\cB)_{\sa}^m} = \ip{\iota_1 \phi_1^*(X), \iota_2 \phi_2^*(Y)}_{L^2(\cC)_{\sa}^m} = \ip{\iota_1(X),\iota_2(Y)}_{L^2(\cC)_{\sa}^m}.
		\]
		Conversely, given $\Phi \in \operatorname{FM}(\cA,\cB)$, we may factorize it as $\iota_2^* \iota_1$ for tracial $\mathrm{W}^*$-embeddings $\iota_1: \cA \to \cC$ and $\iota_2: \cB \to \cC$, and let $X' = \iota_1(X)$ and $Y' = \iota_2(Y)$ to obtain a coupling $(\cC,X',Y')$ of $\lambda_X$ and $\lambda_Y$.
	\end{proof}
	
	\subsection{Matrix tuples with optimal couplings of large dimension}
	
	This connection allows us to address a natural question:  Suppose that $\mu$ and $\nu$ are non-commutative laws that can be realized by self-adjoint tuples $X$ and $Y$ in a finite-dimensional algebra; then is there a non-commutative optimal coupling $(\cA,X',Y')$ of $\mu$ and $\nu$ such that $\cA$ is finite-dimensional?  And do we have some control over the dimension?  The classical analog of this question certainly has a positive answer. Indeed, if $\mu$ and $\nu$ are finitely supported measures on $\R^m$, with supports $S$ and $T$ respectively, then a classical optimal coupling is given by a measure $\pi$ on the product space $S \times T$.  Hence, there exist random variables $X$ and $Y \in L^2(S \times T,\pi;\R^m)$ such that $(\cA,X,Y)$ is an optimal coupling of $\mu$ and $\nu$, where $\cA$ is the finite-dimensional algebra $L^\infty(S \times T,\pi)$ equipped with the trace coming from $\pi$.
	
	Our first negative result in the non-commutative setting shows that, even in situations when an optimal coupling can occur in a finite dimensional algebra, there can be no control over its dimension.  This is a consequence of the following result of Musat and R{\o}rdam \cite{MuRo2020a}.
	
	\begin{theorem}[{Musat-R{\o}rdam \cite[Theorem 4.1]{MuRo2020a}}] \label{thm:infdimfactorization}
		If $n \geq 11$, then $\operatorname{FM}_{\fin}(M_n(\C),M_n(\C))$ is not closed, hence there exist factorizable maps $M_n(\C) \to M_n(\C)$ that do not factor through any finite-dimensional algebra. 
	\end{theorem}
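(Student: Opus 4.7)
The plan is to work via the Kraus/Stinespring-style characterization of factorizable maps: a $\mathrm{UCPT}$ map $\Phi: M_n \to M_n$ is factorizable if and only if there exist a tracial $\mathrm{W}^*$-algebra $(N,\tau)$, a trace-preserving unital embedding $M_n \hookrightarrow N$, and a unitary $u \in N$ such that $\Phi(x) = E_{M_n}(u x u^*)$, and $\Phi \in \operatorname{FM}_{\fin}$ exactly when one can take $N$ finite-dimensional. Decomposing $N$ into matrix blocks and using that $M_n$ embeds trace-preservingly, finite-dimensional factorizability becomes a statement about microstates: it requires matrix unitaries together with a chosen copy of $M_n$ inside a larger matrix algebra realizing the prescribed joint traces of words in $u$, $u^*$, and the matrix units of $M_n$. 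This reformulation is the right arena in which to search for an obstruction.

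The strategy is then to produce an explicit sequence $\Phi_k \in \operatorname{FM}_{\fin}(M_n,M_n)$ whose pointwise limit $\Phi$ is factorizable through an infinite-dimensional $N$ (e.g., by amalgamated free product constructions), but provably admits no finite-dimensional factorization. The natural source is a unitary representation of a discrete structure that is asymptotically, but not exactly, embeddable into matrix algebras while respecting trace conditions compatible with the fixed $M_n$. Concretely, one takes $u$ to be a block-matrix word whose entries are universally generated by relations enforcing a certain correlation pattern; for $n \geq 11$, there is enough room inside $M_n$ to encode such a universal relation, mimicking the Tsirelson-type gap between commuting-operator and tensor-product quantum strategies.

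The heart of the argument—and the main obstacle—is to certify the non-existence of a finite-dimensional factorization for the candidate $\Phi$. One would quantify the joint traces $\tau(u^{\epsilon_1} a_1 u^{\epsilon_2} a_2 \dots)$ with $a_i \in M_n$ and $\epsilon_i \in \{\pm 1\}$, and show that the resulting trace functional on the universal unital $*$-algebra generated by $M_n$ and $u$ lies in the weak-$*$ closure of the traces coming from finite-dimensional factorizations, but not among them. This is a separation result for convex sets of matricial microstates. The convexity, weak-$*$ closedness of $\operatorname{FM}(\cA,\cB)$, and composition stability from Proposition \ref{prop:CPmaps} make the closure side routine; the non-attainment side is the rigid piece, and one expects to need a spectral-gap or operator-space argument (analogous to obstructions arising from Kazhdan's property (T) or from non-synchronous quantum strategies) to rule out every finite-dimensional dilation.

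Failing a fully explicit construction, an alternative route is a compactness/contradiction argument: assume $\operatorname{FM}_{\fin}(M_n,M_n)$ is closed, combine this with the convex structure and a Krein-Milman type analysis of its extreme points (which should all come from embeddings into specific tensor products $M_n \otimes M_k$), and derive a structural rigidity that collapses the full factorizable convex body to one describable by bounded-dimensional data. Then produce a single factorizable map—coming from an appropriate free-product or HNN-extension construction with a trace-preserving $M_n$-embedding—that violates this bound. Either route isolates the essential difficulty: manufacturing an honest gap between asymptotic and exact finite-dimensional matricial dilations, which is precisely the phenomenon that Musat and R{\o}rdam exploit.
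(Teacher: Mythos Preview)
The paper does not prove this theorem; it is quoted as a black box from Musat--R{\o}rdam \cite[Theorem 4.1]{MuRo2020a} and then applied in Corollary~\ref{cor:largedimcoupling}. So there is no ``paper's own proof'' to compare against.

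That said, your proposal is not a proof either. It is a survey of two possible strategies, each of which you explicitly leave open at its decisive step. In the first route you say ``the non-attainment side is the rigid piece, and one expects to need a spectral-gap or operator-space argument\dots to rule out every finite-dimensional dilation,'' without supplying any such argument. In the second route you propose to ``derive a structural rigidity'' from a Krein--Milman analysis and then ``produce a single factorizable map\dots that violates this bound,'' again without doing so. Identifying where the difficulty lies is useful orientation, but it is not a proof; both sketches stop exactly at the point where the real content would begin.

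For context, the actual Musat--R{\o}rdam argument is far more concrete than either of your outlines. They do not proceed via abstract microstate separation or Krein--Milman rigidity; instead they build explicit factorizable Schur multipliers on $M_n(\C)$ from rotation matrices with carefully chosen irrational angle parameters, and then use number-theoretic/spectral constraints on the possible finite-dimensional dilations to show that no finite-dimensional ancilla can realize the required commutation and trace relations exactly, while finite-dimensional approximations are easy. The bound $n \geq 11$ comes out of the specific block structure needed for this construction. If you want to reconstruct the result, the right move is to study their explicit channel rather than to look for a soft existence argument.
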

	
	In order to translate this result into a statement about non-commutative optimal couplings, we use the following lemma, which is an application of the hyperplane separation theorem, vector space duality, and adjointness of tensor and hom functors.
	
	\begin{lemma} \label{lem:vectorduality}
		Let $L_{\R}(M_n(\C)_{\sa},M_m(\C)_{\sa})$ denote the space of real linear transformations $M_n(\C) \to M_m(\C)$.  Let $K \subseteq L_{\R}(M_n(\C)_{\sa},M_m(\C)_{\sa})$ be a closed convex set, and let $\Phi \not \in K$.  Then there exists $k \leq \min(n^2,m^2)$ and $X \in M_n(\C)_{\sa}^{k}$ and $Y \in M_n(\C)_{\sa}^{k}$ such that
		\[
		\ip{\Phi(X),Y}_{L^2(M_m(\C))_{\sa}^k} > \sup_{\Psi \in K} \ip{\Psi(X),Y}_{L^2(M_m(\C))_{\sa}^k}.
		\]
	\end{lemma}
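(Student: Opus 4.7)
The plan is to apply the hyperplane separation theorem in the finite-dimensional real vector space $L_{\R}(M_n(\C)_{\sa},M_m(\C)_{\sa})$, then repackage the separating linear functional as a pairing of tuples through the natural tensor duality and the standard tensor rank bound.

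First, since $L_{\R}(M_n(\C)_{\sa},M_m(\C)_{\sa})$ is a finite-dimensional real vector space, $K$ is closed and convex, and $\Phi \notin K$, the Hahn-Banach separation theorem yields a real linear functional $\ell: L_{\R}(M_n(\C)_{\sa},M_m(\C)_{\sa}) \to \R$ with $\ell(\Phi) > \sup_{\Psi \in K} \ell(\Psi)$. The task is then to realize $\ell$ in the form $\Psi \mapsto \ip{\Psi(X),Y}_{L^2(M_m(\C))_{\sa}^k}$ for some $k \leq \min(n^2,m^2)$ and tuples $X \in M_n(\C)_{\sa}^k$, $Y \in M_m(\C)_{\sa}^k$.

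Next, I would use the normalized-trace inner product on $M_m(\C)_{\sa}$ to identify $M_m(\C)_{\sa}^*$ with $M_m(\C)_{\sa}$ and thereby obtain a canonical $\R$-linear isomorphism
\[
L_{\R}(M_n(\C)_{\sa},M_m(\C)_{\sa})^* \;\cong\; M_n(\C)_{\sa} \otimes_{\R} M_m(\C)_{\sa},
\]
under which a pure tensor $X \otimes Y$ corresponds to the functional $\Psi \mapsto \tr_m(\Psi(X)\,Y) = \ip{\Psi(X),Y}_{\tr_m}$. Hence $\ell$ is represented by some tensor $T \in M_n(\C)_{\sa} \otimes_{\R} M_m(\C)_{\sa}$.

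Now comes the tensor rank step, which is the only substantive observation: since $\dim_{\R} M_n(\C)_{\sa} = n^2$ and $\dim_{\R} M_m(\C)_{\sa} = m^2$, by fixing a basis of the smaller of the two spaces and collecting coefficients on the other side, any element of the tensor product can be written as a sum of at most $\min(n^2,m^2)$ elementary tensors. Applying this to $T$ gives $T = \sum_{i=1}^{k} X_i \otimes Y_i$ with $k \leq \min(n^2,m^2)$. Setting $X = (X_1,\dots,X_k)$ and $Y = (Y_1,\dots,Y_k)$, linearity of $\Psi$ in the first slot and the definition of the tuple inner product give
\[
\ell(\Psi) \;=\; \sum_{i=1}^k \ip{\Psi(X_i),Y_i}_{\tr_m} \;=\; \ip{\Psi(X),Y}_{L^2(M_m(\C))_{\sa}^k},
\]
so the separation inequality $\ell(\Phi) > \sup_{\Psi \in K} \ell(\Psi)$ becomes exactly the desired conclusion.

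There is no substantial obstacle here: everything reduces to finite-dimensional convex separation plus an elementary fact about tensor rank. The only small point to handle carefully is ensuring that the separating functional is real-valued and that the dual identification is done with respect to $\R$-linear, not $\C$-linear, duals, so that the $X_i$ and $Y_i$ genuinely land in the self-adjoint parts; this is automatic once one takes the real Hilbert-space structure on $M_n(\C)_{\sa}$ and $M_m(\C)_{\sa}$ from the trace inner product.
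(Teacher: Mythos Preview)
Your proposal is correct and follows essentially the same approach as the paper: apply finite-dimensional hyperplane separation, identify the separating functional with an element of $M_n(\C)_{\sa} \otimes_{\R} M_m(\C)_{\sa}$, and use the tensor rank bound $\min(n^2,m^2)$ to extract the tuples. The only cosmetic difference is that the paper invokes the singular value decomposition to bound the tensor rank, whereas you use the equivalent elementary basis-expansion argument.
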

	
	\begin{proof}
		Recall that there is a linear isomorphism
		\[
		T: L_{\R}(M_n(\C)_{\sa},M_m(\C)_{\sa}) \to L_{\R}(M_n(\C)_{\sa} \otimes M_m(\C)_{\sa},\R) = (M_n(\R)_{\sa} \otimes M_m(\R)_{\sa})^*
		\]
		that sends $\Psi \in L_{\R}(M_n(\C)_{\sa},M_m(\C)_{\sa})$ to the map
		\[
		\psi: M_n(\C)_{\sa} \otimes M_m(\C)_{\sa} \to \R: A \otimes B \mapsto \ip{\Psi(A),B}_{L^2(M_m(\C))_{\sa}}.
		\]
		Of course, $M_n(\C)_{\sa} \otimes M_m(\C)_{\sa}$ is finite-dimensional, so the double dual is isomorphic to the original space.  Applying the hyperplane separation theorem on the real inner-product space $M_n(\C)_{\sa} \otimes M_m(\C)_{\sa}$, we conclude that there exists some $v \in  M_n(\C)_{\sa} \otimes M_m(\C)_{\sa}$ such that
		\[
		T(\Phi)(v) > \sup_{\Psi \in K} T(\Psi)(v).
		\]
		
		Let us decompose $v$ into a sum of simple tensors $v = \sum_{j=1}^k X_j \otimes Y_j$.  The smallest $k$ for which this is possible is called the \emph{tensor rank} of $v$.  We claim that the tensor rank is at most $\min(n^2,m^2)$.  The reason is that for real vector spaces $V$ and $W$, we can identify $V \otimes W$ with $L_{\R}(V,W)$ and then apply the singular value decomposition of the matrix in $L_{\R}(V,W)$ corresponding to a given tensor $v$.  Since a matrix in $L_{\R}(V,W)$ has rank at most $\min(\dim V, \dim W)$, it follows that the tensor rank of $v$ is at most $\min(\dim V, \dim W)$.  In particular, taking $V = M_n(\C)_{\sa}$ and $W = M_m(\C)_{\sa}$, we see that our vector $v \in M_n(\C) \otimes M_m(\C)$ has tensor rank at most $\min(n^2,m^2)$.
		
		Let $X = (X_1,\dots,X_k)$ and $Y = (Y_1,\dots,Y_k)$.  Then for $\Psi \in L_{\R}(M_n(\C)_{\sa},M_m(\C)_{\sa})$, we have
		\[
		T(\Psi)(v) = \sum_{j=1}^k \ip{\Psi(X_j),Y_j}_{L^2(M_m(\C))_{\sa}} = \ip{\Psi(X),Y}_{L^2(M_m(\C))_{\sa}^k}
		\]
		Thus, by our choice of $v$, the tuples $X$ and $Y$ satisfy the desired properties.
	\end{proof}
	
	\begin{corollary} \label{cor:largedimcoupling}
		If $n \geq 11$ and $d \in \N$, then there exist $X, Y \in M_n(\C)_{\sa}^{n^2}$ such that for every optimal coupling $(\cA,X',Y')$ of $\lambda_X$ and $\lambda_Y$, $\cA$ must have dimension at least $d$.  In particular, if $d$ is sufficiently large, then
		\[
		C(\lambda_X,\lambda_Y) > \sup_{U \in \mathcal{U}(M_n(\C))} \ip{UXU^*,Y}_{L^2(M_n(\C))_{\sa}^m}.
		\]
	\end{corollary}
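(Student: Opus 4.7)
The plan is to combine Musat--R{\o}rdam's non-closedness result (Theorem \ref{thm:infdimfactorization}) with the vector-space duality Lemma \ref{lem:vectorduality}, bridged by Observation \ref{obs:factorizablecoupling}. For each $d \in \N$ write $K_d \subseteq \operatorname{FM}(M_n(\C), M_n(\C))$ for the set of factorizable maps factoring through some tracial $\mathrm{W}^*$-algebra of dimension at most $d$, so that $\operatorname{FM}_{\fin} = \bigcup_d K_d$. Since $\operatorname{FM}$ is closed (Proposition \ref{prop:CPmaps}) and $\operatorname{FM}_{\fin}$ is not (Theorem \ref{thm:infdimfactorization}), we may fix some $\Phi \in \operatorname{FM}(M_n(\C), M_n(\C)) \setminus \bigcup_d K_d$. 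The heart of the argument is to show that the closed convex hull $\widehat K_d := \overline{\operatorname{conv}(K_d)}$ still does not contain $\Phi$, so that hyperplane separation applies.

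To achieve this, I would first show each $K_d$ is closed in $L_{\R}(M_n(\C)_{\sa}, M_n(\C)_{\sa})$ by a compactness argument: a finite-dimensional tracial $\mathrm{W}^*$-algebra of dimension $\leq d$ has the form $\bigoplus_i M_{n_i}(\C)$ with $\sum_i n_i^2 \leq d$ and trace $\sum_i t_i \tr_{n_i}$; for each of the finitely many combinatorial types $(n_1,\dots,n_k)$, the trace weights lie in a compact simplex, the pairs of trace-preserving unital $*$-homomorphisms of $M_n(\C)$ into $\bigoplus_i M_{n_i}(\C)$ form a closed, bounded, hence compact set, and $(t_i,\iota_1,\iota_2) \mapsto \iota_2^*\iota_1$ is continuous. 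Second, I would invoke Carath\'eodory's theorem in the $n^4$-dimensional real vector space $L_{\R}(M_n(\C)_{\sa}, M_n(\C)_{\sa})$: every element of $\operatorname{conv}(K_d)$ is a convex combination $\sum_{i=1}^{N} t_i \Phi_i$ with $N \leq n^4+1$ and each $\Phi_i$ factoring through some $\cC_i$ of dimension $\leq d$. Such a combination itself factors through $\bigoplus_{i=1}^N \cC_i$ with trace $\sum_i t_i \tau_{\cC_i}$, of dimension at most $(n^4+1)d$, so $\operatorname{conv}(K_d) \subseteq K_{(n^4+1)d}$; taking closures and using closedness of $K_{(n^4+1)d}$ yields $\widehat K_d \subseteq K_{(n^4+1)d}$, and hence $\Phi \notin \widehat K_d$.

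Applying Lemma \ref{lem:vectorduality} to the closed convex set $\widehat K_d$ and the point $\Phi$ then produces tuples $X, Y \in M_n(\C)_{\sa}^k$ with $k \leq n^2$ such that
\[
\ip{\Phi(X), Y}_{L^2(M_n(\C))_{\sa}^k} > \sup_{\Psi \in \widehat K_d} \ip{\Psi(X), Y}_{L^2(M_n(\C))_{\sa}^k} = \sup_{\Psi \in K_d} \ip{\Psi(X), Y}_{L^2(M_n(\C))_{\sa}^k},
\]
the last equality holding because a linear functional attains the same supremum on a set as on its closed convex hull. Padding with zeros permits $k = n^2$. Observation \ref{obs:factorizablecoupling} bounds the left-hand side by $C(\lambda_X, \lambda_Y)$, while any optimal coupling $(\cA, X', Y')$ with $\dim \cA \leq d$ would, by the same observation, yield some $\Psi \in K_d$ realizing $C(\lambda_X, \lambda_Y)$, contradicting the strict inequality. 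For the unitary-conjugation statement, take $d \geq n^2$: by Lemma \ref{lem:matricesunitarilyconjugate} every coupling inside $M_n(\C)$ factors through $M_n(\C)$ itself, so $\sup_{U \in \mathcal{U}(M_n(\C))} \ip{UXU^*, Y} \leq \sup_{\Psi \in K_{n^2}} \ip{\Psi(X), Y} \leq \sup_{\Psi \in K_d} \ip{\Psi(X), Y} < C(\lambda_X, \lambda_Y)$. The main obstacle is the combined compactness and Carath\'eodory step that controls $\widehat K_d$; once this is in place, the remainder is formal duality.
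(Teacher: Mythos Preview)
Your argument is correct and follows the same route as the paper: compactness of $K_d=\operatorname{FM}_d(M_n(\C),M_n(\C))$, separation of some $\Phi\in\operatorname{FM}\setminus\operatorname{FM}_{\fin}$ from it via Lemma~\ref{lem:vectorduality}, and translation through Observation~\ref{obs:factorizablecoupling}. You add a refinement the paper glosses over: Lemma~\ref{lem:vectorduality} requires a closed \emph{convex} set, and your Carath\'eodory step supplies this by showing $\overline{\operatorname{conv}(K_d)}\subseteq K_{(n^4+1)d}\subseteq\operatorname{FM}_{\fin}\not\ni\Phi$, whereas the paper records only compactness and applies the lemma directly. (A shorter alternative, implicit in the paper's setup, is that $K_d$ compact in a finite-dimensional space makes $\operatorname{conv}(K_d)$ compact, hence already closed, and contained in the convex set $\operatorname{FM}_{\fin}$.)

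One point both you and the paper pass over: the inference ``a coupling $(\cA,X',Y')$ with $\dim\cA\le d$ yields some $\Psi\in K_d$'' needs $\mathrm{W}^*(X)=\mathrm{W}^*(Y)=M_n(\C)$, so that the embeddings $\iota_1,\iota_2$ produced in the proof of Observation~\ref{obs:factorizablecoupling} are defined on all of $M_n(\C)$ rather than on proper subalgebras. Zero-padding does not guarantee this, but it is easy to arrange: perturb the separating tensor $v$ in the proof of Lemma~\ref{lem:vectorduality} to have full tensor rank $n^2$ (the strict inequality is an open condition), after which both $(X_j)_{j=1}^{n^2}$ and $(Y_j)_{j=1}^{n^2}$ may be taken to span $M_n(\C)_{\sa}$.
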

	
	\begin{proof}
		Let $\operatorname{FM}_d(M_n(\C),M_n(\C))$ denote the set of $\operatorname{UCPT}$ maps $M_n(\C) \to M_n(\C)$ that factorize through a tracial $W^*$-algebra $\cA = (A,\tau)$ of dimension at most $d$.  As a consequence of the Artin-Wedderburn theorem, every such $*$-algebra $A$ is a direct sum of at most $d$ matrix algebras of size at most $d^{1/2}$; see e.g.\ \cite{Effros1981}.  Moreover, the trace $\tau_{\cA}$ is a convex combination of the traces on each component.  From these facts, it is not hard to see that $\operatorname{FM}_d(M_n(\C),M_n(\C))$ is compact.
		
		By Theorem \ref{thm:infdimfactorization}, there exists $\Phi \in \overline{\operatorname{FM}_{\fin}(M_n(\C),M_n(\C))}$ that does not factor through a finite-dimensional algebra, and hence $\Phi \in \overline{\operatorname{FM}_{\fin}(M_n(\C),M_n(\C))} \setminus \operatorname{FM}_d(M_n(\C),M_n(\C))$.

		Also, we also remark that a completely positive map $\Phi: M_n(\C) \to M_n(\C)$ satisfies $\Phi(A^*) = \Phi(A)^*$, and therefore it restricts to a real-linear transformation $M_n(\C)_{\sa} \to M_n(\C)_{\sa}$, and $\Phi$ is uniquely determined by its restriction to self-adjoint elements.  Thus, we can Lemma \ref{lem:vectorduality} to conclude that there exists $k \leq n^2$ and $X, Y \in M_n(\C)_{\sa}^{k}$ such that
		\[
		\ip{\Phi(X),Y}_{L^2(M_n(\C))_{\sa}^k} > \sup_{\Psi \in \operatorname{FM}_{d}(M_n(\C),M_n(\C))} \ip{\Psi(X),Y}_{L^2(M_n(\C))_{\sa}^k}.
		\]
		We can without loss of generality take $k = n^2$ because we can always add additional zero entries to our tuples without changing the value of the inner product of $\Psi(X)$ and $Y$.  Hence, by the proof of Observation \ref{obs:factorizablecoupling} any $\Psi \in \operatorname{FM}_{d}(M_n(\C),M_n(\C))$ cannot produce an optimal coupling.
	\end{proof}
	
	\subsection{Optimal couplings and the Connes embedding problem} \label{subsec:CEP}
	
	The situation is even more wild than this.  Based on the work of \cite{JNVWY2020} on Tsirelson's problem and the Connes embedding problem, as well as work of \cite{HaMu2015}, we can conclude that for some $n$, there exist $X$, $Y \in M_n(\C)_{\sa}^{n^2}$, such that a non-commutative optimal coupling of $\lambda_X$ and $\lambda_Y$ cannot even be \emph{approximated} by couplings in finite-dimensional tracial $*$-algebras.  We begin with some background on the Connes embedding problem, which includes first the definition of ultraproducts of tracial $\mathrm{W}^*$-algebras, a tool to turn approximate embeddings into literal embeddings; \cite[Appendix A]{BrownOzawa2008}, \cite[\S 2]{Capraro2010}, \cite[\S 5.4]{ADP}.
	
	Let $\beta \N$ denote the \emph{Stone-{\v C}ech compactification} of the natural numbers; it is a compact space containing $\N$ as an open subset,\footnote{Here $\N$ is equipped with the discrete topology.  To simplify notation, we view $\N$ as a subset of $\beta \N$ rather than considering an inclusion map $\eta: \N \to \beta \N$.} and satisfies the universal property that every function from $\N$ into a compact topological space $K$ extends uniquely to a continuous function $\beta \N \to K$.  In particular, if $(x_n)_{n \in \N}$ is a bounded sequence of real or complex numbers, then $\lim_{n \to \mathcal{U}} x_n$ exists for every $\mathcal{U} \in \beta \N$.  The Stone--{\v C}ech compactification $\beta \N$ is characterized up to a canonical homeomorphism by its universal property.  One construction of $\beta \N$ is by way of ultrafilters, which is why we have used the letter $\mathcal{U}$ for elements of $\beta \N$. In this framework, the elements of $\beta \N \setminus \N$ are known as \emph{non-principal ultrafilters} and the limit $\lim_{n \to \mathcal{U}} x_n$ is also called an \emph{ultralimit}.
	
	Ultraproducts of tracial von Neumann algebras are defined as follows.  For $n \in \N$, let $\cA_n = (A_n,\tau_n)$ be a sequence of tracial $\mathrm{W}^*$-algebras.  Let $\prod_{n \in \N} A_n$ be the set of sequences $(a_n)_{n \in \N}$ such that $\sup_n \norm{a_n}_{L^\infty(\cA_n)} < \infty$, which is a $*$-algebra.  Let
	\[
	I_{\mathcal{U}} = \left\{(a_n)_{n \in \N} \in \prod_{n \in \N} A_n: \lim_{n \to {\mathcal{U}}} \norm{a_n}_{L^2(\cA_n)} = 0 \right\}.
	\]
	Using the non-commutative H\"older's inequality for $L^2$ and $L^\infty$, one can show that $I_{\mathcal{U}}$ is an ideal in $\prod_{n \in \N} A_n$, and therefore, $\prod_{n \in \N} A_n / I_{\mathcal{U}}$ is a $*$-algebra.  We denote by $[a_n]_{n \in \N}$ the equivalence class in $\prod_{n \in \N} A_n / I_{\mathcal{U}}$ of a sequence $(a_n)_{n \in \N} \in \prod_{n \in \N} A_n$. Furthermore, we define a trace on $\prod_{n \in \N} A_n / I_\mathcal{U}$ as follows.  if $(a_n)_{n \in \N} \in \prod_{n \in \N} A_n$, then $(\tau_n(a_n))_{n \in \N}$ is a bounded sequence in $\C$ and therefore $\lim_{n \to \mathcal{U}} \tau_n(a_n)$ exists.  Since $|\tau_n(a_n)| \leq \norm{a_n}_{L^2(\cA_n)}$, we have $\lim_{n \to \mathcal{U}} \tau_n(a_n) = 0$ whenever $(a_n)_{n \in \N} \in I_\mathcal{U}$.  Therefore, there is a well-defined map
	\[
	\tau_{\mathcal{U}}: \prod_{n \in \N} A_n / I_{\mathcal{U}} \to \C
	\]
	given by $\tau_{\mathcal{U}}([a_n]_{n \in \N}) = \lim_{n \to \mathcal{U}} \tau_n(a_n)$.  It turns out the pair $(\prod_{n \in \N} A_n / I_{\mathcal{U}},\tau_{\mathcal{U}})$ is already a tracial $\mathrm{W}^*$-algebra; see \cite[Proposition 5.4.1]{ADP}.  The proof is based on the fact that a tracial $\mathrm{C}^*$-algebra is a $\mathrm{W}^*$-algebra if and only if the operator-norm unit ball is complete in the $L^2$ norm \cite[Proposition 2.6.4]{ADP}.  See also \cite[Appendix A]{BrownOzawa2008}.
	
	We call the tracial $\mathrm{W}^*$-algebra $(\prod_{n \in \N} A_n / I_{\mathcal{U}},\tau_{\mathcal{U}})$ the \emph{ultraproduct of $(\cA_n)_{n \in \N}$ with respect to $\mathcal{U}$} and we denote it by
	\[
	\prod_{n \to \mathcal{U}} \cA_n := \left(\prod_{n \in \N} A_n / I_{\mathcal{U}},\tau_{\mathcal{U}} \right).
	\]
	The inspiration for this notation is that ultraproduct is defined using a combination of Cartesian product and ultralimits (of the $L^2$-norm and the trace); in contrast to Cartesian products, the ultraproduct only cares about the asymptotic behavior of a sequence as $n \to \mathcal{U}$.
	
	\begin{definition}
		A tracial $\mathrm{W}^*$-algebra $\cA$ is \emph{Connes-embeddable} if there exist finite-dimensional tracial $*$-algebras $\cA_n$ for $n \in \N$, an ultrafilter $\mathcal{U} \in \beta \N \setminus \N$, and a tracial $\mathrm{W}^*$-embedding $\phi: \cA \to \prod_{n \to \mathcal{U}} \cA_n$.  The \emph{Connes embedding problem} is the question of whether every tracial $\mathrm{W}^*$-algebra with separable predual is Connes-embeddable.
	\end{definition}
	
	Embeddings into ultraproducts are closely related to convergence of non-commutative laws in $\Sigma_{m,R}$.
	
	\begin{lemma} \label{lem:ultraproductlaws}
		Let $(\cA_n)_{n \in \N}$ be a sequence of tracial $\mathrm{W}^*$-algebras and let $\cA$ be another tracial $\mathrm{W}^*$-algebra.  Let $X \in L^\infty(\cA)_{\sa}^m$ with $\norm{X}_{L^\infty(\cA)_{\sa}^m} \leq R$ and suppose that $X$ generates $\cA$ as a $\mathrm{W}^*$-algebra.  Let $X_n \in L^\infty(\cA_n)_{\sa}^m$ with $\norm{X_n}_{L^\infty(\cA_n)_{\sa}^m} \leq R$.  Then the following are equivalent:
		\begin{enumerate}[(1)]
			\item $\lim_{n \to \mathcal{U}} \lambda_{X_n} = \lambda_X$ with respect to the weak-$*$ topology on $\Sigma_{m,R}$.
			\item There exists a tracial $\mathrm{W}^*$-embedding $\phi: \cA \to \prod_{n \to \mathcal{U}} \cA_n$ such that $\phi(X) = [X_n]_{n \in \N}$.
		\end{enumerate}
	\end{lemma}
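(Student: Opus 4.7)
The plan is to prove the equivalence by a direct two-way implication, using the universal property of $\mathrm{W}^*(X)$ in Lemma \ref{lem:lawisomorphism} as the bridge between non-commutative laws and tracial $\mathrm{W}^*$-embeddings.

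For the direction (2) $\Rightarrow$ (1), suppose $\phi: \cA \to \prod_{n \to \mathcal{U}} \cA_n$ is a tracial $\mathrm{W}^*$-embedding with $\phi(X) = [X_n]_{n\in\N}$. For every non-commutative polynomial $p$,
\[
\lambda_X(p) = \tau_{\cA}(p(X)) = \tau_{\mathcal{U}}(\phi(p(X))) = \tau_{\mathcal{U}}(p([X_n]_{n\in\N})) = \lim_{n \to \mathcal{U}} \tau_n(p(X_n)) = \lim_{n \to \mathcal{U}} \lambda_{X_n}(p),
\]
where the middle equalities use that $\phi$ is a trace-preserving $*$-homomorphism and the definition of the trace $\tau_{\mathcal{U}}$ on the ultraproduct. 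This is precisely weak-$*$ convergence in $\Sigma_{m,R}$.

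For the direction (1) $\Rightarrow$ (2), I first observe that since $\sup_n \norm{X_n}_{L^\infty(\cA_n)_{\sa}^m} \leq R$, the element $Y := [X_n]_{n\in\N}$ is well-defined in $\prod_{n \to \mathcal{U}} \cA_n$ with $\norm{Y_j}_{L^\infty} \leq R$ for each $j$. By the same computation as above, for every polynomial $p$,
\[
\lambda_Y(p) = \tau_{\mathcal{U}}(p(Y)) = \lim_{n \to \mathcal{U}} \tau_n(p(X_n)) = \lim_{n \to \mathcal{U}} \lambda_{X_n}(p) = \lambda_X(p),
\]
where the last step uses assumption (1). Hence $\lambda_Y = \lambda_X$. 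Applying Lemma \ref{lem:lawisomorphism} to the tuples $X \in L^\infty(\cA)_{\sa}^m$ and $Y \in L^\infty(\prod_{n \to \mathcal{U}} \cA_n)_{\sa}^m$, there is a unique tracial $\mathrm{W}^*$-isomorphism $\rho: \mathrm{W}^*(X) \to \mathrm{W}^*(Y)$ with $\rho(X_j) = Y_j$. Since we assumed $\mathrm{W}^*(X) = \cA$, composing $\rho$ with the inclusion $\mathrm{W}^*(Y) \hookrightarrow \prod_{n \to \mathcal{U}} \cA_n$ yields the desired tracial $\mathrm{W}^*$-embedding $\phi: \cA \to \prod_{n \to \mathcal{U}} \cA_n$ with $\phi(X) = Y = [X_n]_{n\in\N}$.

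This is really just a bookkeeping argument: the non-commutative law of the ultraproduct element $[X_n]_{n\in\N}$ is by construction the ultralimit of the individual laws, so (1) is exactly the statement that this element realizes $\lambda_X$, and then Lemma \ref{lem:lawisomorphism} (a GNS-uniqueness statement) immediately converts this into the embedding in (2). No genuine obstacle arises; one just needs the uniform norm bound to ensure $[X_n]_{n\in\N}$ makes sense in the ultraproduct, and the hypothesis that $X$ generates $\cA$ to identify $\mathrm{W}^*(X)$ with $\cA$ on the nose rather than a proper subalgebra.
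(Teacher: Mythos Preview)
Your proof is correct and essentially identical to the paper's own argument: both directions proceed by the same chain of equalities computing $\lambda_Y(p)$ for $Y = [X_n]_{n\in\N}$ as an ultralimit, and the implication (1) $\Rightarrow$ (2) is obtained in both cases by invoking Lemma~\ref{lem:lawisomorphism} together with the hypothesis $\mathrm{W}^*(X) = \cA$. The only difference is cosmetic (you treat (2) $\Rightarrow$ (1) first).
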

	
	\begin{proof}
		(1) $\implies$ (2).  Let $Y = [X_n]_{n\in\N} \in L^\infty(\prod_{n \to \mathcal{U}} \cA_n)_{\sa}^m$.  Let $\tau_\mathcal{U}$ be the trace on the ultraproduct.  Then for every $p \in \C\ip{x_1,\dots,x_d}$, we have
		\[
		\lambda_Y(p) = \tau_\mathcal{U}(p(Y)) = \lim_{n \to \mathcal{U}} \tau_n(p(X_n)) = \lim_{n \to \mathcal{U}} \lambda_{X_n}(p) = \lambda_X(p).
		\]
		Because $\lambda_Y = \lambda_X$, Lemma \ref{lem:lawisomorphism} implies that there is a $\mathrm{W}^*$-embedding $\cA = \mathrm{W}^*(X) \to \mathrm{W}^*(Y) \hookrightarrow \prod_{n \to \mathcal{U}} \cA_n$.
		
		(2) $\implies$ (1).  Let $\phi: \cA \to \prod_{n \to \mathcal{U}} \cA_n$ as above be a tracial $\mathrm{W}^*$-embedding with $\phi(X) = [X_n]_{n \in \N}$.  Using the fact that $\phi$ preserves addition and multiplication as well as the definition of the trace $\tau_\mathcal{U}$ on the ultraproduct,
		\[
		\lambda_X(p) = \tau(p(X)) = \tau_\mathcal{U}(\phi(p(X))) = \tau_\mathcal{U}(p(\phi(X))) = \lim_{n \to \mathcal{U}} \tau_n(p(X_n)) = \lim_{n \to \mathcal{U}} \lambda_{X_n}(p).
		\]
		Therefore, $\lim_{n \to \mathcal{U}} \lambda_{X_n} = \lambda_X$ in the weak-$*$ topology, as desired.
	\end{proof}
	
	\begin{definition}
		Let $\Sigma_{m,R}^{\fin}$ be the set of non-commutative laws $\mu$ in $\Sigma_{m,R}$ such that $\mu = \lambda_X$ for some $X \in L^\infty(\cA)_{\sa}^m$ where $\cA$ is a finite-dimensional tracial $*$-algebra.
	\end{definition}
	
	The following statement is almost a corollary of Lemma \ref{lem:ultraproductlaws}.
	
	\begin{lemma} \label{lem:ultraproductlaws2}
		Let $\cA$ be a tracial $\mathrm{W}^*$-algebra generated by $X \in L^\infty(\cA)_{\sa}^m$ with $\norm{X}_{L^\infty(\cA)^m} \leq R$.  Then $\cA$ is Connes-embeddable if and only if $\lambda_X$ is in the weak-$*$ closure of $\Sigma_{m,R}^{\fin}$ in $\Sigma_{m,R}$.
	\end{lemma}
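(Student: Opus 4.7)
The plan is to derive this lemma essentially as a corollary of Lemma \ref{lem:ultraproductlaws}, which already establishes the bridge between weak-$*$ convergence of non-commutative laws and embeddings into ultraproducts. I would also use two basic facts recalled earlier: $\Sigma_{m,R}$ is metrizable (hence weak-$*$ closure equals sequential closure), and for a non-principal ultrafilter $\mathcal{U} \in \beta\N \setminus \N$, an ultralimit of a sequence in a Hausdorff space equals its ordinary limit whenever that limit exists.

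For the implication ``weak-$*$ approximable $\Rightarrow$ Connes-embeddable,'' I would pick a sequence $\mu_n \in \Sigma_{m,R}^{\fin}$ with $\mu_n \to \lambda_X$ in the weak-$*$ topology, and represent each $\mu_n$ as $\lambda_{X_n}$ with $X_n \in L^\infty(\cA_n)_{\sa}^m$, $\cA_n$ finite-dimensional, and $\norm{X_n}_{L^\infty(\cA_n)^m} \leq R$. Fix any non-principal ultrafilter $\mathcal{U}$; then $\lim_{n \to \mathcal{U}} \lambda_{X_n} = \lambda_X$ weak-$*$. Since $X$ generates $\cA$, Lemma \ref{lem:ultraproductlaws} supplies a tracial $\mathrm{W}^*$-embedding $\cA \to \prod_{n \to \mathcal{U}} \cA_n$, which by definition makes $\cA$ Connes-embeddable.

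For the converse, suppose $\phi : \cA \to \prod_{n \to \mathcal{U}} \cA_n$ is a tracial $\mathrm{W}^*$-embedding with each $\cA_n$ finite-dimensional. Write each $\phi(X_j) = [Y_{n,j}]_{n \in \N}$ for some sequences in $L^\infty(\cA_n)$ with uniformly bounded operator norm. The main technical point is to produce representatives with $\norm{Y_{n,j}}_{L^\infty(\cA_n)} \leq R$ that are also self-adjoint, so that the tuples $X_n := (Y_{n,1},\dots,Y_{n,m})$ lie in $L^\infty(\cA_n)_{\sa}^m$ with norm bound $R$, making $\lambda_{X_n} \in \Sigma_{m,R}^{\fin}$. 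Self-adjointness follows from the fact that $\phi(X_j) = \phi(X_j)^*$ yields $[Y_{n,j}] = [Y_{n,j}^*]$, so replacing $Y_{n,j}$ by $(Y_{n,j}+Y_{n,j}^*)/2$ preserves the class; for the norm bound, apply the continuous cut-off $f(t) = \max(\min(t,R),-R)$ via functional calculus on each $\cA_n$, which does not change $\phi(X_j) = f(\phi(X_j))$ (since $\norm{\phi(X_j)}_{L^\infty} \leq R$) yet forces $\norm{f(Y_{n,j})}_{L^\infty(\cA_n)} \leq R$. Now Lemma \ref{lem:ultraproductlaws} gives $\lim_{n \to \mathcal{U}} \lambda_{X_n} = \lambda_X$ in the weak-$*$ topology of $\Sigma_{m,R}$, and because $\Sigma_{m,R}$ is metrizable, every metric neighborhood of $\lambda_X$ contains $\lambda_{X_n}$ for some $n$ in a set belonging to $\mathcal{U}$; extracting a convergent subsequence exhibits $\lambda_X$ as a weak-$*$ limit of elements of $\Sigma_{m,R}^{\fin}$.

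The only genuine subtlety is the cut-off step ensuring the uniform $L^\infty$ bound $R$ when representing $\phi(X_j)$ by an explicit sequence, since the ambient ultraproduct only enforces boundedness, not the specific radius. Everything else is a direct invocation of Lemma \ref{lem:ultraproductlaws} together with standard metrizability and ultralimit considerations.
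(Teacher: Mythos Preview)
Your proposal is correct and follows essentially the same argument as the paper: both directions are reduced to Lemma~\ref{lem:ultraproductlaws}, and for the converse you perform the same self-adjointization and functional-calculus cut-off (the paper writes $f(t)=\sgn(t)\min(|t|,R)$, which is your $\max(\min(t,R),-R)$) to force representatives of norm at most $R$. The paper spells out via Weierstrass approximation why $[f(Y_{n,j})]_{n\in\N}=f([Y_{n,j}]_{n\in\N})$ in the ultraproduct, which you take for granted, but otherwise the proofs coincide.
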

	
	\begin{proof}
		If $\lambda_X$ is in the closure of $\Sigma_{m,R}^{\fin}$, then Lemma \ref{lem:ultraproductlaws} implies that $\cA$ is Connes-embeddable.  Conversely, suppose that $\cA$ is Connes-embeddable and $\iota: \cA\to \prod_{n \to \mathcal{U}} \cA_n$ is an embedding into some ultraproduct of finite-dimensional tracial $*$-algebras.  Let $X_n = (X_n^{(1)},\dots,X_n^{(m)}) \in L^\infty(\cA_n)^m$ such that $[X_n]_{n \in \N} = \iota(X)$, and let us also write $X = (X^{(1)},\dots,X^{(m)})$.  By replacing $X_{n}^{(j)}$ with $(X_n^{(j)} + (X_n^{(j)})^*)/2$, we can assume without loss of generality that $X_n^{(j)}$ is self-adjoint.  By assumption $M := \sup_{n \in \N} \norm{X_n}_{L^\infty(\cA)_{\sa}^m} < \infty$.
		
		Although $M$ may be larger than $R$, we can fix this issue through a standard argument with functional calculus.  Let $f: [-M,M] \to [-R,R]$ be given by $f(t) = \sgn(t) \min(|t|,R)$.  By the Weierstrass approximation theorem, there exists a sequence of polynomials $(f_k)_{k \in \N}$ converging uniformly to $f$ on $[-M,M]$.  Note that $f_k(\iota(X^{(j)})) = [f_k(X_n^{(j)})]_{n \to \N}$.  By the spectral mapping theorem, for each $j$, $k$, and $n$,
		\[
		\norm{f_k(X_n^{(j)}) - f(X_n^{(j)})}_{L^\infty(\cA_n)} \leq \sup_{t \in [-M,M]} |f_k(t) - f(t)|,
		\]
		and the same estimate holds for $f_k(\iota(X^{(j)})) - f(\iota(X^{(j)}))$.  Taking $k \to \infty$, we obtain $f(\iota(X^{(j)})) = [f(X_n^{(j)})]_{n \in \N}$ for each $j$.  Since $\norm{X^{(j)}}_{L^\infty(\cA)} \leq R$, we have $f(\iota(X^{(j)})) = \iota(X^{(j)})$.  Let $Y_n = (f(X_n^{(1)}),\dots,f(X_n^{(m)}))$.  Then $\norm{Y_n}_{L^\infty(\cA_n)^m} \leq R$ and $\iota(X) = [Y_n]_{n \in \N}$, hence $\lambda_X$ is in the closure of $\Sigma_{m,R}^{\fin}$ by Lemma \ref{lem:ultraproductlaws}.
	\end{proof}
	
	Decades of work found many equivalent problems in operator algebras and quantum information theory; for a survey, see e.g.\ \cite{Capraro2010,Ozawa2013}.  In particular, building on the established connections with quantum information theory, Haagerup and Musat showed the following result.
	
	\begin{theorem}[{Haagerup-Musat \cite[Theorem 3.6, 3.7]{HaMu2015}}] \label{thm:CEPfactorizable}
		A factorizable map $\Phi: M_n(\C) \to M_n(\C)$ admits a factorization through a Connes-embeddable algebra if and only if it is in the closure of $\operatorname{FM}_{\fin}(M_n(\C),M_n(\C))$.  Moreover, the Connes embedding problem has a positive answer if and only if
		\[
		\operatorname{FM}(M_n(\C),M_n(\C)) = \overline{\operatorname{FM}_{\fin}(M_n(\C),M_n(\C))} \text{ for all } n \in \N.
		\]
	\end{theorem}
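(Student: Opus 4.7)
The plan is to prove part (1) first, from which one implication of part (2) follows immediately, and then to attack the remaining implication of (2). The easier direction of (1) is constructive via ultraproducts; the harder direction requires lifting abstract ultraproduct embeddings to honest $*$-homomorphisms between matrix algebras, which I expect to be the main technical obstacle. The implication ``CEP $\Rightarrow$ equality of factorizable-map closures'' in (2) is a direct deduction from (1), whereas the reverse implication in (2) is considerably deeper and connects this theorem to the full strength of the Connes embedding problem.

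For (1) $\Leftarrow$, suppose $\Phi = \lim_k \Phi_k$ with each $\Phi_k = \iota_{k,2}^* \iota_{k,1}$, where $\iota_{k,j}\colon M_n(\C) \hookrightarrow \cC_k$ are embeddings into finite-dimensional tracial $*$-algebras $\cC_k$. Fix a free ultrafilter $\mathcal{U} \in \beta\N \setminus \N$ and set $\widetilde{\cC} := \prod_{k \to \mathcal{U}} \cC_k$, which is tautologically Connes-embeddable. The maps $\widetilde{\iota}_j\colon M_n(\C) \to \widetilde{\cC}$ defined by $\widetilde{\iota}_j(A) := [\iota_{k,j}(A)]_{k \in \N}$ are tracial $\mathrm{W}^*$-embeddings. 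From the definition of the ultraproduct trace one gets $\ip{\widetilde{\iota}_2^*\widetilde{\iota}_1(A),B} = \lim_{k \to \mathcal{U}} \ip{\iota_{k,1}(A),\iota_{k,2}(B)} = \lim_{k \to \mathcal{U}} \ip{\Phi_k(A),B} = \ip{\Phi(A),B}$ for all $A, B \in M_n(\C)$, so $\Phi = \widetilde{\iota}_2^* \widetilde{\iota}_1$ factors through $\widetilde{\cC}$.

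For (1) $\Rightarrow$, suppose $\Phi = \iota_2^* \iota_1$ with $\iota_j\colon M_n(\C) \hookrightarrow \cB$, and let $\pi\colon \cB \hookrightarrow \prod_{k \to \mathcal{U}} M_{m_k}(\C)$ witness Connes-embeddability. Since $\pi^*\pi = \id_{\cB}$ for any tracial $\mathrm{W}^*$-embedding $\pi$, one has $\Phi = (\pi \iota_2)^*(\pi \iota_1)$, so without loss of generality the factorization takes place in a matricial ultraproduct. Fix matrix units $e_{pq}$ of $M_n(\C)$ and representatives $a^{(k)}_{j,pq} \in M_{m_k}(\C)$ of $\pi\iota_j(e_{pq})$; the matrix-unit relations (products, adjoints, and the sum-of-diagonals condition) hold in the $L^2$-norm along $\mathcal{U}$. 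The main step is a stability lemma for approximate matrix units in finite-dimensional tracial von Neumann algebras: combining the Artin--Wedderburn structure with functional-calculus perturbation of near-projections and polar decomposition of near-partial-isometries, one upgrades the $a^{(k)}_{j,pq}$ to exact matrix units $b^{(k)}_{j,pq}$ with $\norm{b^{(k)}_{j,pq} - a^{(k)}_{j,pq}}_2 \to 0$ along $\mathcal{U}$. This yields genuine $*$-homomorphisms $\iota_{j,k}\colon M_n(\C) \to \cC_k$ for finite-dimensional $\cC_k$ (taking $\cC_k$ to be the compression of $M_{m_k}(\C)$ by the approximate unit $\sum_p b^{(k)}_{j,pp}$, so that unitality and trace-preservation hold exactly, at the cost of a vanishing $L^2$-error). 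The maps $\Phi_k := \iota_{2,k}^* \iota_{1,k} \in \operatorname{FM}_{\fin}(M_n(\C),M_n(\C))$ converge to $\Phi$ along $\mathcal{U}$, and since $\operatorname{CP}(M_n(\C),M_n(\C))$ is finite-dimensional this forces $\Phi \in \overline{\operatorname{FM}_{\fin}(M_n(\C),M_n(\C))}$.

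For (2), the direction CEP $\Rightarrow$ $\operatorname{FM}(M_n(\C),M_n(\C)) = \overline{\operatorname{FM}_{\fin}(M_n(\C),M_n(\C))}$ is immediate: any factorizable map factors through some separable tracial $\mathrm{W}^*$-algebra which under CEP is Connes-embeddable, whence part (1) applies; the reverse inclusion is automatic from Proposition \ref{prop:CPmaps}. For the converse, my plan would be to encode an arbitrary generating tuple $X$ of a separable tracial $\mathrm{W}^*$-algebra $\cM$ into the factorization data of a carefully chosen factorizable map on some $M_n(\C)$ (for instance, realizing $\cM$ as an amalgamated free product of copies of $M_n(\C)$ with prescribed trace-preserving conditional expectations), and then use the assumed equality together with (1) to conclude that this factorizable map factors through a Connes-embeddable algebra. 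Extracting the desired matricial ultraproduct embedding of $\cM$ from this data, and assembling it across a generating system, yields the Connes-embedding of $\cM$. The main obstacles I anticipate are the matrix-unit stability lemma in (1) $\Rightarrow$, and in (2) $\Leftarrow$ the construction of factorizable maps whose factorization data faithfully record the algebraic structure of an arbitrary II$_1$-factor in a way that inverts the implication established in part (1).
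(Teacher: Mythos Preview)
The paper does not prove this theorem at all: it is quoted verbatim from Haagerup--Musat \cite[Theorems 3.6, 3.7]{HaMu2015} and used as a black box in the proof of Corollary~\ref{cor:nonConnes}. So there is no ``paper's own proof'' to compare against.

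That said, your sketch of part~(1) is essentially the Haagerup--Musat argument: the $\Leftarrow$ direction is exactly the ultraproduct construction you give, and the $\Rightarrow$ direction hinges precisely on the tracial stability of the matrix-unit relations for $M_n(\C)$, which is what your perturbation lemma amounts to. The direction CEP $\Rightarrow$ equality in (2) is, as you say, immediate from (1).

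The genuine gap is in your plan for (2)~$\Leftarrow$. Your proposal to ``encode an arbitrary generating tuple of $\cM$ into the factorization data of a factorizable map on some $M_n(\C)$'' is not a viable strategy as stated: a single factorizable map $M_n(\C) \to M_n(\C)$ carries only finitely many real parameters, so it cannot faithfully record an arbitrary separable II$_1$-factor. Haagerup and Musat do not attempt anything like this. Instead they pass through Kirchberg's reformulation of CEP in terms of the min- and max-tensor norms on $C^*(\mathbb{F}_\infty) \otimes C^*(\mathbb{F}_\infty)$, and show that a gap between $\operatorname{FM}$ and $\overline{\operatorname{FM}_{\fin}}$ would produce a state on some $M_n(\C) \otimes_{\max} M_n(\C)$ witnessing a norm discrepancy. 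The argument is an equivalence-of-conjectures proof, not a direct embedding construction, and it relies essentially on the Kirchberg machinery rather than on any encoding of $\cM$ into a single factorizable map. If you want to fill this in, the route is through \cite{HaMu2015} and Kirchberg's theorem, not through the construction you outline.
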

	
	A negative answer to the Connes embedding problem was announced in \cite{JNVWY2020}.  This implies the following corollary.
	
	\begin{corollary} \label{cor:nonConnes}
		There exist $n \in \N$ and $X, Y \in M_n(\C)_{\sa}^{n^2}$ such that
		\[
		C(\lambda_X,\lambda_Y) = \sup_{\Phi \in \operatorname{FM}(M_n(\C),M_n(\C))} \ip{\Phi(X),Y}_{L^2(M_n(\C))_{\sa}^{n^2}} > \sup_{\Phi \in \operatorname{FM}_{\fin}(M_n(\C),M_n(\C))} \ip{\Phi(X),Y}_{L^2(M_n(\C))_{\sa}^{n^2}}.
		\]
		Moreover, a non-commutative optimal coupling of $\lambda_X$ and $\lambda_Y$ does not exist in any Connes-embeddable tracial $\mathrm{W}^*$-algebra.
	\end{corollary}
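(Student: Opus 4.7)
The plan is to imitate the proof of Corollary \ref{cor:largedimcoupling}, but replace the role of Musat--R{\o}rdam's theorem with the combination of Haagerup--Musat's Theorem \ref{thm:CEPfactorizable} and the negative resolution of the Connes embedding problem from \cite{JNVWY2020}. The Haagerup--Musat theorem says that
\[
\operatorname{FM}(M_n(\C),M_n(\C)) = \overline{\operatorname{FM}_{\fin}(M_n(\C),M_n(\C))} \text{ for every } n
\]
is equivalent to CEP having a positive answer. Since by \cite{JNVWY2020} CEP has a negative answer, there exists some $n \in \N$ and some $\Phi \in \operatorname{FM}(M_n(\C),M_n(\C))$ that is \emph{not} in the pointwise weak-$*$ closure of $\operatorname{FM}_{\fin}(M_n(\C),M_n(\C))$.

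Next, I would feed $\Phi$ and the convex set $K := \overline{\operatorname{FM}_{\fin}(M_n(\C),M_n(\C))}$ (which is closed and convex, sitting inside the finite-dimensional real vector space $L_{\R}(M_n(\C)_{\sa},M_n(\C)_{\sa})$ once one restricts a $\operatorname{UCPT}$-map to its action on self-adjoints as in the proof of Corollary \ref{cor:largedimcoupling}) into Lemma \ref{lem:vectorduality}. This produces some $k \leq n^2$ and tuples $X, Y \in M_n(\C)_{\sa}^k$ with
\[
\ip{\Phi(X),Y}_{L^2(M_n(\C))_{\sa}^k} \;>\; \sup_{\Psi \in K} \ip{\Psi(X),Y}_{L^2(M_n(\C))_{\sa}^k}.
\]
Padding with zero entries takes us to $k = n^2$ without changing any of the inner products. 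Observation \ref{obs:factorizablecoupling} now identifies $C(\lambda_X,\lambda_Y)$ with the supremum of $\ip{\Psi(X),Y}$ over $\Psi \in \operatorname{FM}(M_n(\C),M_n(\C))$, so the inequality above upgrades to
\[
C(\lambda_X,\lambda_Y) \;\geq\; \ip{\Phi(X),Y} \;>\; \sup_{\Psi \in \operatorname{FM}_{\fin}(M_n(\C),M_n(\C))} \ip{\Psi(X),Y},
\]
which is precisely the asserted strict inequality.

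For the ``moreover'' clause, suppose toward contradiction that some optimal coupling $(\cA, X', Y')$ of $\lambda_X$ and $\lambda_Y$ existed with $\cA$ Connes-embeddable. The construction in the proof of Observation \ref{obs:factorizablecoupling} then produces a factorizable map $\Psi \in \operatorname{FM}(M_n(\C),M_n(\C))$ that factors through $\cA$ and satisfies $\ip{\Psi(X),Y} = C(\lambda_X,\lambda_Y)$; since $\cA$ is Connes-embeddable, so is the intermediate algebra used in the factorization (or it can be enlarged to one). Applying the first half of Theorem \ref{thm:CEPfactorizable} places $\Psi$ in the closure of $\operatorname{FM}_{\fin}(M_n(\C),M_n(\C))$. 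But $\Psi \mapsto \ip{\Psi(X),Y}$ is continuous in the pointwise weak-$*$ topology, so
\[
C(\lambda_X,\lambda_Y) = \ip{\Psi(X),Y} \leq \sup_{\Psi' \in \operatorname{FM}_{\fin}} \ip{\Psi'(X),Y},
\]
contradicting the strict inequality just established.

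The one technical point that deserves care, and that I expect to be the mildest obstacle, is the Haagerup--Musat characterization of ``factorizes through a Connes-embeddable algebra'' via the closure of $\operatorname{FM}_{\fin}$; one must verify that the factorization produced in Observation \ref{obs:factorizablecoupling} through a Connes-embeddable $\cA$ indeed falls under the scope of Theorem \ref{thm:CEPfactorizable}, which is immediate from the definitions (one can always embed $\cA$ itself into an ultraproduct of matrix algebras and push the factorization forward). Everything else is formal bookkeeping with hyperplane separation, tensor rank, and continuity, paralleling the argument already carried out for Corollary \ref{cor:largedimcoupling}.
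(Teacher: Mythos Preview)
Your proposal is correct and follows essentially the same approach as the paper: both use the negative resolution of CEP together with Theorem \ref{thm:CEPfactorizable} to find $\Phi \in \operatorname{FM}(M_n(\C),M_n(\C)) \setminus \overline{\operatorname{FM}_{\fin}(M_n(\C),M_n(\C))}$, apply Lemma \ref{lem:vectorduality} to separate $\Phi$ from $K = \overline{\operatorname{FM}_{\fin}}$, and then combine Observation \ref{obs:factorizablecoupling} with the first part of Theorem \ref{thm:CEPfactorizable} to rule out optimal couplings in Connes-embeddable algebras. Your write-up is slightly more explicit about the contradiction step and the padding to $k=n^2$, but the argument is the same.
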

	
	\begin{proof}
		Let $K = \overline{\operatorname{FM}_{\operatorname{fin}}(M_n(\C),M_n(\C))}$, which is compact and convex. Because the Connes embedding problem has a negative answer \cite{JNVWY2020}, there exists $\Phi \in \operatorname{FM}(M_n(\C),M_n(\C)) \setminus K$.  By Lemma \ref{lem:vectorduality}, there exist $X$, $Y \in M_n(\C)_{\sa}^{n^2}$ such that
		\[
		\ip{\Phi(X),Y}_{L^2(M_n(\C))_{\sa}^{n^2}} > \sup_{\Psi \in K} \ip{\Psi(X),Y}_{L^2(M_n(\C))_{\sa}^{n^2}}.
		\]
		Hence, by Theorem \ref{thm:CEPfactorizable}, if $\Psi$ factors through a Connes-embeddable algebra, then $\ip{\Psi(X),Y}_{L^2(M_n(\C))_{\sa}^{n^2}}$ cannot be optimal.  Thus, by the proof of Observation \ref{obs:factorizablecoupling}, a coupling of $\lambda_X$ and $\lambda_Y$ in a Connes-embeddable algebra cannot be optimal.
	\end{proof}
	
	\begin{remark} \label{rem:largedimcoupling}
		Although Corollary \ref{cor:nonConnes} is much stronger than Corollary \ref{cor:largedimcoupling} as stated, they are based on different types of phenomena.  Corollary \ref{cor:nonConnes} relies on the existence of factorizable maps $M_n(\C) \to M_n(\C)$ that cannot be approximated by elements of $\operatorname{FM}_{\fin}(M_n(\C),M_n(\C))$ (of which there are not yet explicit examples known).  Meanwhile, Corollary \ref{cor:largedimcoupling} relies on the existence of factorizable maps that are approximated by elements of $\operatorname{FM}_{\fin}(M_n(\C),M_n(\C))$ but are not in $\operatorname{FM}_{\fin}(M_n(\C),M_n(\C))$ (of which \cite{MuRo2020a} gave explicit examples).  Thus, the proof of Corollary \ref{cor:largedimcoupling} shows that for $n \geq 11$ and $d \in \N$, there exist tuples $X$ and $Y$ from $M_n(\C)_{\sa}^{n^2}$ such that
		\[
		\sup_{\Phi \in \overline{\operatorname{FM}_{\fin}(M_n(\C),M_n(\C))}} \ip{\Phi(X),Y}_{L^2(M_n(\C))_{\sa}^{n^2}} > \sup_{\Psi \in \operatorname{FM}_d(M_n(\C),M_n(\C))} \ip{\Psi(X),Y}_{L^2(M_n(\C))_{\sa}^{n^2}}.
		\]
		Hence, a coupling on an algebra $\cA$ of dimension at most $d$ cannot even be optimal among couplings in Connes-embeddable algebras.
	\end{remark}
	
	\subsection{The Wasserstein and weak-$*$ topologies} \label{subsec:twotopologies}
	
	At the beginning, we equipped $\Sigma_{m,R}$ with the weak-$*$ topology as a subset of the algebraic dual of $\C\ip{x_1,\dots,x_m}$.  Meanwhile, because $d_W^{(2)}$ defines a metric on $\Sigma_{m,R}$, it induces another topology, which we will call the \emph{Wassertein topology}.  We will show that the Wasserstein topology is strictly stronger than the weak-$*$ topology.  This is to be contrasted with classical probability theory where the weak-$*$ topology on the space of probability measures on $[-R,R]^m$ is metrized by the $L^2$-Wasserstein distance.
	
	Our first step is to prove an ultraproduct characterization of Wasserstein convergence analogous to Lemma \ref{lem:ultraproductlaws}.
	
	\begin{lemma} \label{lem:ultraproductWasserstein}
		Let $(\cA_n)_{n \in \N}$ be a sequence of tracial $\mathrm{W}^*$-algebras and let $\cA$ be another tracial $\mathrm{W}^*$-algebra.  Let $X \in L^\infty(\cA)_{\sa}^m$ with $\norm{X}_{L^\infty(\cA)_{\sa}^m} \leq R$ and suppose that $X$ generates $\cA$.  Let $X_n \in L^\infty(\cA_n)_{\sa}^m$ with $\norm{X_n}_{L^\infty(\cA_n)_{\sa}^m} \leq R$.  Then the following are equivalent:
		\begin{enumerate}[(1)]
			\item $\lim_{n \to \mathcal{U}} \lambda_{X_n} = \lambda_X$ with respect to Wasserstein distance.
			\item There exists a tracial $\mathrm{W}^*$-embedding $\phi: \cA \to \prod_{n \to \mathcal{U}} \cA_n$ and a factorizable map $\Phi_n \in \operatorname{FM}(\cA,\cA_n)$ (for each $n \in \N$) such that
			\[
			\phi(X) = [X_n]_{n \in \N}, \quad \phi(Z) = [\Phi_n(Z)]_{n \in \N} \text{ for all } Z \in L^\infty(\cA).
			\]
		\end{enumerate}
	\end{lemma}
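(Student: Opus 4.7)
The plan is to prove the two implications separately, with the $(2)\Rightarrow(1)$ direction being a short calculation using Observation \ref{obs:factorizablecoupling} and the $(1)\Rightarrow(2)$ direction requiring a careful construction of the $\Phi_n$ from approximate optimal couplings.

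For $(2)\Rightarrow(1)$: Given $\phi$ and $\Phi_n$, the hypothesis $\phi(X) = [X_n] = [\Phi_n(X)]$ means $\lim_{n\to\mathcal{U}} \norm{X_n - \Phi_n(X)}_{L^2(\cA_n)_{\sa}^m} = 0$, and the isometry of $\phi$ yields $\lim_{n\to\mathcal{U}} \norm{X_n}_{L^2(\cA_n)_{\sa}^m} = \norm{X}_{L^2(\cA)_{\sa}^m}$. For each $n$, factor $\Phi_n = \iota_{2,n}^* \iota_{1,n}$ through some $\cC_n$ as in the definition of factorizable, and observe that $(\cC_n, \iota_{1,n}(X), \iota_{2,n}(X_n))$ is a coupling of $\lambda_X$ and $\lambda_{X_n}$ with
\[
\norm{\iota_{1,n}(X) - \iota_{2,n}(X_n)}^2 = \norm{X}^2 - 2\ip{\Phi_n(X), X_n}_{L^2(\cA_n)_{\sa}^m} + \norm{X_n}^2.
\]
Writing $\ip{\Phi_n(X),X_n} = \norm{\Phi_n(X)}^2 + \ip{\Phi_n(X), X_n - \Phi_n(X)}$ and applying Cauchy–Schwarz, the ultralimit of the inner product is $\norm{X}^2$, so $d_W^{(2)}(\lambda_X,\lambda_{X_n})^2$ is eventually arbitrarily small along $\mathcal{U}$.

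For $(1)\Rightarrow(2)$: Since Wasserstein convergence implies weak-$*$ convergence of moments (each polynomial moment is Lipschitz on $\Sigma_{m,R}$), Lemma \ref{lem:ultraproductlaws} provides a tracial $\mathrm{W}^*$-embedding $\phi: \cA \to \prod_{n\to\mathcal{U}} \cA_n$ with $\phi(X) = [X_n]$. To construct the factorizable maps, choose, for each $n$, a coupling $(\cB_n, Y_n, Y_n')$ of $\lambda_X$ and $\lambda_{X_n}$ with $\norm{Y_n - Y_n'}_{L^2(\cB_n)_{\sa}^m}$ within $1/n$ of $d_W^{(2)}(\lambda_X,\lambda_{X_n})$. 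Let $\alpha_n: \cA \to \mathrm{W}^*(Y_n) \subseteq \cB_n$ be the canonical isomorphism sending $X \mapsto Y_n$, let $E_n: \cB_n \to \mathrm{W}^*(Y_n')$ be the trace-preserving conditional expectation, and let $\tilde\beta_n: \mathrm{W}^*(Y_n') \to \cA_n$ be the composition of the isomorphism sending $Y_n' \mapsto X_n$ with the inclusion $\mathrm{W}^*(X_n) \hookrightarrow \cA_n$. Define $\Phi_n = \tilde\beta_n \circ E_n \circ \alpha_n$, which is factorizable as a composition of embeddings and conditional expectations (each such map lies in $\operatorname{FM}$, and $\operatorname{FM}$ is closed under composition by Proposition \ref{prop:CPmaps}(4)).

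The main obstacle is verifying $[\Phi_n(Z)]_{n\in\N} = \phi(Z)$ for all $Z \in L^\infty(\cA)$, since $\Phi_n$ is only UCPT and not a $*$-homomorphism, so $\Phi_n(p(X)) \neq p(\Phi_n(X))$ in general. The workaround is to compute via the factorization: for any polynomial $p$, using that $\alpha_n$ and $\tilde\beta_n$ are $*$-homomorphisms and $p(Y_n') \in \mathrm{W}^*(Y_n')$ is fixed by $E_n$,
\[
\norm{\Phi_n(p(X)) - p(X_n)}_{L^2(\cA_n)} = \norm{E_n(p(Y_n)) - p(Y_n')}_{L^2(\cB_n)} \leq \norm{p(Y_n) - p(Y_n')}_{L^2(\cB_n)},
\]
which tends to zero along $\mathcal{U}$ by a telescoping estimate using $\norm{Y_n}_{L^\infty}, \norm{Y_n'}_{L^\infty} \leq R$ and $\norm{Y_n - Y_n'}_{L^2} \to 0$. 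Thus $[\Phi_n(p(X))] = [p(X_n)] = \phi(p(X))$ for every polynomial. For general $Z \in L^\infty(\cA) = \mathrm{W}^*(X)$, apply Kaplansky density (Lemma \ref{lem:generators}) to pick polynomials $p_k$ in $X$ with $\norm{p_k(X)}_{L^\infty} \leq \norm{Z}_{L^\infty}$ and $p_k(X) \to Z$ in $L^2(\cA)$. Since each $\Phi_n$ is $L^2$-contractive (Lemma \ref{lem:CPadjoint}) and $\phi$ is $L^2$-isometric, both $[\Phi_n(p_k(X))] \to [\Phi_n(Z)]$ and $\phi(p_k(X)) \to \phi(Z)$ in $L^2$ of the ultraproduct, yielding $[\Phi_n(Z)] = \phi(Z)$ as desired.
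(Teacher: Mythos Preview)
Your proof is correct. The $(2)\Rightarrow(1)$ direction is essentially the paper's argument with slightly different bookkeeping.

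For $(1)\Rightarrow(2)$ you take a genuinely different route from the paper. The paper forms the amalgamated free product $\cC_n = \cA_n *_{\mathrm{W}^*(X_n)} \cB_n$ so that $\cA_n$ itself (not just $\mathrm{W}^*(X_n)$) embeds alongside the coupling algebra; it then works at the ultraproduct level, observing that the induced embeddings $\tilde\pi:\prod_{\mathcal U}\cA_n\to\prod_{\mathcal U}\cC_n$ and $\tilde\rho:\cA\to\prod_{\mathcal U}\cC_n$ satisfy $\tilde\pi\circ\phi=\tilde\rho$, and reads off $\Phi_n=\tilde\pi_n^*\tilde\rho_n$ from the identity $\phi=\tilde\pi^*\tilde\rho$ together with the compatibility of conditional expectations with ultraproducts. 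Your construction avoids amalgamated free products entirely: you build $\Phi_n$ directly as $\tilde\beta_n\circ E_n\circ\alpha_n$ (landing in $\mathrm{W}^*(X_n)\subseteq\cA_n$) and verify $[\Phi_n(Z)]=\phi(Z)$ by the telescoping estimate $\norm{p(Y_n)-p(Y_n')}_{L^2}\leq C_p\norm{Y_n-Y_n'}_{L^2}$ on polynomials followed by Kaplansky density and uniform $L^2$-contractivity of the $\Phi_n$. Your approach is more elementary and self-contained; the paper's is more structural but needs the free-product machinery and the small lemma that $\tilde\pi^*([Y_n])=[\tilde\pi_n^*(Y_n)]$. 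Note also that your justification that Wasserstein convergence implies weak-$*$ convergence via Lipschitz moment estimates is independent of Corollary~\ref{cor:twotopologies}, so there is no circularity in invoking Lemma~\ref{lem:ultraproductlaws} for $\phi$.
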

	
	\begin{proof}
		(1) $\implies$ (2).  The limit $\lim_{n \to \mathcal{U}} \lambda_{X_n} = \lambda_X$ in Wasserstein distance means that there exists tracial $\mathrm{W}^*$ algebras $\cB_n$ and tracial $\mathrm{W}^*$-embeddings $\pi_n: \mathrm{W}^*(X_n) \to \cB_n$ and $\rho_n: \cA \to \cB_n$ such that $\norm{\pi_n(X_n) - \rho_n(X)}_{L^2(\cB_n)_{\sa}^m} \to 0$ as $n \to \mathcal{U}$.  Let $\cC_n$ be the free product of $\cA_n$ and $\cB_n$ with amalgamation over $\mathrm{W}^*(X_n)$, and let $\tilde{\pi}_n: \cA_n \to \cC_n$ and $\tilde{\rho}_n: \cA \to \cC_n$ be the corresponding tracial $\mathrm{W}^*$-embeddings.  It is straightforward to check that these induce tracial $\mathrm{W}^*$-embeddings
		\[
		\tilde{\pi}: \prod_{n \to \mathcal{U}} \cA_n \to \prod_{n \to \mathcal{U}} \cC_n, \qquad \tilde{\rho}: \cA \to \prod_{n \to \mathcal{U}} \cC_n
		\]
		such that $\tilde{\pi}(\phi(X)) = \tilde{\pi}([X_n]_{n\in \N}) = \rho(X)$.  Since $\tilde{\pi} \circ \phi$ and $\tilde{\rho}$ are tracial $\mathrm{W}^*$-embeddings, we have $\tilde{\pi}(\phi(Z)) = \tilde{\rho}(Z)$ for all $Z \in L^\infty(\cA)$ (because for instance every element of $L^\infty(\cA)$ can be approximated in $L^2(\cA)$ by non-commutative polynomials of $X$).
		
		Let $\tilde{\pi}_n^*$ and $\tilde{\pi}^*$ be the trace-preserving conditional expectations adjoint to $\tilde{\pi}_n$ and $\tilde{\pi}$.  We claim that for $Y = [Y_n]_{n \in \N} \in \prod_{n \to \mathcal{U}} \cC_n$, we have
		\[
		\tilde{\pi}^*(Y) =  [\tilde{\pi}_n^*(Y_n)]_{n \in \N}.
		\]
		Let $\tilde{\cA} = \prod_{n \to \mathcal{U}} \cA_n$ and $\tilde{\cC} = \prod_{n \to \mathcal{U}} \cC_n$.  Note that $[\tilde{\pi}_n^*(Y_n)]_{n \in \N}$ is in the $\mathrm{W}^*$-subalgebra $\tilde{\cA} = \prod_{n \to \mathcal{U}} \cA_n$.  Moreover, for every $Z = [Z_n] \in \prod_{n \to \mathcal{U}} \cA_n$, we have
		\[
		\ip{Y,\tilde{\pi}(Z)}_{L^2(\tilde{\cC})} = \lim_{n \to \mathcal{U}} \ip{Y_n, \tilde{\pi}_n(Z_n)}_{L^2(\cC_n)} = \lim_{n \to \mathcal{U}} \ip{\tilde{\pi}_n^*(Y_n),Z_n}_{L^2(\cA_n)} = \ip{[\tilde{\pi}_n^*(Y_n)]_{n \in \N}, Z}_{L^2(\tilde{\cA})}.
		\]
		Thus, $\tilde{\pi}^*(Y) = [\tilde{\pi}_n^*(Y_n)]_{n \in \N}$, as desired.  As noted above, for every $Z \in \cA$, we have $\tilde{\pi}(\phi(Z)) = \tilde{\rho}(Z)$ and hence $\phi(Z) = \tilde{\pi}^* \tilde{\pi} \phi(Z) = \tilde{\pi}^* \tilde{\rho}(Z)$.  This implies that
		\[
		[\tilde{\pi}_n^* \tilde{\rho}_n(Z)]_{n \in \N} = \tilde{\pi}^* \tilde{\rho}(Z) = \phi(Z).
		\]
		Therefore, $\Phi_n := \tilde{\pi}_n^* \tilde{\rho}_n$ is a factorizable map fulfilling condition (2).
		
		(2) $\implies$ (1).  Let $\phi$ and $\Phi_n$ be as in (2).  Then $[X_n]_{n \in \N} = \phi(X) = [\Phi_n(X)]_{n \in \N}$ belongs to $\prod_{n \to \mathcal{U}} \cA_n$.  Letting $E_n$ be the trace-preserving conditional expectation $\cA_n \to \mathrm{W}^*(X_n)$, the map $E_n \circ \Phi_n: \mathrm{W}^*(X) \to \mathrm{W}^*(X_n)$ is factorizable by Proposition \ref{prop:CPmaps} (4), hence by Observation \ref{obs:factorizablecoupling},
		\[
		C(\lambda_{X_n},X) \geq \ip{E_n \circ \Phi_n(X),X_n}_{L^2(\mathrm{W}^*(X_n))_{\sa}^m} = \ip{\Phi_n(X),X_n}_{L^2(\cA_n)_{\sa}^m}.
		\]
		Therefore,
		\begin{align*}
			\lim_{n \to \mathcal{U}} d_W^{(2)}(\lambda_{X_n},\lambda_X)^2 &= \lim_{n \to \mathcal{U}} \left( \norm{X_n}_{L^2(\cA_n)_{\sa}^m}^2 + \norm{X}_{L^2(\cA)_{\sa}^m}^2 - 2C(\lambda_{X_n},\lambda_X) \right) \\
			&\leq \lim_{n \to \mathcal{U}} \left( \norm{X}_{L^2(\cA)_{\sa}^m}^2 + \norm{X_n}_{L^2(\cA_n)_{\sa}^m}^2 - 2\ip{\Phi_n(X),X_n}_{L^2(\cA_n)_{\sa}^m} \right) \\
			&= \norm{\phi(X)}_{L^2(\prod_{n \to \mathcal{U}} \cA_n)_{\sa}^m}^2 + \norm{\phi(X)}_{L^2(\prod_{n \to \mathcal{U}} \cA_n)_{\sa}^m}^2 - 2 \ip{\phi(X),\phi(X)}_{L^2(\prod_{n \to \mathcal{U}} \cA_n)_{\sa}^m}^2 \\
			&= 0.
		\end{align*}
		Hence, $\lim_{n \to \mathcal{U}} \lambda_{X_n} = \lambda_X$ in Wasserstein distance.
	\end{proof}
	
	The next corollary was observed in \cite[Proposition 1.4(b)]{BV2001}, and can be proved in several ways (see for instance \cite[Lemma 2.10, Corollary 2.11]{HJNS2021} for another method), but we will deduce it as a consequence of the ultraproduct characterizations for weak-$*$ and Wasserstein convergence.
	
	\begin{corollary} \label{cor:twotopologies}
		The Wasserstein topology on $\Sigma_{m,R}$ refines the weak-$*$ topology.
	\end{corollary}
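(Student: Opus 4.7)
The plan is to deduce Wasserstein $\Rightarrow$ weak-$*$ continuity directly from the two ultraproduct characterizations just established. Since $\Sigma_{m,R}$ is weak-$*$ compact and metrizable, it suffices to show that any Wasserstein-convergent sequence converges weak-$*$; equivalently, writing $\lambda_n \to \lambda$ in Wasserstein distance, I will show that $\lim_{n \to \mathcal{U}} \lambda_n = \lambda$ in the weak-$*$ topology for every non-principal ultrafilter $\mathcal{U} \in \beta \N \setminus \N$, since such ultralimits detect convergence in a compact metrizable space.

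Fix such a $\mathcal{U}$ and take GNS realizations $(\cA_n, X_n)$ of $\lambda_n$ and $(\cA, X)$ of $\lambda$, each generated by the corresponding tuple with operator-norm bound $R$. Wasserstein convergence of the full sequence gives $\lim_{n \to \mathcal{U}} d_W^{(2)}(\lambda_n, \lambda) = 0$, so the hypothesis of the (1) $\Rightarrow$ (2) direction of Lemma \ref{lem:ultraproductWasserstein} is satisfied. That lemma produces, among other data, a tracial $\mathrm{W}^*$-embedding $\phi \colon \cA \to \prod_{n \to \mathcal{U}} \cA_n$ with $\phi(X) = [X_n]_{n \in \N}$; this is precisely condition (2) of Lemma \ref{lem:ultraproductlaws}, whose (2) $\Rightarrow$ (1) direction then yields $\lim_{n \to \mathcal{U}} \lambda_n = \lambda$ in the weak-$*$ topology. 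Since $\mathcal{U}$ was arbitrary, the conclusion follows.

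There is no serious obstacle here, as the two ultraproduct lemmas do all of the heavy lifting; the corollary amounts to the observation that the ultraproduct criterion for Wasserstein convergence is strictly stronger than the one for weak-$*$ convergence (the former requires not only an embedding sending $X$ to $[X_n]_{n \in \N}$ but also a coherent system of factorizable maps $\Phi_n$ implementing $\phi$ on all of $\cA$, while the latter only requires the embedding on the generators).
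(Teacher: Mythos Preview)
Your proof is correct and follows essentially the same approach as the paper: fix a non-principal ultrafilter, apply Lemma~\ref{lem:ultraproductWasserstein} to obtain the embedding $\phi$ with $\phi(X) = [X_n]_{n\in\N}$, and then invoke Lemma~\ref{lem:ultraproductlaws} to conclude weak-$*$ convergence. The only cosmetic difference is that the paper phrases the reduction via the Urysohn subsequence principle with a single fixed ultrafilter, whereas you quantify over all non-principal ultrafilters; both formulations are equivalent in a compact metrizable space.
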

	
	\begin{proof}
		Fix $\mathcal{U} \in \beta \N \setminus \N$.  Using the Urysohn subsequence principle, it suffices to show that if $\mu_n, \mu \in \Sigma_{m,R}$ and $\lim_{n \to \mathcal{U}} \mu_n = \mu$ in the Wasserstein distance, then $\lim_{n \to \mathcal{U}} \mu_n \to \mu$ in the weak-$*$ topology.  Letting $(\cA_n,X_n)$ and $(\cA,X)$ be the GNS realizations of $\mu_n$ and $\mu$, Lemma \ref{lem:ultraproductWasserstein} implies that there is a tracial $\mathrm{W}^*$-embedding $\cA \to \prod_{n \to \mathcal{U}} \cA_n$ with $\phi(X) = [X_n]_{n \in \N}$.  By Lemma \ref{lem:ultraproductlaws}, this implies that $\lim_{n \to \mathcal{U}} \mu_n = \mu$ in the weak-$*$ topology.
	\end{proof}
	
	The next observation is closely related.
	
	\begin{lemma} \label{lem:LSC}
		The metric $d_W^{(2)}$ is weak-$*$ lower semi-continuous on $\Sigma_{m,R} \times \Sigma_{m,R}$.
	\end{lemma}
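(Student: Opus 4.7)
The plan is to argue via ultraproducts, using Lemma \ref{lem:ultraproductlaws} as the key identification between weak-$*$ convergence and embeddings into ultraproducts. Suppose $\mu_n \to \mu$ and $\nu_n \to \nu$ in the weak-$*$ topology on $\Sigma_{m,R}$; the goal is to show $d_W^{(2)}(\mu,\nu) \leq \liminf_{n\to\infty} d_W^{(2)}(\mu_n,\nu_n)$.

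First, invoking the existence of optimal couplings remarked after Definition \ref{def:boundedoptimalcoupling}, for each $n$ choose an optimal coupling $(\cA_n, X_n, Y_n)$ of $\mu_n$ and $\nu_n$, so that $\norm{X_n-Y_n}_{L^2(\cA_n)_{\sa}^m} = d_W^{(2)}(\mu_n,\nu_n)$ and $\norm{X_n}_{L^\infty(\cA_n)_{\sa}^m}, \norm{Y_n}_{L^\infty(\cA_n)_{\sa}^m} \leq R$ (since $\mu_n,\nu_n \in \Sigma_{m,R}$). Fix an arbitrary non-principal ultrafilter $\mathcal{U} \in \beta\N \setminus \N$, form the ultraproduct $\widetilde{\cA} = \prod_{n \to \mathcal{U}} \cA_n$, and let $X = [X_n]_{n \in \N}$ and $Y = [Y_n]_{n \in \N}$, which are well-defined in $L^\infty(\widetilde{\cA})_{\sa}^m$ by the uniform $R$-bound.

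Next, one identifies the laws of $X$ and $Y$. By the definition of the ultraproduct trace, $\lambda_X(p) = \tau_{\mathcal{U}}(p(X)) = \lim_{n \to \mathcal{U}} \tau_{\cA_n}(p(X_n)) = \lim_{n \to \mathcal{U}} \mu_n(p)$ for every non-commutative polynomial $p$; since ordinary convergence of a sequence in a topological space implies convergence along every non-principal ultrafilter, and since $\mu_n \to \mu$ weak-$*$, this ultralimit equals $\mu(p)$. Hence $\lambda_X = \mu$, and the symmetric argument gives $\lambda_Y = \nu$. Thus $(\widetilde{\cA}, X, Y)$ is a genuine coupling of $\mu$ and $\nu$, so that
\[
d_W^{(2)}(\mu,\nu)^2 \leq \norm{X-Y}_{L^2(\widetilde{\cA})_{\sa}^m}^2 = \lim_{n\to\mathcal{U}} \norm{X_n-Y_n}_{L^2(\cA_n)_{\sa}^m}^2 = \lim_{n\to\mathcal{U}} d_W^{(2)}(\mu_n,\nu_n)^2,
\]
where the middle equality is again by the definition of $\tau_{\mathcal{U}}$ applied to the polynomial $\sum_j (x_j - x_{m+j})^2$ in the joint variables.

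Finally, since $\mathcal{U}$ was arbitrary and the sequence $d_W^{(2)}(\mu_n,\nu_n)$ is bounded (by $2mR$, say), one uses the standard identity $\liminf_n a_n = \inf_{\mathcal{U} \in \beta\N \setminus \N} \lim_{n \to \mathcal{U}} a_n$ for bounded real sequences to conclude $d_W^{(2)}(\mu,\nu)^2 \leq \liminf_n d_W^{(2)}(\mu_n,\nu_n)^2$. There is no real obstacle; the argument is a soft transport of optimal couplings into the ultraproduct, and the only point requiring care is verifying that the ultraproduct pair $(X,Y)$ has marginal laws $\mu$ and $\nu$, which follows immediately once weak-$*$ convergence is rephrased in terms of ultralimits of traces of polynomials.
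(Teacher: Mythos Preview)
Your proof is correct and follows essentially the same approach as the paper: both arguments pick optimal couplings $(\cA_n,X_n,Y_n)$, pass to the ultraproduct $\prod_{n\to\mathcal{U}}\cA_n$, and observe that $([X_n],[Y_n])$ is a coupling of $\mu$ and $\nu$ with cost $\lim_{n\to\mathcal{U}} d_W^{(2)}(\mu_n,\nu_n)$. The only cosmetic difference is that the paper fixes a single $\mathcal{U}$ and invokes the Urysohn subsequence principle (since $\Sigma_{m,R}$ is metrizable), whereas you take the infimum over all non-principal ultrafilters to recover the $\liminf$; these are equivalent reductions.
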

	
	\begin{proof}
		Fix $\mathcal{U} \in \beta \N \setminus \N$.  Again using the Urysohn subsequence principle, it suffices to show that for every pair of sequences $(\mu_n)_{n \in \N}$ and $(\nu_n)_{n \in \N}$ in $\Sigma_{m,R}$, letting $\mu = \lim_{n \to \mathcal{U}} \mu_n$ and $\nu = \lim_{n \to \mathcal{U}} \nu_n$, we have $d_W^{(2)}(\mu,\nu) \leq \lim_{n \to \mathcal{U}} d_W^{(2)}(\mu_n,\nu_n)$.  Let $(\cA_n,X_n,Y_n)$ be an optimal couplings of $\mu_n$ and $\nu_n$.  Let $(\cB,X)$ and $(\cC,Y)$ be the GNS realizations of $\mu$ and $\nu$.  By Lemma \ref{lem:ultraproductlaws}, there exist tracial $\mathrm{W}^*$-embeddings $\phi: \cB \to \prod_{n \to \mathcal{U}} \cA_n$ and $\psi: \cC\to \prod_{n \to \mathcal{U}} \cA_n$ such that $\phi(X) = [X_n]_{n \in \N}$ and $\psi(Y) = [Y_n]_{n \in \N}$.  Then
		\[
		d_W^{(2)}(\mu,\nu) \leq \norm{\phi(X) - \psi(Y)}_{L^2(\prod_{n \to \mathcal{U}} \cA_n)_{\sa}^m} = \lim_{n \to \mathcal{U}} \norm{X_n - Y_n}_{L^2(\cA_n)} = \lim_{n \to \mathcal{U}} d_W^{(2)}(\mu_n,\nu_n).  \qedhere
		\]
	\end{proof}
	
	We will use Lemmas \ref{lem:ultraproductlaws} and \ref{lem:ultraproductlaws2} to characterize when the Wasserstein and weak-$*$ topologies agree at a point in $\Sigma_{m,R}$ in terms of a certain stability property.  To fix terminology, if $S$ is a set and $\mathscr{T}_1$ and $\mathscr{T}_2$ are two topologies on $S$, we say that $\mathscr{T}_1$ and $\mathscr{T}_2$ \emph{agree at $x \in S$} if every $\mathscr{T}_1$-neighborhood of $x$ is contained in a $\mathscr{T}_2$-neighborhood of $x$ and vice versa.  If the topologies are metrizable, this is equivalent to saying that a sequence $x_n$ converges to $x$ with respect to $\mathscr{T}_1$ if and only if it converges to $x$ with respect $\mathscr{T}_2$.  Furthermore, if $\mathcal{U}$ is a given non-principal ultrafilter on $\N$, then agreement of the two topologies at $x$ is equivalent to the claim that $\lim_{n \to \mathcal{U}} x_n = x$ with respect to $\mathscr{T}_1$ if and only if $\lim_{n \to \mathcal{U}} x_n = x$ with respect to $\mathscr{T}_2$.
	
	\begin{definition}[$\operatorname{FM}$-lifting]
		Let $\cA$ be a tracial $\mathrm{W}^*$-algebra with separable predual, and let $\mathcal{U}$ be a free ultrafilter on $\N$.  If $\cA_n$ is a sequence of tracial $\mathrm{W}^*$-algebras and $\phi: \cA \to \prod_{n \to \mathcal{U}} \cA_n$ is a tracial $\mathrm{W}^*$-embedding, then an \emph{$\operatorname{FM}$-lifing} of $\phi$ is a sequence $(\Phi_n)_{n \in \N}$, where $\Phi_n \in \operatorname{FM}(\cA,\cA_n)$, such that $\phi(Z) = [\Phi_n(Z)]_{n \in \N}$ for all $Z \in L^\infty(\cA)$.
	\end{definition}
	
	Note that the sequence $\Phi_n$ in Lemma \ref{lem:ultraproductWasserstein} (2) is an $\operatorname{FM}$-lifting of $\phi$.  In other words, Lemma \ref{lem:ultraproductWasserstein} describes convergence in Wasserstein distance in terms of ultraproduct embeddings that have $\operatorname{FM}$-liftings.
	
	\begin{definition}[$\operatorname{FM}$-stability]
		We say that $\cA$ is \emph{$\operatorname{FM}$-stable} if every tracial $\mathrm{W}^*$-embedding $\phi: \cA \to \prod_{n \to \mathcal{U}} \cA_n$ into the ultraproduct of any sequence of tracial $\mathrm{W}^*$-algebras $\cA_n$ has an $\operatorname{FM}$-lifting.
	\end{definition}
	
	Our notion of $\operatorname{FM}$-stability is analogous and closely related to the notions of tracial stability and $\operatorname{UCP}$-stability studied in \cite{AKE2021,HaSh2018}.  Analogously to \cite[Remark 2.2]{AKE2021}, the definition of $\operatorname{FM}$-stability can be restated as an approximation property without reference to ultraproducts.  This implies in particular that the definition is independent of the choice of non-principal ultrafilter $\mathcal{U}$ (hence it amounts to the same thing whether require the lifting condition for a particular non-principal ultrafilter or for all non-principal ultrafilters).
	
	\begin{proposition} \label{prop:twotopologies}
		Let $\mu \in \Sigma_{m,R}$ and let $(\cA,X)$ be the GNS realization of $\mu$ as in Proposition \ref{prop:GNS}.  Then the following are equivalent:
		\begin{enumerate}[(1)]
			\item The weak-$*$ and Wasserstein topologies on $\Sigma_{m,R}$ agree at $\mu$.
			\item $\cA$ is $\operatorname{FM}$-stable.
		\end{enumerate}
	\end{proposition}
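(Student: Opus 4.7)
The plan is to combine the ultraproduct characterizations of weak-$*$ convergence (Lemma \ref{lem:ultraproductlaws}) and Wasserstein convergence (Lemma \ref{lem:ultraproductWasserstein}) so that the agreement of topologies at $\mu$ translates directly into the existence of $\operatorname{FM}$-liftings of ultraproduct embeddings of $\cA$. Both the weak-$*$ and Wasserstein topologies on $\Sigma_{m,R}$ are metrizable, so each is determined by ultralimits along non-principal ultrafilters; hence I fix $\mathcal{U} \in \beta \N \setminus \N$ and argue at the level of ultralimits.

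For (2) $\implies$ (1), Corollary \ref{cor:twotopologies} already gives that the Wasserstein topology refines the weak-$*$ one globally, so I need only show that weak-$*$ convergence $\mu_n \to \mu$ along $\mathcal{U}$ forces Wasserstein convergence along $\mathcal{U}$. Given such $\mu_n$, I take GNS realizations $(\cA_n, X_n)$ with $\norm{X_n}_{L^\infty(\cA_n)_{\sa}^m} \le R$, apply Lemma \ref{lem:ultraproductlaws} to produce a tracial $\mathrm{W}^*$-embedding $\phi: \cA \to \prod_{n \to \mathcal{U}} \cA_n$ sending $X$ to $[X_n]_{n \in \N}$, invoke $\operatorname{FM}$-stability to obtain an $\operatorname{FM}$-lifting of $\phi$, and feed this into the (2) $\implies$ (1) direction of Lemma \ref{lem:ultraproductWasserstein} to conclude. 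Running this for every $\mathcal{U}$ gives agreement of the two topologies at $\mu$.

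For (1) $\implies$ (2), I let $\phi: \cA \to \prod_{n \to \mathcal{U}} \cA_n$ be an arbitrary tracial $\mathrm{W}^*$-embedding and aim to produce an $\operatorname{FM}$-lifting. Choose representatives $X_n$ of $\phi(X)$. By symmetrizing each coordinate and then applying the functional-calculus truncation trick from the proof of Lemma \ref{lem:ultraproductlaws2} (composing each coordinate with $f(t) = \sgn(t)\min(|t|, R)$), I may assume $X_n \in L^\infty(\cA_n)_{\sa}^m$ with $\norm{X_n}_{L^\infty(\cA_n)_{\sa}^m} \leq R$ while keeping $[X_n]_{n \in \N} = \phi(X)$. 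Lemma \ref{lem:ultraproductlaws} then gives $\lim_{n \to \mathcal{U}} \lambda_{X_n} = \mu$ in the weak-$*$ topology; by hypothesis (1), the convergence is also in the Wasserstein distance along $\mathcal{U}$; Lemma \ref{lem:ultraproductWasserstein} in turn supplies a tracial $\mathrm{W}^*$-embedding $\phi': \cA \to \prod_{n \to \mathcal{U}} \cA_n$ with $\phi'(X) = [X_n]_{n \in \N}$ and an $\operatorname{FM}$-lifting $(\Phi_n)_{n \in \N}$ of $\phi'$. The key closing observation is that $\phi$ and $\phi'$ agree on the generating tuple $X$, and since $(\cA, X)$ is the GNS realization of $\mu$, the tuple $X$ generates $\cA$ as a $\mathrm{W}^*$-algebra, forcing $\phi = \phi'$; hence $(\Phi_n)_{n \in \N}$ is the sought-after $\operatorname{FM}$-lifting of $\phi$.

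The main subtlety is the uniqueness step that promotes $\phi'(X) = \phi(X)$ to $\phi' = \phi$, which relies crucially on $X$ generating $\cA$; without this, Lemma \ref{lem:ultraproductWasserstein} would only produce a lifting of an a priori different embedding and the argument would stall. The truncation step is routine once one imports the argument from Lemma \ref{lem:ultraproductlaws2}. Apart from these two points, the proof is a clean bookkeeping exchange between the two ultraproduct lemmas and the definition of $\operatorname{FM}$-stability.
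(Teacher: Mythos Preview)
Your proof is correct and follows essentially the same approach as the paper: both directions are obtained by shuttling between Lemma \ref{lem:ultraproductlaws} and Lemma \ref{lem:ultraproductWasserstein}, with the functional-calculus truncation from Lemma \ref{lem:ultraproductlaws2} used to bring the representatives into $\Sigma_{m,R}$. Your explicit justification that $\phi' = \phi$ via Lemma \ref{lem:generators} (since $X$ generates $\cA$) is a point the paper leaves implicit when it writes ``this implies that $\phi$ has an $\operatorname{FM}$-lifting,'' so your version is, if anything, slightly more careful there.
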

	
	\begin{proof}
		(1) $\implies$ (2).  Let $\mathcal{U} \in \beta \N \setminus \N$.  Assume that the weak-$*$ and Wasserstein topologies agree at $\mu$.  Let $(\cA_n)_{n \in \N}$ be a sequence of tracial $\mathrm{W}^*$-algebras, and let $\phi: \cA \to \prod_{n \to \mathcal{U}} \cA_n$ be a tracial $\mathrm{W}^*$-embedding.  Express $\phi(X)$ as $[X_n]_{n \in \N}$ where $X_n \in L^2(\cA_n)_{\sa}^m$ and $\sup_n \norm{X_n}_{L^\infty(\cA_n)_{\sa}^m} < \infty$.  Arguing with functional calculus as in Lemma \ref{lem:ultraproductlaws2}, we can arrange that $\norm{X_n}_{L^\infty(\cA_n)_{\sa}^m} \leq R$.  By Lemma \ref{lem:ultraproductlaws}, we have $\lambda_{X_n} \to \lambda_X$ in the weak-$*$ topology on $\Sigma_{m,R}$.  Hence, by hypothesis $\lambda_{X_n} \to \lambda_X$ in the Wasserstein distance as $n \to \mathcal{U}$.  By Lemma \ref{lem:ultraproductWasserstein}, this implies that $\phi$ has an $\operatorname{FM}$-lifting.
		
		(2) $\implies$ (1).  Conversely, suppose that $\cA$ is $\operatorname{FM}$-stable.  To show that the weak-$*$ and Wasserstein topologies on $\Sigma_{m,R}$ agree at $\mu$, using the Urysohn subsequence principle, it suffices to show that if $(\mu_n)_{n \in \N}$ is a sequence such that $\mu_n \to \mu$ weak-$*$ as $n \to \mathcal{U}$, then $d_W^{(2)}(\mu_n,\mu) \to 0$ as $n \to \mathcal{U}$.  Let $(\cA_n,X_n)$ be the GNS-realization of $\mu_n$.  By Lemma \ref{lem:ultraproductlaws}, the tuple $[X_n]_{n \in \N}$ in $\prod_{n \to \mathcal{U}} \cA_n$ has the same law as $X$, and therefore, there exists a tracial $\mathrm{W}^*$-embedding $\phi: \cA \to \prod_{n \to \mathcal{U}} \cA_n$ with $\phi(X) = [X_n]_{n \in \N}$.  By $\operatorname{FM}$-stability of $\cA$, there exist factorizable completely positive maps $\Phi_n: \cA \to \cA_n$ such that $\phi(Z) = [\Phi_n(Z)]_{n \in \N}$ for all $Z \in L^\infty(\cA)$.  Hence, by Lemma \ref{lem:ultraproductWasserstein}, $\lim_{n \to \mathcal{U}} \mu_n = \mu$ in Wasserstein distance.
	\end{proof}
	
	Next, we will show using the work of Connes \cite{Connes1976} that if the weak-$*$ and Wasserstein topologies agree at $\mu$ and the corresponding tracial $\mathrm{W}^*$-algebra $\cA$ is Connes-embeddable, then in fact $\cA$ is approximately finite-dimensional.  We recall the following theorem of Connes \cite{Connes1976} that shows that approximate finite-dimensionality is equivalent to semi-discreteness for tracial $\mathrm{W}^*$-algebras (and these are also equivalent, famously, to the two other conditions of injectivity and amenability); related proofs can also be found in \cite{Popa1986injective}, \cite[\S XIV]{TakesakiIII}, \cite[\S 6.2, 6.3, 9.3]{BrownOzawa2008}, \cite[\S 11]{ADP}.
	
	\begin{theorem}[{Connes \cite{Connes1976}}]
		Let $\cA = (A,\tau)$ be a tracial $\mathrm{W}^*$-algebra with separable predual.  The following are equivalent:
		\begin{enumerate}[(1)]
			\item $\cA$ is \emph{approximately finite-dimensional (AFD)}, that is, there exists a sequence $(A_k)_{k\in \N}$ of finite-dimensional subalgebras with $A_k \subseteq A_{k+1}$ such that $\bigcup_{k \in \N} A_k$ is dense in $A$ with respect to $\norm{\cdot}_{L^2(\cA)}$.
			\item $\cA$ is \emph{semi-discrete}, that is, there exists nets $(\Phi_\alpha)_{\alpha \in I}$ and $(\Psi_\alpha)_{\alpha \in I}$ of completely positive maps $\Phi_\alpha: \cA \to M_{n(\alpha)}(\C)$ and $\Psi_{\alpha}: M_{n(\alpha)}(\C) \to \cA$ such that $\Psi_\alpha \circ \Phi_\alpha(Z) \to Z$ in the weak-$*$ topology for every $Z \in L^\infty(\cA)$.
		\end{enumerate}
	\end{theorem}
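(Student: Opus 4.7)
The plan is to prove the two implications separately; the forward direction is a direct construction, whereas the reverse direction is the depth of Connes' theorem and will require a three-stage argument.

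For $(1) \Longrightarrow (2)$, I would proceed directly. Suppose $L^\infty(\cA) = \overline{\bigcup_k A_k}^{\mathrm{w}^*}$ with $A_k \subseteq A_{k+1}$ finite-dimensional, and let $E_k: L^\infty(\cA) \to A_k$ be the trace-preserving conditional expectation, which is unital completely positive. By Artin-Wedderburn, $A_k \cong \bigoplus_{j=1}^{r_k} M_{n_{k,j}}(\C)$, and $\tau_{\cA}|_{A_k}$ is a convex combination of the normalized traces on the blocks. One embeds $A_k$ into a single matrix algebra $M_{N_k}(\C)$ as follows: amplify each block $M_{n_{k,j}}(\C)$ by a multiplicity $m_{k,j}$ (approximating the weights of $\tau_{\cA}|_{A_k}$ by rationals $m_{k,j}/N_k$), and embed the direct sum block-diagonally into $M_{N_k}(\C)$ so that the normalized trace on $M_{N_k}(\C)$ restricts to $\tau_{\cA}|_{A_k}$. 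Let $\Phi_k: \cA \to M_{N_k}(\C)$ be this embedding composed with $E_k$, and let $\Psi_k: M_{N_k}(\C) \to \cA$ be the trace-preserving conditional expectation onto $A_k$ followed by the inclusion. Then $\Psi_k \circ \Phi_k = E_k$ converges to the identity in $\|\cdot\|_{L^2(\cA)}$, and hence in the weak-$*$ topology on bounded sets.

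For $(2) \Longrightarrow (1)$, the strategy decomposes into three stages. First, semi-discreteness implies injectivity of $L^\infty(\cA) \subseteq B(L^2(\cA))$: using Arveson-Stinespring one extends each $\Psi_\alpha$ to a UCP map $\tilde{\Psi}_\alpha: M_{n(\alpha)}(\C) \to B(L^2(\cA))$ and a point-weak-$*$ limit point of $\tilde{\Psi}_\alpha \circ \Phi_\alpha$ furnishes a conditional expectation $B(L^2(\cA)) \to L^\infty(\cA)$. Second, injectivity implies amenability in the tracial setting: composing the conditional expectation with $\tau_{\cA}$ yields a $\cA$-central (or hyper-) trace on $B(L^2(\cA))$ extending $\tau_{\cA}$, equivalently, an invariant mean on $L^\infty(\cA)' = J L^\infty(\cA) J$. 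Third, and this is the core of the theorem, amenability implies AFD.

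The hardest step by far is the third one. The approach, following Connes, is to exploit the hypertrace together with Tomita-Takesaki modular theory to produce asymptotically central sequences of projections and approximately commuting unitaries; these are then used to construct approximate matrix units inside $\cA$, which a maximality argument promotes to an honest increasing sequence of finite-dimensional $*$-subalgebras with weakly dense union. In the $\mathrm{II}_1$-factor case, the technical heart is showing that amenable factors have property $\Gamma$ and are in fact McDuff, so that $\cA \cong \cA \bar{\otimes} R$; the general tracial case reduces to the factor case via direct integral decomposition using separability of the predual. Because this last step occupies the bulk of Connes' original $40+$ page paper (and of the simplifications in \cite{Popa1986injective} and \cite[\S XIV]{TakesakiIII}), in this proposal I would not attempt to reprove it but would simply cite it as a black box once the reductions in the first two stages are in place.
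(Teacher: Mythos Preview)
The paper does not give a proof of this statement. It is stated as a theorem of Connes with a citation to \cite{Connes1976}, accompanied by the remark that related proofs can be found in \cite{Popa1986injective}, \cite[\S XIV]{TakesakiIII}, \cite[\S 6.2, 6.3, 9.3]{BrownOzawa2008}, and \cite[\S 11]{ADP}. The theorem is invoked as a black box in the proof of Proposition~\ref{prop:twotopologies2}. There is therefore nothing in the paper to compare your proposal against.

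For what it is worth, your outline is a faithful sketch of the standard chain of implications (semi-discrete $\Rightarrow$ injective $\Rightarrow$ amenable/hypertrace $\Rightarrow$ AFD), and your acknowledgement that the final step is the substance of Connes' paper is appropriate. One small quibble on the forward direction: the rational approximation of the trace weights means the embedding of $A_k$ into $M_{N_k}(\C)$ is only approximately trace-preserving, so you should either quantify that approximation or instead use the standard device of taking a weak-$*$ cluster point of the maps $\Psi_\alpha\Phi_\alpha$ viewed as maps into $B(L^2(\cA))$; but this is a routine fix and does not affect the overall structure.
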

	
	We recall a few more results about AFD algebras, which are well-known in operator algebras.  We recall that a \emph{$\mathrm{II}_1$-factor} is an infinite-dimensional tracial von Neumann algebra with trivial center.
	
	
	\begin{lemma} \label{lem:folklore} ~
		\begin{enumerate}[(1)]
			\item Let $\cA$ be an AFD tracial $\mathrm{W}^*$-algebra, let $(\cB_n)_{n \in \N}$ be $\mathrm{II}_1$-factors, and let $\mathcal{U}$ be a free ultrafilter on $\N$.  If $\phi$ and $\psi$ are two embeddings of $\cA$ into $\prod_{n \to \mathcal{U}} \cB_n$, then there exists a unitary $U \in \prod_{n \to \mathcal{U}} \cB_n$ such that $U \phi(Z) U^* = \psi(Z)$ for $Z \in L^\infty(\cA)$.  See \cite{Jung2007,AKE2021}.
			\item If $(\cB_n)_{n \in \N}$ are $\mathrm{II}_1$-factors and $U$ is a unitary in $\prod_{n \to \mathcal{U}} \cB_n$, then there exist unitaries $U_n \in L^\infty(\cB_n)$ such that $U = [U_n]_{n \in \N}$.\footnote{Every unitary $u$ in a tracial $\mathrm{W}^*$-algebra can be expressed as $e^{ix}$ for some self-adjoint $x$ using Borel functional calculus (Theorem \ref{thm:affiliated} (3)).  Suppose $U = e^{iX}$ is unitary in $\prod_{n \to \mathcal{U}}$.  Arguing as in the proof of Lemma \ref{lem:ultraproductlaws2}, $X$ can be expressed as $[X_n]_{n \in \N}$ where $X_n \in L^\infty(\cB_n)_{\sa}$, and we have $[e^{iX_n}]_{n \in \N} = U$.} 
		\end{enumerate}
	\end{lemma}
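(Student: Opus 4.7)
I would handle the two parts separately, beginning with part (2) which is essentially supplied by the footnote.

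For part (2): The plan is to use Borel functional calculus in the ultraproduct $\cM = \prod_{n \to \mathcal{U}} \cB_n$, which is itself a tracial $\mathrm{W}^*$-algebra. Every unitary $U \in \cM$ can be written as $U = e^{iX}$ for some $X \in L^\infty(\cM)_{\sa}$ with $\|X\|_\infty \leq \pi$. By definition of the ultraproduct, $X$ lifts to a bounded sequence $(Y_n)_{n \in \N}$ in $\prod_{n \in \N} L^\infty(\cB_n)$; replacing $Y_n$ with $(Y_n + Y_n^*)/2$ we may assume $Y_n$ is self-adjoint. Then, exactly as in the proof of Lemma \ref{lem:ultraproductlaws2}, applying polynomial approximations of the truncation function $f(t) = \sgn(t) \min(|t|,\pi)$ in the weak-$*$ topology on a compact interval shows $X_n := f(Y_n)$ still represents $X$ in the ultraproduct. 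Since $|e^{is} - e^{it}| \leq |s - t|$, the map $X \mapsto e^{iX}$ is $L^2$-Lipschitz on uniformly bounded self-adjoint elements, so $[e^{iX_n}]_{n \to \mathcal{U}} = e^{iX} = U$, and $U_n := e^{iX_n}$ is a unitary lift.

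For part (1): The plan is a Cauchy construction exploiting the definition of AFD. Fix an increasing sequence of finite-dimensional $\ast$-subalgebras $A_k \subseteq L^\infty(\cA)$ with $\bigcup_k A_k$ dense in $L^2(\cA)$. The ultraproduct $\cM = \prod_{n \to \mathcal{U}} \cB_n$ is again a $\mathrm{II}_1$-factor, so by standard comparison of projections any two trace-preserving unital $\ast$-embeddings of a finite-dimensional $\ast$-algebra into $\cM$ are unitarily conjugate in $\cM$. The core analytic tool is a perturbation statement: if $\phi_1, \phi_2 : A \to \cM$ are two such embeddings of a fixed finite-dimensional $A$ with matching trace, and if they are sufficiently close in $L^2$-norm on a system of matrix units, then a unitary $W \in \cM$ with $W\phi_1 W^* = \phi_2$ can be chosen with $\|W - 1\|_{L^2(\cM)}$ small. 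I would prove this via the classical ``close projections of equal trace are conjugated by a unitary close to $1$'' lemma applied successively to the matrix units, using the $\mathrm{II}_1$-factor comparison theorem to adjust traces.

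With this perturbation tool, I would inductively construct unitaries $V_k \in \cM$ satisfying $V_k \phi(z) V_k^* = \psi(z)$ for all $z \in A_k$ and $\|V_{k+1} - V_k\|_{L^2(\cM)} < 2^{-k}$. For the inductive step, the embeddings $\psi|_{A_{k+1}}$ and $V_k \phi(\cdot) V_k^*|_{A_{k+1}}$ agree on $A_k$, so by the perturbation lemma applied inside the relative commutant $\psi(A_k)' \cap \cM$ (which is again a $\mathrm{II}_1$-algebra and where the two embeddings can be made arbitrarily close by a preliminary small adjustment of $V_k$), one obtains $W \in \mathcal{U}(\psi(A_k)' \cap \cM)$ close to $1$ with $W V_k \phi(z) V_k^* W^* = \psi(z)$ on $A_{k+1}$; set $V_{k+1} := W V_k$. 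The $L^2$-Cauchy sequence $V_k$ converges to a unitary $V \in \cM$, and $V \phi(z) V^* = \psi(z)$ on each $A_k$; since left and right multiplication by bounded elements is $L^2$-continuous and $\bigcup_k A_k$ is dense in $L^\infty(\cA)$ in the $L^2$-norm on operator-norm bounded sets (Lemma \ref{lem:generators}), the identity extends to all of $L^\infty(\cA)$.

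The main obstacle is the perturbation lemma, which requires delicate bookkeeping to promote pointwise conjugation of approximate matrix units to a single small conjugating unitary, especially to guarantee the $2^{-k}$ summable error on the induction. This technical step is precisely what is carried out in \cite{Jung2007} and in a more systematic stability framework in \cite{AKE2021}; for the purposes of this paper one may simply invoke their results as stated.
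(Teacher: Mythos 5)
Your part (2) is correct and is essentially the paper's own argument (the footnote): write $U = e^{iX}$, lift and truncate $X$ as in Lemma \ref{lem:ultraproductlaws2}, and use that $A \mapsto e^{iA}$ is $L^2$-Lipschitz on self-adjoints to conclude $[e^{iX_n}]_{n\in\N} = U$.

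For part (1) the paper offers no proof and simply cites \cite{Jung2007,AKE2021}, so deferring to those references is acceptable; but the sketch you give in its place has a genuine gap, and it is not merely ``delicate bookkeeping.'' Your inductive step needs the correcting unitary $W \in \mathcal{U}(\psi(A_k)'\cap \cM)$ to satisfy $\lVert W - 1\rVert_{L^2} < 2^{-k}$, and you justify this by saying the two embeddings of $A_{k+1}$ ``can be made arbitrarily close by a preliminary small adjustment of $V_k$'' --- but that is exactly what is to be proved, and nothing in your argument supplies it: agreeing on $A_k$ gives no $L^2$-closeness on $A_{k+1}$, and the conjugating unitary produced by comparison of projections in the relative commutant is in general far from $1$. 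More decisively, your argument uses only that $\cM = \prod_{n\to\mathcal{U}}\cB_n$ is a $\mathrm{II}_1$ factor and never uses the ultraproduct structure, so if it worked it would show that any two trace-preserving embeddings of an AFD algebra into an arbitrary $\mathrm{II}_1$ factor are unitarily conjugate. That statement is false: the two embeddings of the hyperfinite $\mathrm{II}_1$ factor $R$ into $R \cong R\bar{\otimes}R$ given by the identity and by $x \mapsto x\otimes 1$ have relative commutants $\C$ and $R$ respectively, hence are not unitarily conjugate. So no Cauchy-perturbation argument of the kind you describe can close the gap; the ultraproduct is essential. The standard folklore proof instead uses exact (not approximately small) conjugacy on each finite-dimensional $A_k$ to get unitaries $u_k$ with $u_k\phi(z)u_k^* = \psi(z)$ for $z \in A_k$, lifts each $u_k$ to unitaries in the $\cB_n$ (this is where your part (2) enters), and then produces a single unitary by a diagonalization/countable-saturation argument along $\mathcal{U}$ (choosing for each $n$ a lift at a level $k(n)\to\infty$ as $n\to\mathcal{U}$); this is the content of the results in \cite{Jung2007,AKE2021} that the paper invokes.
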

	
	\begin{corollary} \label{cor:FMstable}
		Let $\cA$ be an AFD tracial $\mathrm{W}^*$-algebra.  Then $\cA$ is $\operatorname{FM}$-stable. 
	\end{corollary}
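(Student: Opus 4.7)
The plan is to invoke Lemma~\ref{lem:folklore} to reduce $\operatorname{FM}$-stability to the uniqueness (up to a liftable unitary) of embeddings of an AFD algebra into an ultraproduct of $\mathrm{II}_1$ factors. The strategy is to enlarge the target algebras $\cA_n$ to ambient $\mathrm{II}_1$ factors $\cB_n$, conjugate the given embedding to a fixed reference embedding by a unitary that lifts pointwise, and then descend back to $\cA_n$ via the conditional expectation $\cB_n \to \cA_n$, producing factorizable maps.

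First, for each $n$ pick a $\mathrm{II}_1$ factor $\cB_n$ containing $\cA_n$ as a trace-preserving $\mathrm{W}^*$-subalgebra (for instance, $\cB_n = \cA_n * L(\F_2)$), and let $E_n: \cB_n \to \cA_n$ denote the associated trace-preserving conditional expectation. Since $\cA$ is AFD with separable predual, $\cA$ embeds into the hyperfinite $\mathrm{II}_1$ factor $\cR$, which in turn embeds into each $\cB_n$; composing, fix a tracial $\mathrm{W}^*$-embedding $\phi_0^{(n)}: \cA \to \cB_n$ for every $n$, and set $\phi_0: \cA \to \prod_{n \to \mathcal{U}} \cB_n$ by $\phi_0(Z) = [\phi_0^{(n)}(Z)]_{n \in \N}$, which is a tracial $\mathrm{W}^*$-embedding.

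Next, let $\phi: \cA \to \prod_{n \to \mathcal{U}} \cA_n$ be an arbitrary tracial $\mathrm{W}^*$-embedding, and let $\phi': \cA \to \prod_{n \to \mathcal{U}} \cB_n$ be its composition with the natural inclusion $\prod_{n \to \mathcal{U}} \cA_n \hookrightarrow \prod_{n \to \mathcal{U}} \cB_n$. By Lemma~\ref{lem:folklore}(1), there exists a unitary $U \in \prod_{n \to \mathcal{U}} \cB_n$ with $U \phi_0(Z) U^* = \phi'(Z)$ for every $Z \in L^\infty(\cA)$; by Lemma~\ref{lem:folklore}(2), $U = [U_n]_{n \in \N}$ for unitaries $U_n \in \cB_n$. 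Define
\[
\Phi_n: \cA \to \cA_n, \qquad \Phi_n(Z) = E_n\bigl( U_n\, \phi_0^{(n)}(Z)\, U_n^* \bigr).
\]
The map $\rho_n := \operatorname{Ad}(U_n) \circ \phi_0^{(n)}: \cA \to \cB_n$ is a tracial $\mathrm{W}^*$-embedding, and $E_n$ is the conditional expectation adjoint to the inclusion $\cA_n \hookrightarrow \cB_n$; hence $\Phi_n = E_n \circ \rho_n$ factors through $\cB_n$, so $\Phi_n \in \operatorname{FM}(\cA,\cA_n)$.

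Finally, the trace-preserving conditional expectation $\prod_{n \to \mathcal{U}} \cB_n \to \prod_{n \to \mathcal{U}} \cA_n$ is computed coordinate-wise as $[Y_n]_{n \in \N} \mapsto [E_n(Y_n)]_{n \in \N}$ (as in the proof of Lemma~\ref{lem:ultraproductWasserstein}), and it fixes every element of the subalgebra $\prod_{n \to \mathcal{U}} \cA_n$. Applying it to the identity $\phi'(Z) = [U_n \phi_0^{(n)}(Z) U_n^*]_{n \in \N}$ and recalling that $\phi'(Z)$ is just $\phi(Z)$ viewed inside the larger ultraproduct, one concludes that $\phi(Z) = [\Phi_n(Z)]_{n \in \N}$, so $(\Phi_n)_{n \in \N}$ is the desired $\operatorname{FM}$-lifting of $\phi$. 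The main obstacle is the application of Lemma~\ref{lem:folklore}(1): the unique-up-to-unitary statement for ultraproduct embeddings genuinely requires the AFD hypothesis on $\cA$ (and the $\mathrm{II}_1$ factor hypothesis on the $\cB_n$), and it is for this reason that we pass from the $\cA_n$ to the enlargements $\cB_n$.
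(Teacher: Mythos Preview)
Your proof is correct and follows essentially the same approach as the paper's: enlarge each $\cA_n$ to a $\mathrm{II}_1$ factor $\cB_n$, apply Lemma~\ref{lem:folklore} to conjugate the given embedding to a reference embedding via a liftable unitary, then compose with the conditional expectation $\cB_n \to \cA_n$ to obtain the factorizable lifts. The only cosmetic differences are your choice of $\cB_n = \cA_n * L(\F_2)$ (versus the paper's $\cA * \cA_n * L^\infty[0,1]$) and your reference embedding via $\cR$ (versus the paper's direct inclusion of $\cA$ as a free factor); both routes work for the same reasons.
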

	
	\begin{proof}
		If $\cA = \C$, then the conclusion is immediate, so assume that $\cA \neq \C$.  Let $\phi: \cA \to \prod_{n \to \mathcal{U}} \cA_n$ be a tracial $\mathrm{W}^*$-embedding.  Let $\cB$ be the tracial free product $\cA * \cA_n * L^\infty[0,1]$ (where $L^\infty[0,1]$ has the trace coming from Lebesgue measure).  Then $\cB$ is a $\mathrm{II}_1$ factor by \cite[Theorem 3.7]{Ueda2011} since $\cA \neq \C$ and $L^\infty[0,1]$ is diffuse.  For each, $n$, there is a tracial $\mathrm{W}^*$-embedding $\iota_n: \cA_n \to \cB_n$.  Let $\iota$ be the induced map
		\[
		\iota: \prod_{n \to \mathcal{U}} \cA_n \to \prod_{n \to \mathcal{U}} \cB_n.
		\]
		By construct, there also exists a tracial $\mathrm{W}^*$-embedding $\psi_n: \cA \to \cB_n$.  This sequence produces a tracial $\mathrm{W}^*$-embedding $\psi: \cA \to \prod_{n \to \mathcal{U}} \cB_n$.  By Lemma \ref{lem:folklore}, there exists a unitary $U_n \in L^\infty(\cB_n)$ such that, letting $U = [U_n]_{n \in \N}$, we have $U \iota \circ \phi(Z) U^* = \psi(Z)$ for $Z \in L^\infty(\cA)$.
		
		Let $\Phi_n: \cA \to \cA_n$ be given by $\Phi_n(Z) = \iota_n^*[U_n^* \psi_n(Z) U_n]$.  As observed in the proof of Proposition \ref{prop:twotopologies}, ultraproducts respect conditional expectations and therefore for $Z \in \cA$, we have
		\[
		[\Phi_n(Z)]_{n \in \N} = [\iota_n^*[U_n^* \psi_n(Z) U_n]]_{n \in \N} = \iota^* [U_n^* \psi_n(Z) U_n]_{n \in \N} = \iota^*(U^* \psi(Z) U) = \iota^* \iota \phi(Z) = \phi(Z).
		\]
		Thus, $\Phi_n$ is the desired lifting of $\phi$ to a sequence of factorizable maps.
	\end{proof}
	
	\begin{remark}
		In fact, \cite[Theorem 2.6]{AKE2021} implies the converse of Corollary \ref{cor:FMstable}:  If $\cA$ is Connes-embeddable and $\operatorname{FM}$-stable, then $\cA$ is AFD.  The same statement is implied by the next proposition provided that $\cA$ is finitely generated.
	\end{remark}
	
	\begin{proposition} \label{prop:twotopologies2}
		Let $\mu$ be in the weak-$*$ closure of $\Sigma_{m,R}^{\fin}$, and let $(\cA,X)$ be the GNS realization of $\mu$.  The following are equivalent:
		\begin{enumerate}[(1)]
			\item $\cA$ is approximately finite-dimensional.
			\item $\cA$ is $\operatorname{FM}$-stable.
			\item The weak-$*$ and Wasserstein topologies agree at $\mu$.
			\item $\mu$ is in the Wasserstein closure of $\Sigma_{m,R}^{\fin}$.
		\end{enumerate}
	\end{proposition}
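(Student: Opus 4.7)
The plan is to close the cycle $(1) \Rightarrow (2) \Rightarrow (3) \Rightarrow (4) \Rightarrow (1)$. The arrows $(1) \Rightarrow (2)$ and $(2) \Leftrightarrow (3)$ are already available from Corollary \ref{cor:FMstable} and Proposition \ref{prop:twotopologies} respectively. For $(3) \Rightarrow (4)$, I would use that $\Sigma_{m,R}$ is metrizable in the weak-$*$ topology (since $\C\ip{x_1,\dots,x_m}$ has countable dimension), so the hypothesis provides a sequence $\mu_n \in \Sigma_{m,R}^{\fin}$ converging weak-$*$ to $\mu$; agreement of the two topologies at $\mu$ upgrades this convergence to Wasserstein, placing $\mu$ in the Wasserstein closure of $\Sigma_{m,R}^{\fin}$.

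The substantive work is $(4) \Rightarrow (1)$, which I would prove by constructing an asymptotic factorization of $\id_\cA$ through finite-dimensional algebras and then invoking Connes's theorem. Given $\mu_n \in \Sigma_{m,R}^{\fin}$ with $d_W^{(2)}(\mu_n,\mu) \to 0$, realized by $X_n \in L^\infty(\cB_n)_{\sa}^m$ with each $\cB_n$ finite-dimensional, fix $\mathcal{U} \in \beta\N \setminus \N$. Lemma \ref{lem:ultraproductWasserstein} supplies a tracial $\mathrm{W}^*$-embedding $\phi \colon \cA \to \prod_{n \to \mathcal{U}} \cB_n$ and an $\operatorname{FM}$-lifting $\Phi_n \in \operatorname{FM}(\cA,\cB_n)$ with $\phi(Z) = [\Phi_n(Z)]_{n \in \N}$. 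Set $T_n = \Phi_n^* \circ \Phi_n$, which is $\operatorname{UCPT}$ (in fact factorizable) and factors through $\cB_n$. The crucial computation is that for $Z \in L^\infty(\cA)$,
\[
\langle Z, T_n(Z) \rangle_{L^2(\cA)} = \norm{\Phi_n(Z)}_{L^2(\cB_n)}^2 \xrightarrow{n \to \mathcal{U}} \norm{\phi(Z)}^2 = \norm{Z}_{L^2(\cA)}^2,
\]
using adjointness, the definition of the ultraproduct trace, and the fact that tracial $\mathrm{W}^*$-embeddings are $L^2$-isometric (Lemma \ref{lem:inclusionconditionalexpectation}). Since $T_n$ is $L^2$-contractive by Lemma \ref{lem:CPadjoint}, expanding $\norm{T_n(Z) - Z}_{L^2}^2 = \norm{T_n(Z)}^2 - 2 \re \langle Z, T_n(Z)\rangle + \norm{Z}^2$ and passing to the ultralimit yields $T_n(Z) \to Z$ in $L^2(\cA)$ along $\mathcal{U}$.

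The last step is to package these approximations into semi-discreteness: given any finite $F \subset L^\infty(\cA)$ and $\epsilon > 0$, one extracts an index $n = n(F,\epsilon)$ with $\norm{T_n(Z) - Z}_{L^2(\cA)} < \epsilon$ for all $Z \in F$, producing a net of $\operatorname{UCPT}$ approximations of $\id_\cA$ factoring through finite-dimensional tracial $*$-algebras. By Artin--Wedderburn each $\cB_n$ admits a unital CP embedding into some $M_{K_n}(\C)$ with a CP retraction, so the factorization re-routes through $M_{K_n}(\C)$, meeting the literal definition of semi-discreteness; Connes's theorem then gives AFD. The main obstacle is this last packaging step---matching strong $L^2$-convergence through finite-dimensional algebras to the standard semi-discreteness criterion---but this is routine bookkeeping, resting on the coincidence of $L^2$-strong convergence with $\sigma$-weak convergence on $L^\infty$-bounded sets and the Artin--Wedderburn reduction to matrix algebras.
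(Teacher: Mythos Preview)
Your proof is correct and follows the same cycle $(1)\Rightarrow(2)\Rightarrow(3)\Rightarrow(4)\Rightarrow(1)$ as the paper, invoking the same results for the first three implications and for $(4)\Rightarrow(1)$ constructing $T_n=\Phi_n^*\Phi_n$ factoring through finite-dimensional algebras and showing $T_n\to\id$ to obtain semi-discreteness. The one noteworthy difference is in how you verify $T_n(Z)\to Z$: the paper realizes $\Phi_n^*\Phi_n$ as $\pi_n^*\rho_n$ via an amalgamated free product of two copies of the coupling algebra over $\mathrm{W}^*(Y_n)$, then shows the induced ultraproduct embeddings $\pi,\rho$ coincide; you instead compute $\langle Z,T_n(Z)\rangle=\norm{\Phi_n(Z)}_{L^2}^2\to\norm{\phi(Z)}_{L^2}^2=\norm{Z}_{L^2}^2$ directly from the ultraproduct trace and conclude by $L^2$-contractivity of $T_n$. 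Your route is a genuine streamlining---it bypasses the amalgamated free product entirely and needs only the elementary identity $\norm{T_n(Z)-Z}^2\le 2\norm{Z}^2-2\langle Z,T_n(Z)\rangle$---while the paper's route makes the mechanism (two embeddings of $\cA$ collapsing in the ultraproduct) more geometrically transparent.
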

	
	\begin{proof}
		(1) $\implies$ (2) by Corollary \ref{cor:FMstable}.
		
		(2) $\implies$ (3) by Proposition \ref{prop:twotopologies}.
		
		(3) $\implies$ (4)  Since two topologies agree at $\mu$ and $\mu$ is in the weak-$*$ closure of $\Sigma_{m,R}^{\fin}$, it follows that $\mu$ is in the Wasserstein closure of $\Sigma_{m,R}^{\fin}$.
		
		(4) $\implies$ (1).  Assume that (4) holds and we will show that $\cA$ is semi-discrete, hence approximately finite-dimensional by Connes' theorem.  Fix a free ultrafilter $\mathcal{U}$ on $\N$.  Let $\mu_n$ be a sequence in $\Sigma_{n,R}^{\fin}$ such that $\lim_{n \to \mathcal{U}} d_W^{(2)}(\mu_n,\mu) = 0$.  Let $(\cA_n,X_n,Y_n)$ be an optimal coupling of $\mu$ and $\mu_n$.  Since $\mathrm{W}^*(X_n) \cong \mathrm{W}^*(X) = \cA$, we can assume without loss of generality that $\cA \subseteq \cA_n$ and $X_n = X$.  Let $\Phi_n: \cA = \mathrm{W}^*(X) \to \mathrm{W}^*(Y_n)$ be the associated factorizable map.  Since $\mathrm{W}^*(Y_n)$ is finite-dimensional, if we can show that $\Phi_n^* \Phi_n(Z) \to Z$ in $L^2(\cA)_{\sa}^m$ as $n \to \mathcal{U}$ for every $Z \in \cA$, that will imply semi-discreteness of $\cA$ and finish the argument.
		
		The convergence of $\Phi_n^* \Phi_n(Z)$ follows by a similar argument to Proposition \ref{prop:twotopologies}.  Let $\cB_n$ be the free product of two copies of $\cA_n$ with amalgamation over $\mathrm{W}^*(Y_n)$ and let $\pi_n$ and $\rho_n$ be the two inclusions of $\cA$ into the first and second copies of $\cA_n$.  Then $\Phi_n^* \Phi_n = \pi_n^* \rho_n$.  Now $\pi_n$ and $\rho_n$ induce maps
		\[
		\pi, \rho: \cA \to \prod_{n \to \mathcal{U}} \cB_n.
		\]
		Moreover, $\norm{\pi_n(X) - \rho_n(X)}_{L^2(\cB_n)_{\sa}^m} \leq 2 \norm{X - Y_n}_{L^2(\cA_n)_{\sa}^m} \to 0$, and therefore, $\pi(X) = \rho(X)$, so $\pi = \rho$ on all of $L^\infty(\cA)$.  This implies that $\pi^* \rho(Z) = Z$ for $Z \in L^\infty(\cA)$, hence $\lim_{n \to \mathcal{U}} \norm{\pi_n^*\rho_n(Z) - Z}_{L^2(\cA)} = 0$.
	\end{proof}

	\begin{corollary} \label{cor:noncompact}
		For $m > 1$ and $R > 0$, $\Sigma_{m,R}$ is not compact with respect to the Wasserstein topology.
	\end{corollary}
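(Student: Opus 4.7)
The plan is to proceed by contradiction using a standard topological principle: a continuous bijection from a compact space to a Hausdorff space is automatically a homeomorphism. Suppose, toward a contradiction, that $\Sigma_{m,R}$ is compact with respect to $d_W^{(2)}$. The space $\Sigma_{m,R}$ is also compact (and metrizable, hence Hausdorff) in the weak-$*$ topology, as noted immediately after Definition~\ref{def:NClaw}. By Corollary~\ref{cor:twotopologies}, the identity map
\[
\id \colon (\Sigma_{m,R}, d_W^{(2)}) \longrightarrow (\Sigma_{m,R}, \text{weak-}*)
\]
is continuous. Since it is a continuous bijection from a compact space onto a Hausdorff space, it is a homeomorphism, so the weak-$*$ and Wasserstein topologies on $\Sigma_{m,R}$ would coincide.

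To derive a contradiction, I need to exhibit a single $\mu \in \Sigma_{m,R}$ at which the two topologies disagree. By Proposition~\ref{prop:twotopologies2}, any law $\mu$ lying in the weak-$*$ closure of $\Sigma_{m,R}^{\fin}$ whose GNS tracial $\mathrm{W}^*$-algebra is Connes-embeddable but \emph{not} approximately finite-dimensional fails to lie in the Wasserstein closure of $\Sigma_{m,R}^{\fin}$; in particular, every weak-$*$ neighborhood of $\mu$ meets $\Sigma_{m,R}^{\fin}$ but some Wasserstein neighborhood of $\mu$ does not. So it suffices to produce such a $\mu$ in $\Sigma_{m,R}$ for the given $m > 1$ and $R > 0$. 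The natural example is the law of a suitable self-adjoint generating tuple of a free group factor $L(\mathbb{F}_2)$: this algebra is Connes-embeddable (by Voiculescu's random-matrix model) but not AFD, and it is generated by two free semicircular elements. For $m = 2$ one uses these directly, and for $m > 2$ one appends further freely independent bounded self-adjoint elements (or takes additional generators of $L(\mathbb{F}_m)$). A rescaling $X \mapsto (R/R_0)X$ replaces $\mu \in \Sigma_{m,R_0}$ by a law in $\Sigma_{m,R}$ with the same GNS algebra, so the construction works for every $R > 0$.

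The only genuine obstacle is verifying the input to Proposition~\ref{prop:twotopologies2}, namely that such $\mu$ really lies in the weak-$*$ closure of $\Sigma_{m,R}^{\fin}$; this is exactly Connes-embeddability of the generated algebra (Lemma~\ref{lem:ultraproductlaws2}), which is standard for free group factors. Everything else is routine topology and invocation of the preceding propositions.
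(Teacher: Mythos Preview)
Your argument is correct and follows essentially the same route as the paper: assume compactness, use that the identity map $(\Sigma_{m,R},d_W^{(2)}) \to (\Sigma_{m,R},\text{weak-}*)$ is a continuous bijection from a compact space to a Hausdorff space and hence a homeomorphism, and then contradict Proposition~\ref{prop:twotopologies2} by exhibiting a Connes-embeddable but non-AFD tracial $\mathrm{W}^*$-algebra (the paper simply says ``there are many'' such algebras, while you give the explicit example $L(\mathbb{F}_2)$ and handle the dependence on $m$ and $R$).
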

	
	\begin{proof}
		The identity map from $\Sigma_{m,R}$ with the Wasserstein topology to $\Sigma_{m,R}$ with the weak-$*$ topology is a continuous bijection.  If the domain were compact, then it would be a homeomorphism.  The previous proposition would then imply that every $\mu \in \Sigma_{m,R}$ that generates a Connes-embeddable tracial $\mathrm{W}^*$-algebra would in fact generate an AFD tracial $\mathrm{W}^*$-algebra.  However, there are many finitely generated and Connes-embeddable tracial $\mathrm{W}^*$-algebras that are not AFD.
	\end{proof}
	
	Another consequence of Proposition \ref{prop:twotopologies2} is the following:  Let $\Sigma_{m,R}^{\app}$ be the weak-$*$ closure of $\Sigma_{m,R}^{\fin}$; then the laws that generate AFD tracial $\mathrm{W}^*$-algebras are weak-$*$ generic in $\Sigma_{m,R}^{\app}$, in the sense of the Baire category theorem.  This may seem surprising at first because there are many Connes-embedddable tracial $\mathrm{W}^*$-algebras that are not AFD.  However, a closely related model-theoretic statement has already been proved in cite[Theorem 5.1]{Goldbring2021}, namely that $\mathcal{R}$ is the enforceable model of its universal theory.
	
	\begin{corollary} \label{cor:genericity}
	The set of laws $\mu$ that generate an AFD tracial $\mathrm{W}^*$-algebra is a dense $G_\delta$ set in $\Sigma_{m,R}^{\app}$ with respect to the weak-$*$ topology.
	\end{corollary}

	\begin{proof}
	Let $\mathcal{S}$ be the set of such laws.  By definition $S$ is weak-$*$ dense in $\Sigma_{m,R}^{\app}$.  We showed above $\mathcal{S}$ is closed with respect to the Wasserstein distance.  It follows that
	\[
	\mathcal{S} = \bigcap_{k\in\N} \mathcal{V}_k, \text{ where } \mathcal{V}_k := \left\{\mu \in \Sigma_{m,R}^{\app}: d_W^{(2)}(\mu,\nu) < \frac{1}{k} \text{ for some } \nu \in \mathcal{S} \right\}.
	\]
	For each $k$ and each $\nu \in \mathcal{S}$, because the weak-$*$ and Wasserstein topologies agree at $\nu$, there exists a weak-$*$ open set $\mathcal{U}_{k,\nu} \subseteq \Sigma_{m,R}^{\app}$ such that $\nu \in \mathcal{U}_{k,\nu} \subseteq \mathcal{V}_k$.  Let
	\[
	\mathcal{U}_k = \bigcup_{\nu \in \mathcal{S}} \mathcal{U}_{k,\nu}.
	\]
	Then $\mathcal{S} \subseteq \mathcal{U}_k \subseteq \mathcal{V}_k$ and $\mathcal{U}_k$ is weak-$*$ open.  It follows that $\mathcal{S} = \bigcap_{k \in \N} \mathcal{U}_k$ is a $G_\delta$ set in $\Sigma_{m,R}^{\app}$.
	\end{proof}
	
	\subsection{Non-separability of the Wasserstein space} \label{subsec:nonseparability}
	
	We just showed that $\Sigma_{m,R}$ with the Wasserstein distance is not compact for $m > 1$, but in fact we will show that it is not separable using the results of Gromov \cite{Gromov1987}, Olshanskii \cite{Olshanskii1993}, and Ozawa \cite{Ozawa2004}.  We first recall some terminology about groups and their associated $\mathrm{W}^*$-algebras.
	
	Let $\Gamma$ be a group and let $\ell^2(\Gamma)$ be the Hilbert space of square-summable functions on $\Gamma$.  Let $u: \Gamma \to B(\ell^2(\Gamma))$ be the \emph{left regular representation} given by $u(g) \delta_h = \delta_{gh}$, where $\delta_g \in \ell^2(\Gamma)$ is the function which is $1$ at $g$ and zero elsewhere.  The $\mathrm{W}^*$-subalgebra of $B(\ell^2(\Gamma))$ generated by the unitary operators $u(g)$ for $g \in \Gamma$ is called the \emph{group von Neumann algebra of $\Gamma$}.  The map $\tau: L(\Gamma) \to \C$ given by $T \mapsto \ip{\delta_e, T \delta_e}$ is a faithful normal trace on $L(\Gamma)$, so that it is a tracial $\mathrm{W}^*$-algebra.
	
	\begin{definition} \label{def:propertyT}
		A discrete group $\Gamma$ is said to have \emph{property (T)} if there exist generators $g_1$,\dots, $g_m$ and an increasing function $f: [0,\infty) \to [0,\infty)$ with $\lim_{\epsilon \to 0^+} f(\epsilon) = 0$ with the following property:  For every unitary representation $\pi$ of $\Gamma$ on a Hilbert space $H$ and every unit vector $\xi \in H$, if $\max_{j \in [m]} \norm{\pi(g_j) \xi - \xi} < \epsilon$, then there exists $\eta \in H$ such that $\pi(g) \eta = \eta$ for all $g \in \Gamma$ and $\norm{\eta - \xi} < f(\epsilon)$.
	\end{definition}
	
	\begin{theorem}[{Gromov \cite{Gromov1987}, Olshanskii \cite{Olshanskii1993}, and Ozawa \cite[Theorem 1]{Ozawa2004}}] \label{thm:weirdgroups}
		There exists a group $\Gamma$ with property (T) that admits uncountable family $\{\Gamma_\alpha\}_{\alpha \in I}$ of quotient groups that are simple and pairwise non-isomorphic.  (In fact, such a family of quotient groups exists for every group $\Gamma$ that is hyperbolic, torsion-free, and non-cyclic.)
	\end{theorem}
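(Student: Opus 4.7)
The plan is to assemble the statement from two essentially independent ingredients: an explicit source of hyperbolic groups with property~(T), and a theorem of Olshanskii (building on Gromov's small cancellation theory over hyperbolic groups) producing uncountable families of simple quotients of any non-elementary torsion-free hyperbolic group. Property~(T) plays no role in the quotient construction itself; it only enters to exhibit at least one $\Gamma$ for which the parenthetical general statement applies.

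For the first ingredient, I would take $\Gamma$ to be a torsion-free cocompact lattice in $\mathrm{Sp}(n,1)$ for some $n \geq 2$. Such a lattice has property~(T) by Kostant's theorem on simple Lie groups of real rank one other than $\mathrm{SO}(n,1)$ and $\mathrm{SU}(n,1)$; it is Gromov-hyperbolic because it acts properly cocompactly on the quaternionic hyperbolic space $\mathbf{H}_{\mathbb{H}}^n$, which has strictly negative sectional curvature; by Selberg's lemma we may pass to a torsion-free finite-index subgroup without losing either property; and it is non-cyclic (indeed non-elementary as a hyperbolic group) because it is an infinite, non-virtually-abelian lattice. Any of these facts is standard, and an alternative construction using random groups in the Gromov density model or using lattices in $\mathrm{F}_{4(-20)}$ would serve equally well.

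For the second ingredient, I would invoke Olshanskii's theorem that every non-elementary torsion-free hyperbolic group $H$ admits $2^{\aleph_0}$ pairwise non-isomorphic infinite simple quotients. The construction is an iterated small cancellation argument inside $H$: enumerate a sequence of sufficiently long, pairwise distinct conjugacy classes $(w_k)_{k \in \mathbb{N}}$, and for each binary string $\epsilon \in \{0,1\}^{\mathbb{N}}$ build a descending chain $H = H_0^\epsilon \twoheadrightarrow H_1^\epsilon \twoheadrightarrow \dots$ by imposing, at stage $k$, one of two prescribed relators dictated by $\epsilon_k$, while interleaving additional relations that kill any previously enumerated non-trivial normal subgroup. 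The small cancellation condition, formulated over hyperbolic groups, ensures that each $H_k^\epsilon$ remains non-elementary and hyperbolic, so the direct limit $\Gamma_\epsilon$ is infinite and simple. Distinct $\epsilon$'s yield non-isomorphic limits because the set of conjugacy classes killed at finite stages is an isomorphism invariant sensitive to the choice, and a standard cardinality/pigeonhole argument upgrades $2^{\aleph_0}$ binary sequences to $2^{\aleph_0}$ isomorphism types. Since property~(T) passes to quotients, all $\Gamma_\epsilon$ additionally have property~(T), though this is not required by the statement.

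The main obstacle is the second step: executing Olshanskii's iterated small cancellation inside a hyperbolic group with enough control to simultaneously guarantee (i) non-collapse at every finite stage, (ii) simplicity of the direct limit, and (iii) pairwise non-isomorphism of the uncountable family. Each of these is delicate individually, and balancing them requires the full machinery of Olshanskii's generalized small cancellation theory over hyperbolic groups (refining Gromov's sketch in his essay on hyperbolic groups). I would treat this as a black box, as the theorem is used in the paper, and limit the write-up to verifying that a group $\Gamma$ satisfying the hypotheses of Olshanskii's theorem and having property~(T) exists.
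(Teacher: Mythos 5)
Your proposal is correct and follows the same route as the sources the paper cites: the paper itself quotes this theorem from Ozawa (who relies on Gromov and Olshanskii) without proof, and your sketch—a torsion-free cocompact lattice in $\mathrm{Sp}(n,1)$, $n \geq 2$, supplying a property (T), non-elementary, torsion-free hyperbolic group, combined with the Gromov--Olshanskii small cancellation machinery over hyperbolic groups treated as a black box—is exactly how that literature establishes it. Your verification of the hypotheses (Kostant for property (T), negative curvature of quaternionic hyperbolic space for hyperbolicity, Selberg's lemma for torsion-freeness, non-elementarity of the lattice) is accurate, so nothing further is needed.
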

	
	The next lemma will allow us to translate this result into a statement about the space of non-commutative laws.  While the space of non-commutative laws is defined in terms of self-adjoint generators, it is natural in the group setting to consider unitary rather than self-adjoint generators of a tracial $\mathrm{W}^*$-algebra.  However, this issue is easily resolved by taking real and imaginary parts of operators.  More precisely, if $a$ is an operator in a tracial $\mathrm{W}^*$-algebra $\cA$, let $\re(a) = (a + a^*)/2$ and $\im(a) = (a - a^*)/2i$.  Then $\re(a)$ and $\im(a)$ are self-adjoint and $a = \re(a) + i \im (a)$ and $\norm{a}_{L^2(\cA)}^2 = \norm{\re(a)}_{L^2(\cA)}^2 + \norm{\im(a)}_{L^2(\cA)}^2$.
	
	\begin{lemma}
		Let $\Gamma$ be a group with property (T), and let $g_1$, \dots, $g_m \in \Gamma$ and $f: [0,\infty) \to [0,\infty)$ be as in Definition \ref{def:propertyT}.  Let $q_1: \Gamma \to \Gamma_1$ and $q_2: \Gamma \to \Gamma_2$ be quotient group homomorphisms.  For $j = 1,2$, let $\pi_j: \Gamma \to L(\Gamma_j)$ be the quotient map $q_j$ composed with the left regular representation of $\Gamma_j$ and let
		\[
		X_j = \bigl(\re(\pi_j(g_1)), \im(\pi_j(g_1)), \dots, \re(\pi_j(g_m)), \im(\pi_j(g))\bigr) \in L(\Gamma_j)_{\sa}^{2m}.
		\]
		If $f(d_W^{(2)}(\lambda_{X_1},\lambda_{X_2})) < 1/2$, then $\ker(q_1) = \ker(q_2)$ and hence $\Gamma_1 = \Gamma_2$.
	\end{lemma}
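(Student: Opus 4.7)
The plan is to prove the contrapositive: if $\lambda_{X_1}$ and $\lambda_{X_2}$ can be optimally coupled at some distance $\epsilon$ with $f(\epsilon) < 1/2$, then $\ker q_1 = \ker q_2$. First I would fix an optimal coupling $(\cA, Y_1, Y_2)$ of $\lambda_{X_1}$ and $\lambda_{X_2}$. Since $\mathrm{W}^*(Y_j) \cong L(\Gamma_j)$ via the isomorphism of Lemma~\ref{lem:lawisomorphism}, this yields unitary representations $u_j \colon \Gamma \to \mathcal{U}(\cA)$ factoring through $\Gamma_j$, with $u_j(g_k) = Y_j^{(2k-1)} + i Y_j^{(2k)}$. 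Writing $\epsilon = d_W^{(2)}(\lambda_{X_1}, \lambda_{X_2})$, the optimality gives $\norm{u_1(g_k) - u_2(g_k)}_{L^2(\cA)} \leq \norm{Y_1 - Y_2}_{L^2(\cA)_{\sa}^{2m}} = \epsilon$ for each $k$.

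Next I would apply property~(T) to the unitary representation $\pi \colon \Gamma \to \mathcal{U}(L^2(\cA))$ defined by $\pi(g)(\xi) = u_1(g)\, \xi\, u_2(g)^*$, with unit vector $\xi = 1 \in L^2(\cA)$. Since $\norm{\pi(g_k)\cdot 1 - 1}_{L^2} = \norm{u_1(g_k) - u_2(g_k)}_{L^2} \leq \epsilon$, property~(T) produces a $\pi$-invariant vector $\eta \in L^2(\cA)$ with $\norm{\eta - 1}_{L^2} < f(\epsilon) < 1/2$, and the invariance $\pi(g)\eta = \eta$ is the intertwining identity $u_1(g)\eta = \eta u_2(g)$ for all $g \in \Gamma$.

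The heart of the proof is to extract a projection from $\eta$. I would take the polar decomposition $\eta = v|\eta|$, with $v \in \cA$ a partial isometry and $p = v^*v$ the right support projection of $\eta$. Squaring the intertwining identity gives $\eta^*\eta = u_2(g)^* \eta^*\eta\, u_2(g)$, so $|\eta|$ and $p$ commute with every $u_2(g)$ by functional calculus, which in turn lets one show that the intertwining passes to $v$: $u_1(g) v = v u_2(g)$ for all $g \in \Gamma$. Moreover, $\eta p = \eta$ implies $\ip{\eta - p, 1 - p}_{L^2} = 0$, yielding the Pythagorean identity $\norm{\eta - 1}_{L^2}^2 = \norm{\eta - p}_{L^2}^2 + \norm{1 - p}_{L^2}^2$. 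Hence $\tau(1 - p) \leq \norm{\eta - 1}_{L^2}^2 < 1/4$, so $\tau(p) > 3/4$.

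For $w \in \ker q_1$ the identity $u_1(w) = 1$ reduces the intertwining to $v = v u_2(w)$, and left-multiplying by $v^*$ gives $p u_2(w) = p$. Combined with $p u_2(w) = u_2(w) p$, this yields $(u_2(w) - 1)p = 0$, so $p \leq P_{\mathrm{fix}}(u_2(w))$ in $\cA$, where $P_{\mathrm{fix}}(u_2(w))$ is the spectral projection of $u_2(w)$ at the eigenvalue $1$. Since $P_{\mathrm{fix}}(u_2(w))$ lies in the abelian W$^*$-algebra generated by $u_2(w) \in L(\Gamma_2)$, its trace can be computed inside $L(\Gamma_2)$: if $q_2(w)$ has finite order $n$, then $P_{\mathrm{fix}}(u_2(w)) = \tfrac{1}{n}\sum_{k=0}^{n-1} u_2(w)^k$ has trace $1/n$, while if $q_2(w)$ has infinite order, mean-ergodic considerations give trace $0$. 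Combining with $\tau(P_{\mathrm{fix}}(u_2(w))) \geq \tau(p) > 3/4$ forces $n = 1$, i.e.\ $q_2(w) = e$ and so $w \in \ker q_2$. A symmetric argument using the left support $r = vv^*$ of $\eta$ and elements $w' \in \ker q_2$ establishes $\ker q_2 \subseteq \ker q_1$. The main subtlety will be transferring the intertwining identity from $\eta$ to $v$: since $\eta$ lies in $L^2(\cA)$ rather than $\cA$, its absolute value $|\eta|$ is only affiliated to $\cA$, so the step $u_1(g)v = v u_2(g)$ requires careful manipulation of affiliated operators together with Borel functional calculus for $|\eta|$; once this step is secured, the Pythagorean estimate for $\tau(p)$ and the trace computation for $P_{\mathrm{fix}}(u_2(w))$ in $L(\Gamma_2)$ are routine.
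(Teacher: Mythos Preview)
Your argument is correct, and the first half (setting up the coupling, the representation $\pi(g)\xi = u_1(g)\xi u_2(g)^*$ on $L^2(\cA)$, and invoking property~(T) to produce the almost-invariant vector $\eta$) matches the paper exactly. The divergence comes after obtaining $\eta$.

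The paper finishes in one line: from $u_1(g)\eta = \eta\, u_2(g)$ and the triangle inequality,
\[
\norm{u_1(g) - u_2(g)}_{L^2(\cA)} = \norm{u_1(g)\cdot 1 - 1\cdot u_2(g)} \leq \norm{u_1(g)(1-\eta)} + \norm{(\eta - 1)u_2(g)} = 2\norm{1 - \eta} < 1
\]
for \emph{every} $g \in \Gamma$, so $|\tau_{\cA}(u_1(g)) - \tau_{\cA}(u_2(g))| < 1$. Since $\tau_{\cA}(u_j(g)) = \tau_{L(\Gamma_j)}(\lambda(q_j(g))) = \delta_{g \in \ker q_j}$ is $\{0,1\}$-valued, the two indicators coincide and $\ker q_1 = \ker q_2$.

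Your route instead extracts structural information from $\eta$: polar decomposition $\eta = v|\eta|$, transfer of the intertwining to the partial isometry $v$, a Pythagorean estimate giving $\tau(v^*v) > 3/4$, and then a spectral/trace computation for the fixed-point projection of $u_2(w)$ in $L(\Gamma_2)$. This is sound (the Pythagorean step works because $\tau(\eta^* p) = \tau(p\eta^*) = \tau(\eta^*)$ via traciality and $p\eta^* = \eta^*$), and it yields the stronger byproduct of an actual partial isometry in $\cA$ intertwining the two representations. The cost is exactly the technical overhead you flag: handling the polar decomposition of an $L^2$ element through affiliated operators, verifying that the intertwining passes to $v$ via uniqueness of polar decomposition, and computing traces of spectral projections. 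The paper's trace trick sidesteps all of this and gets both inclusions $\ker q_1 \subseteq \ker q_2$ and $\ker q_2 \subseteq \ker q_1$ simultaneously, without needing a separate symmetric argument.
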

	
	\begin{proof}
		Let $\cA$ be a tracial $\mathrm{W}^*$-algebra and let $\iota_j: L(\Gamma_j) \to \cA$ be tracial $\mathrm{W}^*$-embeddings such that $\norm{\iota_1(X_1) - \iota_2(X_2)}_{L^2(\cA)_{\sa}^{2m}} = d_W^{(2)}(\lambda_{X_1},\lambda_{X_2})$.  Note that for $j = 1$, \dots, $m$,
		\begin{align*}
			\norm{\iota_1(\pi_1(g_j)) - \iota_2(\pi_2(g_j))}_{L^2(\cA)}^2 &= \norm{\iota_1(\re(\pi_1(g_j))) - \iota_2(\re(\pi_2(g_j)))}_{L^2(\cA)}^2 + \norm{\iota_1(\im(\pi_1(g_j))) - \iota_2(\im(\pi_2(g_j)))}_{L^2(\cA)}^2 \\
			&\leq \norm{\iota_1(X_1) - \iota_2(X_2)}_{L^2(\cA)_{\sa}^{2m}}^2.
		\end{align*}
		Let $\pi: \Gamma \to B(L^2(\cA))$ be the map given by $\pi(g) \xi = \iota_1(\pi_1(g)) \xi \iota_2(\pi_2(g^{-1}))$ for $\xi \in L^2(\cA)$; note that this is a unitary representation.  The vector $\widehat{1}$ in $L^2(\cA)$ satisfies
		\begin{align*}
			\norm*{\pi(g_j) \widehat{1} - \widehat{1}}_{L^2(\cA)}
			&= \norm*{\iota_1(\pi_1(g_j)) \widehat{1} - \widehat{1}\iota_2(\pi_2(g_2))}_{L^2(\cA)} \\
			&= \norm{\iota_1(\pi_1(g_j)) - \iota_2(\pi_2(g_2))}_{L^2(\cA)} \\
			&\leq d_W^{(2)}(\lambda_{X_1},\lambda_{X_2}).
		\end{align*}
		Hence, by property (T), there exists some $\eta \in L^2(\cA)$ such that $\norm{\widehat{1} - \eta}_{L^2(\cA)} \leq f(d_W^{(2)}(\lambda_{X_1},\lambda_{X_2}))$ and $\pi(g) \eta = \eta$ for all $g \in \Gamma$.  The latter condition implies that $\iota_1(\pi_1(g)) \eta = \eta \iota_2(\pi_2(g))$ for all $g \in \Gamma$.  Therefore, using the triangle inequality and the fact that $\iota_j(\pi_j(g))$ is unitary,
		\[
		\norm*{\iota_1(\pi_1(g))\widehat{1} - \widehat{1} \iota_2(\pi_2(g))}_{L^2(\cA)} \leq 2 \norm*{\widehat{1}- \eta}_{L^2(\cA)} \leq 2 f(d_W^{(2)}(\lambda_{X_1},\lambda_{X_2})) < 1.
		\]
		Hence,
		\[
		|\tau_{\cA}(\iota_1(\pi_1(g))) - \tau_{\cA}(\iota_2(\pi_2(g)))| \leq \norm*{\iota_1(\pi_1(g)) - \iota_2(\pi_2(g))}_{L^2(\cA)} < 1.
		\]
		Now observe that
		\[
		\tau_{\cA}(\iota_j(\pi_j(g))) = \tau_{L(\Gamma_j)}(\pi_j(g)) = \delta_{\pi_j(g) = 1} = \delta_{g \in \ker(q_j)}.
		\]
		Since $\delta_{g \in \ker(q_j)}$ is either zero or one and $|\delta_{g \in \ker(q_1)} - \delta_{g \in \ker(q_2)}| < 1$, we have $\ker(q_1) = \ker(q_2)$.
	\end{proof}
	
	We can now prove Theorem \ref{thm:notseparable} that shows that for sufficiently large $m$, $\Sigma_{m,1}$ is not separable with respect to $d_W^{(2)}$.  The method is similar to \cite[Proof of Theorem 2]{Ozawa2004}.
	
	\begin{proof}[{Proof of Theorem \ref{thm:notseparable}}]
		First, we show that $\Sigma_{2m,1}$ is not separable for some $m$.  Let $\Gamma$ be a property (T) group with an uncountable family $(\Gamma_\alpha)_{\alpha \in I}$ of non-isomorphic quotients.  Let $\pi_\alpha: \Gamma \to L(\Gamma_\alpha)$ be the quotient map composed with the left regular representation.  Let $g_1$, \dots, $g_m$ and $f: [0,\infty) \to [0,\infty)$ witness property (T).  Let $\epsilon$ be sufficiently small that $f(\epsilon) < 1/2$.  Let
		\[
		X_\alpha = (\re(\pi_\alpha(g_1)), \im(\pi_\alpha(g_1)), \dots, \re(\pi_\alpha(g_m)), \im(\pi_\alpha(g_m))).
		\]
		For $\alpha \neq \beta$ in $I$, since $\Gamma_\alpha$ and $\Gamma_\beta$ are not isomorphic, the lemma implies that $f(d_W^{(2)}(\lambda_{X_\alpha},\lambda_{X_\beta})) \geq 1/2$, and therefore $d_W^{(2)}(\lambda_{X_\alpha},\lambda_{X_\beta}) \geq \epsilon$.  Hence, $\{\lambda_{X_\alpha}\}_{\alpha \in I}$ is an uncountable $\epsilon$-separated set in $\Sigma_{2m,1}$ with respect to the Wasserstein distance.
		
		To prove that $\Sigma_{m,R}$ is not separable for general $m > 1$ and $R > 0$, we first observe that there is a bijection between $\Sigma_{m,R}$ and $\Sigma_{m,R'}$ given by rescaling the non-commutative random variables.  Hence, for each $m$, if we prove non-separability for one value of $R$, then it holds for all values of $R$.  Furthermore, we can define a map $\Sigma_{m,R} \to \Sigma_{m+1,R}$ sending the law of $(X_1,\dots,X_m)$ to the law of $(X_1,\dots,X_m,0)$.  It is straightforward to show that this map is isometric with respect to the Wasserstein distance.  Hence, if $\Sigma_{m,R}$ is not separable, then $\Sigma_{m',R}$ is not separable for $m' \geq m$.  Therefore, to prove the theorem, it suffices to show that for some value of $R$, $\Sigma_{2,R}$ is not separable.
		
		We already know that for some $m$, $\Sigma_{m,1}$ is not separable.  Hence, for some $\epsilon > 0$, there is an uncountable family $(\mu_\alpha)_{\alpha \in I}$ of laws in $\Sigma_{m,1}$ that is $\epsilon$-separated with respect to the Wasserstein distance.  Let $(\cA_\alpha, X_\alpha)$ be the GNS realization of $\mu_\alpha$, where $X_\alpha = (X_{\alpha,1},\dots,X_{\alpha,m})$.  Consider the tracial $\mathrm{W}^*$-algebra $M_m(\cA_\alpha)$ with the trace $\tau_\alpha \otimes \tr_m$, and let $Y_\alpha \in M_m(\cA_\alpha)_{\sa}$ be the diagonal matrix with entries $X_{\alpha,1} + 4$, $X_{\alpha,2} + 8$, \dots, $X_{\alpha,m} + 4m$.  Let $U_\alpha \in M_m(\C) \subseteq M_m(\cA_\alpha)$ be the matrix of an $m$-cycle permutation.  By functional calculus, $U_\alpha$ can be expressed as $e^{iZ_\alpha}$ for some self-adjoint $Z_\alpha \in M_m(\C) \subseteq M_m(\cA_\alpha)$ with $\norm{Z_\alpha}_{L^\infty(M_m(\C))} \leq \pi/2$.  Since $U_\alpha$ is the inclusion into $M_m(\cA_\alpha)$ of an element of $M_m(\C)$ that is independent of $\alpha$, there is in fact a polynomial $p$ such that $U_\alpha = p(Z_\alpha)$, and $Z_\alpha$ and $p$ are independent of $\alpha$.  We claim that $d_W^{(2)}(\lambda_{Y_\alpha,Z_\alpha}, \lambda_{Y_\beta,Z_\beta}) \geq (1/K) d_W^{(2)}(\mu_\alpha,\mu_\beta)$ for some $K > 0$, which will imply that $\Sigma_{2,4m+1}$ is not separable and thus prove the theorem.
		
		To accomplish this, we will express $X_{\alpha,j} \otimes I_m$ in $M_m(\cA_\alpha)$ as a function of $Y_\alpha$ and $Z_\alpha$ (in an explicit way which allows us to estimate Wasserstein distances), using a well-known matrix amplification trick.  We first recall a foundational result that the weak-$*$ topology of a $\mathrm{W}^*$-algebra can be recovered from any faithful representation on a Hilbert space; see e.g.\ \cite[Corollary 1.13.3, Proposition 1.16.2, Theorem 1.16.7]{Sakai1971}.  In particular, $\cA_\alpha$ can be faithfully represented on $H = L^2(\cA)$ and $M_m(\cA_\alpha) = \cA_\alpha \otimes M_m(\C)$ can be faithfully represented on the Hilbert space $H \otimes \C^m = H^{\oplus m}$.  Moreover, all the facts about spectral theory and functional calculus on $B(H)$ and $B(H^{\oplus m})$ can be applied to the operators from $\cA_\alpha$ and $M_n(\cA_\alpha)$.  In particular,
		\[
		\Spec(Y_\alpha) = \bigcup_{j=1}^m (\Spec(X_{\alpha,j}) + 4j) \subseteq \bigcup_{j=1}^m [4j-1,4j+1].
		\]
		Let $\gamma_j$ be the rectangular contour in $\C$ bounding the rectangle $[4j-2,4j+2] \times [-1,1]$, so that $\gamma_j$ is separated from $\Spec(Y_\alpha)$ by a distance of $1$.  Using the Cauchy integral formula and functional calculus,
		\[
		\int_{\gamma_j} (z - 4j)(z - X_{\alpha,k})^{-1}\,dz = \delta_{j,k} X_{\alpha,k}.
		\]
		Hence,
		\[
		\int_{\gamma_j} (z - 4j)(z - Y_\alpha)^{-1}\,dz = X_{\alpha,j} \otimes e_{j,j},
		\]
		where $e_{j,j}$ is the $j$th diagonal matrix unit in $M_m(\C)$.  In particular, $X_{\alpha,j} \otimes e_{j,j} \in \mathrm{W}^*(Y_\alpha)$ and thus $X_{\alpha,j} \otimes e_{k,\ell} = U_\alpha^{k-j}(X_{\alpha,j} \otimes e_{j,j}) U_\alpha^{j-\ell} \in \mathrm{W}^*(Y_\alpha,Z_\alpha)$ for every $k, \ell = 1$, \dots, $m$; this implies that $Y_\alpha$ and $Z_\alpha$ generate $M_m(\cA_\alpha)$.  Moreover,
		\begin{equation} \label{eq:Cauchyintegral}
			X_{\alpha,j} \otimes I_m = \sum_{k=1}^m \int_{\gamma_j} U_\alpha^k (z - 4j)(z - Y_\alpha)^{-1}U_\alpha^{-k}\,dz = \sum_{k=1}^m \int_{\gamma_j} p(Z_\alpha)^k (z - 4j)(z - Y_\alpha)^{-1} \overline{p}(Z_\alpha)^k \,dz.
		\end{equation}
		Let $\alpha \neq \beta$.  Then an optimal coupling of $\lambda_{Y_\alpha,Z_\alpha}$ and $\lambda_{Y_\beta,Z_\beta}$ on the tracial $\mathrm{W}^*$-algebra $\cB$ produces two tracial $\mathrm{W}^*$-embeddings $\iota_\alpha: M_m(\cA_\alpha) \to \cB$ and $\iota_\beta: M_m(\cA_\beta) \to \cB$.  Because the Cauchy integral representation \eqref{eq:Cauchyintegral} can be expressed as a Riemann integral, we have
		\[
		\iota_\alpha(X_{\alpha,j} \otimes I_m) = \sum_{k=1}^m \int_{\gamma_j} p(\iota_\alpha(Z_\alpha))^k (z - 4j)(z - \iota_\alpha(Y_\alpha))^{-1} \overline{p}(\iota_\alpha(Z_\alpha))^k \,dz,
		\]
		and the same holds for $\beta$.  Using the resolvent identity and non-commutative H\"older's inequality,
		\begin{align*}
			\norm{(z - \iota_\alpha(Y_\alpha))^{-1} - (z -\iota_\beta(Y_\beta))^{-1}}_{L^2(\cB)} &\leq \norm{(z - \iota_\alpha(Y_\alpha))^{-1}}_{L^\infty(\cB)} \norm{Y_\alpha - Y_\beta}_{L^2(\cB)} \norm{(z - \iota_\beta(Y_\beta))^{-1}}_{L^\infty(\cB)} \\
			&\leq \norm{Y_\alpha - Y_\beta}_{L^2(\cB)}.
		\end{align*}
		Furthermore, one checks easily that $\norm{p(\iota_\alpha(Z_\alpha)) - p(\iota_\beta(Z_\beta))}_{L^2(\cB)} \leq C_p \norm{\iota_\alpha(Z_\alpha) - \iota_\beta(Z_\beta)}_{L^2(\cB)}$ for some constant $C_p$ (since $\norm{Z_\alpha}_{L^\infty(M_m(\cA_\alpha))}$ is bounded by univeral constant).  By estimating the difference between $p(\iota_\alpha(Z_\alpha))^k (z - 4j)(z - \iota_\alpha(Y_\alpha))^{-1} \overline{p}(\iota_\alpha(Z_\alpha))^k$ and $p(\iota_\beta(Z_\beta))^k (z - 4j)(z - \iota_\beta(Y_\beta))^{-1} \overline{p}(\iota_\beta(Z_\beta))^k$ and applying the triangle inequality for integrals, we obtain for some constant $C_p'$ that
		\[
		\norm{\iota_\alpha(X_{\alpha,j} \otimes I_m) - \iota_\beta(X_{\beta,j} \otimes I_m)}_{L^2(\cB)} \leq C_p' \left( \norm{\iota_\alpha(Y_\alpha) - \iota_\beta(Y_\beta)}_{L^2(\cB)}^2 +  \norm{\iota_\alpha(Z_\alpha) - \iota_\beta(Z_\beta)}_{L^2(\cB)}^2  \right)^{1/2}.
		\]
		Since $(X_{\alpha,1} \otimes I_m, \dots, X_{\alpha,m} \otimes I_m)$ has the same non-commutative law as $X_\alpha$, we obtain
		\[
		\epsilon \leq d_W^{(2)}(\lambda_{X_\alpha},\lambda_{X_\beta}) \leq m^{1/2} C_p' d_W^{(2)}(\lambda_{(Y_\alpha,Z_\alpha)},\lambda_{(Y_\beta,Z_\beta)}).
		\]
		Hence, $\{\lambda_{(Y_\alpha,Z_\alpha)}\}_{\alpha \in I}$ is $\epsilon / (m^{1/2} C_p')$-separated in $\Sigma_{2,4m+1}$, as desired.
	\end{proof}
	
	We remark that a similar non-separability result in the context of model theory for operator algebras was shown in \cite[Proposition 4.2.9]{AGKE2020}.  In the model theoretic context, one often encounters triples $(\Omega,\mathscr{T},d)$ where  $(\Omega,\mathscr{T})$ is a topological space and $d$ is a metric on $\Omega$ that is lower semi-continuous with respect to $\mathscr{T}$ and generates a topology that is at least as strong as $\mathscr{T}$; such a triple $(\Omega,\mathscr{T},d)$ is called a \emph{topometric space} \cite{IBY2008}.  In particular, $\Sigma_{m,R}$ with the weak-$*$ topology and Wasserstein distance is a topometric space by Corollary \ref{cor:twotopologies} and Lemma \ref{lem:LSC}. It was shown in \cite[Proposition 3.20]{IBY2008} that if $(\Omega,\mathscr{T},d)$ is a topometric space and $(\Omega,\mathscr{T})$ is second countable and locally compact, then the density character of $(\Omega,d)$ is either countable or equal to the continuum.  Hence, as a corollary of Theorem \ref{thm:notseparable}, the density character of $(\Sigma_{m,R},d_W^{(2)})$ is the continuum (of course since $\Sigma_{m,R}$ with the weak-$*$ topology is compact and metrizable, it is in particular second countable and locally compact).
	
	\section{Further remarks} \label{sec:remarks}
	
	\subsection{Non-commutative optimal couplings and random matrix theory} \label{subsec:randommatrix}
	
	One of the motivations for our paper was the following question.
	
	\begin{question} \label{q:randommatrix}
		Suppose that $X^{(N)}$, $Y^{(N)}$ are random $m$-tuple of self-adjoint $N \times N$ matrices with probability distributions $\mu^{(N)}$ and $\nu^{(N)}$ respectively.  Let $\mu$, $\nu \in \Sigma_{n,R}$. Suppose that almost surely
		\[
		\limsup_{N \to \infty} \norm{X^{(N)}}_{L^\infty(M_N(\C))_{\sa}^m} < R, \qquad \limsup_{N \to \infty} \norm{Y^{(N)}}_{L^\infty(M_N(\C))_{\sa}^m} < R,
		\lim_{N \to \infty} \lambda_{X^{(N)}} = \mu, \qquad \lim_{N \to \infty} \lambda_{Y^{(N)}} = \nu.
		\]
		Does the classical $L^2$-Wasserstein distance of $\mu^{(N)}$ and $\nu^{(N)}$ (as probability measures on $M_N(\C)_{\sa}^m$) converge to the non-commutative $L^2$-Wasserstein distance of $\mu$ and $\nu$?
	\end{question}
	
	The results of \cite{GS2014,DGS2016} combined with \cite{JLS2021} give a positive answer when $\mu^{(N)}$ is a random matrix model with density proportional to $e^{-N^2V^{(N)}}$ where $V^{(N)}: M_N(\C)_{\sa}^m \to \R$ is a sufficiently regular convex function such as the trace of a non-commutative polynomial, and where $\nu^{(N)}$ has density proportional to $e^{-N^2 \norm{X}_{L^2(M_N(\C))_{\sa}^m}^2/2}$ (Gaussian).  The convexity of $V^{(N)}$ is crucial for all these arguments.  By contrast, the present work shows that Question \ref{q:randommatrix} can have a negative answer due to the obstruction of Connes-embeddability.
	
	\begin{proposition} \label{prop:matrixcounterexample}
		Let $X, Y \in M_n(\C)_{\sa}^m$ be matrix tuples such that an optimal coupling of $\lambda_X$ and $\lambda_Y$ requires a non-Connes-embeddable tracial $\mathrm{W}^*$-algebra as in Corollary \ref{cor:nonConnes}.  Suppose $X^{(N)}$ and $Y^{(N)}$ are random (or even deterministic) elements of $M_N(\C)_{\sa}^m$ that converge in non-commutative law to $X$ and $Y$.  Then the classical Wasserstein distance of the probability distributions of $X^{(N)}$ and $Y^{(N)}$ on $M_N(\C)_{\sa}^m$ (with the $L^2$ norm associated to the normalized trace $\tr_N$) does not converge to $d_W^{(2)}(\lambda_X,\lambda_Y)$.
	\end{proposition}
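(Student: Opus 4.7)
The plan is to derive a contradiction: assuming $W_2^{\mathrm{cl}}(\mu^{(N)},\nu^{(N)}) \to d_W^{(2)}(\lambda_X,\lambda_Y)$, we will construct an optimal coupling of $\lambda_X$ and $\lambda_Y$ in a Connes-embeddable tracial $\mathrm{W}^*$-algebra, contradicting the hypothesis on $X, Y$.

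First, for each $N$ pick a classical optimal coupling of $\mu^{(N)}$ and $\nu^{(N)}$ realized by $M_N(\C)_{\sa}^m$-valued random variables $\widetilde X^{(N)}, \widetilde Y^{(N)}$ on a standard Borel probability space $(\Omega_N, P_N)$. Form the tracial $\mathrm{W}^*$-algebra
\[
\cA_N := L^\infty(\Omega_N, P_N) \,\bar\otimes\, M_N(\C), \qquad \tau_{\cA_N} = P_N \otimes \tr_N,
\]
and view $\widetilde X^{(N)}, \widetilde Y^{(N)} \in L^\infty(\cA_N)_{\sa}^m$, so that $\norm{\widetilde X^{(N)} - \widetilde Y^{(N)}}_{L^2(\cA_N)_{\sa}^m} = W_2^{\mathrm{cl}}(\mu^{(N)}, \nu^{(N)})$. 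Each $\cA_N$ is the tensor product of a commutative tracial $\mathrm{W}^*$-algebra (hence AFD) with a matrix algebra, so $\cA_N$ is itself AFD and therefore Connes-embeddable. Fix a free ultrafilter $\mathcal{U}$ on $\N$ and set $\cC := \prod_{N \to \mathcal{U}} \cA_N$; a diagonal-ultrafilter combination of the Connes embeddings $\cA_N \hookrightarrow \prod_{k \to \mathcal{V}_N} M_{k}(\C)$ produces a trace-preserving embedding of $\cC$ into a single ultraproduct of matrix algebras, so $\cC$ is Connes-embeddable as well.

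Next, using the hypothesis $\limsup_N \norm{X^{(N)}}_{L^\infty(M_N(\C))_{\sa}^m} < R$ together with the functional calculus truncation from the proof of Lemma \ref{lem:ultraproductlaws2}, we obtain $X^*, Y^* \in L^\infty(\cC)_{\sa}^m$ with $\norm{X^*}_\infty, \norm{Y^*}_\infty \leq R$ represented by the sequences $\widetilde X^{(N)}, \widetilde Y^{(N)}$. The non-commutative law of $\widetilde X^{(N)}$ in $\cA_N$ is the $P_N$-expectation of the random empirical law $\lambda_{X^{(N)}}$; combining the assumed almost sure convergence $\lambda_{X^{(N)}} \to \lambda_X$ with the uniform operator-norm bound and applying the bounded convergence theorem to each monomial yields weak-$*$ convergence $\mathbb{E}_{P_N}[\lambda_{X^{(N)}}] \to \lambda_X$. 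Lemma \ref{lem:ultraproductlaws} then gives $\lambda_{X^*} = \lambda_X$ and similarly $\lambda_{Y^*} = \lambda_Y$, so $(\cC, X^*, Y^*)$ is a coupling of $\lambda_X$ and $\lambda_Y$.

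Finally, under the assumed convergence of classical Wasserstein distances,
\[
\norm{X^* - Y^*}_{L^2(\cC)_{\sa}^m} = \lim_{N \to \mathcal{U}} W_2^{\mathrm{cl}}(\mu^{(N)}, \nu^{(N)}) = d_W^{(2)}(\lambda_X, \lambda_Y),
\]
so $(\cC, X^*, Y^*)$ is a non-commutative optimal coupling of $\lambda_X$ and $\lambda_Y$ in the Connes-embeddable algebra $\cC$, contradicting the defining property of $X$ and $Y$ inherited from Corollary \ref{cor:nonConnes}. The main technical point is the passage from probabilistic convergence of the random spectral laws $\lambda_{X^{(N)}}$ to deterministic weak-$*$ convergence of their $P_N$-expectations, which is precisely where the uniform operator-norm bound is essential; the remaining ingredients (ultraproduct characterization of law convergence, hyperfiniteness of the $\cA_N$, nonexistence of Connes-embeddable optimal couplings for $X, Y$) are already assembled in the paper.
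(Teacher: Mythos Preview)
Your proof is correct and follows essentially the same idea as the paper's: a classical coupling of $\mu^{(N)}$ and $\nu^{(N)}$ is a non-commutative coupling inside the Connes-embeddable algebra $L^\infty(\Omega_N,P_N)\,\bar\otimes\,M_N(\C)$, and passing to an ultralimit yields a Connes-embeddable coupling of $\lambda_X$ and $\lambda_Y$, contradicting the hypothesis from Corollary~\ref{cor:nonConnes}. The paper packages this slightly differently by introducing the Connes-embeddable Wasserstein distance $d_{W,\app}^{(2)}$ and invoking its weak-$*$ lower semi-continuity (proved by the same ultraproduct argument you carry out inline), which gives directly the marginally stronger statement $\liminf_N W_2^{\mathrm{cl}}(\mu^{(N)},\nu^{(N)}) \geq d_{W,\app}^{(2)}(\lambda_X,\lambda_Y) > d_W^{(2)}(\lambda_X,\lambda_Y)$ rather than arguing by contradiction.
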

	
	Before proving the proposition, we make some preliminary observations.  Let $\Sigma_{m,R}^{\app}$ denote the space of Connes-embeddable non-commutative laws in $\Sigma_{m,R}$.  Let $d_{W,\app}^{(2)}$ be the non-commutative Wasserstein distance on $\Sigma_{m,R}^{\app}$ defined using only couplings in Connes-embeddable tracial $\mathrm{W}^*$-algebras.  Since $\Sigma_{m,R}^{\app}$ is the weak-$*$ closure of $\Sigma_{m,R}^{\fin}$, it is weak-$*$ compact, which implies the existence of optimal Connes-embeddable couplings.  Moreover, the same reasoning as in Lemma \ref{lem:LSC} shows that $d_{W,\app}^{(2)}$ is weak-$*$ lower semi-continuous.  Of course, Corollary \ref{cor:nonConnes} shows that $d_{W,\app}^{(2)}$ can be strictly greater than $d_W^{(2)}$ (however, we do not know whether these two metrics generate the same topology on $\Sigma_{m,R}^{\app}$).
	
	\begin{proof}[{Proof of Proposition \ref{prop:matrixcounterexample}}]
		Suppose that $X^{(N)}$ and $Y^{(N)}$ are random variables on the diffuse probability space $(\Omega,P)$.  Let $\mu^{(N)}$ and $\nu^{(N)}$ be the classical probability distributions of $X^{(N)}$ and $Y^{(N)}$ as random variables with values in the vector space $M_N(\C)_{\sa}^m$ equipped with inner product associated to $\tr_N$. Let $\widehat{\mu}^{(N)}$ and $\widehat{\nu}^{(N)}$ be the non-commutative laws of $X^{(N)}$ and $Y^{(N)}$ as elements of the tracial $\mathrm{W}^*$-algebra $L^\infty(\Omega,P;M_N(\C))$ with the trace given by $\mathbb{E} \circ \tr_N$.  A classical coupling of the probability distributions $\mu^{(N)}$ and $\nu^{(N)}$ on the probability space $(\Omega,P)$ can be interpreted as a non-commutative coupling on the tracial $\mathrm{W}^*$-algebra $ (L^\infty(\Omega,P;M_N(\C)), \mathbb{E}\circ \tr_N)$, which is Connes-embeddable.  Therefore,
		\[
		\liminf_{N \to \infty} d_W(\mu^{(N)},\nu^{(N)}) \geq \liminf_{N \to \infty} d_{W,\app}^{(2)}(\widehat{\mu}^{(N)},\widehat{\nu}^{(N)}) \geq d_{W,\app}^{(2)}(\lambda_X,\lambda_Y) > d_{W}^{(2)}(\lambda_X,\lambda_Y).  \qedhere
		\]
	\end{proof}
	
	This problem cannot be removed using free probabilistic regularity conditions (conditions such as finite free entropy, finite free Fisher information and so forth; see the introduction of \cite{CN2019} for context).
	
	\begin{proposition}
		Again, let $X, Y \in M_n(\C)_{\sa}^m$ be as in Corollary \ref{cor:nonConnes}.  Let $S$ be a free semicircular $m$-tuple freely independent of $X$ and $Y$.  Then $X + t^{1/2}S$ and $Y + t^{1/2}S$ have finite free microstate entropy (defined in \cite{VoiculescuFE2}) and finite free Fisher information (defined in \cite{VoiculescuFE5}).  However, $d_{W,\app}^{(2)}(\lambda_{X+t^{1/2}S},\lambda_{Y+t^{1/2}}) > d_W^{(2)}(\lambda_{X+t^{1/2}S},\lambda_{Y+t^{1/2}})$ for sufficiently small $t > 0$.  Hence, as in Proposition \ref{prop:matrixcounterexample}, there do not exist random matrix approximations for $\lambda_{X+t^{1/2}S}$ and $\lambda_{X+t^{1/2}S}$ whose classical Wasserstein distance converges to $d_W^{(2)}(\lambda_{X+t^{1/2}S},\lambda_{Y+t^{1/2}})$.
	\end{proposition}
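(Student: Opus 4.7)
The plan is to verify the three claims of the proposition in turn, the critical one being the strict gap between $d_{W,\app}^{(2)}$ and $d_W^{(2)}$ at the perturbed laws. The finiteness of the free Fisher information will follow from Voiculescu's standard bound $\Phi^*(Z + t^{1/2}S) \leq m/t$ applicable to any bounded self-adjoint $m$-tuple $Z$ in free position with respect to a standard free semicircular $m$-tuple $S$. For the microstate free entropy $\chi(X + t^{1/2}S)$, I will construct explicit matrix microstates by setting $X^{(N)} = X \otimes I_{N/n}$ (for $N$ divisible by $n$) and taking $S^{(N)}$ an independent GUE $m$-tuple of dimension $N$; by Voiculescu's asymptotic freeness theorem, the empirical non-commutative law of $X^{(N)} + t^{1/2} S^{(N)}$ converges almost surely to $\lambda_{X + t^{1/2}S}$, and standard Gaussian concentration estimates for the GUE provide sufficient microstate volume to guarantee $\chi(X + t^{1/2}S) > -\infty$. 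As a by-product, both $\lambda_{X + t^{1/2}S}$ and $\lambda_{Y + t^{1/2}S}$ lie in $\Sigma_{m,R''}^{\app}$ for some $R'' > \max(\norm{X}_\infty, \norm{Y}_\infty) + 2t^{1/2}$.

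Next, the upper bound $d_W^{(2)}(\lambda_{X + t^{1/2}S}, \lambda_{Y + t^{1/2}S}) \leq d_W^{(2)}(\lambda_X, \lambda_Y)$ will be obtained via a direct coupling argument: take an optimal coupling $(\cA, X', Y')$ of $\lambda_X$ and $\lambda_Y$, form the free product of $\cA$ with $L(\F_m)$ to produce a free semicircular $m$-tuple $S'$ freely independent of $(X',Y')$, and observe that $(X' + t^{1/2} S', Y' + t^{1/2} S')$ is a coupling of the perturbed laws whose $L^2$-distance equals $\norm{X' - Y'}_{L^2(\cA)_{\sa}^m} = d_W^{(2)}(\lambda_X, \lambda_Y)$.

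For the matching lower bound, I will establish that $d_{W,\app}^{(2)}$ is weak-$*$ lower semi-continuous on $\Sigma_{m,R''}^{\app}$ by adapting the proof of Lemma \ref{lem:LSC}. Concretely, for each small $t$ pick an optimal Connes-embeddable coupling of $\lambda_{X + t^{1/2}S}$ and $\lambda_{Y + t^{1/2}S}$ and parameterize it by its joint law $\gamma_t \in \Sigma_{2m, R''}^{\app}$; pass to a weak-$*$ subsequential limit $\gamma$ as $t \to 0^+$, which remains in $\Sigma_{2m,R''}^{\app}$ because this set is weak-$*$ closed by Proposition \ref{prop:twotopologiessummary}. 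The marginals of $\gamma$ are $\lambda_X$ and $\lambda_Y$, so $\gamma$ furnishes a Connes-embeddable coupling of them, and weak-$*$ continuity of the polynomial cost $\gamma \mapsto \sum_j \gamma((x_j - x_{m+j})^2)$ yields $d_{W,\app}^{(2)}(\lambda_X, \lambda_Y)^2 \leq \liminf_{t \to 0^+} d_{W,\app}^{(2)}(\lambda_{X + t^{1/2}S}, \lambda_{Y + t^{1/2}S})^2$. Combining this with the strict inequality $d_{W,\app}^{(2)}(\lambda_X, \lambda_Y) > d_W^{(2)}(\lambda_X, \lambda_Y)$ from Corollary \ref{cor:nonConnes} and the upper bound above delivers the desired strict separation for sufficiently small $t > 0$. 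The random matrix consequence then follows verbatim from the argument in Proposition \ref{prop:matrixcounterexample}.

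The main obstacle is the weak-$*$ lower semi-continuity of $d_{W,\app}^{(2)}$, which rests essentially on the weak-$*$ closedness of the space $\Sigma_{2m,R''}^{\app}$ of Connes-embeddable joint laws; without this property the subsequential limit $\gamma$ could in principle fail to be Connes-embeddable and would not bound $d_{W,\app}^{(2)}(\lambda_X, \lambda_Y)$ from above. The finite microstate entropy claim, by contrast, is a routine matter assuming the standard Gaussian concentration estimates underpinning Voiculescu's lower bound for $\chi$ under a free semicircular perturbation.
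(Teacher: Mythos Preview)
Your argument is correct, but it takes a somewhat different route from the paper for the central strict inequality. The paper dispatches this step with a pure triangle-inequality argument: since the natural coupling $(X,X+t^{1/2}S)$ lives in $M_n(\C) * \mathrm{W}^*(S)$, which is Connes-embeddable by \cite[Proposition~3.3]{Voiculescu1998}, one has $d_{W,\app}^{(2)}(\lambda_X,\lambda_{X+t^{1/2}S}) \leq (mt)^{1/2}$ (and likewise for $Y$), and then the triangle inequality for both $d_W^{(2)}$ and $d_{W,\app}^{(2)}$ transports the strict gap at $t=0$ to all small $t$, with the explicit threshold $4(mt)^{1/2} < d_{W,\app}^{(2)}(\lambda_X,\lambda_Y) - d_W^{(2)}(\lambda_X,\lambda_Y)$.

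Your approach replaces this by two separate mechanisms. For the upper bound on $d_W^{(2)}$ at the perturbed laws you use a ``shared noise'' coupling $(X'+t^{1/2}S',\,Y'+t^{1/2}S')$ built from an optimal (necessarily non-Connes-embeddable) coupling of $\lambda_X,\lambda_Y$; this actually gives the sharper inequality $d_W^{(2)}(\lambda_{X+t^{1/2}S},\lambda_{Y+t^{1/2}S}) \leq d_W^{(2)}(\lambda_X,\lambda_Y)$ for \emph{all} $t$, not just small ones. For the lower bound on $d_{W,\app}^{(2)}$ you invoke weak-$*$ lower semi-continuity, which the paper does note (just before Proposition~\ref{prop:matrixcounterexample}) but does not use here. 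Your argument is a bit heavier in that it requires the weak-$*$ closedness of $\Sigma_{2m,R''}^{\app}$ and a subsequential-limit extraction, and it does not produce an explicit threshold in $t$; the paper's triangle-inequality route is shorter and more quantitative. On the other hand, your shared-noise coupling is a nice observation in its own right. For the entropy and Fisher information claims you are reproving what the paper simply cites (\cite[Theorem~3.9]{Voiculescu1998} and \cite[Corollary~6.14]{VoiculescuFE5}); that is fine but unnecessary.
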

	
	\begin{proof}
		By \cite[Theorem 3.9]{Voiculescu1998}, $X + t^{1/2}S$ and $Y + t^{1/2}S$ have finite free microstate entropy, and by \cite[Corollary 6.14]{VoiculescuFE5}, they have finite free Fisher information. The free product of $M_N(\C)$ and $\mathrm{W}^*(S)$ is Connes-embeddable by \cite[Proposition 3.3]{Voiculescu1998}.  Hence,
		\[
		d_W^{(2)}(\lambda_X,\lambda_{X+t^{1/2}S}) \leq d_{W,\app}^{(2)}(\lambda_X,\lambda_{X+t^{1/2}S}) \leq (mt)^{1/2},
		\]
		and the same holds with $X$ replaced by $Y$.  Thus, using the triangle inequality, $d_{W}^{(2)}(\lambda_{X+t^{1/2}S},\lambda_{Y+t^{1/2}S}) < d_{W,\app}^{(2)}(\lambda_{X+t^{1/2}S},\lambda_{Y+t^{1/2}S})$ for sufficiently small $t > 0$, since this holds at $t = 0$.  The same argument as in Proposition \ref{prop:matrixcounterexample} rules out the possibility of the classical Wasserstein distance for random matrix models converging to $d_W^{(2)}(\lambda_{X+t^{1/2}S},\lambda_{Y+t^{1/2}S})$.
	\end{proof}
	
	Thus, at the very least, Question \ref{q:randommatrix} needs to be reformulated using the Connes-embeddable version of the non-commutative Wasserstein distance.  Even with such a modification, our results illustrate why this question is so difficult.\footnote{Questions of large-$N$ convergence in mean field games are also extremely subtle and require regularity of the putative model for the large-$N$ limit.}  Indeed, in light of \S \ref{subsec:twotopologies}, random matrix models cannot converge in Wasserstein distance to the limiting non-commutative law unless that limiting law produces an approximately finite-dimensional tracial $\mathrm{W}^*$-algebra.  However, ``good behavior'' in random matrix theory and free probability often entails generating a tracial $\mathrm{W}^*$-algebra that is ``similar to'' a free group von Neumann algebra, which is far from being approximately finite-dimensional (see e.g.\ \cite[\S XIV.3]{TakesakiIII}).  The random matrix question suggests a more general question in the framework of non-commutative optimal couplings.
	
	\begin{question} \label{q:convergence}
		Suppose that $\mu_n$, $\nu_n \in \Sigma_{m,R}$ and $\mu_n \to \mu$ and $\nu_n \to \nu$ weak-$*$.  Under what conditions does $d_W^{(2)}(\mu_n,\nu_n) \to d_W^{(2)}(\mu,\nu)$?
	\end{question}
	
	Monge-Kantorovich duality provides one avenue to attack this question.  Indeed, suppose that $(f_n,g_n)$ are admissible pairs of $E$-convex functions minimizing $\mu_n(f_n) + \nu_n(g_n)$.  Suppose that $(f,g)$ is an admissible pair minimizing $\mu(f) + \nu(g)$.  To give a positive answer to Question \ref{q:convergence}, it suffices to show that $\mu_n(f_n) + \nu_n(g_n) \to \mu(f) + \nu(g)$.  Suppose that we somehow show that $f_n \to f$ and $g_n \to g$ uniformly on each operator norm ball, so that $\mu_n(f_n) - \mu_n(f) \to 0$ and $\nu(g_n) - \nu(g) \to 0$.
	
	Then it remains to show that $\mu_n(f) \to \mu(f)$ and $\nu_n(g) \to \nu(g)$.  If $f$ and $g$ take finite values everywhere, then for each $\cA \in \mathbb{W}$, $f^{\cA}$ and $g^{\cA}$ will define continuous functions on $L^2(\cA)_{\sa}^m$, and in particular, $\lambda \mapsto \lambda(f)$ and $\lambda \mapsto \lambda(g)$ are continuous with respect to Wasserstein distance.  However, we only assumed weak-$*$ convergence of $\mu_n \to \mu$ and $\nu_n \to \nu$.  Thus, in order to obtain the convergence of the Wasserstein distance, we would want the stronger condition that $f$ and $g$ are continuous with respect to convergence in law, that is, $\lambda \mapsto \lambda(f)$ and $\lambda \mapsto \lambda(g)$ are weak-$*$ continuous on $\Sigma_{m,R}$ for each $R > 0$.
	
	The examples of Monge-Kantorovich duality in \cite[Lemma 9.10, Remark 9.11]{JLS2021} use functions that are continuous with respect to the weak-$*$ topology on $\Sigma_{m,R}$.  However, we doubt that the optimizers $(f,g)$ in the Monge-Kantorovich duality can always be chosen to be weak-$*$ continuous.  Nonetheless, it is worth investigating in future research how $E$-convex functions and Legendre transforms behave with respect to convergence in law.
	
	\subsection{Bimodule couplings and $\operatorname{UCPT}$-convex functions} \label{subsec:bimodule}
	
	Another operator-algebraic analog of the idea of coupling arises from bimodules over von Neumann algebras, which have been very important in many areas of von Neumann algebras.  For further background, see \cite[Appendix F]{BrownOzawa2008} and \cite[\S 13]{ADP}.
	
	\begin{definition}
		If $A$ and $B$ are $\mathrm{W}^*$-algebras, then a \emph{Hilbert $A$-$B$-bimodule} is a Hilbert space $H$ with an $A$-$B$-bimodule structure, such that the associated maps $A \to B(H)$ and $B \to B(H)$ are weak-$*$ continuous.  Given tracial $\mathrm{W}^*$-algebras $\cA = (A,\tau)$ and $\cB = (B,\sigma)$ and a $A$-$B$-bimodule $H$, we say that a vector $\xi \in H$ is \emph{bitracial} if $\ip{\xi, a\xi} = \tau(a)$ for $a \in A$ and $\ip{\xi, \xi b} = \sigma(b)$ for $b \in B$.
	\end{definition}
	
	For example, suppose that there are tracial $\mathrm{W}^*$-embeddings $\iota_1: \cA \to \cC$ and $\iota_2: \cB \to \cC$.  Then $L^2(\cC)$ is a Hilbert $L^\infty(\cA)$-$L^\infty(\cB)$-bimodule and $\xi = \widehat{1}\in L^2(\cC)$ is a bitracial vector.  Thus, bimodules with bitracial vectors are a generalization of pair of tracial $\mathrm{W}^*$-embeddings.  In the case of a pair of embeddings $\iota_1$ and $\iota_2$, there is an associated factorizable map $\iota_2^* \iota_1: \cA \to \cB$.  In a similar way, general $L^\infty(\cA)$-$L^\infty(\cB)$-bimodules with bitracial vectors correspond to general $\operatorname{UCPT}$-maps.
	
	\begin{lemma}[{See \cite[\S 13.1.2]{ADP}}]
		Let $\cA$, $\cB$ be tracial $\mathrm{W}^*$-algebras.  If $H$ is a Hilbert $L^\infty(\cA)$-$L^\infty(\cB)$-bimodule and $\xi \in H$ is a bitracial vectors, then there exists a unique $\Phi \in \operatorname{UCPT}(\cA,\cB)$ such that $\ip{\xi,a\xi b} = \tau_{\cB}(\Phi(a)b)$ for all $a \in L^\infty(\cA)$ and $b \in L^\infty(\cB)$.  Conversely, $\Phi \in \operatorname{UCPT}(\cA,\cB)$, there exists a Hilbert $L^\infty(\cA)$-$L^\infty(\cB)$-bimodule $H$ and a bitracial vector $\xi$ satisfying $\ip{\xi,a\xi b} = \tau_{\cB}(\Phi(a)b)$.  If we further demand that $H$ is generated by $\xi$ as a Hilbert $L^\infty(\cA)$-$L^\infty(\cB)$-bimodule, then the pair $(H,\xi)$ is unique up to isomorphism.
	\end{lemma}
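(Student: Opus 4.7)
The statement is the bimodule form of Stinespring's dilation theorem, relating Hilbert $L^\infty(\cA)$-$L^\infty(\cB)$-bimodules carrying a bitracial vector to elements of $\operatorname{UCPT}(\cA,\cB)$. I propose to treat the two directions separately, carry out an explicit Connes-style tensor product construction for the reverse direction, and then deduce uniqueness from the generation hypothesis by a polarization argument.

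For the forward direction, given $(H,\xi)$, the map $b \mapsto \ip{\xi, a \xi b}$ is a bounded weak-$*$ continuous functional on $L^\infty(\cB)$ (using weak-$*$ continuity of the right action), so it equals $\tau_{\cB}(\Phi(a) \,\cdot\,)$ for a unique $\Phi(a) \in L^1(\cB)$. For $a \geq 0$, the estimate
\[
0 \leq \ip{\xi b^*, a\,\xi b^*} \leq \norm{a}_{L^\infty(\cA)} \ip{\xi b^*, \xi b^*} = \norm{a}_{L^\infty(\cA)} \tau_{\cB}(b^* b)
\]
combined with the bitracial identity forces $0 \leq \Phi(a) \leq \norm{a}_{L^\infty(\cA)} \cdot 1$ in $L^\infty(\cB)$, so $\Phi$ extends to a positive contraction $L^\infty(\cA) \to L^\infty(\cB)$, normal by weak-$*$ continuity of the left action. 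Unitality and trace preservation restate the two bitracial conditions directly. Complete positivity comes from matrix amplification: if $[a_{ij}] \in M_n(\cA)_+$ is factored as $a_{ij} = \sum_k c_{ki}^* c_{kj}$ and $b_1,\dots,b_n \in L^\infty(\cB)$, then
\[
\sum_{i,j} \tau_{\cB}(b_i^* \Phi(a_{ij}) b_j) = \sum_{i,j,k} \ip{c_{ki}\,\xi b_i, c_{kj}\,\xi b_j} = \sum_k \norm*{\sum_i c_{ki}\,\xi b_i}_H^2 \geq 0,
\]
so $[\Phi(a_{ij})] \in M_n(\cB)_+$. Uniqueness of $\Phi$ is immediate from faithfulness of $\tau_{\cB}$.

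For the reverse direction, I would endow the algebraic tensor product $L^\infty(\cA) \otimes L^2(\cB)$ with the sesquilinear form $\ip{a_1 \otimes \eta_1, a_2 \otimes \eta_2} := \ip{\eta_1, \Phi(a_1^* a_2) \eta_2}_{L^2(\cB)}$, whose positive semi-definiteness is precisely the complete positivity of $\Phi$ applied to $[a_i^* a_j]$. Quotient by the null space and complete to obtain $H$, and set $\xi := 1 \otimes \widehat{1}_{\cB}$; the bitracial conditions follow from $\Phi(1) = 1$ and $\tau_{\cB} \circ \Phi = \tau_{\cA}$. The left and right actions $a \cdot (a' \otimes \eta) = aa' \otimes \eta$ and $(a' \otimes \eta) \cdot b = a' \otimes \eta b$ are bounded; boundedness of the left action reduces to the operator inequality $[\Phi(a_i^* c^* c a_j)] \leq \norm{c}^2 [\Phi(a_i^* a_j)]$ in $M_n(\cB)$, which follows from complete positivity applied to the factorization $[a_i^* c^* c a_j] = [c a_i]^*[c a_j] \leq \norm{c}^2 [a_i]^*[a_j]$. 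For uniqueness of $(H,\xi)$ when $\xi$ generates, the identity
\[
\ip{a_1\,\xi b_1, a_2\,\xi b_2}_H = \tau_{\cB}(\Phi(a_1^* a_2) b_2 b_1^*)
\]
(obtained by moving the actions across the inner product using the bitracial condition) shows that the inner product on the linear span of $\{a\,\xi b\}$ depends only on $\Phi$, so the map $a\,\xi b \mapsto a\,\xi' b$ extends to a well-defined isometric bimodule isomorphism between any two generating realizations.

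I expect the main technical obstacle to be verifying weak-$*$ continuity (normality) of the two bimodule actions constructed in the reverse direction, since the tensor product construction a priori yields only $*$-representations of $L^\infty(\cA)$ and $L^\infty(\cB)$ as $C^*$-algebras. My plan is to establish normality first on matrix coefficients of the form $a \mapsto \ip{a' \otimes \eta, a \cdot (a' \otimes \eta)} = \ip{\eta, \Phi((a')^* a a') \eta}_{L^2(\cB)}$, which are weak-$*$ continuous by normality of $\Phi$ together with weak-$*$ continuity of left multiplication on $L^2(\cB)$; then upgrade to arbitrary matrix coefficients using the uniform operator norm bound on the action established above together with an $\epsilon/3$ approximation on the dense spanning set of vectors of the form $a' \otimes \eta$. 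Normality of the right action is analogous but easier, since it is essentially inherited from the right action of $L^\infty(\cB)$ on $L^2(\cB)$.
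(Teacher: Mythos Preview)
The paper does not supply its own proof of this lemma; it is stated with a bare citation to \cite[\S 13.1.2]{ADP}. Your proposal is correct and is exactly the standard Stinespring/GNS-type construction one finds in that reference (predual identification of $\Phi(a)$ via the bitracial vector for the forward direction, separation--completion of $L^\infty(\cA)\otimes L^2(\cB)$ under the $\Phi$-twisted inner product for the converse, polarization for uniqueness), so there is nothing substantive to compare.
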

	
	The bimodules and their associated $\operatorname{UCPT}$-maps lead to an alternative notion of couplings for non-commutative random variables.
	
	\begin{definition}
		Let $\mu$ and $\nu \in \Sigma_m$ be non-commutative laws, and let $(\cA,X)$ and $(\cB,Y)$ be the GNS realizations of $\mu$ and $\nu$ respectively.  A \emph{bimodule coupling} of $\mu$ and $\nu$ is a Hilbert $\cA$-$\cB$-bimodule $H$ together with a bitracial vector $\xi$.  We define $C_{\bim}(\mu,\nu)$ to be the supremum of $\sum_{j=1}^m \ip{\xi, X_j\xi Y_j}$ over all bimodule couplings of $\mu$ and $\nu$, or equivalently,
		\[
		C_{\bim}(\mu,\nu) = \sup_{\Phi \in \operatorname{UCPT}(\cA,\cB)} \ip{\Phi(X),Y}_{L^2(\cB)_{\sa}^m}.
		\]
		We then define $d_{\bim}(\mu,\nu)$ by
		\[
		d_{\bim}(\mu,\nu)^2 = \sum_{j=1}^m \mu(x_j^2) + \sum_{j=1}^m \nu(x_j^2) - 2 C_{\bim}(\mu,\nu).
		\]
	\end{definition}
	
	We remark that $d_{\bim}(\mu,\nu)$ is the infimum of $\norm{X\xi - \xi Y}$ in $H^m$ over Hilbert $L^\infty(\cA)$-$L^\infty(\cB)$-bimodules with bitracial vectors (this follows from \eqref{eq:bimodulecomputation} below).  Moreover, the existence of optimal bimodule couplings can be deduced from the compactness of $\operatorname{UCPT}(\cA,\cB)$ in the pointwise weak-$*$ topology.  The properties of $C_{\bim}$ and $d_{\bim}$ are quite similar to those of $C$ and $d_W^{(2)}$ only with factorizable maps replaced by general $\operatorname{UCPT}$ maps, but we will see in Corollary \ref{cor:nonfactorizable} that they do not agree in general.  But first, for completeness, we give proofs of some of the basic properties with the aid of the following lemma.
	
	\begin{lemma} \label{lem:UCPTmapestimate}
		Let $\cA$ and $\cB$ be tracial $\mathrm{W}^*$-functions and $\Phi \in \operatorname{UCPT}(\cA,\cB)$.  Let $X \in L^\infty(\cA)_{\sa}^m$ and $Y \in L^\infty(\cB)_{\sa}^m$ with $\norm{X}_{L^\infty(\cA)_{\sa}^m} \leq R$ and $\norm{Y}_{L^\infty(\cA)_{\sa}^m} \leq R$.  Then for $i_1$, \dots, $i_\ell \in \{1,\dots,m\}$, we have
		\begin{equation} \label{eq:bimoduleestimate}
		\norm{\Phi(X_{i_1} \dots X_{i_\ell}) - Y_{i_1} \dots Y_{i_\ell}}_{L^2(\cB)} \leq \ell R^{\ell - 1} \left( \norm{X}_{L^2(\cA)_{\sa}^m}^2 - 2 \ip{\Phi(X),Y}_{L^2(\cB)_{\sa}^m} + \norm{Y}_{L^2(\cB)_{\sa}^m} \right)^{1/2}.
		\end{equation}
	\end{lemma}
	
	\begin{proof}
		Let $H$ be an $L^\infty(\cA)$-$L^\infty(\cB)$ bimodule with a bitracial vector $\xi$ such that $\ip{\Phi(Z),W}_{L^2(\cB)} = \ip{\xi, Z \xi W}_{L^2(\cB)}$ for all $Z \in L^\infty(\cA)$ and $W \in L^\infty(\cB)$.  Direct computation shows that
		\begin{align} \label{eq:bimodulecomputation}
			\norm{\Phi(Z) - W}_{L^2(\cB)}^2 &= \norm{\Phi(Z)}_{L^2(\cB)}^2 - 2 \re \ip{\Phi(Z),W}_{L^2(\cB)} + \norm{W}_{L^2(\cB)}^2 \\
			&\leq \norm{Z}_{L^2(\cB)}^2 - 2 \re \ip{\Phi(Z),W}_{L^2(\cB)} + \norm{W}_{L^2(\cB)}^2 \nonumber \\
			&= \norm{Z \xi - \xi W}^2. \nonumber
		\end{align}
		This implies that
		\begin{align*}
			\norm{\Phi(X_{i_1} \dots X_{i_\ell}) - Y_{i_1} \dots Y_{i_\ell}}_{L^2(\cB)} &\leq \norm{X_{i_1} \dots X_{i_\ell} \xi - \xi Y_{i_1} \dots Y_{i_\ell}} \\
			& \leq \sum_{k=1}^\ell \norm{X_{i_1} \dots X_{i_k} \xi Y_{i_{k+1}} \dots Y_{i_\ell} - X_{i_1} \dots X_{i_{k-1}} \xi Y_{i_k} \dots Y_{i_\ell}} \\
			& \leq \sum_{k=1}^\ell \norm{X_{i_1} \dots X_{i_{k-1}}}_{L^\infty(\cA)} \norm{X_{i_k}\xi - \xi Y_{i_k}} \norm{Y_{i_{k+1}} \dots Y_{i_\ell}}_{L^\infty(\cB)} \\
			& \leq \ell R^{\ell-1} \norm{X\xi - \xi Y} \\
			& = \ell R^{\ell-1} \left( \norm{X}_{L^2(\cA)_{\sa}^m}^2 - 2 \ip{\Phi(X),Y}_{L^2(\cB)_{\sa}^m} + \norm{Y}_{L^2(\cB)_{\sa}^m} \right)^{1/2}.  \qedhere
		\end{align*}
	\end{proof}
	
	\begin{proposition}
		$(\Sigma_{m,R},d_{\bim})$ is a complete metric space.  If $\lambda$, $\mu \in \Sigma_{m,R}$, then
		\begin{equation} \label{eq:bimodulelawestimate}
			|\lambda(x_{i_1} \dots x_{i_\ell}) - \mu(x_{i_1} \dots x_{i_\ell})| \leq \ell R^{\ell - 1} d_{\bim}(\lambda,\mu) \leq \ell R^{\ell - 1} d_W^{(2)}(\lambda,\mu),
		\end{equation}
		and in particular, the topology generated by $d_{\bim}$ refines the weak-$*$ topology, and the topology generated by $d_W^{(2)}$ refines the topology generated by $d_{\bim}$.  Moreover, $d_{\bim}$ is lower semi-continuous on $\Sigma_{m,R} \times \Sigma_{m,R}$ with respect to the weak-$*$ topology.
	\end{proposition}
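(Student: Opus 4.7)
The plan is to establish the metric axioms and estimate \eqref{eq:bimodulelawestimate} first, then use them together with an ultraproduct argument to obtain lower semi-continuity and completeness. For the metric axioms on $\Sigma_{m,R}$, non-negativity of $d_{\bim}^2$ follows from Cauchy-Schwarz applied to $\ip{\xi, X_j \xi Y_j}$ with $\norm{\xi} = 1$, and $d_{\bim}(\mu,\mu) = 0$ is witnessed by $\Phi = \id$ (equivalently, the identity bimodule $L^2(\cA)$ with $\widehat{1}$). Symmetry is immediate by passing from $\Phi$ to its UCPT adjoint $\Phi^*$ (Lemma \ref{lem:CPadjoint}), or by reversing the roles of the left and right actions on the bimodule. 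For the triangle inequality, given bitracial couplings $(H_1,\xi_1)$ of $(\mu,\nu)$ and $(H_2,\xi_2)$ of $(\nu,\rho)$, the Connes relative tensor product $H_1 \otimes_{\mathrm{W}^*(Y)} H_2$ with vector $\xi_1 \otimes \xi_2$ is a bitracial $\mathrm{W}^*(X)$-$\mathrm{W}^*(Z)$-bimodule coupling of $(\mu,\rho)$: using the balancing identity $\xi_1 Y \otimes \xi_2 = \xi_1 \otimes Y\xi_2$ to cancel a middle term, together with the identity $\norm{\eta \otimes \xi_2}^2 = \norm{\eta}^2$ (which uses bitraciality of $\xi_2$ and the defining relation of the $\mathrm{W}^*(Y)$-valued inner product on $H_1$), one obtains
\[
\norm{X(\xi_1\otimes\xi_2) - (\xi_1\otimes\xi_2)Z} \leq \norm{X\xi_1 - \xi_1 Y} + \norm{Y\xi_2 - \xi_2 Z},
\]
and taking infima gives $d_{\bim}(\mu,\rho) \leq d_{\bim}(\mu,\nu) + d_{\bim}(\nu,\rho)$.

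For \eqref{eq:bimodulelawestimate}, fix any bimodule coupling $(H,\xi)$ of $\mu$ and $\nu$; by bitraciality of $\xi$,
\[
\mu(x_{i_1}\cdots x_{i_\ell}) - \nu(x_{i_1}\cdots x_{i_\ell}) = \ip{\xi,\, X_{i_1}\cdots X_{i_\ell}\xi - \xi Y_{i_1}\cdots Y_{i_\ell}}.
\]
Cauchy-Schwarz with $\norm{\xi} = 1$ and the telescoping bound from the proof of Lemma \ref{lem:UCPTmapestimate} give $|\mu(p) - \nu(p)| \leq \ell R^{\ell-1}\norm{X\xi - \xi Y}_{H^m}$; infimizing over bimodule couplings yields the first inequality, while the second, $d_{\bim} \leq d_W^{(2)}$, follows at the level of $C_{\bim} \geq C$ from the inclusion $\operatorname{FM}(\cA,\cB) \subseteq \operatorname{UCPT}(\cA,\cB)$ of Proposition \ref{prop:CPmaps}(1). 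The estimate also delivers separation of points, so $d_{\bim}$ is genuinely a metric and its topology refines the weak-$*$ topology.

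The heart of the argument is the weak-$*$ lower semi-continuity. Given $\mu_n \to \mu$ and $\nu_n \to \nu$ weak-$*$ in $\Sigma_{m,R}$, fix a free ultrafilter $\mathcal{U}$ on $\N$ and choose optimal $\Phi_n \in \operatorname{UCPT}(\mathrm{W}^*(X_n),\mathrm{W}^*(Y_n))$ realizing $C_{\bim}(\mu_n,\nu_n)$ (existence by compactness of $\operatorname{UCPT}$ in the pointwise weak-$*$ topology, Proposition \ref{prop:CPmaps}). Because UCPT maps are $L^\infty$- and $L^2$-contractive (Lemma \ref{lem:CPadjoint}), the formula $[\Phi_n]([a_n]) := [\Phi_n(a_n)]$ descends to a well-defined unital trace-preserving map $\prod^{\mathcal{U}}\mathrm{W}^*(X_n) \to \prod^{\mathcal{U}}\mathrm{W}^*(Y_n)$, with complete positivity checked entrywise on matrix amplifications. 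By Lemma \ref{lem:ultraproductlaws}, $\mathrm{W}^*(X)$ and $\mathrm{W}^*(Y)$ embed into the respective ultraproducts with $X \mapsto [X_n]$ and $Y \mapsto [Y_n]$; composing $[\Phi_n]$ with the trace-preserving conditional expectation onto the image of $\mathrm{W}^*(Y)$ yields $\Phi \in \operatorname{UCPT}(\mathrm{W}^*(X),\mathrm{W}^*(Y))$ satisfying $\ip{\Phi(X),Y}_{L^2} = \lim_{n \to \mathcal{U}}\ip{\Phi_n(X_n),Y_n}_{L^2}$. Since $\norm{X_n}_{L^2}^2 \to \norm{X}_{L^2}^2$ and likewise for $Y_n$ by weak-$*$ convergence, the identity $d_{\bim}(\mu,\nu)^2 = \norm{X}_{L^2}^2 + \norm{Y}_{L^2}^2 - 2C_{\bim}(\mu,\nu)$ forces $d_{\bim}(\mu,\nu)^2 \leq \lim_{n \to \mathcal{U}} d_{\bim}(\mu_n,\nu_n)^2$.

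Completeness is then routine: a $d_{\bim}$-Cauchy sequence in $\Sigma_{m,R}$ admits a weak-$*$ convergent subsequence by weak-$*$ compactness of $\Sigma_{m,R}$, and lower semi-continuity combined with the Cauchy condition forces the full sequence to converge in $d_{\bim}$ to the same limit. The main technical obstacle is verifying directly that the entrywise ultraproduct $[\Phi_n]$ is genuinely unital, completely positive, and trace-preserving on the ultraproduct $\mathrm{W}^*$-algebra; this requires some care with representatives but uses only standard facts, namely that positive elements of matrix amplifications of ultraproducts lift to pointwise-positive sequences and that $L^\infty$- and $L^2$-contractivity make the entrywise definition consistent on equivalence classes (cf.\ \cite[Appendix A]{BrownOzawa2008}, \cite[\S 5.4]{ADP}).
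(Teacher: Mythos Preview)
Your proof is correct and follows the paper's approach closely for the estimate \eqref{eq:bimodulelawestimate}, symmetry, lower semi-continuity (both use the ultraproduct of $\operatorname{UCPT}$ maps), and completeness. The one genuine point of divergence is the triangle inequality.

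You prove the triangle inequality via the Connes relative tensor product of the two bimodule couplings, using bitraciality of $\xi_2$ to get $\norm{\eta \otimes \xi_2} = \norm{\eta}$ and then a telescoping bound. The paper explicitly acknowledges this route in a footnote as ``quite natural'' but opts for what it calls ``a more elementary argument'': it works entirely with the $\operatorname{UCPT}$ maps $\Phi \in \operatorname{UCPT}(\cA,\cB)$ and $\Psi \in \operatorname{UCPT}(\cB,\cC)$ realizing the two optimal couplings, uses the composition $\Psi \circ \Phi$ as a candidate for $C_{\bim}(\lambda,\nu)$, and then expands $d_{\bim}(\lambda,\nu)^2$ as a sum of ``defect'' terms $\norm{X}^2 - \norm{\Phi(X)}^2$ plus $\norm{\Phi(X) - \Psi^*(Z)}^2$ plus $\norm{Z}^2 - \norm{\Psi^*(Z)}^2$, bounding the cross term via $\norm{\Phi(X)-Y}\cdot\norm{\Psi^*(Z)-Y}$. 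Your fusion argument is cleaner conceptually and avoids this algebraic bookkeeping, at the cost of invoking the machinery of Connes fusion (right-boundedness of $X\xi_1$ and $\xi_1 Y$, the identity $\ip{\xi_2,\xi_2}_{\cB}=1$ for bitracial $\xi_2$); the paper's argument is more self-contained but less transparent.

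A minor secondary difference: for completeness you extract a weak-$*$ convergent subsequence via compactness of $\Sigma_{m,R}$, whereas the paper directly builds the limit law from the fact that each moment $\lambda_n(x_{i_1}\cdots x_{i_\ell})$ is Cauchy in $\C$ by \eqref{eq:bimodulelawestimate}. Both are valid; the paper's version avoids the subsequence step.
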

	
	\begin{proof}
		In the following, let $\lambda$, $\mu$, and $\nu \in \Sigma_{m,R}$, and let $(\cA,X)$, $(\cB,Y)$, and $(\cC,Z)$ be their respective GNS realizations. 
		
		First, we prove \eqref{eq:bimodulelawestimate}.  If $\Phi \in \operatorname{UCPT}(\cA,\cB)$, then using \eqref{eq:bimoduleestimate},
		\begin{align*}
			|\lambda(x_{i_1} \dots x_{i_\ell}) - \mu(x_{i_1} \dots x_{i_\ell})| &= |\tau_{\cB}(\Phi(X_{i_1} \dots X_{i_\ell}) - Y_{i_1} \dots Y_{i_\ell})| \\ 
			&\leq \ell R^{\ell-1} \left( \norm{X}_{L^2(\cA)_{\sa}^m}^2 - 2 \ip{\Phi(X),Y}_{L^2(\cB)_{\sa}^m} + \norm{Y}_{L^2(\cB)_{\sa}^m} \right)^{1/2}.
		\end{align*}
		Taking the infimum over $\Phi$, we obtain the first inequality of \eqref{eq:bimodulelawestimate}.  The second inequality follows because $d_{\bim}(\lambda,\mu) \leq d_W^{(2)}(\lambda,\mu)$ since $\operatorname{FM}(\cA,\cB) \subseteq \operatorname{UCPT}(\cA,\cB)$.
		
		Next, we show that $d_{\bim}$ is a metric on $\Sigma_{m,R}$ (postponing the proof of completeness to the end).  Clearly, $d_{\bim}(\lambda,\mu) \geq 0$.  If $d_{\bim}(\lambda,\mu) = 0$, then by \eqref{eq:bimodulelawestimate}, we have $\lambda = \mu$.  Because every $\operatorname{UCPT}$ map has a $\operatorname{UCPT}$ adjoint, we have
		\[
		C_{\bim}(\lambda,\mu) = \sup_{\Phi \in \operatorname{UCPT}(\cA,\cB)} \ip{\Phi(X),Y}_{L^2(\cB)_{\sa}^m} = \sup_{\Phi \in \operatorname{UCPT}(\cA,\cB)} \ip{X, \Phi^*(Y)}_{L^2(\cA)_{\sa}^m} = C_{\bim}(\mu,\lambda),
		\]
		and hence $d_{\bim}(\lambda,\mu) = d_{\bim}(\mu,\lambda)$.  To prove the triangle inequality, we use the fact that $\operatorname{UCPT}$ maps are closed under composition.\footnote{The corresponding notion for bimodules is the \emph{Connes fusion}, and the proof of the triangle inequality is quite natural from this viewpoint; however, we will use a more elementary argument.}  Let $\Phi \in \operatorname{UCPT}(\cA,\cB)$ and $\Psi \in \operatorname{UCPT}(\cB,\cC)$ be $\operatorname{UCPT}$ maps corresponding to optimal bimodule couplings between $\lambda$ and $\mu$ and between $\mu$ and $\nu$ respectively, so that
		\begin{align*}
			d_{\bim}(\lambda,\mu)^2 &= \norm{X}_{L^2(\cA)_{\sa}^m}^2 - 2 \ip{\Phi(X),Y}_{L^2(\cB)_{\sa}^m} + \norm{Y}_{L^2(\cB)_{\sa}^m} \\
			&= \left( \norm{X}_{L^2(\cA)_{\sa}^m}^2 - \norm{\Phi(X)}_{L^2(\cB)_{\sa}^m}^2 \right) + \norm{\Phi(X) - Y}_{L^2(\cB)_{\sa}^m}^2 \\
			&\geq \norm{\Phi(X) - Y}_{L^2(\cB)_{\sa}^m}^2
		\end{align*}
		and
		\begin{align*}
			d_{\bim}(\nu,\mu)^2 &= \norm{Z}_{L^2(\cC)_{\sa}^m}^2 - 2 \ip{\Psi^*(Z),Y}_{L^2(\cB)_{\sa}^m} + \norm{Y}_{L^2(\cB)_{\sa}^m} \\
			&= \left( \norm{Z}_{L^2(\cC)_{\sa}^m}^2 - \norm{\Psi^*(Z)}_{L^2(\cB)_{\sa}^m}^2 \right) + \norm{\Psi^*(Z) - Y}_{L^2(\cB)_{\sa}^m}^2 \\
			&\geq \norm{\Psi^*(Z) - Y}_{L^2(\cB)_{\sa}^m}^2.
		\end{align*}
		Then
		\begin{align*}
			d_{\bim}(\lambda,\nu)^2 &\leq \norm{X}_{L^2(\cA)_{\sa}^m}^2 - 2 \ip{\Psi \circ \Phi(X),Z}_{L^2(\cC)_{\sa}^m} + \norm{Z}_{L^2(\cC)_{\sa}^m}^2 \\
			&= \left( \norm{X}_{L^2(\cA)_{\sa}^m}^2 - \norm{\Phi(X)}_{L^2(\cB)_{\sa}^m}^2 \right) + \norm{\Phi(X) - \Psi^*(Z)}_{L^2(\cB)_{\sa}^m}^2 + \left( \norm{Z}_{L^2(\cC)_{\sa}^m}^2 - \norm{\Psi^*(Z)}_{L^2(\cB)_{\sa}^m}^2 \right) \\
			&\leq \left( \norm{X}_{L^2(\cA)_{\sa}^m}^2 - \norm{\Phi(X)}_{L^2(\cB)_{\sa}^m}^2 \right) + \norm{\Phi(X) - Y}_{L^2(\cB)_{\sa}^m}^2 \\
			& \quad + 2\norm{\Phi(X) - Y}_{L^2(\cB)_{\sa}^m} \norm{\Psi^*(Z) - Y}_{L^2(\cB)_{\sa}^m} \\
			& \quad + \norm{\Psi^*(Z) - Y}_{L^2(\cB)_{\sa}^m}^2 + \left( \norm{Z}_{L^2(\cC)_{\sa}^m}^2 - \norm{\Psi^*(Z)}_{L^2(\cB)_{\sa}^m}^2 \right) \\
			&\leq d_{\bim}(\lambda,\mu)^2 + 2 d_{\bim}(\lambda,\mu) d_{\bim}(\mu,\nu) + d_{\bim}(\mu,\nu)^2.
		\end{align*}
		
		It follows from \eqref{eq:bimodulelawestimate} that the $d_{\bim}$-topology refines the weak-$*$ topology, and the Wasserstein topology refines the $d_{\bim}$-topology.
		
		Next, we show that $d_{\bim}$ is lower semi-continuous with respect to the weak-$*$ topology.  Fix a non-principal ultrafilter $\cU$ on $\N$, and suppose that $(\lambda_n)_{n \in \N}$ and $(\mu_n)_{n \in \N}$ are sequences in $\Sigma_{m,R}$ and $(\cA_n,X_n)$ and $(\cB_n,Y_n)$ are their respective GNS realizations.  Let $\lambda = \lim_{n \to \cU} \lambda_n$ and $\mu = \lim_{n \to \cU} \mu_n$.  Let $\cA = \prod_{n \to \cU} \cA_n$ and $\cB = \prod_{n \to \cU} \cB_n$.  Let $X = [X_n]_{n \in \N} \in L^2(\cA)_{\sa}^m$ and $Y = [Y_n]_{n \in \N} \in L^2(\cB)_{\sa}^m$.  By Lemma \ref{lem:ultraproductlaws}, $X$ and $Y$ have non-commutative laws $\lambda$ and $\mu$ respectively.  Let $\Phi_n \in \operatorname{UCPT}(\cA_n,\cB_n)$ such that $C_{\bim}(\lambda_n,\mu_n) = \ip{\Phi_n(X_n),Y_n}_{L^2(\cB_n)_{\sa}^m}$.  If $(Z_n)_{n \in \N}$ and $(Z_n')_{n \in \N}$ are sequences in $\prod_{n \in \N} \cA_n$ and if $\lim_{n \to \cU} \norm{Z_n - Z_n'}_{L^2(\cA_n)} = 0$, then $\lim_{n \to \cU} \norm{\Phi_n(Z_n) - \Phi_n(Z_n')}_{L^2(\cB_n)} = 0$ because each $\Phi_n$ is a contraction with respect to the $L^2$ norms on $\cA$ and $\cB$.  Therefore, the equivalence class $[\Phi_n(Z_n)]_{n \in \N}$ in $\cB$ only depends on the equivalence class $[Z_n]_{n \in \N}$ in $\cA$, so that the sequence $\Phi_n$ produces a well-defined map $\Phi: \cA \to \cB$.  It is straightforward to check that $\Phi \in \operatorname{UCPT}(\cA,\cB)$.  Let $\Phi': \mathrm{W}^*(X) \to \mathrm{W}^*(Y)$ be the composition of the inclusion $\mathrm{W}^*(X) \to \cA$, the map $\Phi: \cA \to \cB$, and the trace-preserving conditional expectation $\cB \to \mathrm{W}^*(Y)$.  Then
		\[
		C_{\bim}(\lambda,\mu) \geq \ip{\Phi'(X),Y}_{L^2(\mathrm{W}^*(Y))_{\sa}^m} = \ip{\Phi(X),Y}_{L^2(\cB)_{\sa}^m} = \lim_{n \to \cU} \ip{\Phi_n(X_n),Y_n}_{L^2(\cB_n)} = \lim_{n \to \cU} C_{\bim}(\lambda_n,\mu_n).
		\]
		This implies that $d_{\bim}(\lambda,\mu) \leq \lim_{n \to \cU} d_{\bim}(\lambda_n, \mu_n)$, so $d_{\bim}$ is weak-$*$ lower semi-continuous as desired.
		
		Finally, we show that $(\Sigma_{m,R},d_{\bim})$ is complete.  Let $(\lambda_n)_{n \in \N}$ be a Cauchy sequence with respect to $d_{\bim}$.  Using \eqref{eq:bimodulelawestimate}, for each $i_1$, \dots, $i_\ell \in \{1,\dots,m\}$, the sequence $(\lambda_n(x_{i_1} \dots x_{i_\ell}))_{n \in \N}$ is Cauchy and hence converges in $\C$ to some limit $\lambda(x_{i_1} \dots x_{i_\ell})$.  Extend $\lambda$ linearly to a map on $\C\ip{x_1,\dots,x_m} \to \C$, and then it is straightforward to check that $\lambda \in \Sigma_{m,R}$ using Definition \ref{def:NClaw}.  Then because $d_{\bim}$ is weak-$*$ lower semi-continuous,
		\[
		d_{\bim}(\lambda_n,\lambda) \leq \liminf_{k \to \infty} d_{\bim}(\lambda_n, \lambda_k) \leq \sup_{k \geq n} d_{\bim}(\lambda_n, \lambda_k).
		\]
		The right-hand side goes to zero as $n \to \infty$ because $(\lambda_n)_{n \in \N}$ was assumed to be Cauchy in $d_{\bim}$.  This shows that $\lambda_n \to \lambda$ in $d_{\bim}$ as desired.
	\end{proof}
	
	We saw in the preceding argument that $C_{\bim}(\mu,\nu) \geq C(\mu,\nu)$.  In the commutative setting, we have equality by a similar argument as in \cite[Theorem 1.5]{BV2001}.  (For further discussion of bimodules over commutative tracial $\mathrm{W}^*$-algebras, see \cite[Example 13.1.2]{ADP}.)
	
	\begin{lemma}
		Let $\mu$ and $\nu \in \Sigma_{m,R}$ be non-commutative laws that can be realized by elements of commutative tracial $\mathrm{W}^*$-algebras.  Then $C_{\bim}(\mu,\nu) = C(\mu,\nu)$, and there exists an optimal coupling in a commutative tracial $\mathrm{W}^*$-algebra.
	\end{lemma}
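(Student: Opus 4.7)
My plan is to show that when $\cA$ and $\cB$ are commutative, every $\operatorname{UCPT}$ map $\Phi \colon \cA \to \cB$ is automatically factorizable through a commutative algebra. Combined with Observation \ref{obs:factorizablecoupling}, this will immediately yield $C_{\bim}(\mu,\nu) \leq C(\mu,\nu)$, and since the reverse inequality $C(\mu,\nu) \leq C_{\bim}(\mu,\nu)$ is automatic (as $\operatorname{FM}(\cA,\cB) \subseteq \operatorname{UCPT}(\cA,\cB)$), equality follows. Moreover, because an optimal $\operatorname{UCPT}$ map exists (the set of $\operatorname{UCPT}$ maps is weak-$*$ compact) and its factorization is through a commutative algebra, we also get existence of an optimal coupling in a commutative tracial $\mathrm{W}^*$-algebra.

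The main technical step is the factorization claim. First I would reduce to the separably-generated case: since $X$ generates $\mathrm{W}^*(X)$ and $Y$ generates $\mathrm{W}^*(Y)$, I may replace $\cA$ and $\cB$ by these subalgebras, which have separable preduals. Then write $\cA \cong L^\infty(\Omega_1,P_1)$ and $\cB \cong L^\infty(\Omega_2,P_2)$ for standard Borel probability spaces (so that regular conditional probabilities / disintegrations are available). Given $\Phi \in \operatorname{UCPT}(\cA,\cB)$, I would invoke the standard identification of $\operatorname{UCPT}$ maps between commutative von Neumann algebras with Markov kernels: there is a kernel $K \colon \Omega_2 \times \mathcal{B}(\Omega_1) \to [0,1]$ such that
\[
\Phi(f)(\omega_2) = \int_{\Omega_1} f(\omega_1)\,K(\omega_2,d\omega_1),
\]
and the trace-preserving property translates to $\int K(\omega_2,\,\cdot\,)\,dP_2(\omega_2) = P_1$. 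Define the probability measure $\gamma$ on $\Omega_1 \times \Omega_2$ by $d\gamma(\omega_1,\omega_2) = K(\omega_2,d\omega_1)\,dP_2(\omega_2)$. Let $\cC = L^\infty(\Omega_1 \times \Omega_2,\gamma)$, which is a commutative tracial $\mathrm{W}^*$-algebra, and let $\iota_1 \colon \cA \to \cC$ and $\iota_2 \colon \cB \to \cC$ be the pullbacks along the coordinate projections; the marginal conditions ensure these are tracial $\mathrm{W}^*$-embeddings. A direct Fubini computation shows $\Phi = \iota_2^* \iota_1$, i.e.\ $\Phi$ is factorizable through the commutative algebra $\cC$.

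With this in hand, take any $\Phi$ that attains the supremum defining $C_{\bim}(\mu,\nu)$, which exists by weak-$*$ compactness of $\operatorname{UCPT}(\cA,\cB)$ (for which the optimizer's existence can be reduced to $\operatorname{UCPT}(\mathrm{W}^*(X),\mathrm{W}^*(Y))$, both separable). The factorization above then gives a coupling $(\cC,\iota_1(X),\iota_2(Y))$ with $\cC$ commutative and
\[
\ip{\iota_1(X),\iota_2(Y)}_{L^2(\cC)_{\sa}^m} = \ip{\Phi(X),Y}_{L^2(\cB)_{\sa}^m} = C_{\bim}(\mu,\nu) \geq C(\mu,\nu),
\]
so this coupling realizes $C(\mu,\nu) = C_{\bim}(\mu,\nu)$ and is optimal.

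The only step that requires any real care is the disintegration/Markov-kernel representation of a commutative $\operatorname{UCPT}$ map; this is classical but depends on having standard Borel models of the underlying algebras, which is why I first reduce to the separably-generated situation. Everything else is bookkeeping, and no appeal to the deeper $E$-convex duality of \S \ref{sec:duality} is needed.
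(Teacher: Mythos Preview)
Your proof is correct but takes a genuinely different route from the paper's. The paper works directly in the bimodule picture: given an optimal bimodule coupling $(H,\xi)$, it lets $X_j'$ and $Y_j'$ denote the left- and right-multiplication operators on $H$; since left and right actions commute and $\cA$, $\cB$ are each commutative, the $\mathrm{W}^*$-algebra $M \subseteq B(H)$ generated by $(X',Y')$ is commutative, so the vector state $T \mapsto \ip{\xi,T\xi}$ is automatically tracial on $M$. The GNS realization of the resulting law $\gamma = \lambda_{(X',Y')} \in \Sigma_{2m,R}$ then furnishes a commutative coupling achieving $C_{\bim}(\mu,\nu)$. Your approach instead passes to the $\operatorname{UCPT}$/Markov-kernel picture and invokes disintegration to build the joint measure on $\Omega_1 \times \Omega_2$ explicitly. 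The two constructions are essentially dual---your $L^\infty(\Omega_1\times\Omega_2,\gamma)$ is what the paper's $M$ produces after GNS---but the paper's argument is more self-contained: it avoids the reduction to standard Borel spaces and the appeal to disintegration, using only the elementary fact that left and right module actions commute. Your argument, on the other hand, makes the link to classical couplings more transparent and isolates the stronger intermediate statement that every $\operatorname{UCPT}$ map between commutative tracial $\mathrm{W}^*$-algebras (with separable predual) is factorizable through a commutative algebra.
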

	
	\begin{proof}
		Let $(\cA,X)$ and $(\cB,Y)$ be the GNS realizations of $\mu$ and $\nu$.  Consider an optimal bimodule coupling given by a Hilbert $\cA$-$\cB$-bimodule $H$ and a bitracial vector $\xi \in H$.  Let $X_j' \in B(H)$ be the operator of left multiplication by $X_j$, and let $Y_j \in B(H)$ be the operator of right multiplication by $Y_j$.  Let $M$ be the $\mathrm{W}^*$-subalgebra of $B(H)$ generated by $X' = (X_1',\dots,X_m')$ and $Y' = (Y_1',\dots,Y_m')$.  Since $X_i'$ and $Y_j'$ commute and $X_i'$ and $X_j'$ commute and $Y_i'$ and $Y_j'$ commute, $M$ is commutative.  Let $\tau: M \to \C$ be the map $T \mapsto \ip{\xi, T \xi}$.  Since $M$ is commutative, $\tau$ is a trace (it is a state and satisfies $\tau(ab) = \tau(ab)$).  We have not shown that it is normal or faithful, but nonetheless, the map $\gamma = \lambda_{(X',Y')}: \C\ip{x_1,\dots,x_{2m}} \to \C$ given by $p \mapsto \tau(p(X,Y))$ is still an element of $\Sigma_{2m,R}$ according to Definition \ref{def:NClaw}.  Moreover, since $\xi$ was a bitracial vector for $\cA$ and $\cB$, we have $\tau(p(X')) = \tau_{\cA}(p(X)) = \mu(p)$ and $\tau(p(Y')) = \tau_{\cB}(p(Y)) = \nu(p)$.  Therefore, $\gamma$ has the marginals $\mu$ and $\nu$.  If $(\cC,(\widehat{X},\widehat{Y}))$ is the GNS realization of $\gamma$, then $\cC$ is commutative because for any non-commutative polynomials $p$ and $q$ in $2m$ variables,
		\[
		\norm{(pq - qp)(\widehat{X},\widehat{Y})}_{L^2(\cC)}^2 = \gamma[(pq - qp)^*(pq - qp)] = \tau((pq - qp)^*(pq-qp)(X',Y')) = 0,
		\]
		and non-commutative polynomials of $X$ and $Y$ are dense in $L^2(\cC)$ (by Lemma \ref{lem:generators}).  Moreover,
		\[
		\ip*{\widehat{X},\widehat{Y}}_{L^2(\cC)_{\sa}^m} = \sum_{j=1}^m \gamma(x_j x_{m+j}) = \sum_{j=1}^m \tau(X_j' Y_j') = \sum_{j=1}^m \ip{\xi, X_j \xi Y_j}.
		\]
		Hence, $(\cC,\widehat{X},\widehat{Y})$ is a coupling in a commutative tracial $\mathrm{W}^*$-algebra which is also an optimal bimodule coupling of $\mu$ and $\nu$.
	\end{proof}
	
	For general non-commutative laws, the inequality $C(\mu,\nu) \leq C_{\bim}(\mu,\nu)$ can be strict, even for non-commutative laws of matrix tuples.  We can deduce this from another result of Haagerup and Musat that $\operatorname{FM}(M_n(\C),M_n(\C))$ is in general strictly smaller than $\operatorname{UCPT}(M_n(\C),M_n(\C))$, and in particular there is an explicit non-factorizable $\operatorname{UCPT}$ map on $M_3(\C)$.
	
	\begin{theorem}[{Haagerup-Musat \cite[Example 3.1]{HaMu2011}, \cite[Theorems 5.2 and 5.6]{HaMu2015}}] \label{thm:HWchannel}
		For $n > 1$, let $W_n^-: M_n(\C) \to M_n(\C)$ be the Holevo-Werner channel $W_n^-(x) = \frac{1}{n-1}(\Tr_n(x)1 - x^t)$.  Then $W_n^-$ is a $\operatorname{UCPT}$ map, and it is factorizable if and only if $n \neq 3$.  
	\end{theorem}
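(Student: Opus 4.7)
The plan proceeds in four steps, with the real work concentrated in the $n=3$ non-factorizability.

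First I would verify that $W_n^-$ lies in $\operatorname{UCPT}(M_n(\C),M_n(\C))$ by direct computation. Unitality is immediate from $W_n^-(I) = \frac{1}{n-1}(nI - I) = I$, and trace preservation from $\Tr(x^t) = \Tr(x)$ gives $\Tr(W_n^-(x)) = \frac{1}{n-1}(n\Tr(x) - \Tr(x)) = \Tr(x)$. For complete positivity, I would compute the Choi matrix
\[
C(W_n^-) = \sum_{i,j=1}^n E_{ij} \otimes W_n^-(E_{ij}) = \frac{1}{n-1}\sum_{i,j} E_{ij} \otimes (\delta_{ij} I - E_{ji}) = \frac{1}{n-1}\bigl(I\otimes I - F\bigr),
\]
where $F = \sum_{i,j} E_{ij}\otimes E_{ji}$ is the flip operator on $\C^n\otimes\C^n$. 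Since $I - F = 2P^-$, with $P^-$ the orthogonal projection onto the antisymmetric subspace, $C(W_n^-)$ is positive, and Choi's theorem gives complete positivity.

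Next, the easy factorizability cases. For $n=2$, a direct calculation shows $W_2^-(x) = UxU^*$ where $U = \left(\begin{smallmatrix} 0 & -1 \\ 1 & 0 \end{smallmatrix}\right)$, so $W_2^-$ is an inner $*$-automorphism of $M_2(\C)$ and factorizes trivially through $M_2(\C)$ itself.  For $n\geq 4$, the goal is to exhibit $W_n^-$ as $E_2\circ\iota_1$ for tracial $\mathrm{W}^*$-embeddings $\iota_1,\iota_2: M_n(\C)\to \cC$.  Since the transpose appearing in $W_n^-$ is an anti-homomorphism, $\cC$ must accommodate both $M_n(\C)$ and its opposite; I would look for explicit unitaries $u_1,\ldots,u_N$ in $M_n(\C)$ so that $\frac{1}{N}\sum_k u_k x^t u_k^*$ equals $\Tr(x)I - x^t$ up to normalization, building $\iota_1$ and $\iota_2$ from these unitaries inside $M_n(\C)\otimes M_N(\C)$. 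The standard Latin-square / orthogonal-design constructions in the Haagerup--Musat program provide precisely such $u_k$ once $n\geq 4$, because the antisymmetric subspace then has dimension $\binom{n}{2} \geq n+2$, giving enough room to encode the required relations while maintaining traciality.

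The hard part, and the main obstacle, is proving that $W_3^-$ is \emph{not} factorizable. My plan would be to argue by contradiction: assume $W_3^- = \iota_2^*\iota_1$ for some tracial embeddings $\iota_1,\iota_2: M_3(\C)\to \cC$ and extract a numerical obstruction from the Choi data. Stinespring-dilating the rank-$\binom{3}{2} = 3$ Choi matrix $P^-$ produces an isometry $V:\C^3\to\C^3\otimes\C^3$ landing in the antisymmetric subspace; a factorization would lift $V$ to a unitary $U\in M_3(\C)\otimes\cN$ for some tracial $\cN$, whose nine matrix entries $U_{ij}\in\cN$ must simultaneously satisfy Hurwitz-type orthogonality relations (from unitarity) and a system of trace identities dictated by the value of $W_3^-$ on matrix units. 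The crux is that at $n = 3$ the dimension of $\bigwedge^2\C^n$ coincides with $n$ itself, so the entries $U_{ij}$ are forced into an algebra with the relations of a Clifford-type object that admits no tracial model, yielding a contradiction via a Cauchy--Schwarz-type trace inequality. This delicate dimension-matching step is exactly what makes $n=3$ exceptional and is the content of \cite{HaMu2011,HaMu2015}; I would not expect a substantially shorter route than theirs.
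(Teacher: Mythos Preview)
The paper does not prove this theorem; it is quoted as an external result of Haagerup and Musat with citations to \cite{HaMu2011} and \cite{HaMu2015}, and is then used only to deduce Corollary~\ref{cor:nonfactorizable}. So there is no ``paper's own proof'' to compare your proposal against.

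As for the content of your sketch: the $\operatorname{UCPT}$ verification via the Choi matrix $C(W_n^-) = \tfrac{2}{n-1}P^-$ is correct, and the $n=2$ computation $W_2^-(x) = UxU^*$ with $U=\left(\begin{smallmatrix}0&-1\\1&0\end{smallmatrix}\right)$ is accurate. Your treatment of $n\geq 4$, however, is only a gesture: the phrase ``Latin-square / orthogonal-design constructions'' and the dimension count $\binom{n}{2}\geq n+2$ do not by themselves produce the required unitaries, and the actual Haagerup--Musat argument for factorizability when $n\geq 4$ is more delicate than you suggest. Likewise, for $n=3$ you correctly identify the dimensional coincidence $\dim\bigwedge^2\C^3 = 3$ as the source of the obstruction, but ``Clifford-type object that admits no tracial model'' and ``Cauchy--Schwarz-type trace inequality'' are placeholders rather than an argument; you yourself acknowledge this by deferring to the cited references. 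If you intend to supply a self-contained proof, both the $n\geq 4$ construction and the $n=3$ obstruction need substantial expansion beyond what you have written.
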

	
	Combining non-factorizability of $W_3^-$ with Lemma \ref{lem:vectorduality} similarly to the proof of Corollary \ref{cor:nonConnes}, we deduce the following corollary.
	
	\begin{corollary} \label{cor:nonfactorizable}
		There exist $X, Y \in M_3(\C)_{\sa}^{9}$ such that $C_{\bim}(\lambda_X,\lambda_Y) > C(\lambda_X,\lambda_Y)$.
	\end{corollary}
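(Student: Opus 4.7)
The plan is to follow the same template as the proof of Corollary \ref{cor:nonConnes}, but with Connes-embeddability replaced by factorizability and with the Holevo--Werner channel $W_3^-$ playing the role of the non-Connes-embeddable factorizable map. The key input is Theorem \ref{thm:HWchannel}, which gives an explicit element $W_3^- \in \operatorname{UCPT}(M_3(\C), M_3(\C)) \setminus \operatorname{FM}(M_3(\C), M_3(\C))$.

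First, I would observe that the set $K := \operatorname{FM}(M_3(\C), M_3(\C))$ is a closed convex subset of $L_\R(M_3(\C)_{\sa}, M_3(\C)_{\sa})$. Convexity is Proposition \ref{prop:CPmaps}(2); closedness follows from Proposition \ref{prop:CPmaps}(3), noting that in the finite-dimensional algebra $M_3(\C)$ the pointwise weak-$*$ topology coincides with the norm topology on $L_\R(M_3(\C)_{\sa}, M_3(\C)_{\sa})$. Since $W_3^- \notin K$ by Theorem \ref{thm:HWchannel}, we may apply Lemma \ref{lem:vectorduality} with $n = m = 3$ and $\Phi = W_3^-$ to obtain $k \leq 9$ and tuples $X, Y \in M_3(\C)_{\sa}^k$ with
\[
\ip{W_3^-(X), Y}_{L^2(M_3(\C))_{\sa}^k} > \sup_{\Psi \in \operatorname{FM}(M_3(\C),M_3(\C))} \ip{\Psi(X), Y}_{L^2(M_3(\C))_{\sa}^k}.
\]
By padding the tuples with zero entries (which changes neither side of the inequality), we may assume $k = 9$.

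Next, I would identify the two sides with the quantities of interest. By Observation \ref{obs:factorizablecoupling}, the right-hand side is exactly $C(\lambda_X, \lambda_Y)$. For the left-hand side, since $W_3^-$ is a $\operatorname{UCPT}$ map, the definition of $C_{\bim}$ gives
\[
C_{\bim}(\lambda_X,\lambda_Y) \geq \ip{W_3^-(X), Y}_{L^2(M_3(\C))_{\sa}^9},
\]
so we conclude $C_{\bim}(\lambda_X, \lambda_Y) > C(\lambda_X, \lambda_Y)$, as required.

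There is no real obstacle here beyond correctly assembling the pieces; the substantive content is entirely contained in Theorem \ref{thm:HWchannel} (the existence of a non-factorizable $\operatorname{UCPT}$ self-map of $M_3(\C)$) and in the already-established Lemma \ref{lem:vectorduality} and Observation \ref{obs:factorizablecoupling}. The only minor point to verify is the closedness of $\operatorname{FM}(M_3(\C),M_3(\C))$, which is automatic in this finite-dimensional setting, and the innocuous padding argument that upgrades the tuple length from some $k \leq 9$ to exactly $9$.
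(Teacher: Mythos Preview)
Your proof is correct and follows exactly the approach the paper indicates: apply Lemma \ref{lem:vectorduality} to separate $W_3^-$ from the closed convex set $\operatorname{FM}(M_3(\C),M_3(\C))$, then invoke Observation \ref{obs:factorizablecoupling} and the definition of $C_{\bim}$. The only cosmetic point is that $C_{\bim}$ is defined via $\operatorname{UCPT}$ maps between the GNS realizations $\mathrm{W}^*(X)$ and $\mathrm{W}^*(Y)$ rather than all of $M_3(\C)$, but composing $W_3^-$ with the inclusion and conditional expectation immediately bridges this and preserves the inner product.
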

	
	This shows that the metrics $d_{\bim}$ and $d_W^{(2)}$ are distinct.  It is unclear to us whether $d_{\bim}$ and $d_W^{(2)}$ generate the same topology.  However, the results of \S \ref{subsec:twotopologies} about the Wasserstein distance adapt to the $\operatorname{UCPT}$ setting without much difficulty.  For instance, we have the following analog of Lemma \ref{lem:ultraproductWasserstein}.
	
	\begin{lemma}
		Let $(\mu_n)_{n \in \N}$ and $\mu$ be non-commutative laws.  Let $(\cA,X)$ be the GNS realization of $\mu$.  Let $\cA_n$ be a tracial $\mathrm{W}^*$-algebra and $X_n \in L^\infty(\cA_n)_{\sa}^m$ such that $\lambda_{X_n} = \mu_n$.  Then the following are equivalent:
		\begin{enumerate}[(1)]
			\item $\lim_{n \to \cU} d_{\bim}(\mu_n,\mu) = 0$.
			\item There exists a tracial $\mathrm{W}^*$-embedding $\phi: \cA \to \prod_{n \to \cU} \cA_n$ and there exists $\Phi_n \in \operatorname{UCPT}(\cA,\cA_n)$ such that
			\[
			\phi(X) = [X_n]_{n \in \N}, \qquad \phi(Z) = [\Phi_n(Z)]_{n \in \N} \text{ for all } Z \in L^\infty(\cA).
			\]
		\end{enumerate}
	\end{lemma}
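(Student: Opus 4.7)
The plan is to adapt the proof of Lemma \ref{lem:ultraproductWasserstein}, substituting general $\operatorname{UCPT}$ maps for factorizable maps throughout. The quantitative estimates in Lemma \ref{lem:UCPTmapestimate} and the properties in Lemma \ref{lem:CPadjoint} apply to all $\operatorname{UCPT}$ maps, so they furnish exactly the tools needed.

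For (2) $\implies$ (1), let $E_n: \cA_n \to \mathrm{W}^*(X_n)$ be the trace-preserving conditional expectation, so $E_n \circ \Phi_n \in \operatorname{UCPT}(\cA,\mathrm{W}^*(X_n))$ is admissible in the supremum defining $C_{\bim}(\mu,\mu_n)$. Since $X_n \in \mathrm{W}^*(X_n)$, we have $\ip{E_n \circ \Phi_n(X),X_n}_{L^2(\cA_n)_{\sa}^m} = \ip{\Phi_n(X),X_n}_{L^2(\cA_n)_{\sa}^m}$, whence
\[
d_{\bim}(\mu_n,\mu)^2 \leq \norm{X}_{L^2(\cA)_{\sa}^m}^2 - 2\ip{\Phi_n(X),X_n}_{L^2(\cA_n)_{\sa}^m} + \norm{X_n}_{L^2(\cA_n)_{\sa}^m}^2.
\]
Taking the ultralimit along $\cU$ and using the hypothesized identifications $\phi(X) = [X_n]_{n \in \N} = [\Phi_n(X)]_{n \in \N}$, each of the three terms converges to $\norm{\phi(X)}_{L^2(\prod_{n \to \cU}\cA_n)_{\sa}^m}^2$, so the right-hand side vanishes.

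For (1) $\implies$ (2), the estimate \eqref{eq:bimodulelawestimate} converts convergence in $d_{\bim}$ into weak-$*$ convergence $\mu_n \to \mu$, so Lemma \ref{lem:ultraproductlaws} supplies a tracial $\mathrm{W}^*$-embedding $\phi: \cA \to \prod_{n \to \cU}\cA_n$ with $\phi(X) = [X_n]_{n \in \N}$. For each $n$, choose $\Phi_n \in \operatorname{UCPT}(\cA,\cA_n)$ nearly attaining the supremum in $C_{\bim}(\mu,\mu_n)$ (pushing forward an optimizer in $\operatorname{UCPT}(\cA,\mathrm{W}^*(X_n))$ via the inclusion $\mathrm{W}^*(X_n) \hookrightarrow \cA_n$). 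It remains to verify $\phi(Z) = [\Phi_n(Z)]_{n \in \N}$ for every $Z \in L^\infty(\cA)$. For a non-commutative monomial $p$ of degree $\ell$ in the $X_j$, Lemma \ref{lem:UCPTmapestimate} gives
\[
\norm{\Phi_n(p(X)) - p(X_n)}_{L^2(\cA_n)} \leq \ell R^{\ell - 1} \left(\norm{X}_{L^2(\cA)_{\sa}^m}^2 - 2\ip{\Phi_n(X),X_n}_{L^2(\cA_n)_{\sa}^m} + \norm{X_n}_{L^2(\cA_n)_{\sa}^m}^2\right)^{1/2},
\]
whose right-hand side tends to $0$ along $\cU$ by the near-optimality of $\Phi_n$. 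Thus $[\Phi_n(p(X))]_{n \in \N} = [p(X_n)]_{n \in \N} = p(\phi(X)) = \phi(p(X))$, so the desired identity holds on non-commutative polynomials of $X$. For general $Z \in L^\infty(\cA)$, approximate $Z$ in $L^2(\cA)$ by polynomials $p_k(X)$ with $\norm{p_k(X)}_{L^\infty(\cA)} \leq \norm{Z}_{L^\infty(\cA)}$ using Lemma \ref{lem:generators}, then combine the uniform $L^2$-contractivity of the $\Phi_n$ from Lemma \ref{lem:CPadjoint}(2) with the $L^2$-isometricity of $\phi$ in a three-$\varepsilon$ argument to conclude.

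The main obstacle is precisely this last passage from polynomials to arbitrary $Z \in L^\infty(\cA)$: since the $\Phi_n$ are not $*$-homomorphisms, the identity $\phi(Z) = [\Phi_n(Z)]_{n \in \N}$ does not extend from polynomials by multiplicativity as it would for a $*$-homomorphism, and one must instead exploit the uniform $L^2$-contractivity of the $\Phi_n$ to transfer $L^2$-approximations of $Z$ in $\cA$ into $L^2$-approximations of $\Phi_n(Z)$ in $\cA_n$. Aside from this technical point, the argument is a straightforward translation of the Wasserstein version.
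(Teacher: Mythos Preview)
Your proposal is correct and follows essentially the same route as the paper. Both directions match the paper's argument: for (2)$\Rightarrow$(1) you compose with the conditional expectation onto $\mathrm{W}^*(X_n)$ exactly as in Lemma~\ref{lem:ultraproductWasserstein}, and for (1)$\Rightarrow$(2) you obtain $\phi$ from weak-$*$ convergence via Lemma~\ref{lem:ultraproductlaws}, choose (near-)optimal $\Phi_n$, apply Lemma~\ref{lem:UCPTmapestimate} to match $[\Phi_n(p(X))]_{n\in\N}$ with $\phi(p(X))$ on monomials, and then extend to all of $L^\infty(\cA)$ by $L^2$-density together with the uniform $L^2$-contractivity of the $\Phi_n$---precisely the mechanism the paper uses (the paper packages the last step as ``$\Phi$ and $\phi$ are both $L^2$-contractions'', which is your three-$\varepsilon$ argument in slightly different clothing).
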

	
	\begin{proof}
		(1) $\implies$ (2).  By Lemma \ref{lem:ultraproductlaws}, there is a tracial $\mathrm{W}^*$-embedding $\phi: \cA \to \prod_{n \to \cU} \cA_n$ with $\phi(X) = [X_n]_{n \in \N}$.  Let $\Phi_n \in \operatorname{UCPT}(\cA,\cA_n)$ such that $\ip{\Phi_n(X),X_n}_{L^2(\cA_n)_{\sa}^m} = C_{\bim}(\mu_n,\mu)$.  As in the previous lemma, there exists $\Phi \in \operatorname{UCPT}(\cA, \prod_{n \to \cU} \cA_n)$ such that
		\[
		\Phi(Z) = [\Phi_n(Z)]_{n \in \N} \text{ for all } Z \in L^\infty(\cA).
		\]
		It remains to show that $\Phi = \phi$.  Let $X_n = (X_n^{(1)},\dots,X_n^{(m)})$ and $X = (X^{(1)},\dots,X^{(m)})$.  Using \eqref{eq:bimoduleestimate}, for every $i_1$, \dots, $i_\ell \in \{1,\dots,m\}$, we have
		\begin{align*}
			\norm{\Phi_n(X^{(i_1)},\dots,X^{(i_\ell)}) - X_n^{(i_1)},\dots,X_n^{(i_\ell)}}_{L^2(\cA_n)} &\leq \ell R^{\ell-1} \left( \norm{X}_{L^2(\cA)_{\sa}^m}^2 - 2 \ip{\Phi_n(X),X_n} + \norm{X_n}_{L^2(\cA)_{\sa}^m}^2 \right)^{1/2} \\
			&= \ell R^{\ell - 1} d_{\bim}(\mu_n,\mu).
		\end{align*}
		Taking $n \to \cU$, we obtain
		\[
		\norm{\Phi(X^{(i_1)},\dots,X^{(i_\ell)}) - \phi(X^{(i_1)},\dots,X^{(i_\ell)})}_{L^2(\prod_{n \to \cU} \cA_n)} \leq \lim_{n \to \cU} \ell R^{\ell-1} d_{\bim}(\mu_n,\mu) = 0.
		\]
		Hence, $\Phi(p(X)) = \phi(p(X))$ for every non-commutative polynomial $p$.  Since non-commutative polynomials are in $X$ are dense in $L^2(\cA)$ and $\Phi$ and $\phi$ are both contractions with respect to the $L^2$ norm, we have $\Phi = \phi$.
		
		(2) $\implies$ (1).  The proof is the same as in Lemma \ref{lem:ultraproductWasserstein}, so we leave the details to the reader.
	\end{proof}
	
	In a completely analogous way to Proposition \ref{prop:twotopologies}, one can deduce that the weak-$*$ and $d_{\bim}$ topologies agree at some point $\mu \in \Sigma_{m,R}$ if and only if the corresponding tracial $\mathrm{W}^*$-algebra $\cA$ obtained from the GNS construction is \emph{$\operatorname{UCPT}$-stable}, meaning that every tracial $\mathrm{W}^*$-algebra embedding from $\cA$ into some ultraproduct $\prod_{n \to \cU} \cA_n$ of tracial $\mathrm{W}^*$-algebras lifts to a sequence $(\Phi_n)_{n \in \N}$ where $\Phi_n \in \operatorname{UCPT}(\cA,\cA_n)$.  Furthermore, if $\cA$ is Connes-embeddable, then these two conditions are also equivalent to $\cA$ being approximately finite-dimensional; the proof is essentially the same as that of \cite[Theorem 2.6]{AKE2021} or that of Proposition \ref{prop:twotopologies2}.  However, it is unknown how $\operatorname{FM}$-stability and $\operatorname{UCPT}$-stability are related in the non-Connes-embeddable setting.
	
	To circle back to Monge-Kantorovich duality, given the relationship of optimal couplings with factorizable maps on the one hand and $E$-convex functions on the other hand, one might wonder whether there is an alternative version of the theory of convex functions and Legendre transforms that is based on $\operatorname{UCPT}$ maps rather than factorizable maps.  Indeed, this is possible, and we will sketch here some of the basic properties and the parts of the proof that are different from the $E$-convex case.
	
	\begin{definition}
		A $\mathrm{W}^*$-function with values in $[-\infty,\infty]$ is \emph{$\operatorname{UCPT}$-convex} if either $f$ is identically $-\infty$, or else for every $\cA$, $f^{\cA}$ is a convex and lower semi-continuous function with values in $(-\infty,\infty]$, and we have $f^{\cA}(X) \leq f^{\cB}(\Phi(X))$ for every $\cA$, $\cB \in \mathbb{W}$ and $\Phi \in \operatorname{UCPT}(\cA,\cB)$ and $X \in L^2(\cA)_{\sa}^m$.
	\end{definition}
	
	\begin{definition}
		The \emph{$\operatorname{UCPT}$-Legendre transform} of a tracial $\mathrm{W}^*$-function $f$ is the tracial $\mathrm{W}^*$-function $\mathcal{K}f$ given by
		\[
		(\mathcal{K}f)^{\cA}(X) = \sup_{\substack{\cB \in \mathbb{W} \\ \Phi \in \operatorname{UCPT}(\cA,\cB) \\ Y \in L^2(\cB)_{\sa}^m}} \ip{\Phi(X),Y}_{L^2(\cB)_{\sa}^m} - f^{\cB}(Y).
		\]
	\end{definition}
	
	We have the following analog of Proposition \ref{prop:Legendre}.
	
	\begin{proposition} \label{prop:UCPTLegendre}
		If $f$, $g$ be a tracial $\mathrm{W}^*$-functions.
		\begin{enumerate}[(1)]
			\item $\mathcal{K}f$ is $\operatorname{UCPT}$-convex.
			\item If $f \leq g$, then $\mathcal{K}f \geq \mathcal{K} g$.
			\item We have $\mathcal{K}^2 f \leq f$ with equality if and only if $f$ is $\operatorname{UCPT}$-convex.
			\item $\mathcal{K}^2 f$ is the maximal $\operatorname{UCPT}$-convex function that is less than or equal to $f$.
		\end{enumerate}
	\end{proposition}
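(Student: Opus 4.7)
The plan is to follow the outline of Proposition~\ref{prop:Legendre} closely, with UCPT maps replacing the combination of tracial $\mathrm{W}^*$-embeddings, trace-preserving conditional expectations, and amalgamated free products used there. The two essential tools are closure of UCPT maps under composition (Proposition~\ref{prop:CPmaps}(4)) and existence of a UCPT adjoint $\Phi^* \in \operatorname{UCPT}(\cB,\cA)$ for every $\Phi \in \operatorname{UCPT}(\cA,\cB)$ (Lemma~\ref{lem:CPadjoint}(3)); the latter is what allows one to reverse the direction of maps when manipulating the suprema that define the UCPT-Legendre transform.

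For part (1), each $(\mathcal{K}f)^{\cA}$ is by construction a supremum of affine functions of $X \in L^2(\cA)_{\sa}^m$, hence convex and lower semi-continuous. The UCPT-monotonicity of $\mathcal{K}f$ follows by composition: given $\Phi \in \operatorname{UCPT}(\cA,\cB)$, any test pair $\Psi \in \operatorname{UCPT}(\cB,\cC)$ and $Y \in L^2(\cC)_{\sa}^m$ participating in the supremum defining $(\mathcal{K}f)^{\cB}(\Phi(X))$ can be combined with $\Phi$ to give $\Psi \circ \Phi \in \operatorname{UCPT}(\cA,\cC)$, which participates in the supremum defining $(\mathcal{K}f)^{\cA}(X)$. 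To check that $\mathcal{K}f$ is a tracial $\mathrm{W}^*$-function, for an inclusion $\iota: \cA \to \cB$ the same composition argument gives one inequality; for the reverse, any test pair $(\Psi, Y)$ with $\Psi \in \operatorname{UCPT}(\cA,\cC)$ used for $(\mathcal{K}f)^{\cA}(X)$ can be replaced by $(\Psi \circ E_{\cA}, Y)$ with $\Psi \circ E_{\cA} \in \operatorname{UCPT}(\cB,\cC)$, where $E_{\cA}: \cB \to \cA$ is the conditional expectation, and $\Psi \circ E_{\cA} \circ \iota = \Psi$ ensures the same inner product appears. Part (2) is immediate from the definition.

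The heart of the proof is (3). For $\mathcal{K}^2 f \leq f$, given $\Psi \in \operatorname{UCPT}(\cA,\cB)$ and $Y \in L^2(\cB)_{\sa}^m$ participating in the supremum defining $(\mathcal{K}^2 f)^{\cA}(X)$, the key move is to use the adjoint: $\Psi^* \in \operatorname{UCPT}(\cB,\cA)$, and the pair $(\Psi^*, X)$ participates in the supremum defining $(\mathcal{K}f)^{\cB}(Y)$, yielding
\[
(\mathcal{K}f)^{\cB}(Y) \geq \ip{\Psi^*(Y), X}_{L^2(\cA)_{\sa}^m} - f^{\cA}(X) = \ip{\Psi(X), Y}_{L^2(\cB)_{\sa}^m} - f^{\cA}(X),
\]
which rearranges to $\ip{\Psi(X),Y}_{L^2(\cB)_{\sa}^m} - (\mathcal{K}f)^{\cB}(Y) \leq f^{\cA}(X)$. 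For equality when $f$ is UCPT-convex, write $f^{\cA}$ as a supremum of affine minorants $X \mapsto \ip{X, Z_\alpha}_{L^2(\cA)_{\sa}^m} + c_\alpha$ and aim to show $(\mathcal{K}f)^{\cA}(Z_\alpha) \leq -c_\alpha$: for any $\Phi \in \operatorname{UCPT}(\cA, \cB)$ and $Y \in L^2(\cB)_{\sa}^m$, applying UCPT-monotonicity of $f$ to $\Phi^* \in \operatorname{UCPT}(\cB,\cA)$ gives a comparison between $f^{\cA}(\Phi^*(Y))$ and $f^{\cB}(Y)$, which combined with the affine lower bound for $f^{\cA}$ at $\Phi^*(Y)$ yields $\ip{\Phi(Z_\alpha), Y}_{L^2(\cB)_{\sa}^m} - f^{\cB}(Y) \leq -c_\alpha$. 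Hence $(\mathcal{K}f)^{\cA}(Z_\alpha) \leq -c_\alpha$, and taking $\Psi = \id_{\cA}$ and $Y = Z_\alpha$ in the supremum defining $(\mathcal{K}^2 f)^{\cA}(X)$ gives $(\mathcal{K}^2 f)^{\cA}(X) \geq \ip{X, Z_\alpha}_{L^2(\cA)_{\sa}^m} + c_\alpha$ for each $\alpha$; the supremum in $\alpha$ gives $(\mathcal{K}^2 f)^{\cA}(X) \geq f^{\cA}(X)$. The converse direction of the ``if and only if'' is immediate from (1). Part (4) then follows by the standard argument: $\mathcal{K}^2 f$ is UCPT-convex and bounded above by $f$ by (1) and (3), and if $g$ is UCPT-convex with $g \leq f$, then (2) gives $\mathcal{K}g \geq \mathcal{K}f$, whence $g = \mathcal{K}^2 g \leq \mathcal{K}^2 f$ by (3).

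The main conceptual obstacle is keeping the directions of UCPT maps consistent: in the definition of $\mathcal{K}f$ the supremum runs over maps out of $\cA$, while the UCPT-monotonicity property of $f$ moves in the other direction, and the two must be coordinated when proving $\mathcal{K}^2 f \geq f$. This is precisely where the existence of UCPT adjoints becomes essential, and it has no direct analogue in the $E$-convex proof of Proposition~\ref{prop:Legendre}, where the asymmetry between inclusions and conditional expectations is resolved instead by introducing amalgamated free products.
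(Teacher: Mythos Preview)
Your proof is correct and follows exactly the approach the paper indicates: adapt Proposition~\ref{prop:Legendre} by replacing embeddings and conditional expectations with $\operatorname{UCPT}$ maps, using closure under composition and the existence of $\operatorname{UCPT}$ adjoints. One minor point about your commentary: in Proposition~\ref{prop:Legendre} the amalgamated free product is used in the monotonicity step of part~(1), not in the $\mathcal{L}^2 f \geq f$ step of part~(3), which already uses the adjoint pair $(\iota,E)$ in precisely the way you use $(\Phi,\Phi^*)$, so your part~(3) is the direct analogue rather than a new device.
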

	
	The proof is essentially the same as that of Proposition \ref{prop:Legendre}, modulo the necessary changes to work with $\operatorname{UCPT}$ maps rather than tracial $\mathrm{W}^*$-embeddings and conditional expectations.  For instance, to show monotonicity of $\mathcal{K}f$ under $\operatorname{UCPT}$ maps, suppose that $\Phi \in \operatorname{UCPT}(\cA,\cB)$ and $X \in L^2(\cA)_{\sa}^m$.  If $\Psi \in \operatorname{UCPT}(\cB,\cC)$, then $\Psi \circ \Phi \in \operatorname{UCPT}(\cA,\cC)$.  Therefore,
	\[
	\mathcal{K} f^{\cA}(X) \geq \sup_{\substack{\cC \in \mathbb{W} \\ \Psi \in \operatorname{UCPT}(\cB,\cC) \\ Y \in L^2(\cC)_{\sa}^m}} \left( \ip{\Psi \circ \Phi(X),Y} - f^{\cC}(Y) \right) = \mathcal{K} f^{\cB}(\Phi(X)).
	\]
	The relationship between the $\operatorname{UCPT}$ Legendre transform and the $E$-convex Legendre transform is as follows (compare the relationship between the $E$-convex Legendre transform and the Hilbert-space Legendre tranform).
	
	\begin{corollary}
		Let $f$ be a tracial $\mathrm{W}^*$-function.
		\begin{enumerate}[(1)]
			\item If $f$ is $\operatorname{UCPT}$-convex, then $f$ is $E$-convex.
			\item $\mathcal{K}f \geq \mathcal{L} f$.
			\item $\mathcal{K}^2 f \leq \mathcal{L}^2 f$.
			\item If $f$ is $\operatorname{UCPT}$-convex, then $\mathcal{K}f = \mathcal{L}f$.
		\end{enumerate}
	\end{corollary}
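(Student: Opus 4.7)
The plan for this corollary is to prove the four parts sequentially, leveraging the extremal characterizations already in hand: Proposition~\ref{prop:Legendre} for $\mathcal{L}$ and the preceding Proposition~\ref{prop:UCPTLegendre} for $\mathcal{K}$. Two general features of $\operatorname{UCPT}$ maps will do most of the work. First, every tracial $\mathrm{W}^*$-embedding $\iota: \cA \to \cB$ lies in $\operatorname{UCPT}(\cA, \cB)$, and its trace-preserving conditional expectation $E = \iota^*: \cB \to \cA$ also lies in $\operatorname{UCPT}(\cB, \cA)$. Second, by Lemma~\ref{lem:CPadjoint} the adjoint of any $\operatorname{UCPT}$ map is again $\operatorname{UCPT}$, so the $\operatorname{UCPT}$-convexity axiom can be transferred from $\Phi$ to $\Phi^*$.

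For part (1), I would deduce $E$-convexity directly: the convexity and lower semi-continuity requirements are built into the definition of $\operatorname{UCPT}$-convex, and the monotonicity requirement $f^{\cA}(E[X]) \leq f^{\cB}(X)$ follows from the $\operatorname{UCPT}$-convexity inequality applied to the trace-preserving conditional expectation $E \in \operatorname{UCPT}(\cB, \cA)$. For part (2), I would observe that the supremum defining $\mathcal{K}f^{\cA}(X)$ ranges over the strictly larger family of $\operatorname{UCPT}$ maps $\cA \to \cB$, while the supremum defining $\mathcal{L}f^{\cA}(X)$ ranges only over tracial $\mathrm{W}^*$-embeddings; since embeddings are $\operatorname{UCPT}$, it follows immediately that $\mathcal{K}f \geq \mathcal{L}f$.

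For part (3), I would combine (1) with the maximality statement of Proposition~\ref{prop:Legendre}(4). By Proposition~\ref{prop:UCPTLegendre}(1), $\mathcal{K}^2 f$ is $\operatorname{UCPT}$-convex, and by (1) this implies it is $E$-convex. Since $\mathcal{K}^2 f \leq f$ by Proposition~\ref{prop:UCPTLegendre}(3), and $\mathcal{L}^2 f$ is the pointwise maximal $E$-convex function dominated by $f$, we conclude $\mathcal{K}^2 f \leq \mathcal{L}^2 f$ without further work.

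The main obstacle is part (4), where the nontrivial direction is $\mathcal{K}f \leq \mathcal{L}f$. Given $\Phi \in \operatorname{UCPT}(\cA, \cB)$ and $Y \in L^2(\cB)_{\sa}^m$ participating in the supremum for $\mathcal{K}f^{\cA}(X)$, I would apply the adjoint identity to rewrite $\ip{\Phi(X), Y}_{L^2(\cB)_{\sa}^m} = \ip{X, \Phi^*(Y)}_{L^2(\cA)_{\sa}^m}$, then invoke $\operatorname{UCPT}$-convexity of $f$ applied to $\Phi^* \in \operatorname{UCPT}(\cB, \cA)$ to compare $f^{\cB}(Y)$ and $f^{\cA}(\Phi^*(Y))$ in the direction needed. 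The resulting expression is of the form $\ip{X, Z}_{L^2(\cA)_{\sa}^m} - f^{\cA}(Z)$ with $Z = \Phi^*(Y) \in L^2(\cA)_{\sa}^m$, and by Remark~\ref{rem:EconvexLegendre} this is at most $\mathcal{L}f^{\cA}(X)$, using that $f$ is $E$-convex by (1) so that the $\mathcal{L}$-Legendre transform agrees with the Hilbert space Legendre transform of $f^{\cA}$. Taking the supremum over $\Phi$ and $Y$ yields $\mathcal{K}f \leq \mathcal{L}f$. The delicate point to verify carefully is the direction of the $\operatorname{UCPT}$-convexity inequality at each application, so that the transfer from $f^{\cB}(Y)$ to $f^{\cA}(\Phi^*(Y))$ lands on the correct side of the Legendre transform bound; this is the step where the self-duality of $\operatorname{UCPT}$ under adjoints is essential.
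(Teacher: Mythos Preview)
Your proposal is correct and follows essentially the same route as the paper's proof: parts (1) and (2) are immediate from the fact that tracial $\mathrm{W}^*$-embeddings and their adjoint conditional expectations are $\operatorname{UCPT}$ maps, part (3) uses the maximality characterization of $\mathcal{L}^2 f$ from Proposition~\ref{prop:Legendre}(4), and part (4) passes to the adjoint $\Phi^*$ and applies $\operatorname{UCPT}$-convexity to bound $\ip{\Phi(X),Y} - f^{\cB}(Y)$ by $\ip{X,\Phi^*(Y)} - f^{\cA}(\Phi^*(Y)) \leq \mathcal{L}f^{\cA}(X)$. One small simplification: in (4) you do not need to invoke Remark~\ref{rem:EconvexLegendre} or $E$-convexity of $f$, since the inequality $\ip{X,Z}_{L^2(\cA)_{\sa}^m} - f^{\cA}(Z) \leq \mathcal{L}f^{\cA}(X)$ holds for any $Z \in L^2(\cA)_{\sa}^m$ directly from the definition of $\mathcal{L}f$ (take $\iota = \id_{\cA}$).
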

	
	\begin{proof}
		(1) and (2) are immediate from the definitions of $\mathcal{L}$ and $\mathcal{K}$ since every tracial $\mathrm{W}^*$-embedding is a $\operatorname{UCPT}$ map.
		
		(3)  Observe that $\mathcal{K}^2 f$ is $E$-convex by (1) and $\mathcal{K}^2 f \leq f$.  Therefore, Proposition \ref{prop:Legendre} (4) implies that $\mathcal{K}^2 f \leq \mathcal{L}^2 f$.
		
		(4) We already know that $\mathcal{L}f \leq \mathcal{K}f$.  For the reverse inequality, the idea is already contained in the proof of Proposition \ref{prop:UCPTLegendre} (3).  Note that for $\cA, \cB \in \mathbb{W}$ and $\Phi \in \operatorname{UCPT}(\cA,\cB)$ and $X \in L^2(\cA)_{\sa}^m$ and $Y \in L^2(\cB)_{\sa}^m$, we have
		\[
		\ip{\Phi(X),Y}_{L^2(\cB)_{\sa}^m} - f^{\cB}(Y) \leq \ip{X,\Phi^*(Y)}_{L^2(\cA)_{\sa}^m} - f^{\cA}(\Phi^*(Y)) \leq \mathcal{L} f^{\cA}(X).
		\]
		Taking the supremum over $\cB$, $\Phi$, and $Y$, we obtain $\mathcal{K}f \leq \mathcal{L}f$.
	\end{proof}
	
	The $\operatorname{UCPT}$-analog of Monge-Kantorovich duality is as follows.
	
	\begin{definition}
		A pair of tracial $\mathrm{W}^*$-functions $(f,g)$ with values in $(-\infty,\infty]$ is said to be \emph{$\operatorname{UCPT}$-admissible} if for every $\cA$, $\cB \in \mathbb{W}$ and $X \in L^2(\cA)_{\sa}^m$ and $Y \in L^2(\cB)_{\sa}^m$ and $\Phi \in \operatorname{UCPT}(\cA,\cB)$, we have
		\[
		f^{\cA}(X) + g^{\cB}(Y) \geq \ip{\Phi(X),Y}_{L^2(\cB)_{\sa}^m}.
		\]
	\end{definition}
	
	\begin{proposition}
		$C_{\bim}(\mu,\nu)$ is equal to the infimum of $\mu(f) + \nu(g)$ over all $\operatorname{UCPT}$-admissible pairs of tracial $\mathrm{W}^*$-functions, as well as the infimum of $\mu(f) + \nu(g)$ over all $\operatorname{UCPT}$-admissible pairs of $\operatorname{UCPT}$-convex functions.
	\end{proposition}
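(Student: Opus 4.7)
The plan is to mirror the argument for Proposition \ref{prop:MKduality1}, with the $E$-convex Legendre transform $\mathcal{L}$ replaced by its $\operatorname{UCPT}$ analogue $\mathcal{K}$ from Proposition \ref{prop:UCPTLegendre}. Let $A$ denote the infimum over all $\operatorname{UCPT}$-admissible pairs and $B$ the infimum over $\operatorname{UCPT}$-admissible pairs that are additionally $\operatorname{UCPT}$-convex; trivially $A \leq B$. The goal is to show $C_{\bim}(\mu,\nu) \leq A$ and $B \leq C_{\bim}(\mu,\nu)$.

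The easy direction $C_{\bim}(\mu,\nu) \leq A$ is immediate: if $(\cA,X)$ and $(\cB,Y)$ are the GNS realizations of $\mu$ and $\nu$, and $(f,g)$ is $\operatorname{UCPT}$-admissible, then for every $\Phi \in \operatorname{UCPT}(\cA,\cB)$ we have $\ip{\Phi(X),Y}_{L^2(\cB)_{\sa}^m} \leq f^{\cA}(X) + g^{\cB}(Y) = \mu(f) + \nu(g)$; taking the supremum over $\Phi$ gives the bound by Observation \ref{obs:factorizablecoupling}'s $\operatorname{UCPT}$-analogue (the definition of $C_{\bim}$).

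For the other direction, I would use the indicator construction. Define the tracial $\mathrm{W}^*$-function
\[
f^{\cC}(Z) = \begin{cases} 0, & \lambda_Z = \mu, \\ +\infty, & \text{otherwise,} \end{cases}
\]
which is well-defined as a $\mathrm{W}^*$-function because non-commutative laws are invariant under tracial embeddings. Clearly $\mu(f) = 0$. The key computation is that $\nu(\mathcal{K}f) = C_{\bim}(\mu,\nu)$: evaluating $\mathcal{K}f$ on the GNS realization $(\cB,Y)$ of $\nu$, the finiteness constraint $f^{\cC}(Z)=0$ restricts the supremum to pairs $(\cC,Z)$ with $\lambda_Z=\mu$, and using the $\operatorname{UCPT}$-adjoint $\ip{\Phi(Y),Z} = \ip{Y,\Phi^*(Z)}$ from Lemma \ref{lem:CPadjoint}, one shows that this supremum matches $\sup_{\Psi \in \operatorname{UCPT}(\cA,\cB)} \ip{\Psi(X),Y}_{L^2(\cB)_{\sa}^m} = C_{\bim}(\mu,\nu)$. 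The nontrivial direction here uses that any $(\cC,Z,\Phi^*)$ with $\lambda_Z = \mu$ can be factored through $\cA = \mathrm{W}^*(X)$ by pre-composing $\Phi^*$ with the trace-preserving conditional expectation $\cC \to \mathrm{W}^*(Z) \cong \cA$, which is itself $\operatorname{UCPT}$. Since $(f,\mathcal{K}f)$ is tautologically $\operatorname{UCPT}$-admissible, this gives $A \leq \mu(f) + \nu(\mathcal{K}f) = C_{\bim}(\mu,\nu)$, hence $A = C_{\bim}(\mu,\nu)$.

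Finally, to handle $B$, replace $(f,\mathcal{K}f)$ by $(\mathcal{K}^2f, \mathcal{K}f)$. By Proposition \ref{prop:UCPTLegendre}, both components are $\operatorname{UCPT}$-convex, and $(\mathcal{K}^2f,\mathcal{K}f)$ remains $\operatorname{UCPT}$-admissible because $\mathcal{K}^2f \leq f$ and admissibility is preserved under decreasing the first function. Since $\mathcal{K}^2f \leq f$, we get $\mu(\mathcal{K}^2f) + \nu(\mathcal{K}f) \leq \mu(f) + \nu(\mathcal{K}f) = C_{\bim}(\mu,\nu)$, giving $B \leq C_{\bim}(\mu,\nu)$. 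Combined with $A \leq B$ and $A = C_{\bim}(\mu,\nu)$, we conclude $A = B = C_{\bim}(\mu,\nu)$. The main conceptual point (and only place one must be careful) is the interplay between $\operatorname{UCPT}$-admissibility and $\mathcal{K}$: admissibility of $(f,g)$ is equivalent to $g \geq \mathcal{K}f$, which requires the $\operatorname{UCPT}$-adjoint and the identification of different-looking suprema after a change of direction of the map. Everything else is a direct transcription of the $E$-convex proof.
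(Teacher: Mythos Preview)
Your overall approach matches the paper's (which simply says the proof is the same as Proposition~\ref{prop:MKduality1} with $\mathcal{L}$ replaced by $\mathcal{K}$), and the indicator-function computation of $\nu(\mathcal{K}f)=C_{\bim}(\mu,\nu)$ via adjoints and factoring through $\mathrm{W}^*(X)$ is correct.

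There is one genuine slip in your final step. You claim that $(\mathcal{K}^2 f,\mathcal{K}f)$ is $\operatorname{UCPT}$-admissible ``because $\mathcal{K}^2 f\leq f$ and admissibility is preserved under decreasing the first function.'' That implication goes the wrong way: admissibility means $f^{\cA}(X)+g^{\cB}(Y)\geq \ip{\Phi(X),Y}$, so \emph{decreasing} one of the functions can only destroy the inequality, not preserve it. The correct justification is that any pair of the form $(\mathcal{K}h,h)$ is tautologically $\operatorname{UCPT}$-admissible by the very definition of $\mathcal{K}$; taking $h=\mathcal{K}f$ gives admissibility of $(\mathcal{K}^2 f,\mathcal{K}f)$ directly (equivalently, use $\mathcal{K}(\mathcal{K}^2 f)=\mathcal{K}f$ from Proposition~\ref{prop:UCPTLegendre}, so the pair is of the form $(g,\mathcal{K}g)$). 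With this correction, your argument goes through and coincides with the paper's intended proof.
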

	
	The proof is the same as that of Proposition \ref{prop:MKduality1}; similarly, there is an $\operatorname{UCPT}$ analog of Proposition \ref{prop:MKduality2}.  However, although there is an analog of Monge-Kantorovich duality, there are many questions about bimodule couplings for which the answer is not immediately clear:
	\begin{itemize}
		\item Is there a bimodule analog of the displacement interpolation?
		\item Is there a bimodule analog of the $L^p$ Wasserstein distance for $p \neq 2$?
		\item Is there a useful subgradient characterization of $\operatorname{UCPT}$-convexity analogous to Lemma \ref{lem:Econvex}?
		\item Do $d_{\bim}$ and $d_W^{(2)}$ generate the same topology on $\Sigma_{m,R}$?
	\end{itemize}
	
	\appendix
	
	\section{Non-commutative laws and couplings for $L^p$ variables} \label{sec:Lp}
	
	Although we have focused in this paper on the non-commutative $L^2$-Wasserstein distance, Biane and Voiculescu \cite{BV2001} also defined $L^p$ Wasserstein distance for $p \in [1,\infty)$.  Although they only defined the Wasserstein distance for tuples of bounded operators, it is natural to extend the theory to non-commutative $L^p$ spaces. In this section, after reviewing the properties of affiliated operators to a tracial $\mathrm{W}^*$-algebra, we define laws, couplings, and $L^p$ Wasserstein distance for $m$-tuples of self-adjoint operators in non-commutative $L^p$ space, and show the existence of optimal couplings and Wasserstein geodesics.
	
	\subsection{Affiliated operators, $L^p$ spaces} \label{subsec:affiliated}
	
	For background on unbounded operators, refer for instance to \cite[\S VIII]{ReedSimon1972}.  We recall that if $H$ is a Hilbert and $T: H \supseteq \dom(T) \to H$ is a closed densely defined unbounded operator, then $T$ has a polar decomposition as $U |T|$ where $|T|$ is a positive self-adjoint operator with $\dom(|T|) = \dom(T)$ and $U$ is a partial isometry \cite[\S VIII.9]{ReedSimon1972}.
	
	We quote without proof the basic definitions and results about affiliated operators and non-commutative $L^p$ spaces.  Affiliated operators were first studied by Murray and von Neumann in \cite[\S XVI]{MvN1}, and the non-commutative $L^p$ spaces were studied in \cite{Dixmier1953}.  For a modern exposition of affiliated operators and $L^p$ spaces in English, see \cite{daSilva2018} as well as \cite[\S 7.2]{ADP}.
	
	Let $\cA = (A,\tau)$ be a tracial $\mathrm{W}^*$-algebra.  To avoid ambiguity, we use the notation $H_{\cA}$ rather than $L^2(\cA)$ for the completion of $A$ with respect to the inner product $(a,b) \mapsto \tau(a^*b)$.  We still use the notation $\widehat{a}$ for the element of $H_{\cA}$ corresponding to $a \in A$.
	
	\begin{definition}
		Let $\cA = (A,\tau)$ be a tracial $\mathrm{W}^*$-algebra, and let us view $A$ as a subset of $B(H_{\cA})$ as in Theorem \ref{thm:W*GNSrep}.  A closed densely defined operator $T: \dom(T) \to H_{\cA}$ with polar decomposition $U|T|$ is \emph{affiliated to $A$} if $U \in A$ and the spectral projection $1_{S}(|T|) \in A$ for every Borel set $S \subseteq [0,\infty)$.  We denote the set of affiliated operators by $\Aff(\cA)$.
	\end{definition}
	
	\begin{example}
		Let $(\Omega,P)$ be a probability space and let $\cA$ be $L^\infty(\Omega,P)$ equipped with the trace given by integration against $P$.  Then $\Aff(\cA)$ can be canonically identified with measurable functions on $\Omega$ that are finite almost everywhere, viewed as unbounded multiplication operators on $L^2(\Omega,P)$.
	\end{example}
	
	\begin{theorem} \label{thm:affiliated}
		Let $\cA = (A,\tau)$ be a tracial $\mathrm{W}^*$-algebra.
		\begin{enumerate}
			\item $\Aff(\cA)$ satisfies the following properties:
			\begin{itemize}
				\item If $T \in \Aff(A)$, then $T^* \in \Aff(\cA)$.
				\item If $T_1$, $T_2 \in \Aff(\cA)$, then $T_1|_{\dom(T_1) \cap \dom(T_2)} + T_2|_{\dom(T_1) + \dom(T_2)}$ is closeable, and its closure is in $\Aff(\cA)$.
				\item If $T_1$, $T_2 \in \Aff(A)$, then $T_1 T_2|_{T_2^{-1}(\dom(T_1))}$ is closeable and its closure is in $\Aff(\cA)$.
			\end{itemize}
			In this way, $\Aff(\cA)$ can be equipped with the structure of a $*$-algebra.
			\item The canonical inclusion $A \to \Aff(\cA)$ is a $*$-homomorphism.  Moreover, if $T \in \Aff(\cA)$ is a bounded operator, then $T \in A$.
			\item If $T \in \Aff(\cA)$ is a normal operator, and $f$ is a Borel function on its spectrum, then $f(T) \in \Aff(\cA)$.  If $f$ is bounded, then $f(T) \in A$.  There is a unique probability measure $\mu_T$ on $\C$ such that $\tau(f(T)) = \int f\,d\mu_T$ for all bounded Borel functions $f$.  The spectrum of $T$ is exactly the closed support of $\mu_T$.
			\item Let $\Aff(\cA)_+$ be the set of positive operators affiliated to $\cA = (A,\tau)$.  Then $\tau$ extends to a map $\Aff(\cA)_+ \to [0,\infty]$ satisfying
			\[
			\tau(T) = \lim_{n \to \infty} \tau(f_n(T)),
			\]
			whenever $f_n$ is any sequence of nonnegative Borel functions increasing to the identity function on $[0,\infty)$.
		\end{enumerate}
	\end{theorem}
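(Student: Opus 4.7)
The plan is to reduce everything to two standard tools: the spectral theorem (with polar decomposition) for closed densely defined operators, and the double commutant theorem $A = A''$ inside $B(H_{\cA})$. The unifying idea is the equivalent characterization that $T \in \Aff(\cA)$ if and only if $T$ \emph{commutes strongly} with $A'$, in the sense that $UT \subseteq TU$ (equivalently $UTU^* = T$) for every unitary $U \in A'$. This is equivalent to saying that $U$ preserves $\dom(T)$ and intertwines the action of $T$, which in turn is equivalent to all spectral projections of $|T|$ and the partial isometry in the polar decomposition lying in $A'' = A$. I would establish this characterization first, since every subsequent claim becomes a routine consequence.

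For part (1), given $T_1, T_2 \in \Aff(\cA)$ with polar decompositions, I would set $P_n = 1_{[0,n]}(|T_1|) \wedge 1_{[0,n]}(|T_2|)$, which lies in $A$ since both factors do. The projections $P_n$ increase strongly to $1$ because both $1_{[0,n]}(|T_j|) \uparrow 1$; and $\Ran(P_n)$ is contained in $\dom(T_1) \cap \dom(T_2)$. Since $P_n$ commutes with every unitary in $A'$, and $T_1 + T_2$ restricted to the joint domain does as well, the closure is affiliated by the characterization. The same argument works for $T_1 T_2$ using projections $Q_n$ chosen so that $T_2 Q_n$ is bounded and lands in the bounded part of $\dom(T_1)$. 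Closeability follows from the fact that the adjoints have dense domain (same construction). Part (2) is then immediate: if $T \in \Aff(\cA)$ is bounded, then $T$ is a bounded operator commuting with every unitary in $A'$, so $T \in A'' = A$ by von Neumann's bicommutant theorem; and the algebraic inclusion is a $*$-homomorphism because the unbounded operations restrict to the ordinary ones on the common domain $H_{\cA}$.

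For part (3), let $T = \int z \, dE(z)$ be the spectral decomposition of a normal $T \in \Aff(\cA)$. The affiliation hypothesis forces every spectral projection $E(S)$, for $S$ Borel in $\C$, to lie in $A$ (this is essentially the definition, extended from $|T|$ and the partial isometry to the full spectral measure of $T$ using that $T = U|T|$ with $U \in A$). For a Borel function $f$, define $f(T) = \int f(z) \, dE(z)$; the spectral projections of $f(T)$ are of the form $E(f^{-1}(S))$, hence in $A$, so $f(T) \in \Aff(\cA)$, and $f(T) \in A$ if $f$ is bounded by the norm estimate. Define $\mu_T(S) = \tau(E(S))$; this is a probability measure since $\tau$ is a faithful normal state and $E$ is a spectral measure, and $\tau(f(T)) = \int f \, d\mu_T$ for bounded Borel $f$ by the usual approximation by simple functions and normality of $\tau$. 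The spectrum equals $\supp \mu_T$ because $E(S) = 0$ iff $\tau(E(S)) = 0$, by faithfulness of $\tau$. Part (4) follows by applying part (3) to $T = \int_0^\infty \lambda \, dE(\lambda)$ positive and affiliated: define $\tau(T) = \int_0^\infty \lambda \, d\mu_T(\lambda) \in [0,\infty]$, then for any $f_n \uparrow \id_{[0,\infty)}$ the monotone convergence theorem for $\mu_T$ gives $\tau(f_n(T)) \uparrow \tau(T)$.

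The main obstacle is part (1): one must produce a genuinely dense common domain for sums and products while simultaneously showing the resulting closure is affiliated, and this is where the interplay between the polar decomposition, the functional calculus of $|T_j|$, and the double commutant theorem is essential. Once the characterization via strong commutation with $A'$ is in hand, the verification reduces to checking that the cutoff projections $P_n$ (and their translates by unitaries in $A'$) do what one wants, which is a routine manipulation. Everything beyond part (1) is then a straightforward application of the spectral theorem combined with normality and faithfulness of $\tau$.
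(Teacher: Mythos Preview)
The paper does not prove this theorem: it is quoted without proof as standard background, with references to Murray--von Neumann, Dixmier, da Silva, and Anantharaman--Popa for the details. Your sketch is essentially the standard argument one finds in those references, built on the equivalent characterization of affiliation via strong commutation with unitaries in $A'$, the bicommutant theorem, and spectral calculus; it is correct in outline and there is nothing to compare against in the paper itself.
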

	
	\begin{definition}
		For a tracial $\mathrm{W}^*$-algebra $\cA = (A,\tau)$ and $p \in [1,\infty)$, we define
		\[
		L^p(\cA) = \{T \in \Aff(\cA): \tau(|T|^p) < \infty \},
		\]
		and we write
		\[
		\norm{T}_{L^p(\cA)} = \tau(|T|^p)^{1/p}.
		\]
		As stated above, $L^\infty(\cA) = A$ and $\norm{T}_{L^\infty(\cA)}$ is the norm on $A$.
	\end{definition}
	
	\begin{theorem}
		Let $\cA = (A,\tau)$ as above.
		\begin{enumerate}[(1)]
			\item For $p \in [1,\infty]$, $\norm{\cdot}_p$ defines a norm on $L^p(\cA)$, and $L^p(\cA)$ is a complete with respect to this norm, hence it is a Banach space.
			\item $A$ is a dense subspace of $L^p(\cA)$ for $p \in [1,\infty)$.
			\item Let $p$, $p_1$, $p_2 \in [1,\infty]$ with $1/p = 1/p_1 + 1/p_2$.  If $T_1 \in L^{p_1}(\cA)$ and $T_2 \in L^{p_2}(\cA)$, then $T_1 T_2 \in L^p(\cA)$ and $\norm{T_1T_2}_p \leq \norm{T_1}_{p_1} \norm{T_2}_{p_2}$.
			\item For $p \in [1,\infty]$, if $T \in L^p(\cA)$, then $T^* \in L^p(\cA)$ with $\norm{T}_p = \norm{T^*}_p$.
			\item $\tau$ extends uniquely to a bounded map $L^1(\cA) \to \C$ (still denoted by $\tau$ or $\tau_{\cA}$) that satisfies $\tau(a^*) = \overline{\tau(a)}$
			\item Let $p \in [1,\infty)$ and let $1/p + 1/q = 1$.  Then $L^q(\cA)$ may be canonically identified with the dual of $L^p(\cA)$ through the pairing $(T_1,T_2) \mapsto \tau(T_1 T_2)$.  In particular, this yields an identification between $L^1(\cA)$ and $A_*$.
			\item If $A_*$ is separable, then $L^p(\cA)$ is separable for $p \in [1,\infty)$.
		\end{enumerate}
	\end{theorem}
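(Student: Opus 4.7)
The plan is to follow the classical Dixmier approach and reduce each claim to the corresponding statement for the generalized singular value function in $L^p([0,1])$. For $T \in \Aff(\cA)$, using Theorem~\ref{thm:affiliated}(3)--(4) to make sense of $\tau$ on positive affiliated operators, I would define the distribution function $d_T(s) = \tau(1_{(s,\infty)}(|T|))$ and the non-increasing rearrangement $\mu_T(t) = \inf\{s \geq 0 : d_T(s) \leq t\}$ on $[0,1]$. A layer-cake identity then yields $\tau(f(|T|)) = \int_0^1 f(\mu_T(t))\,dt$ for nonnegative Borel $f$, so $\norm{T}_p = \norm{\mu_T}_{L^p([0,1])}$. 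The key singular value inequalities $\mu_{T_1 + T_2}(s+t) \leq \mu_{T_1}(s) + \mu_{T_2}(t)$ and $\mu_{T_1 T_2}(s+t) \leq \mu_{T_1}(s)\mu_{T_2}(t)$ follow from the min-max characterization $\mu_T(t) = \inf\{\norm{T(1-P)}_{L^\infty(\cA)} : P \text{ a projection in } \cA \text{ with } \tau(P) \leq t\}$, verified by direct manipulation of spectral projections.

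With these tools in hand, parts~(1) and~(3) reduce to Minkowski and H\"older in $L^p([0,1])$; for completeness in~(1), a Cauchy sequence in $\norm{\cdot}_p$ yields convergence of the $\mu_{T_n}$ and convergence in measure of the $T_n$ themselves (via spectral projections), with the limit identified as an affiliated operator by a diagonal argument on truncations. For part~(2), given the polar decomposition $T = U|T|$, the truncations $T_n = U|T|\,1_{[0,n]}(|T|) \in L^\infty(\cA)$ approximate $T$ in $L^p$-norm by dominated convergence on the spectral measure of $|T|$. Part~(4) uses that $|T^*| = U|T|U^*$ for the polar decomposition $T = U|T|$, so by traciality $\tau(|T^*|^p) = \tau(U|T|^p U^*) = \tau(U^*U|T|^p) = \tau(|T|^p)$, since $U^*U$ is the support projection of $|T|$. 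Part~(5) is immediate from $|\tau(a)| \leq \tau(|a|) = \norm{a}_1$ on $L^\infty(\cA)$ together with the density asserted in~(2).

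For part~(6), the pairing induces an isometric embedding $L^q(\cA) \hookrightarrow L^p(\cA)^*$ by H\"older, with optimality achieved by test elements of the form $|T_1|^{p-1}U^*$ suitably normalized. Surjectivity is the delicate point: given $\phi \in L^p(\cA)^*$, I would restrict $\phi$ to $L^\infty(\cA) \cap L^p(\cA)$ to obtain a normal functional on $A$, then invoke the foundational predual identification $A_* \cong L^1(\cA)$ (proved separately via polar decomposition of normal functionals and an application of the Radon–Nikodym theorem of Sakai) to represent $\phi$ by some $x \in L^1(\cA)$, and finally verify $x \in L^q(\cA)$ via a truncation argument applied to the boundedness of $\phi$ against $1_{[0,n]}(|x|) |x|^{q-1}$. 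Part~(7) then follows because separability of $A_* = L^1(\cA)$ gives $\sigma$-weak separability of $A$; combined with density of $A$ in $L^p(\cA)$ and Kaplansky density to control norms, one extracts a countable $\norm{\cdot}_p$-dense subset.

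The main obstacle I anticipate is the careful setup of the singular value machinery on $\Aff(\cA)$ and the proof of the associated inequalities (since one must work with unbounded operators and spectral projections rather than just bounded calculus), together with the surjectivity direction in~(6). The latter, particularly at $p = 1$, depends on the predual identification $A_* \cong L^1(\cA)$, which is the foundational nontrivial input underlying the entire theory and typically occupies the bulk of a rigorous treatment.
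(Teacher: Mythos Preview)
The paper does not give a proof of this theorem. It is stated as background material in the appendix, explicitly introduced with the words ``We quote without proof the basic definitions and results about affiliated operators and non-commutative $L^p$ spaces,'' and the reader is referred to Dixmier's original paper and modern expositions such as da Silva and Anantharaman--Popa. So there is no proof in the paper to compare your proposal against.

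That said, your outline is the standard route (essentially the Fack--Kosaki development of generalized singular values, building on Dixmier), and it is a reasonable sketch of how the cited references proceed. Your identification of the main technical burdens---the singular value inequalities for unbounded affiliated operators and the surjectivity in the duality, ultimately resting on the predual identification $A_* \cong L^1(\cA)$---is accurate.
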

	
	\begin{theorem}
		Let $\cA = (A,\tau)$, and $T \in \Aff(\cA)$.  Then $T \in L^2(\cA)$ if and only if $\widehat{1} \in \dom(T)$.  There is unitary isomorphism of Hilbert spaces $\phi: L^2(\cA) \to H_{\cA}$ given by $T \mapsto T\widehat{1}$.  Furthermore, for $T \in L^2(\cA)$ and $a \in A$, we have $\phi(aT) = a \phi(T)$.
	\end{theorem}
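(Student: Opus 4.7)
The plan is to use the polar decomposition $T = U|T|$ together with the spectral theorem applied to the positive self-adjoint affiliated operator $|T|$ provided by Theorem \ref{thm:affiliated}. For each $t \geq 0$, the spectral projection $E(t) := 1_{[0,t]}(|T|)$ lies in $A$, and for every bounded Borel function $f$ on $[0,\infty)$ we have
\[
\ip{\widehat{1}, f(|T|) \widehat{1}}_{H_{\cA}} = \tau(f(|T|)) = \int_0^\infty f \, d\mu_{|T|},
\]
by the identity $\ip{\widehat{1}, a \widehat{1}} = \tau(a)$ for $a \in A$ together with Theorem \ref{thm:affiliated}(3). Specializing to $f = 1_{[0,t]}$ gives $\ip{\widehat{1}, E(t) \widehat{1}} = \mu_{|T|}([0,t])$, which is the bridge between Hilbert-space spectral calculus and the trace on $\Aff(\cA)$.

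First I would establish the equivalence ``$T \in L^2(\cA)$ iff $\widehat{1} \in \dom(T)$'' and the isometry $\norm{T \widehat{1}}_{H_{\cA}} = \norm{T}_{L^2(\cA)}$ simultaneously. Since polar decomposition gives $\dom(T) = \dom(|T|)$, the standard characterization of the domain of a positive self-adjoint operator yields
\[
\widehat{1} \in \dom(|T|) \iff \int_0^\infty t^2 \, d\ip{\widehat{1}, E(t) \widehat{1}} < \infty \iff \int_0^\infty t^2 \, d\mu_{|T|}(t) < \infty.
\]
The last integral equals $\tau(|T|^2) = \norm{T}_{L^2(\cA)}^2$ by Theorem \ref{thm:affiliated}(4) (applied via monotone convergence to $f_n(t) = \min(t^2, n)$). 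When these quantities are finite, the spectral theorem gives $\norm{|T| \widehat{1}}_{H_{\cA}}^2 = \int_0^\infty t^2 \, d\mu_{|T|}$, and since $U$ is a partial isometry whose initial subspace contains $\Ran(|T|)$, we have $\norm{T \widehat{1}}_{H_{\cA}} = \norm{U |T| \widehat{1}}_{H_{\cA}} = \norm{|T| \widehat{1}}_{H_{\cA}} = \norm{T}_{L^2(\cA)}$.

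Next, I would verify surjectivity. For $a \in A$, $\phi(a) = a \widehat{1} = \widehat{a}$, so $\phi|_A$ coincides with the canonical inclusion $A \hookrightarrow H_{\cA}$. Because $A$ is dense in $L^2(\cA)$, $\widehat{A}$ is dense in $H_{\cA}$ by construction, and $\phi$ is an isometry between complete spaces, it extends to a unitary isomorphism onto $H_{\cA}$ and agrees with $T \mapsto T \widehat{1}$ on all of $L^2(\cA)$. The $A$-module identity $\phi(aT) = a \phi(T)$ then follows from the definition of the product in $\Aff(\cA)$: since $a$ is bounded, $aT|_{\dom(T)}$ is already closed and hence equals $aT$, so $\widehat{1} \in \dom(aT)$ with $(aT) \widehat{1} = a(T \widehat{1})$.

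The main obstacle is navigating carefully between the several manifestations of $|T|$---as an unbounded positive operator on $H_{\cA}$, as a normal element of $\Aff(\cA)$ with spectral distribution $\mu_{|T|}$, and as the positive square root from polar decomposition---and checking that the unbounded trace $\tau$ on $\Aff(\cA)_+$ of Theorem \ref{thm:affiliated}(4) really does equal the integral of $t^2$ against the spectral measure associated to $\widehat{1}$. Once these identifications are in hand, the result reduces to a single spectral-theoretic calculation combined with a density argument.
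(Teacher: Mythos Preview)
The paper does not prove this theorem; it is stated in the appendix as a quoted background result on affiliated operators and non-commutative $L^p$ spaces, with references to Murray--von Neumann, Dixmier, da Silva, and Anantharaman--Popa for proofs.

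Your outline is the standard argument and is essentially correct. One small point you could make more explicit: after establishing that $T \mapsto T\widehat{1}$ is norm-preserving, you assert that the isometric extension of $\phi|_A$ ``agrees with $T \mapsto T\widehat{1}$ on all of $L^2(\cA)$,'' but you do not justify this. The cleanest fix is to observe directly that $\phi$ is linear on $L^2(\cA)$: whenever $T_1, T_2 \in L^2(\cA)$ we have $\widehat{1} \in \dom(T_1) \cap \dom(T_2)$, and since the sum $T_1 + T_2$ in $\Aff(\cA)$ is by definition the closure of $T_1|_{\dom(T_1) \cap \dom(T_2)} + T_2|_{\dom(T_1) \cap \dom(T_2)}$, it follows at once that $(T_1 + T_2)\widehat{1} = T_1\widehat{1} + T_2\widehat{1}$. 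Then $\phi$ is a linear isometry whose range contains the dense subspace $\widehat{A}$, hence is unitary, and no separate extension-and-identification step is needed. Everything else in your plan is sound.
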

	
	The next lemma can be deduced from well-known facts about $\mathrm{C}^*$-algebras as well as the properties of affiliated operators and $L^p$ spaces in the previous section.
	
	\begin{lemma}
		Let $\iota: \cA \to \cB$ be a tracial $\mathrm{W}^*$-embedding.  Then $\norm{\iota(a)} = \norm{a}$ for $a \in A$.  Moreover, for $a \in \cA$ and $p \in [1,\infty)$, we have $\iota((a^*a)^{p/2}) = (\iota(a)^*\iota(a))^{p/2}$.  Hence, $\norm{\iota(a)}_{L^p(\cB)} = \norm{a}_{L^p(\cA)}$ for $a \in \cA$, and therefore, $\iota$ extends to an isometric linear map $L^p(\cA) \to L^p(\cB)$ for every $p \in [1,\infty)$.  In fact, $\iota$ extends to an injective $*$-homomorphism $\Aff(\cA) \to \Aff(\cB)$.
	\end{lemma}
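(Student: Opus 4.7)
The plan is to prove the five assertions in sequence, with each building on the previous. The operator-norm isometry is the cornerstone; once established, compatibility with functional calculus gives the $L^p$-isometry on $A$, from which extensions to $L^p(\cA)$ and to $\Aff(\cA)$ follow by approximation and spectral calculus, respectively.

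First, I would observe that $\iota$ is injective: this is Lemma \ref{lem:inclusionconditionalexpectation}(1), which follows because $\iota$ extends to an isometry on $L^2$ and $\tau_{\cA}$ is faithful. A standard fact about $\mathrm{C}^*$-algebras (see \cite[Theorem I.5.5]{TakesakiI}) says that every injective unital $*$-homomorphism between $\mathrm{C}^*$-algebras is isometric with respect to the operator norm, giving $\norm{\iota(a)}_{L^\infty(\cB)} = \norm{a}_{L^\infty(\cA)}$. (This is also recorded as Lemma \ref{lem:inclusionconditionalexpectation}(2).) Next, for $a \in A$ and $p \in [1,\infty)$, the element $a^*a$ is a positive element of the $\mathrm{C}^*$-algebra $A$ with spectrum contained in $[0, \norm{a}^2]$. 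Since $*$-homomorphisms between $\mathrm{C}^*$-algebras intertwine the continuous functional calculus applied to self-adjoint (hence normal) elements, and since $t \mapsto t^{p/2}$ is continuous on the spectrum, we obtain $\iota((a^*a)^{p/2}) = (\iota(a^*a))^{p/2} = (\iota(a)^* \iota(a))^{p/2}$.

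Applying $\tau_{\cB}$ to both sides and using the assumption that $\tau_{\cA} = \tau_{\cB} \circ \iota$ yields
\[
\norm{\iota(a)}_{L^p(\cB)}^p = \tau_{\cB}\bigl((\iota(a)^* \iota(a))^{p/2}\bigr) = \tau_{\cB}\bigl(\iota((a^*a)^{p/2})\bigr) = \tau_{\cA}((a^*a)^{p/2}) = \norm{a}_{L^p(\cA)}^p.
\]
Thus $\iota: A \to L^\infty(\cB) \hookrightarrow L^p(\cB)$ is isometric with respect to the $L^p$-norm. Because $A$ is $L^p$-norm-dense in $L^p(\cA)$, the map $\iota$ extends uniquely and by continuity to an isometric linear map $L^p(\cA) \to L^p(\cB)$.

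The remaining task, extending $\iota$ to an injective $*$-homomorphism on $\Aff(\cA)$, is where the main work lies. The strategy is to use the Borel functional calculus: for a self-adjoint $T \in \Aff(\cA)$ with spectral resolution $T = \int \lambda\, dE_T(\lambda)$, each projection $E_T(S)$ lies in $A$, so $\iota \circ E_T$ is a projection-valued measure into $\cB$ (countable additivity in the strong operator topology passes through $\iota$ because $\iota$ is weak-$*$-continuous, as noted after Lemma \ref{lem:inclusionconditionalexpectation}). Define $\iota(T)$ to be the self-adjoint affiliated operator with spectral measure $\iota \circ E_T$; equivalently, $\iota(T)$ is the closure of $\bigcup_n \iota(T p_n)$, where $p_n = 1_{[-n,n]}(T) \in A$. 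For general $T \in \Aff(\cA)$, use the polar decomposition $T = U|T|$ with $U \in A$ and $|T|$ self-adjoint and affiliated, and set $\iota(T) = \iota(U) \iota(|T|)$. The main obstacle is verifying that this extension is compatible with the algebraic operations of $\Aff(\cA)$ (closures of sums and products, adjoints) and that it is still injective. The cleanest route is to invoke the topology of convergence in measure on $\Aff(\cA)$: bounded operators are dense, $\iota$ is continuous for this topology (since it is continuous on each $L^p$), and continuity together with algebraic compatibility on $A$ forces compatibility on the completion. Injectivity on $\Aff(\cA)$ follows from injectivity on $A$ (via spectral projections of $|T|$), completing the proof.
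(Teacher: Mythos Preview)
Your proposal is correct and follows exactly the route the paper has in mind: the paper does not actually supply a proof of this lemma, stating only that it ``can be deduced from well-known facts about $\mathrm{C}^*$-algebras as well as the properties of affiliated operators and $L^p$ spaces in the previous section.'' Your argument fills in precisely those well-known facts (injective $*$-homomorphisms are isometric, compatibility with continuous functional calculus, density of $L^\infty$ in $L^p$, and extension to $\Aff$ via spectral resolutions and polar decomposition), so there is nothing to compare.
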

	
	\begin{notation}
		If $\iota: \cA \to \cB$ is a tracial $\mathrm{W}^*$-embedding, we will denote the extended map $\Aff(\cA) \to \Aff(\cB)$ also by $\iota$.
	\end{notation}
	
	\begin{proposition}
		Let $\iota: \cA = (A,\tau) \to \cB = (B,\sigma)$ be a tracial $\mathrm{W}^*$-embedding.  Let $E: L^2(\cB) \to L^2(\cA)$ be the adjoint of the map $\iota: L^2(\cA) \to L^2(\cB)$.
		\begin{enumerate}[(1)]
			\item For $p \in [1,\infty]$, $E$ extends to a unique bounded linear map $L^p(\cB) \to L^p(\cA)$, and (denoting the extended map still by $E$) we have $\norm{E(b)}_{L^p(\cA)} \leq \norm{b}_{L^p(\cB)}$.
			\item For all $b \in L^1(\cB)$, we have $\tau(E(b)) = \sigma(b)$; in other words, $E$ is trace-preserving.
			\item For all $b \in L^1(\cB)$, we have $E(b^*) = E(b)^*$.
			\item If $b \in L^p(\cB)$ and $a \in L^q(\cA)$ with $1/p + 1/q = 1$, then $E[\iota(a) b] = a E[b]$ and $E[b \iota(a)] = E[b] a$.
		\end{enumerate}
	\end{proposition}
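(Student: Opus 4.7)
The plan is to bootstrap from the already established facts about $E$ on $L^\infty$ and $L^2$ (Lemma \ref{lem:inclusionconditionalexpectation}) using the non-commutative $L^p$--$L^q$ duality and the isometric behaviour of $\iota$ on every $L^p$ space (from the preceding lemma), together with density of $L^\infty$ in $L^p$ for $p<\infty$.

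The core step is to establish the contractivity in (1), and I would do this by duality rather than by a direct Jensen-type argument.  For bounded $a \in L^\infty(\cA)$ and bounded $b \in L^\infty(\cB)$, Lemma \ref{lem:inclusionconditionalexpectation} gives $E[\iota(a)b] = a E[b]$, so applying $\tau_{\cA}$ and using that $\tau_{\cA}\circ E = \sigma$ on bounded elements (which is part of Lemma \ref{lem:inclusionconditionalexpectation}) yields
\[
\tau_{\cA}\bigl(a\, E[b]\bigr) \;=\; \tau_{\cA}\bigl(E[\iota(a)b]\bigr) \;=\; \sigma\bigl(\iota(a)b\bigr).
\]
Let $p\in[1,\infty]$ and let $q$ be its conjugate exponent.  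Using this identity together with the $L^p$--$L^q$ duality on $\cA$ and $\cB$, the non-commutative H\"older inequality, and the isometric property $\norm{\iota(a)}_{L^q(\cB)} = \norm{a}_{L^q(\cA)}$, I get
\[
\norm{E[b]}_{L^p(\cA)}
\;=\; \sup_{\substack{a\in L^\infty(\cA)\\ \norm{a}_{L^q(\cA)}\leq 1}} \bigl|\tau_{\cA}(a E[b])\bigr|
\;=\; \sup \bigl|\sigma(\iota(a)b)\bigr|
\;\leq\; \norm{b}_{L^p(\cB)}
\]
for every bounded $b$.  Since $L^\infty(\cB)$ is dense in $L^p(\cB)$ for $p<\infty$, $E$ extends uniquely to a contraction $L^p(\cB)\to L^p(\cA)$; consistency with the $p=2$ and $p=\infty$ cases already known is automatic because all of these maps agree on bounded elements.

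Parts (2), (3), and (4) then follow by density.  For (2), the identity $\tau_{\cA}(E[b])=\sigma(b)$ holds on bounded $b$ by Lemma \ref{lem:inclusionconditionalexpectation}, and both $\tau_{\cA}\circ E:L^1(\cB)\to\C$ and $\sigma:L^1(\cB)\to\C$ are continuous linear functionals (the former by part (1) at $p=1$), so they agree on all of $L^1(\cB)$.  For (3), the involution $b\mapsto b^*$ is an isometry on each $L^p(\cB)$ and on $L^p(\cA)$, $E$ is continuous on $L^p$, and $E[b^*]=E[b]^*$ holds for bounded $b$; hence the identity extends to all of $L^p(\cB)$.  For (4), fix $p\in[1,\infty]$ with conjugate $q$ and take $a\in L^q(\cA)$, $b\in L^p(\cB)$.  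Approximate $a$ and $b$ in $L^q(\cA)$ and $L^p(\cB)$ respectively by bounded sequences $a_n$ and $b_n$; by non-commutative H\"older, $\iota(a_n)b_n\to\iota(a)b$ in $L^1(\cB)$ and $a_n E[b_n]\to a E[b]$ in $L^1(\cA)$, and by continuity of $E$ on $L^1$ from (1), $E[\iota(a_n)b_n]\to E[\iota(a)b]$ in $L^1(\cA)$.  Passing to the limit in the bounded-case identity $E[\iota(a_n)b_n]=a_n E[b_n]$ gives the bimodule relation; the right multiplication analogue is identical.

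The main obstacle, if any, is the duality step.  One has to be careful that the supremum defining $\norm{E[b]}_{L^p(\cA)}$ can legitimately be restricted to bounded $a$ (which is fine because bounded elements are norm-dense in $L^q(\cA)$ for $q<\infty$, and the case $p=1$, $q=\infty$ falls directly under the bounded-case identity), and that the isometric behaviour of $\iota$ between $L^q$ spaces is in hand; both issues are handled by the preceding lemma together with density.  Once contractivity on $L^p$ is available, everything else is a standard density-and-continuity argument.
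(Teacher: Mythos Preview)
Your proof is correct and follows essentially the same route as the paper's: both establish the contractivity in (1) via the pairing identity $\tau_{\cA}(a\,E[b]) = \sigma(\iota(a)\,b)$ for bounded $a,b$, then apply non-commutative H\"older together with the isometric property of $\iota$ on $L^q$, and finally handle (2)--(4) by density. The only cosmetic difference is that the paper invokes the adjoint relation $\langle a, E[b]\rangle_{\cA} = \langle \iota(a), b\rangle_{\cB}$ directly rather than deriving it from the bimodule property plus trace preservation, and your treatment is slightly more careful about the $q=\infty$ endpoint.
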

	
	\begin{proof}[Sketch of proof]
		(1) Fix $p, q \in [1,\infty]$ with $1/p + 1/q = 1$.  If $a \in L^\infty(\cA)$ and $b \in L^\infty(\cB)$, we have
		\[
		|\ip{a,E[b]}_{\cA}| = |\ip{\iota(a),b}_{\cB}| \leq \norm{\iota(a)}_{L^q(\cB)} \norm{b}_{L^p(\cB)} = \norm{a}_{L^q(\cA)} \norm{b}_{L^p(\cB)}.
		\]
		By density of $L^\infty(\cA)$ in $L^q(\cA)$ and the duality of $L^q(\cA)$ and $L^p(\cA)$, we obtain $\norm{E[b]}_{L^p(\cA)} \leq \norm{b}_{L^p(\cB)}$, and the extension follows from this.
		
		(2) Since $\iota(1) = 1$ and $E = \iota^*$, we obtain $\tau(E(b)) = \sigma(b)$ for $b \in L^2(\cB)$ and this extends to $L^1(\cB)$ by density.
		
		(3) (4) The claims are first checked for $b \in L^\infty(\cB)$ and $a \in L^\infty(\cA)$ using the properties of the trace and the fact that $E$ is the adjoint of $\iota$, by similar reasoning as in Lemma \ref{lem:Econvexitymotivation}.  Then we use density of $L^\infty$ in $L^p$ to conclude.
	\end{proof}
	
	\begin{notation}
		Let $\cA = (A,\tau)$ be a tracial $\mathrm{W}^*$-algebra.  Let $X = (X_1,\dots,X_m) \in \Aff(\cA)^m$.  We denote by $\mathrm{W}^*(X)$ the smallest $\mathrm{W}^*$-subalgebra of $A$ to which $X_1$, \dots, $X_m$ are affiliated operators.  Equivalently, letting $X_j = U_j |X_j|$ be a polar decomposition of $X_j$, $\mathrm{W}^*(X)$ is the weak-$*$ closure of the $*$-algebra generated by $U_j$ and $f(|X_j|)$ for bounded Borel functions $f: \R \to \C$ and $j = 1$, \dots, $m$.  We view $\mathrm{W}^*(X)$ as a tracial $\mathrm{W}^*$-algebra, where the trace is the simply the restriction of $\tau$.
	\end{notation}
	
	\begin{notation} \label{not:Lpm}
		Let $\cA = (A,\tau)$ and $\cB = (B,\sigma)$ be tracial $\mathrm{W}^*$-algebras.  For $p \in [1,\infty]$, we equip $L^p(\cA)^m$ with the norm
		\[
		\norm{(X_1,\dots,X_m)}_{L^p(\cA)^m} = \begin{cases} \left(\sum_{j=1}^m \tau((X_j^*X_j)^{p/2}) \right)^{1/p}, & p < \infty, \\ \max_{j=1,\dots,m} \norm{X_j}_A, & p = \infty \end{cases}
		\]
		Note that $L^p(\cA)_{\sa}^m$ is a real subspace of $L^p(\cA)^m$.  For $X = (X_1,\dots,X_m)$ and $Y = (Y_1,\dots,Y_m)$ in $L^2(\cA)^m$, we define
		\[
		\ip{X,Y}_{L^2(\cA)^m} = \sum_{j=1}^m \tau(X_j^*Y_j) = \sum_{j=1}^m \ip{X_j,Y_j}_{\cA}.
		\]
		Given a tracial $\mathrm{W}^*$-embedding $\iota: \cA \to \cB$ and the corresponding conditional expectation $E: \cB \to \cA$, if $X \in L^1(\cA)^m$ and $Y \in L^1(\cB)^m$, we write
		\[
		\iota(X) := (\iota(X_1),\dots,\iota(X_m)), \qquad E[Y] := (E[Y_1],\dots,E[Y_m]).
		\]
		Note that $\iota$ and $E$ both preserve the real subspaces of self-adjoint tuples.
	\end{notation}

	\subsection{Laws and the Wasserstein distance for $L^p$ variables}
	
	To extend the notion of non-commutative laws and couplings to $L^p$ variables, we use a fairly standard trick in operator algebras, namely transforming an unbounded operator into a bounded operator using functional calculus.  If $(X_1,\dots,X_m) \in L^p(\cA)_{\sa}^m$, then $\arctan(X) := (\arctan(X_1),\dots,\arctan(X_m))$, where $\arctan(X_j)$ is defined by functional calculus, is an $m$-tuple of bounded self-adjoint operators with has a non-commutative law $\lambda_{\arctan(X)} \in \Sigma_{m,\pi/2}$.  Rather than defining $\lambda_X$ directly, we will work with $\lambda_{\arctan(X)}$ instead.  The analogous procedure in classical probability theory would be to study a probability distribution $\mu$ on $\R^m$ through the compactly supported probabilitity distribution $\arctan_* \mu$ obtained by pushing forward $\mu$ by the function $(x_1,\dots,x_d) \mapsto (\arctan(x_1),\dots,\arctan(x_d))$.
	
	Given a law $\lambda \in \Sigma_{m,\pi/2}$, the following criterion decides whether $\lambda = \lambda_{\arctan(X)}$ for some $m$-tuple $X$ in a non-commutative $L^p$-space:
	
	\begin{lemma}
		Let $\lambda \in \Sigma_{m,\pi/2}$.  For each $j$ there is a measure $\lambda_j$ on $[-\pi/2,\pi/2]$ with $\int f(x)\,d\lambda_j(x) = \lambda(f(x_j))$ for all non-commutative polynomials $f$.  The following are equivalent:
		\begin{enumerate}[(1)]
			\item $\tan \in L^p(\lambda_j)$ for every $j$.
			\item There exists a tracial $\mathrm{W}^*$-algebra $\cA$ and $X \in L^p(\cA)_{\sa}^m$ such that $\lambda = \lambda_{\arctan(X)}$. 
		\end{enumerate}
	\end{lemma}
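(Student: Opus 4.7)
The plan is to establish the two directions separately, using the GNS realization of a law together with Borel functional calculus for unbounded self-adjoint operators.

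For the direction (2) $\implies$ (1), assume $X \in L^p(\cA)_{\sa}^m$ with $\lambda = \lambda_{\arctan(X)}$. The marginal $\lambda_j$ is the spectral distribution of the bounded self-adjoint operator $\arctan(X_j) \in L^\infty(\cA)_{\sa}$, provided by Theorem \ref{thm:affiliated}(3). In particular, for every nonnegative bounded Borel function $g$ on $[-\pi/2,\pi/2]$, one has $\int g\, d\lambda_j = \tau_{\cA}(g(\arctan(X_j)))$. Applying this to $g_n(y) = \min(|\tan(y)|^p, n)$ and using the definition of the extended trace on $\Aff(\cA)_+$ (Theorem \ref{thm:affiliated}(4)), monotone convergence gives
\[
\int |\tan|^p\, d\lambda_j = \tau_{\cA}(|\tan(\arctan(X_j))|^p) = \tau_{\cA}(|X_j|^p) = \norm{X_j}_{L^p(\cA)}^p < \infty,
\]
so $\tan \in L^p(\lambda_j)$.

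For the direction (1) $\implies$ (2), first apply Proposition \ref{prop:GNS} to the law $\lambda \in \Sigma_{m,\pi/2}$ to obtain a tracial $\mathrm{W}^*$-algebra $\cA$ and a tuple $Y = (Y_1,\dots,Y_m) \in L^\infty(\cA)_{\sa}^m$ with $\norm{Y_j}_{L^\infty(\cA)} \leq \pi/2$ and $\lambda_Y = \lambda$. By the spectral theorem, the spectral measure of $Y_j$ is exactly $\lambda_j$, supported in $[-\pi/2,\pi/2]$. Since $\tan \in L^p(\lambda_j)$ and $\tan$ is infinite at $\pm\pi/2$, we must have $\lambda_j(\{-\pi/2,\pi/2\}) = 0$, so the spectral mass of $Y_j$ is concentrated in the open interval $(-\pi/2, \pi/2)$, where $\tan$ is a real-valued Borel function. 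Therefore the Borel functional calculus (Theorem \ref{thm:affiliated}(3)) produces a self-adjoint operator $X_j := \tan(Y_j) \in \Aff(\cA)$. Repeating the $L^p$-norm computation of the first direction,
\[
\norm{X_j}_{L^p(\cA)}^p = \tau_{\cA}(|\tan(Y_j)|^p) = \int |\tan|^p\, d\lambda_j < \infty,
\]
so $X = (X_1,\dots,X_m) \in L^p(\cA)_{\sa}^m$. Finally, since $\arctan\circ\tan$ is the identity on $(-\pi/2,\pi/2)$ and the spectral measure of $Y_j$ is supported there, the functional calculus identity $\arctan(X_j) = \arctan(\tan(Y_j)) = Y_j$ holds, hence $\lambda_{\arctan(X)} = \lambda_Y = \lambda$.

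The only genuine subtlety is the need to exclude atoms of $\lambda_j$ at $\pm\pi/2$ before defining $\tan(Y_j)$; this is automatic once $\tan \in L^p(\lambda_j)$ is assumed. The rest of the argument is a routine application of Borel functional calculus and the definition of the non-commutative $L^p$-norm, and requires no genuinely hard step.
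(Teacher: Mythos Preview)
Your proof is correct and follows essentially the same approach as the paper: both directions use the GNS realization, Borel functional calculus, and the identification of $\lambda_j$ with the spectral distribution of $\arctan(X_j)$ (respectively $Y_j$), including the observation that $\tan \in L^p(\lambda_j)$ forces $\lambda_j(\{\pm\pi/2\})=0$ so that $\tan(Y_j)$ is well defined. If anything, your write-up of (2)$\implies$(1) is slightly more explicit than the paper's, invoking the monotone approximation $g_n(y)=\min(|\tan y|^p,n)$ and Theorem~\ref{thm:affiliated}(4), whereas the paper leaves this computation implicit.
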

	
	\begin{proof}
		(1) $\implies$ (2).  Let $(\cA,Y)$ be the GNS realization of $\lambda$ given by Proposition \ref{prop:GNS}.  Then $\lambda_j$ is the spectral distribution of $Y_j$ with respect to $\tau$.  Because $\tan \in L^p(\lambda_j)$ for each $j$, we know that $\lambda_j$ has no mass at $\pm \pi/2$ and therefore $X_j = \tan(Y_j)$ is a well-defined self-adjoint operator affiliated to $\cA$.  Using Theorem \ref{thm:affiliated} (3), we have $\mu_{\tan(Y_j)} = \tan_* \mu_{Y_j} = \tan_*\lambda_j$.  Hence, $\tau(|X_j|^p) = \int |t|^p \,d\mu_{X_j}(t) = \int |\tan t|^p\,d\lambda_j(t) < \infty$.  Therefore, $X_j \in L^p(\cA)_{\sa}$ and $\lambda_{\arctan(X_j)} = \lambda$.
		
		(2) $\implies$ (1).  If $X$ is as in (2), then let $Y_j = \arctan(X_j)$.  The spectral distribution of $Y_j$ with respect to $\tau$ is thus $\mu_{Y_j} = \lambda$.
	\end{proof}
	
	\begin{definition}
		We define $\Sigma_m^{(p)}$ as the set of non-commutative laws $\lambda$ in $\Sigma_{m,\pi/2}$ such that $\tan \in L^2(\lambda_j)$ for every $j$.  We define the weak-$*$ topology on $\Sigma_m^{(p)}$ as the restriction of the weak-$*$ topology on $\Sigma_{m,\pi/2}$.
	\end{definition}
	
	Next, we define couplings of laws in $\Sigma_m^{(p)}$.  It will be useful to have two different points of view on couplings, one more measure-theoretic, and the other more probabilistic.
	
	\begin{definition}
		Given $\mu, \nu \in \Sigma_m^{(p)}$, a \emph{measure-theoretic coupling} of $\mu$ and $\nu$ is a law $\gamma \in \Sigma_{2m}^{(p)}$ such that $\gamma(f(x_1,\dots,x_m)) = \mu(f(x_1,\dots,x_m))$ and $\gamma(f(x_{m+1},\dots,x_{2m})) = \nu(f(x_1,\dots,x_m))$ for all $f \in \C\ip{x_1,\dots,x_m}$.  We denote by $\Gamma^{(p)}(\mu,\nu) \subseteq \Sigma_{2m}^{(p)}$ the space of measure-theoretic couplings.
	\end{definition}
	
	\begin{definition}
		A \emph{probabilistic coupling} of $\mu$ and $\nu$ is a tuple $(\cA,X,Y)$, where $\cA$ is a tracial $\mathrm{W}^*$-algebra and $X, Y \in L^p(\cA)_{\sa}^m$ with $\lambda_{\arctan(X)} = \mu$ and $\lambda_{\arctan(Y)} = \nu$.
	\end{definition}
	
	Of course, if $(\cA,\mu,\nu)$ is a probabilistic coupling, then $\gamma = \lambda_{\arctan(X),\arctan(Y)}$ is a measure-theoretic coupling.  Conversely, if $\gamma$ is a measure-theoretic coupling, then a probabilistic coupling can be obtained from the GNS construction of $\gamma$.
	
	\begin{definition}[Wasserstein distance] \label{def:NCoptimalcoupling}
		For a given $\mu$, $\nu \in \Sigma_m^{(p)}$, we define $d_W^{(p)}(\mu,\nu)$ to be the infimum $\norm{X - Y}_{L^p(\cA)_{\sa}^m}$ over all probabilistic couplings $(\cA,X,Y)$ with $\cA \in \mathbb{W}$.
	\end{definition}
	
	\begin{proposition}
		The Wasserstein distance $d_W^{(p)}$ defines a metric on the set $\Sigma_m^{(p)}$ which makes it into a complete metric space.
	\end{proposition}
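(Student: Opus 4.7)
The plan is to verify the three metric axioms and then prove completeness, with the amalgamated free product from Proposition~\ref{prop:amalgamatedfreeproduct} serving throughout as the tool for gluing two couplings that share a common marginal into a single tracial $\mathrm{W}^*$-algebra. Non-negativity and symmetry of $d_W^{(p)}$ are immediate from the definition. For the triangle inequality among $\mu,\nu,\rho \in \Sigma_m^{(p)}$, I would pick $\varepsilon$-near-optimal couplings $(\cA_1,X,Y_1)$ of $(\mu,\nu)$ and $(\cA_2,Y_2,Z)$ of $(\nu,\rho)$; since $\lambda_{\arctan Y_1} = \lambda_{\arctan Y_2} = \nu$, Lemma~\ref{lem:lawisomorphism} produces a canonical identification $\mathrm{W}^*(\arctan Y_1) \cong \mathrm{W}^*(\arctan Y_2)$ carrying $Y_1$ to $Y_2$ as affiliated operators, and the amalgamated free product $\cA_1 \ast_{\mathrm{W}^*(\arctan Y)} \cA_2$ realizes $X, Y, Z$ as affiliated operators with the correct laws, so the Minkowski inequality for $\norm{\cdot}_{L^p(\cA)^m}$ together with $\varepsilon \to 0$ closes the argument.

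For the identity of indiscernibles, $\mu = \nu$ yields a zero-distance diagonal coupling via the GNS realization. Conversely, if $d_W^{(p)}(\mu,\nu) = 0$, a minimizing sequence $(\cA_n, X_n, Y_n)$ of couplings can be iteratively glued into a single algebra $\cA$ in which all the $\arctan X_n$ have been identified with a fixed realization of $\mu$ by some $X$, so that $Y_n \to X$ in $L^p(\cA)_{\sa}^m$. To propagate this to $\lambda_{\arctan Y_n} = \nu$ converging to $\lambda_{\arctan X} = \mu$, I would invoke the resolvent identity $(Y_n - \zeta)^{-1} - (X - \zeta)^{-1} = -(Y_n - \zeta)^{-1}(Y_n - X)(X - \zeta)^{-1}$ together with the non-commutative H\"older inequality, which turns $L^p$-convergence of $Y_n$ into $L^p$-convergence of resolvents for $\zeta \in \C \setminus \R$ and hence convergence of mixed resolvent moments; since $\arctan$ restricted to $\R$ is uniformly approximable by rational functions with non-real poles, this forces convergence of all moments of $\arctan Y_n$, so $\nu = \mu$.

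For completeness, I would take a Cauchy sequence $(\mu_n) \subseteq \Sigma_m^{(p)}$, pass to a subsequence satisfying $d_W^{(p)}(\mu_n, \mu_{n+1}) < 2^{-n}$, and choose couplings $(\cB_n, U_n, V_n)$ with $\lambda_{\arctan U_n} = \mu_n$, $\lambda_{\arctan V_n} = \mu_{n+1}$, and $\norm{U_n - V_n}_{L^p(\cB_n)^m} < 2^{-n}$. Iterating the two-factor amalgamated free product along the chain of isomorphisms $\mathrm{W}^*(\arctan V_n) \cong \mathrm{W}^*(\arctan U_{n+1})$ and taking an inductive limit yields a single tracial $\mathrm{W}^*$-algebra $\cA$ into which each $\cB_n$ embeds trace-preservingly, with tuples $X_n \in L^p(\cA)_{\sa}^m$ satisfying $\lambda_{\arctan X_n} = \mu_n$ and $\norm{X_n - X_{n+1}}_{L^p(\cA)^m} < 2^{-n}$. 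Banach-space completeness of $L^p(\cA)_{\sa}^m$ supplies $X \in L^p(\cA)_{\sa}^m$ with $X_n \to X$; setting $\mu := \lambda_{\arctan X}$, the coupling $(\cA, X_n, X)$ shows $d_W^{(p)}(\mu_n, \mu) \leq \norm{X_n - X}_{L^p(\cA)^m} \to 0$, and the standard Cauchy-plus-subsequence argument promotes this to convergence of the full original sequence.

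The main obstacle is the countable iterated amalgamated free product: Proposition~\ref{prop:amalgamatedfreeproduct} supplies only the finite-factor case, so one must verify that the direct limit along the chain $\cB_1 \hookrightarrow \cB_1 \ast \cB_2 \hookrightarrow \cdots$ is itself a tracial $\mathrm{W}^*$-algebra. The cleanest route realizes the chain on an inductive limit of the relevant $L^2$-spaces, defines $\cA$ as the double commutant of the union, and checks by Kaplansky density that the trace extends normally. A secondary technical step is confirming that the unbounded elements $U_n, V_n$ extend consistently as affiliated operators of $\cA$ with unchanged $L^p$ norms; this follows from the functorial extension of trace-preserving $\mathrm{W}^*$-embeddings to affiliated operators and non-commutative $L^p$-spaces recalled in Subsection~\ref{subsec:affiliated}. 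Once these foundational issues are settled, the remainder is routine $L^p$ analysis.
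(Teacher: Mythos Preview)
Your overall strategy---amalgamated free products for the triangle inequality, and an inductive limit of such products realizing a fast-Cauchy subsequence inside a single algebra for completeness---is exactly what the paper sketches (it simply cites \cite{BV2001} and singles out the amalgamated free product as the device for the triangle inequality, without spelling out completeness). Your identification of the countable inductive-limit construction and the functorial extension of embeddings to affiliated operators as the two technical points needing care is accurate.

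There is, however, one false step in your identity-of-indiscernibles argument: $\arctan$ is \emph{not} uniformly approximable on $\R$ by rational functions with poles off $\R$. Any such rational function extends continuously to the one-point compactification $\R\cup\{\infty\}$, hence so does any uniform limit; but $\arctan$ has distinct limits $\pm\pi/2$ at $\pm\infty$. The resolvent-moment portion of your argument is correct, and there are two easy repairs for the final passage to $\arctan$-moments. One is to reverse direction: for $\zeta\notin\R$ the function $s\mapsto(\tan s-\zeta)^{-1}$ \emph{is} continuous on $[-\pi/2,\pi/2]$ (it vanishes at the endpoints), so resolvent moments are weak-$*$ continuous functionals of $\lambda_{\arctan(\cdot)}\in\Sigma_{m,\pi/2}$; equality of all mixed resolvent moments then yields a trace-preserving $*$-isomorphism $\mathrm{W}^*\bigl((X_j-i)^{-1}:j\bigr)\to\mathrm{W}^*\bigl((Y_j-i)^{-1}:j\bigr)$ sending $X$ to $Y$ as affiliated operators, hence $\mu=\nu$. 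A second repair is truncation: the Chebyshev-type bound $\mu_{Y_{n,j}}(\{|t|>R\})\le R^{-p}\norm{Y_{n,j}}_{L^p}^p$, whose right-hand side depends only on $\nu$ and not on $n$, lets you replace $\arctan$ by a version constant outside $[-R,R]$, which \emph{is} continuous on $\R\cup\{\infty\}$ and hence uniformly approximable by resolvent polynomials, with truncation error uniform in $n$. Incidentally, the gluing into a single algebra is unnecessary for this axiom: each coupling $(\cA_n,X_n,Y_n)$ separately controls $|\mu(p)-\nu(p)|$ in terms of $\norm{X_n-Y_n}_{L^p(\cA_n)}$, and letting $n\to\infty$ suffices.
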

	
	The argument to show that $d_W^{(p)}$ is a metric on $\Sigma_m^{(p)}$ is exactly the same as in \cite{BV2001}.  The hardest axiom to verify is the triangle inequality, but this follows because a coupling of $\mu_1$ and $\mu_2$ and a coupling of $\mu_2$ and $\mu_3$ can be joined by taking the amalgamated free product of the tracial $\mathrm{W}^*$-algebras corresponding to the two couplings over the subalgebra generated by the variables corresponding to $\mu_2$.  Hence, $\Sigma_m^{(p)}$ is a metric space.
	
	\subsection{Optimal couplings and Wasserstein geodesics}
	
	\begin{definition}
		A \emph{probabilistic coupling} $(\cA,X,Y)$ of $\mu$ and $\nu \in \Sigma_m^{(p)}$ is said to be \emph{optimal} if $\norm{X - Y}_{L^p(\cA)_{\sa}^m} = d_W^{(p)}(\mu,\nu)$; in this case we also say that the corresponding \emph{measure-theoretic coupling} is \emph{optimal}.  We denote the space of optimal measure-theoretic couplings by $\Gamma_{\opt}^{(p)}(\mu,\nu)$.
	\end{definition}
	
	To show the existence of optimal couplings, we use a certain type of continuity and compactness.
	
	\begin{lemma}
		Let $p \in [1,\infty)$.  Then $\Gamma^{(p)}(\mu,\nu)$ is compact in the weak-$*$ topology.  For $\gamma \in \Gamma^{(p)}(\mu,\nu)$, the quantity
		\[
		N(\gamma,p) := \norm{X - Y}_{L^p(\cA)_{\sa}^m} \text{ where } \gamma = \lambda_{\arctan(X),\arctan(Y)},
		\]
		where $(X,Y) \in L^2(\cA)_{\sa}^m$ is a $2m$-tuple with $\lambda_{\arctan(X),\arctan(X)}$, only depends on $\gamma$, and moreover $\gamma \mapsto N(\gamma,p)$ is continuous.
	\end{lemma}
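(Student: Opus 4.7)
My plan is to handle the three assertions separately. \emph{Well-definedness} of $N(\gamma,p)$ follows directly from Lemma~\ref{lem:lawisomorphism}: if $(\cA,X,Y)$ and $(\cA',X',Y')$ both realize $\gamma$, there is a canonical tracial $\mathrm{W}^*$-isomorphism $\mathrm{W}^*(\arctan(X),\arctan(Y)) \cong \mathrm{W}^*(\arctan(X'),\arctan(Y'))$ sending generators to generators.  This isomorphism extends canonically to affiliated operators and preserves $L^p$ norms, and by Borel functional calculus (Theorem~\ref{thm:affiliated}(3)) applied to $X_j = \tan(\arctan(X_j))$ it sends $X_j \mapsto X_j'$ and $Y_j \mapsto Y_j'$.  \emph{Compactness} of $\Gamma^{(p)}(\mu,\nu)$ will follow by showing it is a weak-$*$ closed subset of the weak-$*$ compact space $\Sigma_{2m,\pi/2}$: the marginal conditions $\gamma(f(x_1,\ldots,x_m)) = \mu(f)$ and $\gamma(f(x_{m+1},\ldots,x_{2m})) = \nu(f)$ are weak-$*$ closed, and the $L^p$-moment condition defining $\Sigma_{2m}^{(p)}$ holds automatically since the one-variable marginals of $\gamma$ are forced to coincide with those of $\mu$ and $\nu$.

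For \emph{continuity}, which is the heart of the lemma, my plan is to truncate $\tan$ by a bounded continuous function.  Fix $M > 0$, let $\psi_M(x) = \sgn(x)\min(|x|,M)$, and let $T_M := \psi_M \circ \tan$, which extends continuously to $[-\pi/2,\pi/2]$ with $T_M(\pm\pi/2) = \pm M$.  Given $\gamma$ realized by $(\cA,X,Y)$ with $A_j := \arctan(X_j)$ and $B_j := \arctan(Y_j)$, I will define
\[
N_M(\gamma)^p := \sum_{j=1}^m \tau_{\cA}\bigl(|T_M(A_j) - T_M(B_j)|^p\bigr),
\]
which is well-defined by the same isomorphism argument.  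The $L^p$-triangle inequality gives $|N(\gamma,p) - N_M(\gamma)| \leq \norm{X - T_M(A)}_{L^p(\cA)_{\sa}^m} + \norm{Y - T_M(B)}_{L^p(\cA)_{\sa}^m}$, and by functional calculus each summand rewrites as $\int_{[-\pi/2,\pi/2]} |\tan(a) - \psi_M(\tan(a))|^p\, d\mu_j(a)$ (resp.\ with $\nu_j$), which depends only on the fixed marginal and vanishes as $M \to \infty$ by dominated convergence, using that $\tan \in L^p(\mu_j)$.  Hence $N_M \to N$ \emph{uniformly} on $\Gamma^{(p)}(\mu,\nu)$.

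To show each $N_M$ is weak-$*$ continuous, I will apply a double Stone--Weierstrass approximation: given $\epsilon > 0$, approximate $T_M$ on $[-\pi/2,\pi/2]$ uniformly within $\epsilon$ by a polynomial $r$, and approximate $s \mapsto |s|^p$ on $[-(2M+1),2M+1]$ uniformly within $\epsilon$ by a polynomial $q$.  The operator-norm bound $\norm{(T_M(A_j)-T_M(B_j)) - (r(A_j)-r(B_j))}_{L^\infty(\cA)} \leq 2\epsilon$ together with the local Lipschitz behavior of $q$ yields $|\tau(|T_M(A_j)-T_M(B_j)|^p) - \tau(q(r(A_j)-r(B_j)))| \leq C\epsilon$ for a constant $C = C(M,p)$ independent of $\gamma$; meanwhile $\tau(q(r(A_j)-r(B_j)))$ expands to $\gamma(P(x_j,x_{m+j}))$ for some non-commutative polynomial $P$, and is therefore weak-$*$ continuous in $\gamma$.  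So each $N_M$ is a uniform limit of weak-$*$ continuous functions, and uniform convergence $N_M \to N$ transfers continuity to $N$.  The main obstacle will be precisely this last step: the natural formula $|X_j - Y_j|^p = h(\arctan(X_j),\arctan(Y_j))$ is not a non-commutative polynomial because $\arctan(X_j)$ and $\arctan(Y_j)$ do not commute, so $N(\gamma,p)$ cannot be directly realized as a weak-$*$ continuous moment; the double-truncation argument is designed precisely to circumvent this.
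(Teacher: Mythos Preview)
Your proposal is correct and follows essentially the same strategy as the paper: compactness via closedness in $\Sigma_{2m,\pi/2}$ (noting that the $L^p$-moment condition is automatic from the fixed marginals), and continuity by approximating $\tan$ with bounded continuous functions, reducing to polynomial moments via Weierstrass approximation of $|\cdot|^p$, and observing that the approximation error depends only on the one-variable marginals $\mu_j,\nu_j$ so the convergence is uniform on $\Gamma^{(p)}(\mu,\nu)$. The only cosmetic differences are that the paper uses a general sequence $\phi_m \to \tan$ with $|\phi_m| \le |\tan|$ rather than your explicit cutoff $T_M$, and approximates $t \mapsto t^{p/2}$ applied to the square rather than $s \mapsto |s|^p$ directly.
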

	
	\begin{proof}
		First, $\Gamma^{(p)}(\mu,\nu)$ is compact because it is a closed subset of $\Sigma_{2m,\pi/2}$, which is compact.  To show well-definedness and continuity $\gamma \mapsto N(\gamma,p)$, first note that for any polynomial $\phi$, the map
		\[
		\lambda_{Z,Z'} \mapsto \norm{\phi(Z) - \phi(Z')}_{L^p(\cA)_{\sa}^m}
		\]
		is well-defined and continuous on $\Sigma_{2m,\pi/2}$.  Indeed, let $M$ be an upper bound on $|\phi|$ and let $g_k$ be a sequence of polynomials that converge uniformly on $[-2M,2M]$ to the function $| \cdot |^{p/2}$.  Then $g_k((\phi(Z) - \phi(Z'))^*(\phi(Z) - \phi(Z')))$ converges to $|\phi(Z) - \phi(Z')|^p$ and the rate of convergence is uniform for all $\cA$ and all $(Z,Z')$ with $\norm{(Z,Z')}_\infty \leq \pi$ because of the spectral mapping theorem.  The continuity of
		\[
		\lambda_{Z,Z'} \mapsto \sum_{j=1}^m g_k(\phi(Z) - \phi(Z')) = \lambda_{Z,Z'}(g_k(\phi(x_j) - \phi(x_{m+j})))
		\]
		is immediate by definition of the weak-$*$ topology.  Thus, the continuity of $\lambda_{Z,Z'} \mapsto \norm{\phi(Z) - \phi(Z')}_p$ follows from uniform convergence.  Similarly, using uniform convergence, we can generalize $\phi$ from a polynomial to an arbitrary continuous real-valued function on $[-\pi/2,\pi/2]$.
		
		Now let $\phi_k \in C([-\pi/2,\pi/2];\R)$ be a sequence such that $|\phi_k| \leq |\tan|$ and $\phi_k \to \tan$ pointwise.  Suppose that $\gamma \in \Gamma^{(p)}(\mu,\nu)$ and $\gamma = \lambda_{(Z,Z')}$.  Then $(\tan(Z),\tau(Z')) \in L^p(\cA)_{\sa}^m$.  Also,
		\begin{align*}
			\left| \norm{\tan(Z) - \tan(Z')}_{L^p(\cA)_{\sa}^m} - \norm{\phi_k(Z) - \phi_k(Z')}_{L^p(\cA)_{\sa}^m} \right| &\leq \sum_{j=1}^m \norm{\tan(Z_j) - \phi_k(Z_j)}_{L^p(\cA)} + \sum_{j=1}^m \norm{\tan(Z_j') - \phi_k(Z_j')}_{L^p(\cA)} \\
			&= \sum_{j=1}^m \left( \int |\tan - \phi_k|^p\,d\mu_j \right)^{1/p} + \sum_{j=1}^m \left( \int |\tan - \phi_k|^p\,d\nu_j \right)^{1/p},
		\end{align*}
		where $\mu_j$ and $\nu_j$ are the measures on $[-\pi/2,\pi/2]$ representing the $j$th marginals of $\mu$ and $\nu$ respectively.  The bound on the right-hand side only depends on $\mu$ and $\nu$ and thus is a uniform bound for all $\gamma \in \Gamma^{(p)}(\mu,\nu)$.  Furthermore, by the dominated convergence theorem $|\tan - \phi_k| \to 0$ in $L^p(\mu_j)$ and $L^p(\nu_j)$.  Therefore, the map sending $\gamma$ to $\norm{\tan(Z) - \tan(Z')}_{L^p(\cA)_{\sa}^m}$ is continuous as the uniform limit of continuous maps. 
	\end{proof}
	
	\begin{corollary}
		For each $\mu, \nu \in \Sigma_m^{(p)}$, the space of optimal couplings $\Gamma_{\opt}^{(p)}$ is nonempty and compact.
	\end{corollary}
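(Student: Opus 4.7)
The plan is to deduce the corollary as an immediate consequence of the lemma that just preceded it. The lemma established two key facts: first, that $\Gamma^{(p)}(\mu,\nu)$ is weak-$*$ compact as a closed subset of the compact space $\Sigma_{2m,\pi/2}$; and second, that the functional $N(\cdot,p): \Gamma^{(p)}(\mu,\nu) \to [0,\infty)$ which sends a measure-theoretic coupling $\gamma$ to $\norm{X - Y}_{L^p(\cA)_{\sa}^m}$ is well-defined and weak-$*$ continuous. Moreover, $\Gamma^{(p)}(\mu,\nu)$ is nonempty because one can always take the amalgamated free product construction, or even more simply, take $\mu$ and $\nu$ to be realized in free product tracial $\mathrm{W}^*$-algebras (and then the joint law of $(\arctan X, \arctan Y)$ supplies an element of $\Gamma^{(p)}(\mu,\nu)$).

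From these ingredients the corollary is a direct application of the extreme value theorem: a real-valued continuous function on a nonempty compact space attains its infimum. Thus the set
\[
\Gamma_{\opt}^{(p)}(\mu,\nu) = \{\gamma \in \Gamma^{(p)}(\mu,\nu) : N(\gamma,p) = d_W^{(p)}(\mu,\nu)\}
\]
is nonempty. It is also the preimage of the closed (singleton) set $\{d_W^{(p)}(\mu,\nu)\}$ under the continuous map $N(\cdot,p)$, hence closed in $\Gamma^{(p)}(\mu,\nu)$, and therefore compact as a closed subset of a compact space.

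There is essentially no obstacle here; all the real work was carried out in the preceding lemma, where the subtle points were (i) showing that $N(\gamma,p)$ depends only on $\gamma$ and not on the particular probabilistic realization, and (ii) establishing weak-$*$ continuity of $N(\cdot,p)$ despite the unboundedness of $\tan$, which was handled via a dominated-convergence argument approximating $\tan$ uniformly (in $L^p$ of the marginals) by continuous functions on $[-\pi/2,\pi/2]$. Once those facts are in hand, the existence and compactness of optimal couplings follow by pure soft analysis.
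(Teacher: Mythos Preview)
Your proposal is correct and takes essentially the same approach as the paper, which states the corollary without proof as an immediate consequence of the preceding lemma. You have spelled out precisely the soft-analysis argument the paper leaves implicit: nonemptiness of $\Gamma^{(p)}(\mu,\nu)$, continuity of $N(\cdot,p)$ on a compact set giving a minimizer, and closedness of the minimizing level set.
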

	
	Given the existence of $L^2$ optimal couplings, all the theorems from \S \ref{sec:duality} and \S \ref{sec:displacement} can be generalized to $\Sigma_m^{(2)}$ with the appropriate changes to notation.  Almost no change is needed for the proofs since $E$-convex functions were defined for $L^2$ non-commutative random variables to begin with.
	
	Just as in the case of classical probability theory, the existence of optimal couplings and the ability to take convex combinations of non-commutative random variables immediately leads to the existence of geodesics between any two points in $\Sigma_m^{(p)}$.
	
	\begin{definition}
		Let $(\Omega,d)$ be a metric space.  A \emph{geodesic} in $(\Omega,d)$ is a continuous map $g: I \to \Omega$, where $I \subseteq \R$ is an interval (of positive length), such that for all $t_1 < t_2 < t_3$ in $I$, we have
		\[
		d(g(t_1),g(t_3)) = d(g(t_1),g(t_2)) + d(g(t_2),g(t_3)).
		\]
		The geodesic is said to be \emph{constant speed} if $d(g(t_1),g(t_2)) / (t_2 - t_1)$ is constant for all $t_1 < t_2$.
	\end{definition}
	
	\begin{proposition} \label{prop:geodesic}
		Let $p \in [1,\infty)$.  Let $\mu, \nu \in \Sigma_d^{(p)}$, and let $(\cA,X,Y)$ be a probabilistic optimal coupling.  For $t \in [0,1]$, let $X_t = (1 - t)X + tY$.  Let $\mu_t = \lambda_{\arctan(X_t)}$.  Then $t \mapsto \mu_t$ is a constant speed geodesic in $(\Sigma_m^{(p)},d_W^{(p)})$.  Moreover, for $s, t \in [0,1]$, $(\cA,X_s,X_t)$ is an optimal coupling of $\mu_s$ and $\mu_t$.
	\end{proposition}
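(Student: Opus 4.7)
The plan is to prove both claims simultaneously by combining a trivial upper bound on the Wasserstein distance with a lower bound from the triangle inequality, which forces equality everywhere and hence optimality of the intermediate couplings.

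First I would verify that the setup makes sense: since $X, Y \in L^p(\cA)_{\sa}^m$ and $L^p(\cA)_{\sa}^m$ is a real vector space, $X_t = (1-t)X + tY \in L^p(\cA)_{\sa}^m$ for each $t \in [0,1]$, so $\arctan(X_t)$ is a bounded self-adjoint $m$-tuple of norm at most $\pi/2$, and $\mu_t = \lambda_{\arctan(X_t)}$ lies in $\Sigma_m^{(p)}$. Moreover, $(\cA, X_s, X_t)$ is a valid probabilistic coupling of $\mu_s$ and $\mu_t$ in the sense of the $L^p$ definition. Let $D := d_W^{(p)}(\mu,\nu) = \norm{X - Y}_{L^p(\cA)_{\sa}^m}$, the last equality holding because $(\cA,X,Y)$ is optimal.

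The key computation is $X_t - X_s = (t-s)(Y - X)$, which gives, for any $s, t \in [0,1]$,
\[
\norm{X_t - X_s}_{L^p(\cA)_{\sa}^m} = |t-s|\, D.
\]
Since $(\cA, X_s, X_t)$ is a coupling of $\mu_s, \mu_t$, this yields the upper bound $d_W^{(p)}(\mu_s, \mu_t) \leq |t-s|\, D$. For the matching lower bound I would use the triangle inequality for $d_W^{(p)}$: for $0 \leq s < t \leq 1$,
\[
D = d_W^{(p)}(\mu_0,\mu_1) \leq d_W^{(p)}(\mu_0,\mu_s) + d_W^{(p)}(\mu_s,\mu_t) + d_W^{(p)}(\mu_t,\mu_1) \leq sD + d_W^{(p)}(\mu_s,\mu_t) + (1-t)D,
\]
so $d_W^{(p)}(\mu_s,\mu_t) \geq (t-s)\,D$. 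Combining the two bounds gives $d_W^{(p)}(\mu_s,\mu_t) = (t-s)\,D$, which is exactly the $L^p$ norm of $X_s - X_t$; hence $(\cA, X_s, X_t)$ is optimal.

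Finally, the identity $d_W^{(p)}(\mu_s, \mu_t) = |t-s|\, D$ immediately gives both continuity of $t \mapsto \mu_t$ and, for any $t_1 < t_2 < t_3$ in $[0,1]$, the additivity $d_W^{(p)}(\mu_{t_1}, \mu_{t_3}) = d_W^{(p)}(\mu_{t_1},\mu_{t_2}) + d_W^{(p)}(\mu_{t_2},\mu_{t_3})$ together with the constant-speed property (speed $D$). There is no real obstacle here: the argument is the standard one from classical optimal transport, and the only point to double-check is that the triangle inequality for $d_W^{(p)}$ on $\Sigma_m^{(p)}$ (which in turn relies on amalgamated free products of tracial $\mathrm{W}^*$-algebras to glue two couplings together) is available in the $L^p$ setting, as sketched in the paragraph following Definition \ref{def:NCoptimalcoupling}.
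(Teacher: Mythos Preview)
Your proposal is correct and follows essentially the same approach as the paper: both obtain the upper bound $d_W^{(p)}(\mu_s,\mu_t)\le |t-s|\,\norm{X-Y}_{L^p(\cA)_{\sa}^m}$ from the coupling $(\cA,X_s,X_t)$ and then use the triangle inequality for $d_W^{(p)}$ to force equality, which simultaneously yields the constant-speed geodesic property and optimality of the intermediate couplings. Your write-up is slightly more explicit about verifying that $X_t\in L^p(\cA)_{\sa}^m$ and that the triangle inequality is available, but the argument is the same.
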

	
	\begin{proof}
		Of course,
		\[
		d_W^{(p)}(\mu_s,\mu_t) \leq \norm{X_s - X_t}_{L^p(\cA)_{\sa}^m} = |s - t| \norm{X - Y}_{L^p(\cA)_{\sa}^m}.
		\]
		Thus, for $s < t$,
		\begin{align*}
			\norm{X - Y}_{L^p(\cA)_{\sa}^m} &= d_W^{(p)}(X,Y) \\
			&\leq d_W^{(p)}(X,X_s) + d_W^{(p)}(X_s,X_t) + d_W^{(p)}(X_t,Y) \\
			&\leq \norm{X - X_s}_{L^p(\cA)_{\sa}^m} + \norm{X_s - X_t}_{L^p(\cA)_{\sa}^m} + \norm{X_t - Y}_{L^p(\cA)_{\sa}^m} \\
			&= \norm{X - Y}_{L^p(\cA)_{\sa}^m}.
		\end{align*}
		Thus, all the inequalities are forced to be equalities.  Hence, $d_W^{(p)}(\mu_s,\mu_t) = (t - s) d_W^{(p)}(\mu,\nu)$.  Therefore, $t \mapsto \mu_t$ is a constant speed geodesic.  Also, $d_W^{(p)}(\mu_s,\mu_t) = \norm{X_s - Y_t}_{L^p(\cA)_{\sa}^m}$, so that $(\cA,X_s,X_t)$ is an optimal coupling.
	\end{proof}

	\bibliographystyle{plain}
	\bibliography{optimal_transport}

\begin{thebibliography}{10}

\bibitem{AD2006}
Claire Anantharaman-Delaroche.
\newblock On ergodic theorems for free group actions on noncommutative spaces.
\newblock {\em Probab. Theory Rel. Fields}, 135:520--546, 2006.

\bibitem{ADP}
Claire Anantharaman-Delaroche and Sorin Popa.
\newblock An introduction to $\mathrm{II}_1$ factors.
\newblock 2021.

\bibitem{AGZ2009}
Greg~W. Anderson, Alice Guionnet, and Ofer Zeitouni.
\newblock {\em An Introduction to Random Matrices}.
\newblock Cambridge Studies in Advanced Mathematics. Cambridge University
  Press, 2009.

\bibitem{AGKE2020}
Scott Atkinson, Isaac Goldbring, and Srivatsav {Kunnawalkam Ellayavalli}.
\newblock Factorial relative commutants and the generalized {J}ung property for
  $\mathrm{II}_1$ factors.
\newblock arXiv:2004.02293.

\bibitem{AKE2021}
Scott Atkinson and Srivatsav {Kunnawalkam Ellayavalli}.
\newblock On ultraproduct embeddings and amenability for tracial von {N}eumann
  algebras.
\newblock {\em International Mathematics Research Notices}, 2021(4):2882--2918,
  2021.

\bibitem{BdP1981}
V.~Barbu and G.~{da Prato}.
\newblock Global existence for the {H}amilton-{J}acobi equations in {H}ilbert
  space.
\newblock {\em Annali della Scuola Normale Superiore di Pisa Classe di Scienze
  4e s{\'e}rie}, 8(2):257--284, 1981.

\bibitem{BdP1985b}
V.~Barbu and G.~{da Prato}.
\newblock Hamilton-{J}acobi equations in {H}ilbert spaces; variational and
  semigroup approach.
\newblock {\em Annali di Matematica pura ed applicata}, 142:303--349, 1985.

\bibitem{BdP1985a}
V.~Barbu and G.~{da Prato}.
\newblock A note on a {H}amilton-{J}acobi equation in {H}ilbert space.
\newblock {\em Nonlinear Analysis: Theory, Methods \& Applications},
  9(12):1337--1345, 1985.

\bibitem{IBY2008}
Ita{\"i} {Ben Yaacov}.
\newblock Topometric spaces and perturbations of metric structures.
\newblock {\em Log. Anal.}, 1:235–272, 2008.

\bibitem{BDLL2021}
Charles Bertucci, M{\'e}rouane Debbah, Jean-Michel Lasry, and Pierre-Louis
  Lions.
\newblock A spectral dominance approach to random matrices.
\newblock Preprint arXiv:2105.08983, 2021.

\bibitem{BV2001}
Philippe Biane and Dan-Virgil Voiculescu.
\newblock A free probability analogue of the {W}asserstein metric on the
  trace-state space.
\newblock {\em Geometric and Functional Analysis}, 11:1125--1138, 2001.

\bibitem{BrownOzawa2008}
Nathaniel~P. Brown and Narutaka Ozawa.
\newblock {\em $\mathrm{C}^*$-algebras and finite-dimensional approximations},
  volume~88 of {\em Graduate Studies in Mathematics}.
\newblock American Mathematical Society, Providence, 2008.

\bibitem{Capraro2010}
Valerio Capraro.
\newblock A survey on {C}onnes' embedding conjecture.
\newblock arXiv preprint at arXiv:1003.2076, 2010.

\bibitem{CDLL2019}
Pierre Cardaliaguet, Fran{\c c}ois Delarue, Jean-Michel Lasry, and Pierre-Louis
  Lions.
\newblock {\em The Master Equation and the Convergence Problem in Mean Field
  Games}, volume~2 of {\em Annals of Mathematics Studies}.
\newblock Princeton University Press, 2019.

\bibitem{CCP2020}
Ren{\'e} Carmona, Mark Cerenzia, and Aaron~Zeff Palmer.
\newblock The {D}yson and {C}oulomb games.
\newblock {\em Ann. Henri Poincar{\'e}}, 21:2897--2949, 2020.

\bibitem{CN2019}
Ian Charlesworth and Brent Nelson.
\newblock Free {S}tein irregularity and dimension.
\newblock arXiv:1902.02379, 2019.

\bibitem{Connes1976}
Alain Connes.
\newblock Classification of injective factors. {C}ases {$II_{1},$} {$II_{\infty
  },$} {$III_{\lambda },$} {$\lambda \not=1$}.
\newblock {\em Ann. of Math. (2)}, 104(1):73--115, 1976.

\bibitem{CrLi1985}
Michael~G. Crandall and Pierre-Louis Lions.
\newblock Hamilton-{J}acobi equations in infinite dimensions {I}. uniqueness of
  viscosity solutions.
\newblock {\em J. Funct. Anal.}, 62:379--396, 1985.

\bibitem{CrLi1986a}
Michael~G. Crandall and Pierre-Louis Lions.
\newblock Hamilton-{J}acobi equations in infinite dimensions {II}. existence of
  viscosity solutions.
\newblock {\em J. Funct. Anal.}, 65:368--405, 1986.

\bibitem{daSilva2018}
Ricardo~Correa {da Silva}.
\newblock Lecture notes on non-commutative {$L_p$}-spaces.
\newblock arXiv:1803.02390, 2018.

\bibitem{Dabrowski2017}
Yoann Dabrowksi.
\newblock A {L}aplace principle for {H}ermitian {B}rownian motion and free
  entropy {I}: the convex functional case.
\newblock arXiv:1604.06420, 2017.

\bibitem{Dabrowski2010}
Yoann Dabrowski.
\newblock A non-commutative path space approach to stationary free stochastic
  differential equations.
\newblock arxiv:1006.4351, 2010.

\bibitem{DGS2016}
Yoann Dabrowski, Alice Guionnet, and Dimitri Shlyakhtenko.
\newblock Free transport for convex potentials.
\newblock arXiv:1701.00132, 2016.

\bibitem{Dixmier1953}
Jacques Dixmier.
\newblock Formes lin{\'e}aires sur un anneau d'op{\'e}rateurs.
\newblock {\em Bulletin de la Soci{\'e}t{\'e} Math{\'e}matique de France},
  81:9--39, 1953.

\bibitem{Effros1981}
Edward~G. Effros.
\newblock {\em Dimensions and $\mathrm{C}^*$-algebras}, volume~46 of {\em CBMS
  Regional Conference Series in Mathematics}.
\newblock American Mathematical Society, Providence, 1981.

\bibitem{GMS2021}
Wilfrid Gangbo, Sergio Mayorga, and Andrzej {\'S}wi{\d e}ch.
\newblock Finite dimensional approximations of {H}amilton-{J}acobi-{B}ellman
  equations in spaces of probability measures.
\newblock {\em SIAM J. Math. Anal.}, 53(2):1320--1356, 2021.

\bibitem{GMMZ2021}
Wilfrid Gangbo, Alp{\'a}r M{\'e}sz{\'a}ros, Chenchen Mou, and Jianfeng Zhang.
\newblock Mean field games master equations with non- separable {H}amiltonians
  and displacement monotonicity.
\newblock arXiv preprint arXiv:2101.12362, 2021.

\bibitem{GM2021}
Wilfrid Gangbo and Alp{\'a}r~R M{\'e}sz{\'a}ros.
\newblock Global well-posedness of master equations for deterministic
  displacement convex potential mean field games.
\newblock arXiv preprint arXiv:2004.01660, 2021.

\bibitem{GaTu2018}
Wilfrid Gangbo and Adrian Tudorascu.
\newblock On differentiability in the {W}asserstein space and well-posedness
  for {H}amilton-{J}acobi equations.
\newblock {\em J. Math Pures Appl.}, 125:119--174, 2018.

\bibitem{Gromov1987}
Mikhael Gromov.
\newblock Hyperbolic groups.
\newblock In {\em Essays in group theory}, volume~8 of {\em Math. Sci. Res.
  Inst. Publ.}, page 75–263. Springer, New York, 1987.

\bibitem{GS2014}
Alice Guionnet and Dimitri Shlyakhtenko.
\newblock Free monotone transport.
\newblock {\em Inventiones Mathematicae}, 197(3):613--661, 09 2014.

\bibitem{HaMu2011}
Uffe Haagerup and Magdalena Musat.
\newblock Factorization and dilation problems for completely positive maps on
  von neumann algebras.
\newblock {\em Comm. Math. Phys.}, 303(2):555--594, 2011.

\bibitem{HaMu2015}
Uffe Haagerup and Magdalena Musat.
\newblock An asymptotic property of factorizable completely positive maps and
  the connes embedding problem.
\newblock {\em Comm. Math. Phys.}, 338(2):721--752, 2015.

\bibitem{HaSh2018}
Don Hadwin and Tatiana Shulman.
\newblock Tracial stability for $\mathrm{C}^*$-algebras.
\newblock {\em Integral Equations Operator Theory}, 90(1), 2018.

\bibitem{HJNS2021}
Benjamin Hayes, David Jekel, Brent Nelson, and Thomas Sinclair.
\newblock A random matrix approach to absorption in free products.
\newblock {\em International Mathematics Research Notices}, 2021(3):1919--1979,
  2021.

\bibitem{HPU2004}
Fumio Hiai, D{\'e}nes Petz, and Yoshimichi Ueda.
\newblock Free transportation cost inequalities via random matrix
  approximation.
\newblock {\em Probab. Theory Related Fields}, 130(2):199--221, 2004.

\bibitem{HU2006}
Fumio Hiai and Yoshimichi Ueda.
\newblock Free transportation cost inequalities for noncommutative
  multi-variables.
\newblock {\em Infinite Dimensional Analysis, Quantum Probability, and Related
  Topics}, 9:391--412, 2006.

\bibitem{Jekel2018}
David Jekel.
\newblock An elementary approach to free entropy theory for convex potentials.
\newblock {\em arXiv:1805.08814}, 2018.
\newblock To appear in Analysis and PDE Journal.

\bibitem{JekelExpectation}
David Jekel.
\newblock Conditional expectation, entropy, and transport for convex gibbs laws
  in free probability.
\newblock {\em International Mathematics Research Notices IMRN}, 2020, 2020.

\bibitem{JekelThesis}
David Jekel.
\newblock {\em Evolution equations in non-commutative probability}.
\newblock PhD thesis, University of California, Los Angeles, 2020.

\bibitem{JLS2021}
David Jekel, Wuchen Li, and Dimitri Shlyakhtenko.
\newblock Tracial non-commutative smooth functions and the free {W}asserstein
  manifold.
\newblock arXiv:2101.06572, 2021.

\bibitem{JNVWY2020}
Zhengfeng Ji, Anand Natarajan, Thomas Vidick, John Wright, and Henry Yuen.
\newblock {MIP*=RE}.
\newblock arXiv:2001.04383, 2020.

\bibitem{Jing2015}
Naihuan Jing.
\newblock Unitary and orthogonal equivalence of sets of matrices.
\newblock {\em Linear Algebra and its Applications}, 481:235--242, 2015.

\bibitem{JKO1998}
Richard Jordan, David Kinderlehrer, and Felix Otto.
\newblock The variational formulation of the {F}okker--{P}lanck equation.
\newblock {\em SIAM Journal on Mathematical Analysis}, 29(1):1--17, 1998.

\bibitem{Jung2007}
Kenley Jung.
\newblock Amenability, tubularity, and embeddings into {$\mathcal{R}^\omega$}.
\newblock {\em Math. Ann.}, 338(1):241--248, 2007.

\bibitem{Lafferty1988}
John~D. Lafferty.
\newblock The density manifold and configuration space quantization.
\newblock {\em Transactions of the American Mathematical Society},
  305(2):699--741, 1988.

\bibitem{LL1986}
J.M. Lasry and P.L. Lions.
\newblock A remark on regularization in {H}ilbert spaces.
\newblock {\em Israel J. Math.}, 55:257--266, 1986.

\bibitem{McDuff1969}
Dusa McDuff.
\newblock Uncountably many $\mathrm{II}_1$ factors.
\newblock {\em Ann. of Math. (2)}, 90(2):372--377, 1969.

\bibitem{CrLi1986b}
Pierre-Louis~Lions Michael G.~Crandall.
\newblock Hamilton-{J}acobi equations in infinite dimensions {III}.
\newblock {\em J. Funct. Anal.}, 68:214--247, 1986.

\bibitem{MvN1}
F.~J. Murray and J.~{von Neumann}.
\newblock On rings of operators.
\newblock {\em Annals of Mathematics}, 37(1):116--229, 1936.

\bibitem{MvN2}
F.J. Murray and J.~{von Neumann}.
\newblock On rings of operators, {II}.
\newblock {\em Trans. Amer. Math.Soc.}, 41:208--248, 1937.

\bibitem{MuRo2020a}
Magdalena Musat and Mikael R{\o}rdam.
\newblock Non-closure of quantum correlation matrices and factorizable channels
  that require infinite dimensional ancilla (with an appendix by {N}arutaka
  {O}zawa).
\newblock {\em Comm. Math. Phys.}, 375:1761--1776, 2020.

\bibitem{Nelson2015a}
Brent Nelson.
\newblock Free monotone transport without a trace.
\newblock {\em Communications in Mathematical Physics}, 334(3):1245--1298,
  2015.

\bibitem{Nelson2015b}
Brent Nelson.
\newblock Free transport for finite depth subfactor planar algebras.
\newblock {\em Journal of Functional Analysis}, 268(9):2586--2620, 2015.

\bibitem{Olshanskii1993}
A.~Yu. Olshanskii.
\newblock On residualing homomorphisms and g-subgroups of hyperbolic groups.
\newblock {\em Internat. J. Algebra Comput.}, 3(4):365--409, 1993.

\bibitem{Otto2001}
Felix Otto.
\newblock The geometry of dissipative evolution equations the porous medium
  equation.
\newblock {\em Communications in Partial Differential Equations},
  26(1-2):101--174, 2001.

\bibitem{OV2000}
Felix Otto and C{\'e}dric Villani.
\newblock Generalization of an inequality by {T}alagrand and links with the
  logarithmic {S}obolev inequality.
\newblock {\em Journal of Functional Analysis}, 173(2):361--400, 2000.

\bibitem{Ozawa2004}
Narutaka Ozawa.
\newblock There is no separable universal $\mathrm{II}_1$ factor.
\newblock {\em Proc. Amer. Math. Soc.}, 132:487--90, 2004.

\bibitem{Ozawa2013}
Narutaka Ozawa.
\newblock About the connes embedding conjecture: Algebraic approaches.
\newblock {\em Japanese Journal of Mathematics}, 8:147--183, 2013.

\bibitem{Paulsen2003}
Vern Paulsen.
\newblock {\em Completely Bounded Maps and Operator Algebras}.
\newblock Cambridge Studies in Advanced Mathematics. Cambridge University
  Press, 2003.

\bibitem{Popa1986injective}
Sorin Popa.
\newblock A short proof of ``injectivity implies hyperfiniteness'' for finite
  von neumann algebras.
\newblock {\em J. Operator Theory}, 16:261--272, 1986.

\bibitem{Procesi1976}
Claudio Procesi.
\newblock The invariant theory of $n \times n$ matrices.
\newblock {\em Advances in Mathematics}, 19:306--381, 1976.

\bibitem{ReedSimon1972}
Michael Reed and Barry Simon.
\newblock {\em Methods of Modern Mathematical Physics I: Functional Analysis}.
\newblock Academic Press, New York, 1972.

\bibitem{Sakai1971}
Sh{\^o}ichir{\^o} Sakai.
\newblock {\em $\mathrm{C}^*$-algebras and $\mathrm{W}^*$-algebras}, volume~60
  of {\em Ergebnisse der {M}athematik und ihrer {G}renzgebiete}.
\newblock Springer-Verlag, Berlin Heidelberg, 1971.

\bibitem{Shapiro1991}
Helene Shapiro.
\newblock A survey of canonical forms and invariants for unitary similarity.
\newblock {\em Lin. Algebra Appl.}, 147:101--167, 1991.

\bibitem{Shlyakhtenko2003}
Dimitri Shlyakhtenko.
\newblock Free fisher information for non-tracial states.
\newblock {\em Pacific J. Math}, 211:375--390, 2003.

\bibitem{Specht1940}
W.~Specht.
\newblock Zur theorie der matrizen, ii.
\newblock {\em Jahresber. Deutsch. Math.-Verein.}, 50:19--23, 1940.

\bibitem{TakesakiI}
Masamichi Takesaki.
\newblock {\em Theory of Operator Algebras I}, volume 124 of {\em Encyclopaedia
  of Mathematical Sciences}.
\newblock Springer-Verlag, Berlin Heidelberg, 2002.

\bibitem{TakesakiII}
Masamichi Takesaki.
\newblock {\em Theory of Operator Algebras II}, volume 125 of {\em
  Encyclopaedia of Mathematical Sciences}.
\newblock Springer-Verlag, Berlin Heidelberg, 2003.

\bibitem{TakesakiIII}
Masamichi Takesaki.
\newblock {\em Theory of Operator Algebras III}, volume 127 of {\em
  Encyclopaedia of Mathematical Sciences}.
\newblock Springer-Verlag, Berlin Heidelberg, 2003.

\bibitem{Ueda2011}
Yoshimichi Ueda.
\newblock Factoriality, type classification and fullness for free product von
  {N}eumann algebras.
\newblock {\em Adv. Math.}, 228(5):2647--2671, 2011.

\bibitem{Villani2008}
C\'edric Villani.
\newblock {\em Optimal Transport: Old and New}, volume 338 of {\em Grundlehren
  Der {M}athematischen {W}issenschaften}.
\newblock Springer, Berlin, 2009.

\bibitem{Voiculescu1985}
Dan-Virgil Voiculescu.
\newblock Symmetries of some reduced free product ${C}^*$-algebras.
\newblock In Huzihiro Araki, Calvin~C. Moore, {\c{S}}erban-Valentin Stratila,
  and Dan Voiculescu, editors, {\em Operator Algebras and their Connections
  with Topology and Ergodic Theory}, pages 556--588. Springer, Berlin,
  Heidelberg, 1985.

\bibitem{Voiculescu1986}
Dan-Virgil Voiculescu.
\newblock Addition of certain non-commuting random variables.
\newblock {\em Journal of Functional Analysis}, 66(3):323--346, 1986.

\bibitem{Voiculescu1991}
Dan-Virgil Voiculescu.
\newblock Limit laws for random matrices and free products.
\newblock {\em Inventiones mathematicae}, 104(1):201--220, Dec 1991.

\bibitem{VoiculescuFE1}
Dan-Virgil Voiculescu.
\newblock The analogues of entropy and {F}isher's information measure in free
  probability theory, {I}.
\newblock {\em Communications in Mathematical Physics}, 155(1):71--92, 1993.

\bibitem{VoiculescuFE2}
Dan-Virgil Voiculescu.
\newblock The analogues of entropy and of {F}isher's information measure in
  free probability theory, {II}.
\newblock {\em Inventiones Mathematicae}, 118:411--440, 1994.

\bibitem{VoiculescuFE3}
Dan-Virgil Voiculescu.
\newblock The analogues of entropy and of {F}isher's information measure in
  free probability theory, {III}: Absence of {C}artan subalgebras.
\newblock {\em Geometric and Functional Analysis}, 6:172--199, 1996.

\bibitem{VoiculescuFE5}
Dan-Virgil Voiculescu.
\newblock The analogues of entropy and of {F}isher's information measure in
  free probability theory {V}.
\newblock {\em Inventiones Mathematicae}, 132:189--227, 1998.

\bibitem{Voiculescu1998}
Dan-Virgil Voiculescu.
\newblock A strengthened asymptotic freeness result for random matrices with
  applications to free entropy.
\newblock {\em International Mathematics Research Notices}, 1998(1):41--63,
  1998.

\bibitem{VDN1992}
Dan-Virgil Voiculescu, Kenneth~J. Dykema, and Alexandru Nica.
\newblock {\em Free Random Variables}, volume~1 of {\em CRM Monograph Series}.
\newblock American Mathematical Society, Providence, RI, 1992.

\bibitem{Wiegmann1961}
N.~Wiegmann.
\newblock Necessary and sufficient conditions for unitary similarity.
\newblock {\em J. Austral. Math. Soc.}, 2:122--126, 1961.

\bibitem{Wilde2013}
Mark~M. Wilde.
\newblock {\em Quantum Information Theory}.
\newblock Cambridge University Press, 2013.

\bibitem{Witten2020}
Edward Witten.
\newblock A mini-introduction to information theory.
\newblock {\em La Rivista del Nuovo Cimento}, 43:187--227, 2020.

\end{thebibliography}
	
\end{document}